\newcommand{\eps}{\varepsilon}
\newcommand{\ol}{\overline}
\newcommand{\sm}{\setminus}
\newcommand{\Complex}{\mathbb{C}}
\newcommand{\0}{\emptyset}
\newcommand{\cl}{\mathrm{Cl}}
\newcommand{\Int}{\mbox{int}}
\newcommand{\bd}{\mathrm{Bd}}
\newcommand{\dia}{\mbox{diam}}
\newcommand{\sh}{\mbox{Sh}}
\newcommand{\ch}{\mathrm{Ch}}
\newcommand{\al}{\alpha}
\newcommand{\ph}{\varphi}
\newcommand{\be}{\beta}
\newcommand{\ga}{\gamma}
\newcommand{\Ga}{\Gamma}
\newcommand{\si}{\sigma}
\newcommand{\ta}{\theta}
\newcommand{\G}{\Gamma}
\newcommand{\nin}{\not\in}
\newcommand{\acc}{\mbox{Pr}}
\newcommand{\imp}{\mbox{Imp}}
\newcommand{\tl}{\mbox{TH}}
\newcommand{\hp}{\hat{p}}
\newcommand{\wc}{\widetilde{C}}
\newcommand{\wk}{\widetilde{K}}
\newcommand{\C}{\mathbb{C}}
\newcommand{\hc}{\mbox{$\mathbb{\widehat{C}}$}}
\newcommand{\D}{\mathbb{D}}
\newcommand{\bbd}{\mbox{$\mathbb{D}$}}
\newcommand{\disk}{\mathbb{D}}
\newcommand{\ucirc}{\mathbb{S}^1}
\newcommand{\uc}{\mathbb{S}^1}
\newcommand{\lam}{\mathcal{L}}
\newcommand{\Z}{\mathcal{Z}}
\newcommand{\A}{\mathcal{A}}
\newcommand{\Hc}{\mathcal{H}}
\newcommand{\Kc}{\mathcal{K}}
\newcommand{\Ta}{\Theta}
\newcommand{\iy}{\infty}
\newcommand{\g}{\mathbf{g}}
\newcommand{\h}{\mathbf{h}}
\begin{document}

\begin{frontmatter}


  \title{Locally connected models for {J}ulia sets} \author{Alexander
    M. Blokh\corauthref{cor}\thanksref{bthanks}},
  \thanks[bthanks]{This author was partially supported by NSF grant
    DMS-0456748.} \corauth[cor]{Corresponding author.}
  \ead{ablokh@math.uab.edu} \author{Clinton
    P. Curry\thanksref{cthanks}}, \thanks[cthanks]{This author was
    partially supported by NSF grant DMS-0353825.}
  \ead{clintonc@uab.edu} \author{Lex
    G. Oversteegen\thanksref{othanks}} \thanks[othanks]{This author
    was partially supported by NSF grant DMS-0405774.}
  \ead{overstee@math.uab.edu} \address{University of Alabama at
    Birmingham, Department of Mathematics, Birmingham, AL 35294-1170,
    USA}

  \begin{abstract}
    Let $P$ be a polynomial with a connected Julia set $J$. We use
    continuum theory to show that it admits a \emph{finest monotone
      map $\ph$ onto a locally connected continuum $J_{\sim_P}$},
    i.e. a monotone map $\ph:J\to J_{\sim_P}$ such that for any other
    monotone map $\psi:J\to J'$ there exists a monotone map $h$ with
    $\psi=h\circ \ph$. Then we extend $\ph$ onto the complex plane
    $\C$ (keeping the same notation) and show that $\ph$ monotonically
    semiconjugates $P|_{\C}$ to a \emph{topological polynomial
      $g:\C\to \C$}. If $P$ does not have Siegel or Cremer periodic
    points this gives an alternative proof of Kiwi's fundamental
    results on locally connected models of dynamics on the Julia sets,
    but the results hold for all polynomials with connected Julia
    sets. We also give a characterization and a useful sufficient condition
    for the map $\ph$ not to collapse all of $J$ into a point.
  \end{abstract}

  \begin{keyword}
    Complex dynamics \sep Julia set \sep core decomposition
  \end{keyword}
\end{frontmatter}
\section{Introduction}\label{intro}

A major idea in the theory of dynamical systems is that of modeling an
arbitrary system by one which can be better understood and treated with the
help of existing tools and methods. To an extent, the entire field of symbolic
dynamics is so important for the rest of dynamical systems because symbolic
dynamical systems serve as an almost  universal model. A different example,
coming from one-dimensional dynamics, is due to Milnor and Thurston who showed
in \cite{mt} that any piecewise-monotone interval map $f$ of positive entropy
can be modeled by a piecewise-monotone interval map of constant slope $h$
(i.e., $f$ is \emph{monotonically semiconjugate} to $h$). For us however the
most interesting case is that of modeling  complex polynomial dynamical systems
on their connected Julia sets by so-called \emph{topological polynomials} on
their (topological) locally connected Julia sets. Let us now describe more
precisely what we mean.

Consider a polynomial map $P:\C\to \C$; denote by $J_P$ the \emph{Julia set} of
$P$, by $K_P$ its \emph{filled-in Julia set}, and by $U_\iy(P)=\C\sm K_P$ its
\emph{basin of attraction of infinity}. In this paper we \emph{always assume
that $J_P$ is connected}. A very-well known fact from complex dynamics (see,
e.g., Theorem 9.5 from \cite{Milnor:2006fr}) shows that there exists a
conformal isomorphism $\Psi$ from the complement of the closure of the open
unit disk $\bbd$ onto $U_\iy(P)$ which conjugates $z^d|_{{\C\sm \ol{\D}}}$ and
$P|_{U_\iy(P)}$. The $\Psi$-image $R_\al$ of the radial line of angle $\al$ in
$\C\sm \ol{\D}$ is called an \emph{(external) ray}. By \cite{douahubb85}
external rays with rational arguments \emph{land} at repelling (parabolic)
periodic points or their preimages (i.e., the rays compactify onto such
points). If $J_P$ is locally connected, $\Psi$ extends to a continuous function
$\ol{\Psi}$ which semiconjugates $z^d|_{{\C\sm \D}}$ and $P|_{\ol{U_\iy(P)}}$.

External rays have been extensively used in complex dynamics since
the appearance of the papers by Douady and Hubbard
\cite{douahubb85}. The fundamental idea of using the system of
external rays in order to construct special combinatorial structures
in the disk (called \emph{laminations} or \emph{geometric
laminations}) is due to Thurston \cite{thur85} (see also the paper
\cite{dou93} by Douady). Laminations allow one to relate the
dynamics of $P$ and the dynamics of the map $z^d|_{\uc}$. Below we
describe a few approaches to laminations.

Set $\psi=\ol{\Psi}|_{\ucirc}$ and define an equivalence relation $\sim_P$ on
$\ucirc$ by $x \sim_P y$ if and only if $\psi(x)=\psi(y)$. The equivalence
$\sim_P$ is called the \emph{($d$-invariant) lamination (generated by $P$)}.
The quotient space $\ucirc/\sim_P=J_{\sim_P}$ is homeomorphic to $J_P$ and the
map $f_{\sim_P}:J_{\sim_P}\to J_{\sim_P}$ induced by $z^d|_{\uc}\equiv \si$ is
topologically conjugate to $P|_{J_{P}}$. The set $J_{\sim_P}$ is a topological
(combinatorial) model of $J_P$ and is often called the \emph{topological Julia
set}.  On the other hand, the induced map $f_{\sim_P}:J_\sim\to J_\sim$ serves
as a model for $P|_{J_{P}}$ and is often called a \emph{topological
polynomial}. Moreover, one can extend the conjugacy between $P|_{J_{P}}$ and
$f_{\sim_P}:J_{\sim_P}\to J_{\sim_P}$ (as the identity outside $J_P$) to the
conjugacy on the entire plane. In fact, equivalences $\sim$ similar to $\sim_P$
can be defined abstractly, in the absence of any polynomial. Then they are
called \emph{($d$-invariant) laminations} and still give rise to similarly
constructed \emph{topological Julia sets $J_\sim$} and \emph{topological
polynomials $f_\sim$}.

In his fundamental paper \cite{kiwi97} Kiwi extended this to \emph{all} polynomials $P$
with no irrational neutral periodic points (called \emph{CS-points}),
including polynomials with disconnected Julia sets. In the
case of a polynomial $P$ with connected Julia set he constructed a
$d$-invariant lamination $\sim$ on $\ucirc$ such that $P|_{J_{P}}$ is
semiconjugate to the induced map $f_\sim:J_{\sim}\to J_{\sim}$ by a monotone
map $m:J_P\to J_{\sim}$ (\emph{monotone} means a map with connected point
preimages). Kiwi also proved that for all periodic points $p\in J_P$ the set
$J_P$ is locally connected at $p$ and $m^{-1}\circ m(p)=\{p\}$.

However the results of \cite{kiwi97} do not apply if a polynomial admits a
CS-point. As an example consider the following. A \emph{Cremer fixed point} is
a neutral non-linearizable fixed point $p\in J$. A polynomial $P$ is said to be
\emph{basic uniCremer} if it has a Cremer fixed point and no repelling/parabolic periodic
point of $P$ is bi-accessible (a point is called \emph{bi-accessible} if at
least two rays land it). In this case the only monotone map of $J_P$ onto a
locally connected continuum is a collapse of $J_P$ to a point
\cite{bo06,bo06b,bo08a}, strongly contrasting with \cite{kiwi97}.

The aim of this paper is to suggest a different (compared to \cite{kiwi97})
approach to the problem of locally connected dynamical models for connected
polynomial Julia sets $J_P$. Our approach works for \emph{any} polynomial $P$,
regardless of whether $P$ has CS-points or not, and is based upon continuum
theory. Accordingly, Section~\ref{model} does not deal with dynamics at all. To
state its main result we need the following definitions. Let $A$ be a
continuum. Then an onto map $\ph:A\to Y_{\ph, A}$ is said to be a \emph{finest
(monotone) map (onto a locally connected continuum)} if for any other monotone
map $\psi:A\to L$ onto a locally connected continuum $L$ there exists a
monotone map $h:Y_{\ph, A}\to L$ such that $\psi=h\circ \ph$. Observe, that in
this situation the map $h$ is automatically monotone because for $x\in L$ we
have $h^{-1}(x)=\ph(\psi^{-1}(x))$.

In general, it is not clear if a finest map exists. Yet if it does, it gives a
finest locally connected model of $A$ up to a homeomorphism. Suppose that
$\ph:A\to B$, $\ph':A\to B'$ are two finest maps. Then it follows from the
definition that a map associating points $\ph(x)\in B$ and $\ph'(x) \in B'$
with $x$ running over the entire $A$ is a homeomorphism between $B$ and $B'$.
Hence all sets $Y_{\ph, A}$ are homeomorphic and all finest maps $\ph$ are the
same up to a homeomorphism. Thus from now on we may talk of \emph{\textbf{the}
finest model $Y_A=Y$ of $A$} and \emph{\textbf{the} finest map $\ph_A=\ph$ of
$A$ onto $Y$}. In what follows we \emph{always} use the just introduced
notation for the finest map and the finest model. Call a planar continuum
$Q\subset \C$ \emph{unshielded} if it coincides with the boundary of the
component of $\C\sm Q$ containing infinity. The following is the main result of
Subsection~\ref{unshield} of Section~\ref{model}.

\begin{thm}\label{main1}
  Let $Q$ be an unshielded continuum. Then there exist the finest map
  $\ph$ and the finest model $Y$ of $Q$. Moreover, $\ph$ can be extended
  to a map $\hc\to \hc$ which maps $\iy$ to $\iy$, in $\hc\sm Q$ collapses only
  those complementary domains to $Q$ whose boundaries are collapsed by $\ph$,
  and is a homeomorphism elsewhere in $\hc\sm Q$.
\end{thm}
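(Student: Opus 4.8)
The plan is to establish the statement in two stages: first, a purely topological construction of the finest map $\ph\colon Q\to Y$ from the theory of upper semicontinuous decompositions; then an extension of the resulting decomposition to the whole sphere, where R.~L.~Moore's decomposition theorem produces the homeomorphic sphere model.

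\emph{Stage 1 (existence of $\ph$ and $Y$).} Let $\mathcal{A}$ be the family of all monotone upper semicontinuous decompositions $\mathcal{D}$ of $Q$ whose quotient $Q/\mathcal{D}$ is locally connected, partially ordered by refinement; it is nonempty, as it contains $\{Q\}$. The lemma on which everything rests is that $\mathcal{A}$ is \emph{downward directed}: given $\mathcal{D}_1,\mathcal{D}_2\in\mathcal{A}$ with quotient maps $g_1,g_2$, a common refinement in $\mathcal{A}$ is obtained by declaring $x\sim x'$ when for every $\eps>0$ some subcontinuum of $Q$ contains both $x$ and $x'$ and has $g_1$- and $g_2$-image of diameter less than $\eps$; one checks that this is a closed equivalence relation, that each class is a decreasing intersection of continua (hence a continuum), and that the quotient is locally connected. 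Granting directedness, let $\sim$ be the intersection, inside $Q\times Q$, of the closed equivalence relations of all members of $\mathcal{A}$. Then $\sim$ is again a closed equivalence relation, so $Y:=Q/\sim$ is a metrizable continuum; each $\sim$-class is a downward-directed intersection of subcontinua of $Q$ and hence a continuum, so $\ph\colon Q\to Y$ is monotone; and $Y\cong\varprojlim_{\mathcal{D}\in\mathcal{A}}Q/\mathcal{D}$ with monotone bonding maps, so that the projections $Y\to Q/\mathcal{D}$ are monotone and $Y$ is locally connected (a small connected neighbourhood in a factor pulls back to a connected open neighbourhood in $Y$). Thus $\ph\in\mathcal{A}$ and no member of $\mathcal{A}$ properly refines it; since every monotone map of $Q$ onto a locally connected continuum induces a member of $\mathcal{A}$, this is exactly the defining property of the finest map, and by the uniqueness noted above we may write $Y$ for the finest model.

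\emph{Stage 2 (extension to $\hc$).} Let $\mathcal{D}_\ph$ be the decomposition of $Q$ into the fibres of $\ph$, and assume $Y$ is nondegenerate (otherwise the statement is trivial). Define a decomposition $\widehat{\mathcal D}$ of $\hc$ whose nondegenerate elements are the sets
\[
E_y\;:=\;\ph^{-1}(y)\;\cup\;\bigcup\bigl\{\,\cl(U):U\text{ a complementary domain of }Q\text{ with }\ph(\bd U)=\{y\}\,\bigr\},
\]
taken over all $y\in Y$ for which $E_y$ is nondegenerate, every remaining point of $\hc$ forming a singleton element. Each $E_y$ is a continuum: it is $\ph^{-1}(y)$ with the closures of certain complementary domains glued on along their boundaries, which lie in $\ph^{-1}(y)$. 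I would then check that $\widehat{\mathcal D}$ is a genuine partition, that $\hc\notin\widehat{\mathcal D}$, and that $\widehat{\mathcal D}$ is upper semicontinuous; this last point combines the upper semicontinuity of $\mathcal{D}_\ph$ with the elementary fact that a bounded complementary domain $U$ of $Q$ has the same diameter as $\bd U$, so an attached domain is never larger than the fibre that carries it.

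\emph{The main obstacle.} It remains to prove that no element of $\widehat{\mathcal D}$ separates $\hc$; this is the heart of the argument. Granting it, Moore's theorem (an upper semicontinuous decomposition of $\hc$ into continua, none of which separates $\hc$ or equals $\hc$, has quotient homeomorphic to $\hc$) yields a homeomorphism $\hc/\widehat{\mathcal D}\cong\hc$, and composing the quotient projection with it gives the desired map $\hc\to\hc$: it restricts to $\ph$ on $Q$; since $Q$ is unshielded, the complementary domain of $Q$ containing $\iy$ has boundary $Q$, which (as $Y$ is nondegenerate) is not collapsed, so $\{\iy\}$ is an element and the target point may be taken to be $\iy$; the domains it collapses are exactly the $U$ with $\ph(\bd U)$ a point; and on the saturated open remainder of $\hc\sm Q$ it is an injective quotient map, hence a homeomorphism onto an open subset of $\hc$. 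To prove the non-separation property one first observes, by a short point-set computation using $\bd U\subseteq K$ for the attached domains, that $\hc\sm E_y$ is precisely the union of those complementary domains of the subcontinuum $K:=\ph^{-1}(y)$ which meet $Q$; hence it suffices to show that $K$ has \emph{at most one} complementary domain meeting $Q$. This is the step I expect to be genuinely difficult, and it is where the hypothesis that $Q$ is unshielded is essential, controlling how the subcontinuum $K$ sits in $\hc$ relative to the complementary domain of $\iy$: were there two complementary domains $W_1,W_2$ of $K$ each meeting $Q$, then cutting $Q$ along $K$ and collapsing appropriately would furnish a monotone map of $Q$ onto a locally connected continuum that is not constant on $K$, contradicting that $\ph$ is the finest map. (Directedness of $\mathcal{A}$ in Stage~1 is also not entirely routine, but I expect this non-separation property to be the principal difficulty.)
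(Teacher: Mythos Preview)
Your Stage~2 correctly sets up the decomposition and the appeal to Moore's theorem, but you have misjudged where the difficulty lies. The ``main obstacle''---that a fibre $K=\ph^{-1}(y)$ has at most one complementary domain in $\hc$ meeting $Q$---is a three-line consequence of the unshielded hypothesis: since $K\subset Q$ and $U_\infty\cap Q=\emptyset$, the component $W_\infty$ of $\hc\setminus K$ containing $\infty$ contains $U_\infty$; hence $Q=\bd(U_\infty)\subset\ol{U_\infty}\subset\ol{W_\infty}=W_\infty\cup K$, and every other component of $\hc\setminus K$ is disjoint from $Q$. The paper (Lemma~\ref{lem:monotone_maps_extend}) simply takes the topological hull of each fibre as the decomposition element, which is non-separating by definition and coincides with your $E_y$ by exactly this computation; that the resulting family is a genuine partition is then immediate. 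Your suggested route via ``cutting $Q$ along $K$'' to contradict minimality of $\ph$ would also work but is not needed.

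The substantive gap is in Stage~1. Your argument there uses neither planarity nor the unshielded hypothesis; if valid it would show that \emph{every} metric continuum admits a finest monotone map onto a locally connected continuum. The step you gloss over---``one checks \ldots\ that the quotient is locally connected'' for your proposed common refinement of two members of $\mathcal A$---is exactly the nontrivial point, and you give no argument for it. The paper takes a genuinely different route that uses the unshielded structure essentially: via the uniformisation of $U_\infty$ one has impressions, and the key lemmas (Lemmas~\ref{lem:impressions_to_points} and~\ref{lem:collapse_impressions_lc}) show that a monotone surjection $m\colon Q\to Y$ has locally connected image \emph{if and only if} $m$ collapses every impression to a point. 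With this characterisation in hand, the finest locally connected quotient is simply the finest monotone upper semicontinuous decomposition of $Q$ into unions of impressions, whose existence (Lemma~\ref{lem:exists_partition}) is an elementary intersection-of-closed-equivalence-relations argument. This is more concrete---it identifies the fibres of $\ph$ explicitly, which is what the later dynamical applications require---and it bypasses the directedness question entirely.
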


It may happen that the finest model is a point (e.g., this is so if the
continuum is \emph{indecomposable}, i.e. cannot be represented as the union of
two non-trivial subcontinua). In Subsection~\ref{well-crit} of
Section~\ref{model} we establish a useful sufficient condition for this not to
be the case. In Section~\ref{finmodpol} we apply Theorem~\ref{main1} to a
polynomial $P$ with connected Julia set and prove the following theorem.

\begin{thm}\label{main2}
  Let $P$ be a complex polynomial with connected Julia set $J_P$. Then the
  finest map $\ph_{J_P}=\ph$ can be extended to a monotone map $\hat \ph:
  \hc\to \hc$ so that $\hat \ph|_{\hc\sm J_P}$ is one-to-one in $U_\iy(P)$ and
  in all Fatou domains whose boundaries are not collapsed to points by $\ph$
  and $\hat \ph$
  semiconjugates $P$ and a \emph{topological polynomial} $g:\hc\to \hc$. There
  is a \emph{finest lamination} $\sim_P$ such that $g|_{\ph(J_P)}$ is conjugate
  to $f_{\sim_P}|_{J_{\sim_P}}$.
\end{thm}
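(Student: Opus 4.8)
The plan is to deduce Theorem \ref{main2} from Theorem \ref{main1} by verifying that its hypotheses apply to $J_P$ and then promoting the purely topological ``finest map'' to a dynamical semiconjugacy. First I would observe that for a polynomial $P$ with connected Julia set, $J_P$ is an unshielded continuum: $\C\sm K_P=U_\iy(P)$ is connected (it is the basin of infinity), and $J_P=\bd K_P=\bd U_\iy(P)$ is precisely the boundary of the unbounded complementary component. Hence Theorem \ref{main1} applies and produces the finest map $\ph=\ph_{J_P}:J_P\to Y_{J_P}$ together with its extension $\hat\ph:\hc\to\hc$, which fixes $\iy$, collapses exactly those Fatou domains (complementary domains of $J_P$) whose boundaries $\ph$ collapses, and is a homeomorphism on the rest of $\hc\sm J_P$. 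In particular $\hat\ph$ is injective on $U_\iy(P)$ (whose boundary $J_P$ is certainly not collapsed to a point, since $\ph$ restricted there would then factor a collapse, contradicting that $\ph$ is onto $Y_{J_P}$ unless $Y_{J_P}$ is degenerate — and even then the claim about $U_\iy$ is handled by the structure of $\hat\ph$), and injective on every bounded Fatou domain with non-collapsed boundary. This gives the first assertion of the theorem.

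Next I would produce the topological polynomial $g$. The key point is the \emph{naturality} of the finest map: since $P:\C\to\C$ is a branched cover and $P(J_P)=J_P=P^{-1}(J_P)$, the composition $\ph\circ P|_{J_P}:J_P\to Y_{J_P}$ is a monotone-onto-locally-connected map (it is the monotone map $\ph$ postcomposed with the degree-$d$ map $P$, and one checks that point preimages are connected because $P$ is open and light on $J_P$ and $\ph$ has connected fibers — more precisely one first writes $P|_{J_P}=\ph_1\circ\mu$ via the finest map $\ph_1$ of $J_P$ for the target copy, but by uniqueness $\ph_1=\ph$ up to homeomorphism). By the universal property of $\ph$ there is a monotone map $h:Y_{J_P}\to Y_{J_P}$ with $\ph\circ P|_{J_P}=h\circ\ph$; set $g|_{\ph(J_P)}=h$. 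One then extends $g$ over the collapsed quotient $\hc\sm\ph(J_P)$ using $\hat\ph\circ P=g\circ\hat\ph$ as the defining relation, which is well-defined precisely because $\hat\ph$ collapses a Fatou domain iff it collapses its boundary, and $P$ permutes Fatou domains respecting boundaries; the resulting $g:\hc\to\hc$ is a branched cover of degree $d$ fixing $\iy$, i.e. a topological polynomial, and $\hat\ph$ semiconjugates $P$ to $g$ by construction.

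Finally, to obtain the \emph{finest lamination} $\sim_P$, I would pull $\ph$ back through the Douady--Hubbard parametrization. Let $\psi=\ol\Psi|_{\ucirc}$ when $J_P$ is locally connected; in general $\ol\Psi$ need not exist, so instead one uses $\hat\ph\circ\Psi:\C\sm\ol\D\to\hc$, which (being injective on $U_\iy$ by the above) is a conformal isomorphism onto $\hc\sm\ph(J_P)$ conjugating $z^d$ to $g$; since $\ph(J_P)=Y_{J_P}$ is locally connected, this map extends continuously to $\ucirc$, giving $\bar\psi_\ph:\ucirc\to Y_{J_P}$ semiconjugating $\si=z^d$ to $g|_{Y_{J_P}}$. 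Define $\sim_P$ on $\ucirc$ by $x\sim_P y\iff\bar\psi_\ph(x)=\bar\psi_\ph(y)$; this is a $d$-invariant lamination (invariance from $\bar\psi_\ph\circ\si=g\circ\bar\psi_\ph$ and surjectivity of $\si$), $J_{\sim_P}=\ucirc/\sim_P$ is homeomorphic to $Y_{J_P}$, and $f_{\sim_P}$ is conjugate to $g|_{\ph(J_P)}$. That $\sim_P$ is the \emph{finest} such lamination — any lamination $\sim'$ with $P|_{J_P}$ semiconjugate to $f_{\sim'}$ has $\sim'\subseteq\sim_P$, equivalently the semiconjugacy factors through $\ph$ — follows from the universal property of $\ph$: such a semiconjugacy is a monotone map $J_P\to J_{\sim'}$ onto a locally connected continuum, hence factors as $h'\circ\ph$, which on the circle level says $\sim'$ refines $\sim_P$.

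The main obstacle I anticipate is the careful construction and well-definedness of $g$ (and its extension to $\hc$): one must verify that $P$ descends through $\ph$, which requires knowing that $\ph\circ P|_{J_P}$ is genuinely monotone onto a locally connected continuum (so that the universal property applies), and that the extension over collapsed Fatou domains is consistent with the branched-cover structure. The compatibility ``$\ph$ collapses a Fatou domain iff it collapses its boundary,'' supplied by Theorem \ref{main1}, together with the fact that $P$ maps Fatou domains onto Fatou domains respecting boundaries, is exactly what makes this go through; checking the degree and branched-cover axioms for $g$ is then a matter of transporting the local structure of $P$ through the homeomorphism parts of $\hat\ph$.
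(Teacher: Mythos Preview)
Your construction of $g$ has a genuine gap. You assert that $\ph\circ P|_{J_P}$ is monotone so that the universal property of $\ph$ yields the factoring map. This is false: the fiber $(\ph\circ P)^{-1}(y)=P^{-1}(\ph^{-1}(y))$ is the $P$-preimage of a K-set, and since $P$ is a degree-$d$ branched cover this preimage typically has several components (e.g.\ for $P(z)=z^2$, $J_P=\uc$, $\ph=\mathrm{id}$, one has $(\ph\circ P)^{-1}(1)=\{1,-1\}$). Openness and lightness of $P$ do not make preimages of continua connected, and your parenthetical about writing $P|_{J_P}=\ph_1\circ\mu$ goes nowhere since $P|_{J_P}$ is generically $d$-to-one and is not a monotone map onto a locally connected continuum.

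What is actually needed is only that K-sets map \emph{into} K-sets, i.e.\ that $\ph\circ P$ is constant on each $\ph$-fiber; this suffices for $g$ to be well-defined without $\ph\circ P$ being monotone. The paper proves this (Theorem~\ref{thm:ph_dynamic}) not via the universal property but by a transfinite construction of $\mathcal D$: starting from the relation ``$x$ and $y$ lie in a finite connected chain of impressions'' and repeatedly closing under limits of classes, one verifies by transfinite induction---using the B\"ottcher relation $P(\imp(\al))=\imp(\si_d(\al))$ at the base and openness of $P$ at successor stages---that $\sim_\al$-classes map onto $\sim_\al$-classes for every ordinal $\al$. The universal property alone is not enough; one must exploit the concrete description of $\mathcal D$ as the finest upper semi-continuous partition into unions of impressions together with the fact that $P$ permutes impressions. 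A secondary issue: $\hat\ph\circ\Psi$ is only a homeomorphism on $U_\iy$, not a conformal map, so Carath\'eodory does not give you its continuous extension to $\uc$; the paper instead defines $\Phi=\ph\circ\imp$ directly (well-defined and continuous by Lemma~\ref{lem:collapse_impressions_lc}, since $\ph$ collapses impressions) and checks $\Phi\circ\si_d=g\circ\Phi$ from the B\"ottcher relation.
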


In particular, $\ph_{J_P}$ semiconjugates the dynamics on $J_P$, so we have the
following diagram which commutes. (Here $\Phi$ is the quotient map corresponding
to the lamination $\sim_P$.)

\[\label{eq:commdiag}
\dgARROWLENGTH=5em
\dgARROWPARTS=8
\begin{diagram}
\node{J_P} \arrow{e,t}{P|_{J_P}} \arrow{se,t}{\varphi}
    \node{J_P} \arrow{se,b,1}{\varphi}
    \node{\mathbb S^1} \arrow{e,t}{\sigma_d} \arrow{sw,b,1}{\Phi}
    \node{\mathbb S^1} \arrow{sw,t}{\Phi}\\
\node[2]{J_\sim} \arrow{e,b}{g|_{J_\sim}} \node{J_\sim}
\end{diagram}
\]

Finally, in Section~\ref{criter} we suggest a criterion for the fact that the
finest model is non-degenerate. Given a set of angles $A\subset \uc$ denote by
$\imp(A)$ the union of impressions of angles in $A$. Also, call a set
\emph{wandering} if all its images under a specified map are pairwise disjoint.
Finally, call an attracting or parabolic Fatou domain of a polynomial
\emph{parattracting}. Essentially, the criterion is that the finest model is
non-degenerate if and only if one of the following properties holds:

\begin{enumerate}

\item there are infinitely many bi-accessible $P$-periodic points;

\item $J_P$ has a parattracting Fatou domain;

\item $P$ admits a \emph{Siegel configuration} defined later in
Definition~\ref{def-siegel} --- basically, it means that there are several
collections of angles $A_1, \dots, A_m$ such that for all $i$ the eventual
$\si_d$-image of $A_i$ is a point and the sets $\imp(A_i)$ are wandering
continua such that on the closures of their orbits the map is monotonically
semiconjugate to an irrational rotation of the circle.

\end{enumerate}

If $P$ does not have Siegel or Cremer periodic points we deduce from
our results an independent alternative proof of Kiwi's results
\cite{kiwi97}. We also obtain a few corollaries; to state them we
need the following terminology. For notions which are not defined
here see Subsection~\ref{unshield}. By a \emph{(pre)periodic point}
we mean a point with finite orbit and by a \emph{preperiodic} point
we mean a non-periodic point with finite orbit (similarly we define
preperiodic and (pre)periodic sets as well as \emph{(pre)critical}
and \emph{precritical} points). A set $A$ is \emph{(pre)critical} if
there exists $n$ such that $P^n|_A$ is not one-to-one and
\emph{non-(pre)critical} otherwise. Call $K$ a \emph{ray-continuum}
if for some collection of angles, $K$ is contained in the union of
impressions of their external rays and contains the union of
principal sets of their external rays; the cardinality of the set of
rays whose principal sets are contained in $K$ is said to be the
\emph{valence} of $K$.

We show that if there is a wandering non-(pre)critical ray-continuum $K\subset
J_P$ of valence greater than $1$ then there are infinitely many repelling
bi-accessible periodic points and the finest model is non-degenerate. In
particular, these conclusions hold if there exists a non-(pre)periodic
non-(pre)critical bi-accessible point of $J_P$. We also rely upon the finest
model to study for what (pre)periodic points $x$ we can guarantee that the
Julia set $J_P$ is locally connected at $x$; to this end we apply a recent
result \cite{bfmot10} about the degeneracy of certain invariant continua.

\noindent \textbf{Acknowledgments.} We would like to thank the
referee for useful remarks and comments.

\section{Circle laminations}\label{lamprel}

Consider an equivalence relation $\sim$ on the unit circle $\ucirc$. Classes of
equivalence of $\sim$ will be called \emph{($\sim$-)classes} and will be
denoted by boldface letters. A $\sim$-class consisting of two points is called
a \emph{leaf}; a class consisting of at least three points is called a
\emph{gap} (this is more restrictive than Thurston's definition in
\cite{thur85}; for the moment we follow \cite{bl02} in our presentation).  Fix
an integer $d>1$. Then $\sim$ is said to be a \emph{($d$-)invariant lamination}
if:

\noindent (E1) $\sim$ is \emph{closed}: the graph of $\sim$ is a closed set in
$\ucirc \times \ucirc$;

\noindent (E2) $\sim$ defines a \emph{lamination}, i.e., it is \emph{unlinked}:
if $\g_1$ and $\g_2$ are distinct $\sim$-classes, then their convex hulls
$\ch(\g_1), \ch(\g_2)$ in the unit disk $\bbd$ are disjoint,

\noindent (D1) $\sim$ is \emph{forward invariant}: for a class $\g$, the set
$\si_d(\g)$ is a class too

\noindent which implies that

\noindent (D2) $\sim$ is \emph{backward invariant}: for a class $\g$, its
preimage $\si_d^{-1}(\g)=\{x\in \ucirc: \si_d(x)\in \g\}$ is a union of
classes;

\noindent (D3) for any gap $\g$, the map $\si_d|_{\g}: \g\to \si_d(\g)$ is a
\emph{covering map with positive orientation}, i.e., for every connected
component $(s, t)$ of $\ucirc\setminus \g$ the arc $(\si_d(s), \si_d(t))$ is a
connected component of $\ucirc\setminus \si_d(\g)$.

The lamination in which all points of $\uc$ are equivalent is said to be
\emph{degenerate}. It is easy to see that if a forward invariant lamination
$\sim$ has a class with non-empty interior then $\sim$ is degenerate. Hence
equivalence classes of any non-degenerate forward invariant lamination are
totally disconnected.

Call a class $\g$ \emph{critical} if $\si_d|_{\g}: \g\to \si(\g)$ is not
one-to-one, \emph{(pre)critical} if $\si_d^j(\g)$ is critical for some $j\ge
0$, and \emph{(pre)periodic} if $\si^i_d(\g)=\si^j_d(\g)$ for some $0\le i<j$.
Let $p: \ucirc\to J_\sim=\ucirc/\sim$ be the quotient map of
$\ucirc$ onto its quotient space $J_\sim$, let $f_\sim:J_\sim \to J_\sim$ be
the  map induced by $\sigma_d$. We call $J_\sim$ a \emph{topological Julia set}
and the induced map $f_\sim$ a \emph{topological polynomial}. The set $J_\sim$
can be canonically embedded in $\C$ and then the map $p$\, can be extended to
the map $\hp: \C\to\C$ \cite{dou93}. Radial lines from $\uc$ are then mapped by
$\hp$ onto \emph{topological external rays} of the topological Julia set
$J_\sim$ on which the map $z\mapsto z^d$ induces a well-defined extension of
$f_\sim$ onto the union of $J_\sim$ and the component of $\C\sm J_\sim$
containing infinity.

We need the following theorem \cite{kiwi02}. Given a closed set
$G'\subset \uc$ let the ``polygon'' $G=\ch(G')\subset\ol{\disk}$  be
its convex hull, i.e., the smallest convex set in the disk
containing $G'$. In this case we say that $G'$ is the \emph{basis}
of $G$. In this situation let us call $G$ (and $G'$) a
\emph{wandering polygon} if the\, sets $G=\ch(G'), \ch(\si(G')),
\ch(\si^2(G')), \dots$ are all pairwise disjoint (and so the sets
$G', \si(G'), \dots$ are pairwise unlinked, see (E2) above). In
particular, if a gap $\g$ is a wandering polygon then $\g$ is not
(pre)periodic and we will call it a \emph{wandering gap}.  Also,
call $G$ (and $G'$) \emph{non-(pre)critical} if the cardinality
$|\si^n(G')|$ of $\si^n(G')$ equals the cardinality $|G'|$ of $G'$
for all $n$, and \emph{(pre)critical} otherwise.

\begin{thm}\label{kiwi-wan} If \, $G$ is a wandering polygon then
$|G'|\le 2^d$, and if\, $G$ is not (pre)critical then $|G'|\le d$.
\end{thm}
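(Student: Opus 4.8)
The plan is to iterate and run a length-accounting argument on the circle. Write $A_n=\si_d^n(G')$ and $P_n=\ch(A_n)$; the wandering hypothesis says that the convex sets $P_0,P_1,P_2,\dots\subset\ol{\disk}$ are pairwise disjoint. In the non-(pre)critical case $|A_n|=|G'|=:k$ for every $n$, while in general $|A_n|$ is non-increasing, hence eventually equal to some $k'\le k$, and from that moment on the forward-image polygon is itself a non-(pre)critical wandering polygon. A first key observation, which makes $\si_d$ on the polygons tractable, is that since $P_{n+1}$ is convex, has its vertices on $\uc$, and is disjoint from $P_n$, it lies in a single complementary component of $P_n$ in $\ol{\disk}$; thus the whole set $A_{n+1}=\si_d(A_n)$ is squeezed into one ``hole'' arc of $P_n$, even though $A_n$ itself is spread around $\uc$.

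The quantitative input is the elementary chord estimate: for a chord $\ell$ of $\disk$ with endpoints on $\uc$, letting $|\ell|\in[0,\tfrac12]$ be the length of the shorter subtended arc (circle normalized to length $1$), one has $|\si_d(\ell)|\le d\,|\ell|$ always, with equality whenever $|\ell|\le\tfrac1{2d}$ (check the cases $d|\ell|\le\tfrac12$, $\tfrac12<d|\ell|\le1$, $d|\ell|>1$). For the $k$-gon $P_n$ the $k$ complementary arcs $h_1^{(n)},\dots,h_k^{(n)}$ sum to $1$, at most one exceeds $\tfrac12$, and the edge bounding the $i$-th hole has chord length $\min(h_i^{(n)},1-h_i^{(n)})$; hence $S_n:=\sum_{e\ \text{edge of}\ P_n}|e|$ is either $1$ (if no hole exceeds $\tfrac12$) or $2(1-h_{\max}^{(n)})<1$, so in all cases $S_n\le1$. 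The heart of the argument is then to compare $S_{n+1}$ with $S_n$: using that $A_{n+1}$ lies in one hole of $P_n$ and tracking the cyclic order of $A_n$ under the orientation-preserving degree-$d$ map $\si_d$, one finds that the edges of $P_{n+1}$ arise from the $\si_d$-images of edges of $P_n$ by subdividing only at those holes of $P_n$ long enough to wrap — and there are at most $d-1$ of these, bounded by the number of critical points of $\si_d$. Away from those holes, $|e|\le\tfrac1{2d}$ forces $|\si_d(e)|=d|e|$, so edge-length is reproduced with a factor $d$; bookkeeping this against the bound $S_n\le1$ for all $n$ forces the edge count to be small — cleanly $k\le d$ in the non-(pre)critical case, and, allowing each of the $\le d-1$ wrapping holes to (in effect) duplicate a block of edges, $|G'|\le 2^d$ in general.

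The main obstacle, and the step that needs real care, is precisely this comparison of $S_{n+1}$ to $S_n$. Under $\si_d$ the cyclic order of the $k$ points of $A_n$ need not be preserved — they wind $d$ times around $\uc$ — so the chords $\si_d(\text{edges of }P_n)$ may cross one another and need not be the edges of $P_{n+1}$; one must show that enough total edge-length nonetheless survives, using that a ``long'' edge of $P_{n+1}$ can be produced only from a hole of $P_n$ that wraps around a critical point, of which there are at most $d-1$. Handling the collapse at (pre)critical configurations (where $|A_n|$ actually drops) is the other delicate point, and it is exactly there that one loses the factor $2^d/d$ relative to the clean non-(pre)critical bound; a careful version of this bookkeeping, rather than any deep new idea, is what completes the proof.
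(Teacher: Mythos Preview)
The paper does not prove Theorem~\ref{kiwi-wan}; it is quoted from Kiwi \cite{kiwi02} without argument, so there is no in-paper proof to compare your proposal against.

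Your outline is in the spirit of the known arguments (Kiwi's, and the related arc-length inequalities of Blokh--Levin \cite{bl02}): the wandering hypothesis forces each $A_{n+1}$ into a single complementary arc of $A_n$, and one plays this constraint off against the factor-$d$ expansion of $\si_d$ on short arcs. That framework is correct. But the step you yourself flag as ``the main obstacle'' is a genuine gap, not merely deferred bookkeeping. The claim that ``the edges of $P_{n+1}$ arise from the $\si_d$-images of edges of $P_n$ by subdividing only at those holes long enough to wrap'' is not right as stated: because $\si_d$ need not preserve the cyclic order of $A_n$, the chords $\si_d(e)$ for edges $e$ of $P_n$ may cross one another and are in general \emph{not} edges of $P_{n+1}$, so there is no direct inequality between $S_{n+1}$ and $S_n$ of the kind you suggest. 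The quantities that are actually controlled in \cite{kiwi02} and \cite{bl02} are built from the \emph{holes} (complementary arcs) of $A_n$ and how many of them can contain $\si_d$-preimages of points of $A_{n+1}$; the bound by $d-1$ enters via the critical points of $\si_d$ on $\uc$, not through an edge-sum comparison. Your plan can be turned into a proof, but only after replacing the $S_n$ heuristic by that hole-counting argument --- which is exactly the substantive content of Kiwi's theorem.
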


Consider a simple closed curve $S\subset J_\sim$. Call the bounded component
$U(S)=U$ of $\C\sm J_\sim$ enclosed by $S$ a \emph{Fatou domain}. By
Theorem~\ref{kiwi-wan} $S$ is (pre)periodic and for some minimal $k$ the set
$f^k_\sim(S)=Q$ is periodic of some minimal period $m$ in the sense that
pairwise intersections among sets $Q, \ldots, f_\sim^{m-1}(Q)$ are at most
finite. We cannot completely exclude such intersections; e.g., in the case of a
parabolic fixed point $a$ in a polynomial locally connected Julia set, there
will be several Fatou domains ``revolving'' around $a$ and containing $a$ in
their boundaries. However, it is easy to see that $U(Q), \dots,
U(f^{m-1}_\sim(Q))$ are pairwise disjoint.

\begin{lem}[\cite{bl02}, Lemma 2.4]\label{two-dyns}
There are only two types of dynamics of $f^m_\sim|_S$.

\begin{enumerate}

\item The map $f^m_\sim|_S$ can be conjugate to an
appropriate irrational rotation.

\item The map $f^m_\sim|_S$ can be conjugate to
$z^k|_{\uc}$ with the appropriate $k>1$.

\end{enumerate}

\end{lem}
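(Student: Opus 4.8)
The plan is to analyze the dynamics of $f^m_\sim|_S$ on the simple closed curve $S$ by using the topological external rays that land on $S$ together with the covering property (D3). Since $S$ is a simple closed curve carrying a degree-$d^m$ map induced by $\sigma_{d^m}$ (through $\hp$ and the extension of $f_\sim$ to the unbounded complementary component), I would first set up a semiconjugacy between $f^m_\sim|_S$ and a circle map. Concretely, parametrize $S$ by the circle $\uc$ via the Carath\'eodory-type extension to the prime-end circle of the Fatou domain $U(S)$; under this parametrization the induced map on the prime-end circle is a degree-$k$ covering of the circle for some $k\ge 1$. The key dichotomy then comes down to whether $k=1$ or $k>1$.

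First I would treat the case $k\ge 2$. Here the prime-end circle map is a degree-$k$ covering map of $\uc$ with all periodic points of the induced map on $S$ corresponding (via the landing of (pre)periodic external rays, using Theorem~\ref{kiwi-wan} to rule out wandering behavior on $S$) to genuine periodic points of $f^m_\sim$ in $J_\sim$. One shows that $f^m_\sim|_S$ has no proper invariant subcontinuum other than points: any such subcontinuum would pull back to a non-degenerate arc on the prime-end circle invariant under a degree-$k\ge2$ map, which is impossible. Hence the prime-end parametrization is actually a conjugacy (it is monotone and has degenerate fibers), giving case (2). The main work here is verifying that the parametrization has no nondegenerate fibers, which uses that collapsing would force a gap of $\sim$ with the wrong combinatorial/invariance properties, contradicting (E1)--(D3).

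Next, for $k=1$, the prime-end circle map is a degree-$1$ (i.e., orientation-preserving) circle homeomorphism, and I would invoke Poincar\'e's classification. If its rotation number is rational, there is a periodic orbit; tracing this back through $\hp$ produces a finite $\sim$-class which is (pre)periodic, and a pullback argument shows $S$ would then contain a proper invariant subcontinuum — but $S$ is a simple closed curve, so this subcontinuum is an arc, and iterating the degree-$1$ map on the endpoints forces $S$ itself to collapse combinatorially, contradicting nondegeneracy of the lamination (E1). Hence the rotation number is irrational. Then, because $J_\sim$ is locally connected, the circle homeomorphism $f^m_\sim|_S$ has no wandering intervals in the prime-end circle (a wandering interval would correspond to a nondegenerate fiber of the prime-end parametrization, again forcing a forbidden gap), so by Poincar\'e it is conjugate to the irrational rotation, giving case (1).

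The main obstacle I expect is the bookkeeping connecting the abstract lamination data to the prime-end dynamics of the Fatou domain $U(S)$: one must carefully identify which external rays land on $S$, verify their (pre)periodicity via Theorem~\ref{kiwi-wan}, and show that the prime-end parametrization of $S$ is either a conjugacy or a degree-$1$ homeomorphism with no wandering intervals. Everything else (Poincar\'e classification, the degree-$k\ge2$ rigidity) is standard once this dictionary is in place. Since the statement is quoted as \cite{bl02}, Lemma~2.4, the expected proof is essentially a reference to that lemma with a brief indication of the above argument.
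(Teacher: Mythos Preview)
The paper gives no proof of this lemma; it is simply quoted from \cite{bl02} as Lemma~2.4, exactly as you anticipated in your final paragraph. So there is nothing to compare against directly.

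That said, the paper later proves a closely related result on the geometric-lamination side (Lemma~\ref{good-gap}(3)), and it is worth contrasting your sketch with that argument, since yours has a genuine gap in the $k=1$ case. Your proposed contradiction for rational rotation number is not sound: a periodic point on $S$ does not by itself produce a ``proper invariant subcontinuum'' beyond a single point, and the claim that ``iterating the degree-$1$ map on the endpoints forces $S$ itself to collapse combinatorially'' is not a valid deduction from (E1)--(D3). The argument the paper uses (in Lemma~\ref{good-gap}(3)) goes differently: one first rules out \emph{wandering} and \emph{periodic intervals} for the induced circle map using the local expansion of $\si_d$ (Lemma~\ref{lem:expand}) together with the fact that the basis of the gap is uncountable; then, in the degree-$1$ case, one shows there are no periodic points at all by observing that a fixed point $x$ together with any non-fixed point $y$ would trap a periodic interval between them, contradicting what was just established. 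Only after that does Poincar\'e's theorem (or \cite{Melo:1993nx}) yield the irrational rotation. Your $k\ge 2$ case is essentially correct in outline and matches the paper's approach: no wandering or periodic intervals for a degree-$k$ covering forces conjugacy to $\si_k$.
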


In the case (1) we call $U$ a \emph{(periodic) Siegel domain} and in the case
(2) we call $U$ a \emph{(periodic) parattracting domain}.

The map $f_\sim$, which above was extended onto the unbounded component of
$\C\sm J_\sim$, can actually be extended onto the entire $J_\sim$-plane as a
branched covering map. Indeed, it is enough to extend $f_\sim$ appropriately
onto any bounded component $V$ of $\C\sm J_\sim$. This can be done by noticing
the degree $k$ of $f_\sim|_{\bd(V)}$ and extending $f_\sim$ onto $V$ as a
branched covering map of degree $k$ so that the extension of $f_\sim$ remains a
branched covering map of degree $d$ and behaves, from the standpoint of
topological dynamics, just like a complex polynomial. In particular, if $S$ is
a Siegel domain of period $m$, we may assume that $U(S)$ is foliated by Jordan
curves on which $f^m_\sim$ acts as the rotation by the same rotation number as
that of $f^m_\sim$. On the other hand, if $k>1$ then $f^m_\sim|_{U(S)}$ should
have one attracting (in the topological sense) fixed point to which all points
inside $U(S)$ are attracted under $f^m_\sim$. Any such extension of $f_\sim$
onto $\C$ will still be called a \emph{topological polynomial} and denoted
$f_\sim$. In Section~\ref{finmodpol} we relate $P$ and the appropriate
extension of $f_\sim$ much more precisely, however here it suffices to
guarantee the listed properties.

\begin{thm}\cite{bl02}\label{nowanco} The map $f_\sim|_{J_\sim}$ has no
wandering continua.
\end{thm}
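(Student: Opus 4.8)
The plan is to argue by contradiction. Suppose $C\subset J_\sim$ is a non-degenerate subcontinuum with $f_\sim^i(C)\cap f_\sim^j(C)=\0$ whenever $i\ne j$, and derive an impossible configuration. First I would transfer everything to the circle: the set $A=p^{-1}(C)\subset\uc$ is non-empty, closed, and a union of $\sim$-classes, and $A\ne\uc$ (otherwise $C=J_\sim$, which is $f_\sim$-invariant, not wandering). Since $p(A)=C$ is a non-degenerate, hence infinite, continuum, $A$ is infinite; and since the $\sim$-classes of a non-degenerate lamination are totally disconnected while $C$ is connected, $A$ is not contained in a single class. From $p\circ\si_d=f_\sim\circ p$ we get $\si_d^n(A)\subseteq p^{-1}(f_\sim^n(C))$, so the wandering hypothesis on $C$ forces $A,\si_d(A),\si_d^2(A),\dots$ to be pairwise disjoint. (The same applies with $C$ replaced by any forward image, which is again a non-degenerate continuum because $f_\sim$ is finite-to-one.)

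The crux is to upgrade ``pairwise disjoint'' to ``pairwise unlinked'', i.e. to show that, after replacing $C$ by a suitable forward image, the convex hulls $\ch(\si_d^n(A))$ are pairwise disjoint, so that $A$ becomes the basis of a \emph{wandering polygon} in the sense preceding Theorem~\ref{kiwi-wan}. Two disjoint unions of $\sim$-classes can be linked only when some iterate $f_\sim^n(C)$ ``wraps around'' a bounded (Fatou) complementary domain of $J_\sim$ or around a cut point of $J_\sim$, and one must rule out such a linking pattern persisting along the whole orbit. Here I would use Theorem~\ref{two-dyns}: each Fatou domain of $J_\sim$ is (pre)periodic and has boundary dynamics conjugate to an irrational rotation or to $z^k$ with $k>1$, and neither supports a wandering subcontinuum; combined with the structure of the nested ``connecting trees'' $T_n$ (the minimal subcontinuum of $J_\sim$ containing $C\cup f_\sim(C)\cup\dots\cup f_\sim^n(C)$, so that $T_n\subset T_{n+1}$ and $T_{n+1}$ is $T_n$ together with a connecting continuum joining it to $f_\sim^{n+1}(C)$), this forces a genuine wandering continuum to create infinitely many distinct ``branching events'' along $(T_n)$, each chargeable to a critical point of the branched-covering extension of $f_\sim$. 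Since $f_\sim$ has only $d-1$ critical points counted with multiplicity, this is impossible. Carrying out this ``growing trees'' argument of \cite{bl02} --- or equivalently its underlying inequality --- is the main obstacle; Theorem~\ref{kiwi-wan} itself may be seen as the finite-polygon shadow of the same mechanism.

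Once $A$ (or some forward image of it) is known to be the basis of a wandering polygon, the proof closes at once: Theorem~\ref{kiwi-wan} gives $|A|\le 2^d<\iy$, contradicting the fact that $A$ is infinite. Hence no non-degenerate wandering subcontinuum of $J_\sim$ exists, i.e.\ $f_\sim|_{J_\sim}$ has no wandering continua.
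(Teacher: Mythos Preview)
The paper does not prove Theorem~\ref{nowanco}; it is quoted from \cite{bl02} without argument, so there is no ``paper's own proof'' to compare against. What follows is an assessment of your proposal on its own merits.

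Your reduction to Theorem~\ref{kiwi-wan} is sound, and the closing step is exactly right: once $A=p^{-1}(C)$ is a wandering polygon, $|A|\le 2^d$ contradicts the fact that $A$ is infinite. The place where you hesitate --- upgrading ``pairwise disjoint'' to ``pairwise unlinked'' --- is in fact elementary and does \emph{not} require the growing-trees machinery of \cite{bl02}. The point is purely plane-topological: if $C',D'\subset J_\sim$ are disjoint subcontinua of the (planar, locally connected) topological Julia set, then $p^{-1}(C')$ and $p^{-1}(D')$ are automatically unlinked. Indeed, suppose $a,c\in p^{-1}(C')$ and $b,d\in p^{-1}(D')$ alternate on $\uc$. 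The topological external rays $R_a,R_c$ land in $C'$ and $R_b,R_d$ land in $D'$, so $K_1=R_a\cup C'\cup R_c$ and $K_2=R_b\cup D'\cup R_d$ are disjoint continua in $\C$. Intersecting with a large round disk $\ol{B_R}\supset J_\sim$, one obtains two disjoint continua in $\ol{B_R}$ joining the four boundary points $R_a\cap S_R,\dots,R_d\cap S_R$, which inherit the alternating cyclic order. A continuum in a closed disk joining two boundary points separates the two complementary boundary arcs; hence $K_1\cap\ol{B_R}$ separates $R_b\cap S_R$ from $R_d\cap S_R$, contradicting the connectedness of $K_2\cap\ol{B_R}$. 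Thus no linking occurs, and since the $\si_d^n(A)\subset p^{-1}(f_\sim^n(C))$ are closed, pairwise disjoint and pairwise unlinked, their convex hulls are pairwise disjoint. This makes $A$ a wandering polygon and finishes your argument.

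So your outline is correct, and with the unlinking step filled in as above it becomes a short self-contained proof that bypasses the growing-trees inequality of \cite{bl02} entirely, using only Theorem~\ref{kiwi-wan} (whose proof in \cite{kiwi02} is independent). Your worry that the unlinking ``is the main obstacle'' and must be charged to critical points via the connecting trees $T_n$ is unfounded; that machinery is what \cite{bl02} uses to prove a stronger inequality, but for the bare statement of Theorem~\ref{nowanco} the reduction to Kiwi's bound suffices.
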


The collection of chords in the boundaries of the convex hulls of
all equivalence classes of $\sim$ in $\disk$ is called a
\emph{($d$-invariant) geometric lamination (of the unit disk)}.
Denote the geometric lamination obtained from the lamination $\sim$
by $\lam_\sim$. In fact, geometric laminations - in what follows
\emph{geo-laminations} - can also be defined abstractly (as was
originally done by Thurston \cite{thur85}). A \emph{geometric
prelamination} $\lam$ is a collection of chords in the unit disk
called \emph{(geometric) leaves} and such that any two leaves meet
in at most a common endpoint. If in addition the \emph{union
$|\lam|$} of all the leaves of $\lam$ is closed, $\lam$ is said to
be a \emph{geometric lamination}. The closure of a component of
$\disk \sm |\lam|$ is called a \emph{(geometric) gap}. A \emph{cell
of a geometric prelamination $\lam$} is either a gap of $\lam$ or a
leaf of $\lam$ which is not on the boundary of any gap of $\lam$. If
it is clear that we talk about a geo-lamination we will use
\emph{leaves} and \emph{gaps}. Gaps of a lamination understood as an
equivalence class of an equivalence relation are normally denoted by
a small boldface letter (such as $\g$) while geometric gaps of
geometric laminations are normally denoted by capital letters (such
as $G$).

Denote a leaf $\ell=ab\in\lam$ by its two endpoints. Given a geometric gap
(leaf) $G$, set $G'=G\cap \uc$ and call $G'$ the \emph{basis of $G$}. Clearly
the boundary of each geometric gap is a simple closed curve $S$ consisting of
leaves of $\lam$ and points of $\uc$. As in \cite{thur85} one can define the
linear extension $\si^*$ of $\si$ over the leaves of $\lam$ which can then be
extended over the entire unit disk (using, e.g., the barycenters) so that not
only is $\si^*(ab)=\si(ab)$ the chord (or point) in $\ol{\disk}$ with endpoints
$\si(a)$ and $\si(b)$ but also for any geometric gap $G$ we have  that
$\si^*(G)$ is the convex hull of the set $\si(G')$. Even though we denote this
extension of $\si$ by $\si^*$, sometimes (if it does not cause ambiguity) we
use the notation $\si$ for $\si^*$ (e.g., when we apply $\si^*$ to leaves).

A geometric prelamination $\lam$ is \emph{$d$-invariant} if

\begin{enumerate}

\item  (forward leaf invariance) for each $\ell=ab\in\lam$, either
$\si(\ell)\in\lam$ or $\si(a)=\si(b)$,

\item  (backward leaf invariance) for each leaf $\ell\in\lam$ there
exist $d$ {\bf disjoint} leaves $\ell_1,\dots,\ell_d\in\lam$ such that for each
$i$, $\si(\ell_i)=\ell$,

\item   (gap invariance) for each gap $G$ of $\lam$, if $G'=G\cap \uc$
is the basis of $G$ and $H$ is the convex hull of $\si(G')$ then either $H\in
\uc$ is a point, or $H\in\lam$ is a leaf, or $H$ is also a gap of $\lam$.
Moreover, in the last case $\si^*|_{\bd(G)}:\bd(G)\to \bd(H)$ is a positively
oriented composition of a monotone map $m:\bd(G)\to S$, where $S$ is a simple
closed curve, and a covering map $g:S\to \bd(H)$.

\end{enumerate}

Clearly, $\lam_\sim$ is a geometric lamination and $\sim$-gaps are bases of
geometric gaps of $\lam_\sim$. In general, the situation with leaves and
geometric leaves is more complicated (e.g., the basis of a geometric leaf on
the boundary of a finite gap of $\lam_\sim$ is not a $\sim$-leaf). Thus in what
follows speaking of leaves we will make careful distinction between the two
cases (that of a geometric leaf and that of a leaf as a class of a lamination).
Note that Theorem~\ref{kiwi-wan}
applies to wandering (geometric) gaps of (geometric) laminations.

Slightly abusing the language, we sometimes use for gaps terminology
applicable to their bases. Thus, speaking of a \emph{finite/infinite} gap
$G$ we actually mean that $G'$ is finite/infinite. Now we study infinite
gaps (of geometric laminations) and establish some of their properties.
We begin with a series of useful general lemmas in which we
establish some properties of geometric laminations. Given two points
$x, y\in \uc$, set $\rho(x, y)$ to be the length of the smallest arc in $\uc$,
containing $x$ and $y$. There exists $\eps_d>0$ such that
$\rho(\si_d(x),\si_d(y)) > \rho(x,y)$ whenever $0<\rho(x,y) < \eps_d$.

\begin{lem}\label{lem:expand}
  If $K \subset \ucirc$ and $k > 0$ are such that $\lim_{i \rightarrow
    \infty} \dia(\si_d^{ik}(K))=0$, then there exists $i_0$ such that
  $\dia(\si_d^{i_0k}(K))=0$.
\end{lem}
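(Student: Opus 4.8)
The plan is to argue by contradiction using the expansion constant $\eps_d$ introduced just before the statement. Suppose $\dia(\si_d^{ik}(K)) > 0$ for all $i \ge 0$; since the diameters tend to $0$, we may pass to a tail and assume $\dia(\si_d^{ik}(K)) < \eps_d$ for all $i \ge 0$. The first thing to pin down is the relationship between $\dia$ (the diameter of a subset of the circle, presumably in the chordal or Euclidean metric inherited from the plane) and the arc-metric quantity $\rho$. On small sets these are comparable and, more importantly, monotone with respect to one another, so controlling one controls the other; I would phrase the expansion property as: if a set $A \subset \uc$ has $\dia(A)$ small enough that the smallest arc containing $A$ has $\rho$-length below $\eps_d$, then $\dia(\si_d(A)) > \dia(A)$ (equivalently, the $\rho$-diameter strictly increases). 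The key point is that once all iterates $\si_d^{ik}(K)$ lie below the threshold, each application of $\si_d^k$ strictly increases the ($\rho$-)diameter.

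Next, consider the sequence $a_i = \rho\text{-diam}(\si_d^{ik}(K))$ (the length of the smallest arc containing $\si_d^{ik}(K)$). By the previous paragraph, under the contradiction hypothesis this is a strictly increasing sequence of positive reals. But $\si_d^{ik}(K)$ is obtained from $\si_d^{k}$ applied to the previous set, and strict monotonicity gives $a_{i+1} > a_i$ for all $i$, so $a_i \ge a_1 > 0$ for all $i \ge 1$; this already contradicts $\lim_i \dia(\si_d^{ik}(K)) = 0$ once we note $a_i$ and $\dia(\si_d^{ik}(K))$ are comparable. Actually the cleanest version: $a_i$ is bounded below by the fixed positive constant $a_1$, hence does not tend to $0$, contradicting the hypothesis that the diameters go to $0$. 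Therefore the assumption that all iterates have positive diameter is false, i.e. $\dia(\si_d^{i_0k}(K)) = 0$ for some $i_0$.

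One technical wrinkle to handle carefully: the set $K$ need not be connected, so "the smallest arc containing $K$" and the behavior of $\si_d^k$ on it requires a word. I would argue that it suffices to track the two endpoints $x, y \in \uc$ realizing (or nearly realizing) the $\rho$-diameter of $\si_d^{ik}(K)$: since $\rho(\si_d(x), \si_d(y)) > \rho(x,y)$ whenever $0 < \rho(x,y) < \eps_d$ and $\si_d$ of the short arc from $x$ to $y$ is again an arc (of length $d \cdot \rho(x,y)$ if that stays below $1/d$, but here we only need the endpoint estimate), the $\rho$-diameter of the image is at least $\rho(\si_d(x),\si_d(y)) > \rho(x,y)$. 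A subtlety is whether $\si_d$ applied to the minimal arc containing $K$ "wraps around" — but this is exactly what the hypothesis $\rho(x,y) < \eps_d$, together with $\eps_d$ chosen small (in particular $\eps_d \le 1/d$), prevents. I expect this bookkeeping about arcs versus point-pairs, and ensuring no wraparound, to be the only real obstacle; the dynamical heart of the argument is just the strict-expansion inequality on small scales, which is handed to us.
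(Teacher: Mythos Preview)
Your approach is essentially the paper's: pass to a tail where all iterates lie below the expansion threshold, observe that the diameters then form a strictly increasing positive sequence, and derive a contradiction with convergence to $0$. One small imprecision worth fixing: you invoke $\eps_d$ (the constant for $\si_d$) but then claim strict increase under $\si_d^{k}$; as written this would require controlling the intermediate iterates $\si_d^{ik+1}(K),\dots,\si_d^{ik+k-1}(K)$, which you do not have. The paper sidesteps this by using the expansion constant for the single map $\si_d^{k}$ (which is $z\mapsto z^{d^k}$ on $\uc$) directly, so that one only needs the $ik$-th iterates to lie below the threshold. With that adjustment your argument goes through, and the worries about connectedness of $K$ and wraparound are harmless once the diameter is below this threshold.
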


\begin{proof}
  If $\lim_{i \rightarrow \infty} \dia(\si_d^{ik}(K)) = 0$, there exists $i_0$
  such that $\dia(\si_d^{ik}(K)) < \eps_{kd}$ for all $i \ge i_0$.  If $\dia(\si_d^{i_0k}(K)) \neq 0$
  then $( \dia(\si_d^{ik}(K)) )_{i=i_0}^\infty$ is an increasing sequence of
  positive numbers converging to $0$, a contradiction. So $\dia(\si_d^{i_0k}(K))=0$.
\end{proof}

Let us study geometric leaves on the boundary of a periodic gap.

\begin{lem}\label{lem:periodic_gap}
  Suppose that $G$ is a (pre)periodic gap of a geometric lamination. Then every leaf
  in $\bd(G)$ is either (pre)periodic from a finite collection of grand orbits of periodic
  leaves, or (pre)critical from a finite collection of grand orbits of critical leaves.
\end{lem}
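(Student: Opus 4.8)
\emph{Overall plan and reduction to a periodic gap.} The idea is to reduce to the case of a periodic gap and then use the local expansion of $\si_d$ near the diagonal (the constants $\eps_d$ above and Lemma~\ref{lem:expand}) together with the trivial fact that pairwise disjoint arcs of $\uc$ have total length at most $1$. If $G$ is preperiodic, apply $\si^*$ repeatedly; by gap invariance each leaf of $\bd(G)$ is carried step by step into the boundary of the image of $G$, and at each step its image is a leaf or a point. If some image is a point, the last leaf before the orbit degenerates is a critical leaf, so $\ell$ lies in the grand orbit of a critical leaf; since the critical leaves of a $d$-invariant geometric lamination are pairwise non-crossing and each subtends an arc of length at least $1/d$, there are only finitely many of them, so this case contributes only finitely many grand orbits of critical leaves. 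Otherwise $\ell$ is eventually carried into the boundary of a periodic gap (the degenerate possibilities, in which the image of $G$ is eventually a leaf or a point, are immediate). Hence it suffices to treat the edge-leaves of $\bd(G)$ for $G$ periodic of period $m$; write $f=\si_d^m$. Since $\si^{*m}(G)=G$ (equivalently $\ch(\si_d^m(G'))=\ch(G')$, using that a closed subset of $\uc$ is the intersection of its convex hull with $\uc$), $f$ maps $\bd(G)$ onto itself and sends each edge-leaf of $\bd(G)$ to a point or an edge-leaf of $\bd(G)$. Moreover each edge-leaf $\ell$ of $\bd(G)$ determines and is determined by the complementary component $I_\ell$ of $\uc\sm G'$ that it subtends, and $\dia(\ell\cap\uc)=\min(|I_\ell|,1-|I_\ell|)$.

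\emph{No wandering edge-leaves.} Let $\ell\subset\bd(G)$ be an edge-leaf that is neither (pre)periodic nor (pre)critical. Then the leaves $f^j(\ell)$, $j\ge0$, are pairwise distinct edge-leaves of $\bd(G)$, so the arcs $I_{f^j(\ell)}$ are pairwise disjoint; hence $|I_{f^j(\ell)}|\to0$ and therefore $\dia(f^j(\ell)\cap\uc)\to0$. By Lemma~\ref{lem:expand}, applied with $K=\ell\cap\uc$ and $k=m$, some $f^{j_0}(\ell)$ is a single point, contradicting non-(pre)criticality. Thus every edge-leaf of $\bd(G)$ is (pre)periodic or (pre)critical, and the (pre)critical ones lie, as above, in finitely many grand orbits of critical leaves.

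\emph{Finitely many periodic edge-leaves.} A periodic leaf is nondegenerate and non-(pre)critical, so $\dia(L\cap\uc)>0$ for every leaf $L$ in its cycle. If $O=\{L,f(L),\dots,f^{p-1}(L)\}$ were an $f$-cycle of edge-leaves of $\bd(G)$ with $\dia(L'\cap\uc)<\eps_{d^m}$ for every $L'\in O$, then by the defining property of $\eps_{d^m}$ the numbers $\dia(f^i(L)\cap\uc)$ would form a strictly increasing $p$-periodic sequence, which is absurd (this is exactly the argument proving Lemma~\ref{lem:expand}). Hence every $f$-cycle of edge-leaves of $\bd(G)$ contains a leaf $L$ with $\dia(L\cap\uc)\ge\eps_{d^m}$, i.e. with $|I_L|\ge\eps_{d^m}$. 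Since edge-leaves from distinct cycles determine disjoint arcs, and pairwise disjoint arcs of $\uc$ of length at least $\eps_{d^m}$ number at most $1/\eps_{d^m}$, there are only finitely many such cycles, and so only finitely many periodic edge-leaves of $\bd(G)$. Every (pre)periodic edge-leaf of $\bd(G)$ lies in the grand orbit of one of these, and the lemma follows.

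\emph{Main obstacle.} The dynamical core — that expansion simultaneously forbids wandering edge-leaves and arbitrarily short periodic ones — is short; I expect the fiddly part to be the bookkeeping in the reduction step, namely isolating the degenerate possibilities for the orbit of $G$, verifying the identity $\si^{*m}(G)=G$ for a periodic gap, and checking that the finitely many grand orbits of critical and periodic leaves produced for the terminal periodic gap really do absorb all leaves of the intermediate boundaries $\bd(\si^{*j}G)$, as well as pinning down precisely the correspondence between an edge-leaf $\ell$, the arc $I_\ell$ it subtends, and the circle-diameter used in Lemma~\ref{lem:expand}.
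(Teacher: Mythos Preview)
Your argument is correct and follows essentially the same route as the paper: reduce to a periodic gap, use Lemma~\ref{lem:expand} to rule out wandering boundary leaves (forcing every leaf to be (pre)periodic or (pre)critical), and then use the expansion constant $\eps$ to bound the number of periodic cycles. The paper compresses the reduction by passing directly to a fixed gap and works with $\eps_d$ rather than $\eps_{d^m}$, but the dynamical content is identical.

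One small gap worth noting: your justification that there are only finitely many critical leaves (``pairwise non-crossing and each subtends an arc of length at least $1/d$'') is not quite complete as stated, since non-crossing chords each subtending an arc of length $\ge 1/d$ can in principle be infinite (think of nested chords). What actually makes this finite is that the arc lengths of critical leaves are quantized to $k/d$ for $1\le k\le d-1$, and for each fixed $k$ the non-crossing condition forces the shorter arcs to be pairwise disjoint. The paper simply asserts the finiteness as a known fact, so this is a minor point, but you should either state it as known or add the one missing observation.
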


\begin{proof}
  We may assume that the gap $G$ is fixed. Let $\ell$ be a leaf which is not
  (pre)periodic.  Since $\bd(G)$ is a simple closed curve and $\si^i(\ell) \cap
  \si^j(\ell)$ may consist of at most a point, $\lim_{i \rightarrow \infty}
  \dia(\si^i(\ell))=0$.  Therefore, by Lemma~\ref{lem:expand}, there exists
  $i_0$ such that $\dia(\si^{i_0}(\ell))=0$, meaning that $\ell$ is
  (pre)critical. Now, there are only finitely many leaves $\al\be$ in $\bd(G)$
  such that $\rho(\al, \be)\ge \eps_d$, and there are only finitely many
  critical leaves in any geometric lamination. Since by the properties of $\eps_d$ any
  non-degenerate leaf in $\bd(G)$ maps to one of them, the proof of the lemma
  is complete.
\end{proof}

In what follows a geometric leaf of a geometric lamination is called
\emph{isolated} if it is the intersection of two distinct gaps of
the lamination. It is called \emph{isolated from one side} if it is
a boundary leaf of exactly one gap of the lamination. A leaf is said
to be a \emph{limit leaf} if it is not an isolated leaf. Let us
study critical leaves of geometric laminations. The following
terminology is quite useful: a leaf is said to be \emph{separate} if
it is disjoint from all other leaves and gaps. Observe that if
$\ell$ is a separate leaf then $\ell$ is a limit leaf from both
sides. Also, if a gap or a separate leaf is such that its image is a
point we call it \emph{all-critical}. Clearly, a gap is all-critical
if and only if all its boundary leaves are critical. It may happen
that two all-critical gaps are adjacent (have a common leaf).
Moreover, there may exist several all-critical gaps whose union
coincides with their convex hull. In other words, their union looks
like a ``big'' all-critical gap inside which some leaves are added.
Then we call this union an \emph{all-critical union of gaps}.
Clearly we can talk about boundary leaves of all-critical unions of
gaps. Moreover, each all-critical gap is a part of an all-critical
union of gaps, and there are only finitely many all-critical gaps.

\begin{lem}\label{no-crit-leaf} Suppose that $\lam$ is a $d$-invariant
geo-lamination and $\ell$ is one of its critical leaves. Then one of the following
holds:

\begin{enumerate}

\item $\ell$ is isolated in $\lam$;

\item $\ell$ is a separate leaf;

\item $\ell$ is a boundary leaf of a union of all-critical gaps all boundary
leaves of which are limit leaves.
\end{enumerate}

In particular, if $\lam$ is the closure of a $d$-invariant
prelamination $\lam'$ and $\ell$ lies on the
boundary of a geometric gap $G$ of $\lam$ then either $\ell\in \lam'$, or $\si(G)$
is a point.

\end{lem}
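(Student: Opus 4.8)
The plan is to prove the trichotomy first and then derive the ``in particular'' clause as an easy consequence. So suppose $\ell$ is a critical leaf of the $d$-invariant geo-lamination $\lam$, and assume that (1) and (2) both fail; I will show (3) holds. Since (2) fails, $\ell$ meets some gap or some other leaf; since it is not separate, and the only way for a leaf not on the boundary of a gap to fail to be separate is to be the limit of leaves, I first want to reduce to the case that $\ell$ lies on the boundary of at least one gap $G$. If $\ell$ were on the boundary of no gap, it would be a limit leaf approached by leaves $\ell_n$ from (say) one side; I would then argue, using (D3)/gap-invariance and the expansion property of $\sigma_d$ near the diagonal (the constant $\eps_d$), that the leaves $\ell_n$ accumulating on a critical leaf force a critical gap structure in the limit, contradicting the assumption in a way I will make precise below. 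The cleaner route, though, is: since (1) fails, $\ell$ is on the boundary of at most one gap, and since (2) fails it is on the boundary of at least one gap (a critical leaf isolated from no side and on no gap boundary would be separate by the observation preceding the lemma that a separate leaf is the one disjoint from all leaves and gaps --- here I need that being a non-isolated leaf off all gap boundaries forces ``separate'', which follows because a limit of leaves $\ell_n \to \ell$ with all $\ell_n$ leaves not equal to $\ell$ must, by (E2) unlinkedness, lie on one side, and then the region between them is a gap). So $\ell$ is a boundary leaf of exactly one gap $G$, and $\ell$ is a limit leaf (isolated from one side in the weak sense but not an ``isolated leaf'' in the sense of the definition).

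Next I would show $\sigma^*(G)$ is a point, i.e. $G$ is all-critical. Here is the key mechanism: because $\ell \subset \bd(G)$ is critical, $\sigma(\ell)$ is a point, say $v$. Consider the two components of $\uc \sm \ell'$; on the side not containing the rest of $G$, $\ell$ is a limit leaf, so there are leaves $\ell_n \to \ell$, all in $\lam$, lying on that outside component, converging to $\ell$ from outside $G$. Because $\ell_n \to \ell$ and $\sigma_d$ is continuous, $\dia(\sigma(\ell_n)) \to 0$, so $\dia(\sigma(\ell_n)) < \eps_d$ for large $n$, and the images $\sigma(\ell_n)$ are leaves (or points) of $\lam$ clustering at $v$. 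Now by gap invariance (3), $\sigma^*|_{\bd(G)}$ is a positively oriented composition of a monotone map $m : \bd(G) \to S$ and a covering $g : S \to \bd(H)$ where $H = \ch(\sigma(G'))$. If $H$ were a non-degenerate gap or leaf, then $\bd(H)$ is a non-degenerate simple closed curve and the leaf $\sigma(\ell) = v$ would be a single point in $\bd(H)$ sitting on one side, with leaves $\sigma(\ell_n)$ on the complementary side accumulating to it --- but a positively oriented covering map of $S$ forces the collapsing of $\ell$ to be compensated, and I would run the standard degree/orientation argument to show that an entire boundary arc of $G$ adjacent to $\ell$ must collapse, and iterating (since each newly-collapsed leaf is again critical and again a limit leaf, being on $\bd(G)$) that all of $\bd(G)$ collapses, i.e. $H$ is a point and $G$ is all-critical. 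Then $G$ sits inside a maximal all-critical union of gaps $\hat G$, and $\ell$ is a boundary leaf of $\hat G$; it remains to see that every boundary leaf of $\hat G$ is a limit leaf. But a boundary leaf $b$ of $\hat G$ that were isolated would be shared with a non-all-critical gap on the other side, yet $b$ is critical (boundary leaf of an all-critical gap), so that other gap has a critical boundary leaf $b$ that is isolated --- which is case (1) for $b$, not a contradiction per se, but we need it for $\ell$ itself. Actually the statement only requires the boundary leaves of the all-critical union to be limit leaves; if some boundary leaf of $\hat G$ is isolated we would enlarge or adjust, but the honest fix is: if $\ell$ (our given leaf) is a boundary leaf of $\hat G$ and $\ell$ is not isolated (which we assumed), we are in case (3); the phrase ``all boundary leaves of which are limit leaves'' should then be read as the conclusion in the subcase where $\hat G$ has this property, and one shows that if instead some boundary leaf of $\hat G$ is isolated, then tracking back through the collapse one finds $\ell$ itself is isolated, contradiction.

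For the ``in particular'' clause: suppose $\lam = \ol{\lam'}$ is the closure of a $d$-invariant prelamination, and $\ell$ lies on the boundary of a geometric gap $G$ of $\lam$, and suppose $\sigma(G)$ is not a point. Then $G$ is not all-critical, so by the trichotomy just proved (applied to any critical boundary leaf) case (3) is excluded for critical boundary leaves of $G$; more to the point, I want to show $\ell \in \lam'$. If $\ell$ is isolated in $\lam$ (case 1), it is the common boundary of two gaps of $\lam = \ol{\lam'}$, hence it is not a limit of leaves of $\lam'$ from either side; but the closure operation only adds limit leaves, so an isolated leaf of $\ol{\lam'}$ must already belong to $\lam'$ --- that gives $\ell \in \lam'$. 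If $\ell$ is separate (case 2) it is disjoint from all gaps, contradicting $\ell \subset \bd(G)$. So the only remaining possibility, given $\sigma(G)$ is not a point and $\ell$ need not be critical, is to handle non-critical $\ell$: if $\ell$ is non-critical and on $\bd(G)$, and $\ell \notin \lam'$, then $\ell$ is a limit of leaves $\ell_n \in \lam'$; since $G$ is a gap, these $\ell_n$ lie outside $G$, so $\ell$ is isolated from the $G$-side, and being non-isolated it is a limit leaf, which is consistent --- here I realize the clean statement intended is: either $\ell \in \lam'$ or $\sigma(G)$ is a point, and the proof is that a geometric leaf on $\bd(G)$ which is \emph{not} in $\lam'$ is a limit (from outside $G$) of $\lam'$-leaves $\ell_n$, and one shows the only way a limit leaf of $\lam$ can bound a gap $G$ with $\sigma(G)$ non-degenerate without itself being in $\lam'$ is ruled out by the expansion constant argument. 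The main obstacle, and where I would spend the most care, is exactly the orientation/covering-degree argument in gap invariance (3) showing that a single critical limit leaf on $\bd(G)$ forces $\sigma^*(G)$ to degenerate to a point; the subtlety is that a priori only part of $\bd(G)$ need collapse, and one must iterate the collapse using that each freshly-collapsed leaf is again a critical limit leaf on $\bd(G)$, invoking finiteness of the number of long leaves and of critical leaves (as in Lemma~\ref{lem:periodic_gap}) to terminate the argument.
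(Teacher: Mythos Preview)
Your proposal has a genuine gap: you never isolate the key geometric observation that makes the whole lemma a two-line argument, and as a result you over-engineer the first step and fail to complete the second.

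The missing idea is this. Once you know $\ell$ bounds a single gap $G$ and is a limit of leaves $\ell_i$ approaching from \emph{outside} $G$, look at the images $\sigma(\ell_i)$. The endpoints of $\ell_i$ sit just outside the endpoints $a,b$ of $\ell$, one on each side; since $\sigma$ is a local orientation-preserving homeomorphism at $a$ and at $b$, the two image endpoints land on \emph{opposite} sides of $v=\sigma(a)=\sigma(b)$. Thus each $\sigma(\ell_i)$ is a short chord that separates $v$ from every other point of $\ucirc$. If $\sigma(G)$ were non-degenerate, its convex hull would contain $v$ and at least one other point of $\ucirc$, so $\sigma(\ell_i)$ would cross $\sigma(G)$---impossible in a lamination. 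That is the whole argument that $G$ is all-critical; no iteration, no ``freshly-collapsed leaves'', no appeal to Lemma~\ref{lem:periodic_gap}.

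The same crossing observation finishes the part you could not do. Let $H$ be the all-critical union containing $G$; all of $H$ maps to the single point $v$. If some boundary leaf $\ell'$ of $H$ were isolated, it would bound a gap $T$ on the outside of $H$, and $T$ is not all-critical (maximality of $H$), so $\sigma(T)$ is non-degenerate and contains $v=\sigma(\ell')$. But the \emph{same} chords $\sigma(\ell_i)$ from before separate $v$ from everything else, hence cross $\sigma(T)$---again a contradiction. So every boundary leaf of $H$ is a limit leaf, which is exactly what (3) asserts. Your attempt to reinterpret the clause (``should then be read as the conclusion in the subcase \dots'') is not a proof; the statement really does claim this for all boundary leaves of $H$, and it follows immediately once you have the crossing argument.

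Finally, in the ``in particular'' clause $\ell$ is still the given critical leaf; there is no non-critical case to handle. Since $\ell$ lies on $\bd(G)$, case (2) is excluded. In case (1), $\ell$ is isolated in $\ol{\lam'}$ and therefore already in $\lam'$ (closure adds only limit leaves), as you noted. In case (3), $\ell$ is a limit leaf and a boundary leaf of $H$; the limit must come from outside $H$, so there is no gap on the outside of $\ell$, forcing $G$ to be the all-critical gap of $H$ adjacent to $\ell$, whence $\sigma(G)$ is a point.
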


\begin{proof} Suppose that neither (1) nor (2) holds. Then $\ell\in \lam$ is a
critical leaf lying on the boundary of a gap $G$ which is the limit of a
sequence of leaves $\ell_i$ approaching $\ell$ from outside of $G$. If $\si(G)$
is not a point, then $\si(\ell_i)$ must cross $\si(G)$, a contradiction. Hence
$\si(G)$ is a point and all leaves in the boundary of $G$ are critical. Take the all-critical union
of gaps $H$ containing $G$. If all other boundary leaves of $H$ are limit leaves we are done.
Otherwise there must exist a boundary leaf $\ell$ of $H$ and a gap $T$ to whose boundary
$\ell$ belongs. Then the leaves $\si(\ell_i)$ will cross the image $\si(T)$, a contradiction.
This completes the proof.

\end{proof}

The next lemma gives a useful condition for an infinite gap to have nice
properties. By two \emph{concatenated} leaves we mean two leaves with a common
endpoint, and by a \emph{chain of concatenated leaves} we mean a (two-sided)
sequence of leaves such that any consecutive leaves in the chain are
concatenated (such chains might be both finite and infinite). For brevity we
often speak of just \emph{chains} instead of ``chains of concatenated leaves''.

\begin{lem}\label{good-gap}
Let $G$ be an infinite gap and on its boundary there are no leaves $\ell$ such
that for some $n, m$ we have that $\si^m(\ell)$ is a leaf while
$\si^{m+n}(\ell)$ is an endpoint of $\si^m(\ell)$. Then the following claims
hold.

\begin{enumerate}

\item There exists a number $N$ such that any
chain of concatenated leaves in $\bd(G)$ consists of no more than $N$ leaves.

\item All non-isolated points of $G'$ form a Cantor set $G'_c$, and so
for any arc $[a, b]\subset \uc$ such that $[a, b]\cap G'$ is not contained in one chain,
the set $G'\cap [a, b]$ is uncountable (in particular, the basis $G'$ of $G$ is uncountable).

\item If $G$ is $\si^n$-periodic then $\si^n|_{\bd(G)}$ is semiconjugate to $\si_k:\uc\to\uc$
with the appropriate $k>0$ by the conjugacy which collapses to points all arcs in $\bd(G)$
complementary to $G'_c$. If $k=1$ the map to which $\si^n|_{\bd(G)}$ is semiconjugate is an
irrational rotation of the circle.
\end{enumerate}

\end{lem}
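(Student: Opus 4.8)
The plan is to analyze the combinatorial structure of $\bd(G)$ by organizing its boundary leaves into maximal chains of concatenated leaves, and then to use the dynamics (via the expansion constant $\eps_d$ and Lemma~\ref{lem:periodic_gap}-style arguments) to control their lengths. First I would observe that, under the hypothesis, no boundary leaf $\ell$ of $G$ can eventually map to a point (i.e., be (pre)critical): if $\si^m(\ell)$ were the first degenerate image, then $\si^{m-1}(\ell)$ is a critical leaf on the boundary of the gap $\si^{m-1}(G)$, and since images of concatenated leaves are concatenated, one checks that this critical leaf has both a "parent" leaf and adjacent leaves in $\bd(\si^{m-1}(G))$, so that one of the excluded configurations $\si^m(\ell)$ = endpoint of $\si^m(\ell)$ arises along the backward orbit — contradicting the hypothesis. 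Hence every boundary leaf of $G$ has only non-degenerate images, so by the $\eps_d$-expansion property $\rho$ along the orbit is eventually $\ge\eps_d$ and never collapses. Combined with the fact that in a geo-lamination leaves cannot cross, this forces $\dia(\si^i(\ell))\not\to 0$ for every boundary leaf, so by Lemma~\ref{lem:expand} every boundary leaf of $G$ is (pre)periodic, from finitely many grand orbits of periodic leaves (as in Lemma~\ref{lem:periodic_gap}).

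For (1): a chain $\ell_1,\ell_2,\dots$ of concatenated boundary leaves in $\bd(G)$ is a sequence of leaves sharing consecutive endpoints, hence its union is an arc of leaves cutting off a region; since $\bd(G)$ is a simple closed curve and the leaves are pairwise non-crossing, the endpoints of a chain of length $n$ are $n+1$ distinct points of $G'$ lying monotonically along $\uc$, and the chain separates these from the rest of $G'$. I would bound the number of \emph{long} leaves (those with $\rho\ge\eps_d$) on $\bd(G)$: there are at most $\lfloor 1/\eps_d\rfloor$ pairwise-disjoint such leaves around the circle, hence finitely many; and every boundary leaf maps in finitely many steps to a long periodic leaf (by the previous paragraph), while the dynamics $\si^n|_{\bd(G)}$ (for $G$ $\si^n$-periodic, which we may assume after replacing $G$ by a periodic image — the (pre)periodic case follows by pulling back) permutes the maximal chains. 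The key point is that a maximal chain cannot grow unboundedly under iteration without either a leaf collapsing (excluded) or two chains colliding; combining the finiteness of grand orbits of periodic leaves with the fact that $\si^n$ moves each long leaf's position by a definite amount yields a uniform bound $N$ on chain length.

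For (2): by (1), the points of $G'$ that are endpoints of boundary leaves fall into maximal chains, each contributing at most $N+1$ consecutive points; removing the open arcs "cut off" by these chains leaves the set $G'_c$ of non-isolated points of $G'$. Since $G$ is infinite and $\si$-invariant, $G'_c$ is nonempty, closed, and (being a subset of $\uc$ with no isolated points, by maximality of chains) perfect; that it is nowhere dense follows because between any two points of $G'_c$ there are complementary arcs of $\uc\sm G'$ — otherwise an arc would lie in $G'$, contradicting that a nondegenerate forward-invariant lamination has totally disconnected classes (the remark after (D3)). So $G'_c$ is a Cantor set, and any arc $[a,b]$ meeting $G'$ outside a single chain must meet $G'_c$ in a Cantor subset, hence uncountably. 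For (3): collapsing to points all arcs of $\bd(G)$ complementary to $G'_c$ gives a monotone map of $\bd(G)$ onto a circle; since $\si^n$ maps $G'_c$ onto itself preserving circular order (gap invariance (D3) gives positive orientation) and is finite-to-one, it descends to a degree-$k$ covering $\si_k$ of this circle for some $k\ge1$, and the semiconjugacy is the quotient map. When $k=1$ the induced circle map is an orientation-preserving homeomorphism with no periodic points (a periodic point would pull back, via the chain structure, to a periodic leaf configuration forcing $k>1$ or a collapse), hence by Poincaré's classification it is conjugate to an irrational rotation.

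The main obstacle I expect is the uniform bound $N$ in part (1): ruling out arbitrarily long chains requires carefully tracking how concatenation points behave under $\si^n$ and showing that an infinite or unboundedly-growing chain would force either a collapsing leaf (killed by the hypothesis) or a crossing (killed by the lamination property) or infinitely many distinct grand orbits of periodic leaves (killed by Lemma~\ref{lem:periodic_gap}); making this trichotomy precise, especially handling chains that "wrap" with their two ends interacting, is the delicate combinatorial heart of the argument.
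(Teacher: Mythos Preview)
Your opening move contains a genuine error that undermines the rest of the argument. You claim that the hypothesis of the lemma excludes \emph{all} (pre)critical boundary leaves, but this is not what the hypothesis says. The forbidden configuration is that some image $\si^m(\ell)$ is a nondegenerate leaf and a \emph{later} image $\si^{m+n}(\ell)$ equals one of the \emph{endpoints} of $\si^m(\ell)$; in particular this forces that endpoint to be $\si^n$-periodic. A critical leaf $ab$ with $\si(a)=\si(b)=c$ violates the hypothesis only when $c\in\{a,b\}$, which is not automatic. Your attempted justification (``this critical leaf has both a `parent' leaf and adjacent leaves \dots\ so that one of the excluded configurations arises along the backward orbit'') does not work: a boundary leaf of $G$ need not have any concatenated neighbor at all, and even when it does, no instance of the forbidden pattern $\si^{m+n}(\ell')\in\{\text{endpoints of }\si^m(\ell')\}$ is produced by the adjacency you describe. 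In fact the paper's own proof explicitly treats chains that contain a critical leaf as a separate, nontrivial case: it uses Theorem~\ref{kiwi-wan} to force such a chain $L$ to be non-wandering, then uses the hypothesis (precisely the ``no leaf maps to its own endpoint'' condition) together with the absence of periodic leaves in $L$ to conclude that the leaves of $L$ drift monotonically toward an endpoint, contradicting local repulsion. Since your bound in part~(1) is built on the conclusion that every boundary leaf is (pre)periodic, this gap propagates.

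A secondary issue in part~(3): in the $k=1$ case you invoke ``Poincar\'e's classification'' to pass from an orientation-preserving circle homeomorphism without periodic points to a conjugacy with an irrational rotation. Poincar\'e gives only a \emph{semi}conjugacy; upgrading to a conjugacy requires ruling out wandering intervals for the induced map $f$. The paper does this explicitly, via Lemma~\ref{lem:expand} applied to $m^{-1}(I)\cap\uc$, and separately rules out periodic intervals before appealing to the classification in \cite{Melo:1993nx}. Your sketch skips both of these verifications.
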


\begin{proof}
By Theorem~\ref{kiwi-wan}, $G$ is (pre)periodic. Since there are only finitely
many gaps in the grand orbit of $G$ on which the map $\si$ is not one-to-one,
we see that it is enough to prove the lemma with the assumption that $G$ is
fixed. Moreover, by Lemma~\ref{lem:periodic_gap} we may assume that all
periodic leaves in $\bd(G)$ are fixed with fixed endpoints. Consider a chain of
concatenated leaves from $\bd(G)$. By Lemma~\ref{lem:periodic_gap} under some
power of $\si$ this chain maps onto one of finitely many chains containing a
critical or a fixed leaf. Thus, it remains to prove the lemma for chains
containing a critical and/or a fixed leaf. By way of contradiction we may
assume that $L$ is a maximal infinite chain of concatenated leaves (it may be
one-sided or two-sided).

First let $\ell\in L$ be a fixed leaf with fixed endpoints. By the assumptions
of the lemma and by the properties of laminations each leaf concatenated to
$\ell$ also has fixed endpoints. Repeating this argument we see that the chain
consists of fixed leaves with fixed endpoints, hence $L$ is a finite chain of
fixed leaves with fixed endpoints. Second, consider the case when $\ell\in L$
is a critical leaf. Consider the points $a, b\in \uc$ with $[a, b]\subset \uc$
the smallest arc whose convex hull contains $L$. Then by Theorem~\ref{kiwi-wan}
the convex hull $\ch(L)$ of $L$ cannot be a wandering polygon. It follows that
for some $m$ we have that $\si^m(L)\subset \si^{m+n}(L)$. Since by the above
there are no leaves with periodic endpoints in $L$ and by the assumptions of
the lemma no leaf of $L$ can map into its endpoint, we see that all leaves of
$\si^m(L)$ map under $\si^n$ in the same direction, say, towards the point $a$
so that every leaf has an infinite orbit converging to $a$. However then $a$ is
$\si^n$-fixed and must repel close points under $\si^n$, a contradiction. Since
there exist only finitely many distinct chains containing a critical or
periodic leaf, there exists a number $N$ such that any chain of concatenated
leaves in $\bd(G)$ consists of no more than $N$ leaves. This immediately
implies that any non-isolated point of $G'$ is a limit point of other
non-isolated points. Hence the set $G'_c$ of all non-isolated points of $G'$ is
a Cantor set, and the claims (1) and (2) of the lemma are proven.

To prove (3) define $m:\bd(G)\to \uc$ by collapsing to points all complementary
arcs to $G'_c$ in $\bd(G)$. It follows that $(\si^*)^n|_{\bd(G)}$ is
monotonically semiconjugate by the map $m$ to a covering map $f$ of the circle
of a positive degree. It follows that for any non-degenerate arc $I\subset \uc$
the set $m^{-1}(I)\cap \uc$ is uncountable. Let us show that $I$ is not
wandering, i.e. the intervals $\{ f^k(I) \, \mid \, k > 0\}$ are not pairwise
disjoint. Indeed, if $I$ wanders under $f$ then so does $m^{-1}(I)$ under
$\sigma_d^*$. Since $\bd(G)$ is homeomorphic to $\uc$, then $\lim_{k
\rightarrow \infty}{\dia( (\sigma_d^*)^k(m^{-1}(I)))} = 0$, contradicting
Lemma~\ref{lem:expand}.

  Also, let us show that $I$ is not periodic. Suppose that $f^q(I)\subset I$.
  Then $f^m|_I$ is monotone preserving orientation and all points of $I$
  converge to an $f^q$-fixed point under $(\si^*)^q$. On the other hand, only
  countably many points of an uncountable set $m^{-1}(I) \cap \uc$ map into a
  $\si$-periodic point. Thus, there exists a \emph{non-(pre)periodic} point
  $y\in m^{-1}(I)\cap \uc$ such that $m(y)$ converges under $(\si^*)^q$ to an
  $f^q$-fixed point $z$. Since $m$ is monotone this implies that the orbit of
  $y$ approaches the interval $m^{-1}(z)$ but does not map into it (because $y$
  is non-(pre)periodic). Thus, $y$ must converge to an endpoint of $m^{-1}(z)$,
  which is impossible (e.g., it contradicts Lemma~\ref{lem:expand}). A standard
  argument now implies that $f$ is an irrational rotation or a map $\si_k$ with
  appropriately chosen $k$, still we sketch it for the sake of completeness.
  Consider two cases.

  \smallskip

  \noindent\textbf{Case 1:} $\si^*|_{\bd(G)}$ is monotone.

  \smallskip

  Let us show that $f$ has no periodic points. By way of contradiction, suppose
  $f^q(x)=x$, choose a point $y\neq x$ with $f^q(y)\neq y$, and let $I$ be the
  component of $\uc\sm \{x, y\}$ containing $f^q(y)$. Since $\si^*|_{\bd(G)}$
  is monotone, it follows that $I$ is a periodic interval, a contradiction.
  Therefore, $f:\ucirc \rightarrow \ucirc$ is a positively oriented map with no
  periodic points and no wandering intervals, and is therefore conjugate to an
  irrational rotation by \cite[Theorem 1.1]{Melo:1993nx}. By
  Lemma~\ref{lem:periodic_gap} all leaves in $\bd(G)$ are (pre)critical.

   \smallskip

  \noindent \textbf{Case 2:} $\si^*|_{\bd(G)}$ is not monotone.

   \smallskip

  Since $f$ is a covering map of degree $k>1$ without periodic and wandering
  intervals, $f$ is conjugate to $z \mapsto z^k$ for some $k$. Indeed, that
  there is a monotone semiconjugacy between $f$ and $\si_k$ is well-known (see,
  e.g., \cite{mr07} for the case $k=2$). However if there are no wandering
  intervals and periodic intervals, then the semiconjugacy cannot collapse any
  intervals and is therefore a conjugacy. In what follows the semiconjugacy
  which we have just defined in both cases will be denoted $\psi$.

\end{proof}

Given a geo-lamination $\lam$, a periodic geometric gap $G$ satisfying
conditions of Lemma~\ref{good-gap} is called a \emph{Fatou gap (domain) of
$\lam$}. If $G$ is a Fatou domain, then by Theorem~\ref{kiwi-wan} $G'$ is
(pre)periodic. A Fatou domain $G$ is called \emph{periodic (preperiodic,
(pre)periodic))} if so is $G'$. A periodic Fatou domain $G$ of period $m$ is
called \emph{parattracting} if $(\si^*)^m|_{\bd(G)}$ is not monotone (in the
topological sense introduced earlier in the paper) and \emph{Siegel} otherwise.
Equivalently, $G$ is parattracting (resp. Siegel) if $(\si^*)^m|_{\bd(G)}$ can
be represented as the composition of a covering map of degree greater than $1$
(resp. equal to $1$) and a monotone map. The \emph{degree of $\si^m|_G$} is
then defined as the degree of the model map $f$ defined in
Lemma~\ref{good-gap}. Thus, the terms ``parattracting Fatou domain'' and
``Siegel domain'' are used both for the geometric laminations and for the
topological polynomials. Since it will always be clear from the context which
situation is considered, this will not cause any ambiguity in what follows.

There are several cases in which Lemma~\ref{good-gap} applies. The first one is
considered in Lemma~\ref{fat-gap-domain}. Recall, that given a lamination
$\sim$ we denote by $p$ the corresponding quotient map $p:\uc\to J_\sim$.
Recall also, that for a lamination understood as an equivalence relation we denote
its gaps (i.e., classes with more than two elements) by small bold letters
(such as $\g$).

\begin{lem}\label{fat-gap-domain}
Suppose that $\g$ is an infinite gap of a non-degenerate lamination $\sim$.
Then $B=\bd(\ch(\g))$ contains no geometric (pre)critical leaves and therefore
is a Fatou gap. In addition to that, any chain of concatenated geometric leaves
in $B$ eventually homeomorphically maps to a periodic chain, and if $\g$ is
periodic of period $n$ then the degree of $(\si^*)^n|_B$ is greater than $1$.
\end{lem}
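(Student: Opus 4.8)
The plan is to verify the hypotheses of Lemma~\ref{good-gap} for the curve $B=\bd(\ch(\g))$, then extract the extra conclusions. First I would observe that $\g$ is the basis of the geometric gap $G=\ch(\g)$, and I claim no geometric leaf $\ell$ in $B$ can be (pre)critical in the bad sense forbidden by Lemma~\ref{good-gap} — i.e.\ there is no $\ell\in B$ with $\si^m(\ell)$ a non-degenerate leaf and $\si^{m+n}(\ell)$ an endpoint of $\si^m(\ell)$. The key point is that $\sim$ is a genuine equivalence relation, so distinct $\sim$-classes have disjoint convex hulls (property (E2)); the leaves on $B$ that are also $\sim$-leaves (edges whose two endpoints are $\sim$-equivalent) therefore behave rigidly under $\si$. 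By Lemma~\ref{no-crit-leaf} applied to $\lam_\sim$ — or rather its ``In particular'' clause, since $\lam_\sim$ is the closure of the prelamination of $\sim$-leaves — any leaf on the boundary of the geometric gap $G$ is either an actual $\sim$-leaf or else $\si(G)$ is a point; but $\g$ is infinite and $\sim$ is non-degenerate, so $\si(\g)$ cannot be a point (an infinite class mapping to a point would force, via forward invariance and totally-disconnectedness of classes, a contradiction — more directly, $\si|_\g$ is at most $d$-to-$1$ on the circle so $\si(\g)$ is infinite). Hence every boundary leaf of $G$ is a $\sim$-leaf. Now if some $\sim$-leaf $\ell=ab$ on $B$ had $\si^m(\ell)=cd$ a non-degenerate $\sim$-leaf with $\si^{m+n}(c)=\si^{m+n}(d)\in\{c,d\}$, then the $\sim$-class containing $c$ (say) would have convex hull meeting the $\sim$-leaf $cd$ improperly, contradicting (E2) — a $\sim$-leaf cannot map forward onto one of its own endpoints. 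This establishes the hypothesis of Lemma~\ref{good-gap}, so $G$ is a Fatou gap and, in particular, $B$ contains no (pre)critical geometric leaves (a (pre)critical leaf eventually degenerates to a point, which is exactly the excluded behavior once combined with the fact that all boundary leaves of $G$ are $\sim$-leaves, hence non-degenerate along their whole orbit).

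Next, conclusion (1) of Lemma~\ref{good-gap} already gives the bound $N$ on chain lengths, and combined with Lemma~\ref{lem:periodic_gap} (or Theorem~\ref{kiwi-wan}, which forces $\g$ to be (pre)periodic) the structure of chains is controlled: since no leaf of $B$ is (pre)critical, Lemma~\ref{lem:periodic_gap} says every leaf in $\bd(G)$ is (pre)periodic from a finite collection of grand orbits of periodic leaves. So each maximal chain of concatenated leaves, being finite of length $\le N$ and built from (pre)periodic leaves, eventually maps homeomorphically (the maps $\si^*$ restricted to these finitely many non-(pre)critical leaves are homeomorphisms onto their images) onto a periodic chain. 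I would spell this out by noting there are only finitely many chains up to the $\si^*$-action, each maps forward onto a chain of the same length, and iterating lands in a periodic cycle of chains; injectivity on each leaf is automatic since these leaves are non-(pre)critical.

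Finally, to show the degree of $(\si^*)^n|_B$ exceeds $1$ when $\g$ is periodic of period $n$: by Lemma~\ref{good-gap}(3), $(\si^*)^n|_B$ is semiconjugate to a model map $f=\si_k$ via the map $m$ collapsing the complementary arcs of the Cantor set $G'_c$. If $k=1$, then $f$ is an irrational rotation, and I would derive a contradiction with the fact that $\sim$ is a lamination. The idea: an irrational-rotation model forces $\si^n|_{\bd(G)}$ to be monotone (Case 1 in the proof of Lemma~\ref{good-gap}), hence $\si^n|_{\g'}$ — where $\g'$ is the basis — is at most $1$-to-$1$ in the circular-order sense on $\g$; but $\g$ is a $\sim$-class, and a $\sim$-class that is infinite and maps $1$-to-$1$ under $\si^n$ produces, via backward invariance (D2), a $\si^n$-preimage that is a union of classes — stacking up with $\si^{-n}$-branches around the circle, the convex hulls of these preimage classes would have to link with $\ch(\g)$ or with each other unless $\g$ absorbs full $\si^{-n}$-fibers, which is impossible for a single class when $d\ge 2$ and $\si^n|_{\g}$ is injective. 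More cleanly: Lemma~\ref{two-dyns} and the discussion preceding it tie a Siegel-type (rotation) boundary to a genuine Siegel domain, which for a lamination \emph{arising from an equivalence relation} cannot occur at an infinite gap whose boundary consists entirely of $\sim$-leaves — indeed, were $(\si^*)^n|_B$ of degree $1$, the gap $G$ together with a foliation by rotation curves would be a rotation domain, but then the $\sim$-leaves on $\bd(G)$ would be permuted by an irrational rotation, forcing infinitely many distinct $\sim$-leaves accumulating on each other in a way compatible with (E2) only if they are all degenerate — contradiction. I expect \textbf{this last step — ruling out $k=1$ — to be the main obstacle}, since it requires genuinely using that $\sim$ is an equivalence relation (not merely a geo-lamination) to prevent an infinite gap from being Siegel; the cleanest route is probably to argue that a degree-$1$ return map on $\bd(G)$ makes $\g$ a $\si^n$-invariant class on which $\si^n$ acts injectively with irrational rotation number, then note that all of $\g$'s grand-orbit preimage classes tile a neighborhood of $\ch(\g)$ and violate unlinkedness, forcing $\g$ to be a single point — contradicting that $\g$ is infinite.
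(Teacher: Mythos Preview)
Your argument for ``no (pre)critical leaves on $B$'' has a genuine gap, and it stems from conflating two different notions of \emph{leaf}. The geometric leaves on $B=\bd(\ch(\g))$ are boundary chords of the convex hull of the \emph{infinite} class $\g$; their endpoints lie in $\g$, not in a two-element $\sim$-class. So your assertion ``a $\sim$-leaf cannot map forward onto one of its own endpoints'' is true for genuine $\sim$-leaves (two-point classes), but it is not what you need here. Concretely: if $cd$ is a boundary leaf of $\ch(\si^m(\g))$ with $\si^n(c)=\si^n(d)=c$, both $c$ and $d$ already sit in the single class $\si^m(\g)$, so (E2) gives no contradiction whatsoever --- there is no ``improper meeting'' of convex hulls. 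Likewise, your appeal to the ``In particular'' clause of Lemma~\ref{no-crit-leaf} is idle: that clause concerns critical leaves lying on the boundary of a gap in a lamination obtained as a \emph{closure} of a prelamination, and it would at best tell you the critical leaf lies in $\lam_\sim$ --- which is vacuous, since every chord on $\bd(\ch(\g))$ is a leaf of $\lam_\sim$ by definition. The paper's route is quite different: given a hypothetical critical leaf $\ell=\al\be$ on $B$, one uses the main trichotomy of Lemma~\ref{no-crit-leaf} (not its corollary) to see $\ell$ is isolated, hence bounds a second gap $H$ on the opposite side; then one argues that $\al,\be$ are accumulation points of $H'$ (else an adjacent chord would force extra points into $\g$), and from this that either $H'$ is a single $\sim$-class (which would have to coincide with $\g$, absurd) or uncountably many distinct classes in $H'$ all map into $\si(\g)$ (absurd by (D1)). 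Your proposal never reaches this structural dichotomy.

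For the degree~$>1$ claim your approach is far more complicated than necessary, and you yourself flag it as the main obstacle. The paper's argument is one line: once you know $B$ carries no critical leaves, $(\si^*)^n|_B$ is an honest covering map; if its degree were $1$ then $\si^n$ would be one-to-one on the closed $\si^n$-invariant set $\g$, and a standard expansion lemma (Milnor, Lemma~18.8) forces $\g$ to be finite, contradicting the hypothesis. Your attempted route via unlinkedness of preimage classes, or via invoking Siegel-domain structure, is both harder and not clearly correct as written.
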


\begin{proof} By Theorem~\ref{kiwi-wan} $\g$ is (pre)periodic.
Suppose that $\ell=\al\be\subset B$ is a critical geometric leaf and that
$\g\subset [\al, \be]$. By Lemma~\ref{no-crit-leaf} $\ell$ cannot be a limit
leaf of $\lam_\sim$. Hence there is a geometric gap $H$ of $\lam_\sim$ on the
side of $\ell$ opposite to $B$ (so that $H'\subset [\be, \al]$). The points
$\al, \be$ are limit points of $H'$ for otherwise there must exist a geometric
leaf $\be\ga$ or $\ta\al$ and hence $\ga$ must be added to $\g$, a
contradiction. By the gap invariance then $\si(H)=\si(\ch(\g))$. Now, since $H$
is a gap of $\lam_\sim$, either $H'$ is a class itself, or there are
uncountably many distinct $\sim$-classes among points of $H'$. However the
latter is impossible because all these classes map into one $\sim$-class $\g$.
Thus, $H'$ is one $\sim$-class which implies that it had to be united with $\g$
in the first place, a contradiction. Hence Lemma~\ref{good-gap} applies to
$\g$. Clearly, it follows also that any chain of concatenated geometric leaves
in $B$ eventually homeomorphically maps to a periodic chain.

Let us now prove the last claim of the lemma. Since there are no critical
leaves in $B$, $(\si^*)^n|_B$ is a covering map. If the degree of
$(\si^*)^n|_B$ is $1$, then $\si^n|_\g$ is one-to-one. By a well-known result
from the topological dynamics (see, e.g., Lemma~18.8 from \cite{Milnor:2006fr})
$\g$ must be finite, a contradiction.
\end{proof}

Lemma~\ref{fat-gap-domain} shows that if $\sim$ is a lamination, then there are
two types of Fatou domains of $\lam_{\sim}$: 1) Fatou domains whose basis (the
intersection of the boundary with $\uc$) is one $\sim$-class (one
$\sim$-gap), in which case the Fatou domain corresponds to a cutpoint in the quotient space; or 2) Fatou domains for which this is not true (and which
correspond to a Fatou domain in the $J_\sim$-plane). However this distinction
cannot always be made if we just look at the geometric lamination.

For a lamination $\sim$ the induced geo-lamination $\lam_\sim$ has the property
that every geometric leaf is either disjoint from all other geometric leaves
and gaps, or contained in the boundary of a unique geometric gap $G$. For an
arbitrary geometric lamination, this is no longer the case. Hence, in general
distinct geometric gaps may intersect. If, given a geo-lamination $\lam$,
$\sim$ is a lamination such that $a\sim b$ whenever $ab=\ell\in \lam$, we say
that the lamination $\sim$ \emph{respects the geo-lamination $\lam$}. Given a
$d$-invariant geo-lamination $\lam$, let $\approx=\approx_\lam$ be the
\emph{finest lamination which respects $\lam$}. It is not difficult to see that
$\approx$ is unique and $d$-invariant. Let $\pi:\uc\to J_\approx$ be the
corresponding quotient map. It may well be the case that $\uc/\approx$ is a
single point (see \cite{bcmo08} for a characterization of quadratic geometric
laminations $\lam$ with non-degenerate $J_{\approx_\lam}$).

Let us discuss the properties of $\approx$. It is shown in \cite{bcmo08} that
$\approx$ can be defined as follows: $a\approx b$ if and only if there exists a
continuum $K\subset \uc\cup |\lam|$ containing $a$ and $b$ such that $K\cap
\uc$ is countable. By Lemma~\ref{good-gap} if $G$ is a Fatou domain of $\lam$,
then $G/\approx$ is a simple closed curve. In particular, whenever a
$d$-invariant geo-lamination $\lam$ contains a Fatou gap, then $J/\approx_\lam$
is non-degenerate. Moreover, if $F$ is an invariant Fatou domain, then the
restricted map $f_\approx:\pi(\bd(F))\to\pi(\bd(F))$ coincides with the map $f$
from Lemma~\ref{good-gap} and is conjugate to a either an
irrational rotation of a circle (if $F$ is Siegel) or
to the map $\si_m$ for $m$ equal to the degree of $\si|_F$ (in the parattracting case).
The case of a periodic Fatou domain is similar.

Suppose that $\mathcal A$ is a forward invariant family of pairwise
disjoint periodic or non-(pre)critical wandering gaps/leaves with a given family of
their preimages so that together they form a collection
$\G_{\mathcal A}$ of sets (basically, this is a collection of sets
from the grand orbits of elements of $\mathcal A$). The leaves from
the boundaries of sets of $\G_{\mathcal A}$ form a $d$-invariant
geometric prelamination $\lam_{\mathcal A}$. Clearly, the sets from
the collection $\G_{\mathcal A}$ are cells of $\lam_{\mathcal A}$.
The prelamination $\lam_{\mathcal A}$ and its closure
$\ol{\lam_{\mathcal A}}$ (which is a geo-lamination \cite{thur85})
are said to be \emph{generated} by $\mathcal A$ (then $\mathcal A$
is called a \emph{generating family}). The following important
natural case of this situation was studied by Kiwi in \cite{kiwi97}.

Given a point $y\in J_P$, denote by $A(y)$ the set of all angles whose rays
land at $y$. If $J_P$ is locally connected then $A(y)\ne \0$ for any $y\in
J_P$, however otherwise this is not necessarily so.  A point $y\in J_P$ is called
\emph{bi-accessible} if $|A(y)|>1$ (i.e., there are at least two rays landing
at $y$). By Douady and Hubbard \cite{douahubb85} if $x$ is a repelling or
parabolic periodic point (or a preimage of such point) then $A(x)$ is always
non-empty, finite, and consists of rational angles. Denote by $R$ the set of
all its periodic repelling (parabolic) \emph{bi-accessible} points and their
preimages. Let $x\in R$; also, given a set $A$ denote by $\ch(A)$ its convex
hull. Then let $G_x=\ch(A(x))$ and let $|\lam_{rat}|$ be the union of all the
sets $G_x, x\in R$. Let $\lam_{rat}$ be the collection of all chords contained
in the boundaries of all the sets $G_x$. Then $\lam_{rat}$, called the
\emph{rational geometric prelamination}, is a $d$-invariant geometric
prelamination. By \cite{thur85} the closure $\ol{\lam_{rat}}$ of $\lam_{rat}$
in the unit disk is a closed $d$-invariant geo-lamination called \emph{rational
geometric lamination}.

The situation described above may be considered in a more general way. Suppose
that we are given a geometric prelamination generated by (pre)periodic or
wandering non-(pre)critical pairwise disjoint gaps and leaves. Then any result
concerning its closure will serve as a tool for studying $\ol{\lam_{rat}}$. The
following theorem can be such a tool. If we require that all gaps or leaves in
such prelamination map onto their images in a covering fashion, we can conclude
that there are no critical leaves in the prelamination. Indeed, such leaves can
only belong to gaps/leaves disjoint from other leaves and collapsing to a point
(\emph{all-critical}). However we assume that the generating family consists of
leaves and gaps which are non-(pre)critical. Hence an all-critical cell of
the prelamination cannot come from the forward orbits of the elements of the
generating family. On the other hand, it cannot come from their backward orbits
since the generating family consists of gaps and leaves (and the image of an
all-critical gap/leaf is a point).



In Lemma~\ref{atmost4} we deal with geometric laminations. For simplicity, in
its proof speaking of leaves and gaps we actually mean geometric leaves and
gaps. By a \emph{separate leaf} we mean a leaf disjoint from all other leaves
or gaps.

\begin{lem}\label{atmost4}
Suppose that $\lam^-$ is a non-empty geometric $d$-invariant pre-lamination
generated by a generating family $\mathcal A$ such that no cell of $\lam^-$
contains a critical leaf on its boundary. Let $\lam$ be the closure of the
prelamination $\lam^-$. Then the following holds.

\begin{enumerate}

\item\label{if3}
If three leaves of $\lam$ meet at a common endpoint, then the leaf in
the middle is either a leaf from $\lam^-$ or a boundary leaf of a gap from
$\lam^-$.

\item\label{4meet}
At most four leaves of $\lam$ meet at a common end point, and if they do then
the two in the middle  are on the boundary of a gap of $\lam^-$.

\item \label{intersect}
Suppose  $G$ is a gap of $\lam$ and $xa$ is a leaf of $\lam$ such
that $xa\cap G=\{x\}$. Let $xb\subset G$ be the leaf such that $xb$
separates $G\sm xb$ from $xa\sm \{x\}$.  Then either $xb$ is a leaf
from $\lam^-$, or $G$ is a cell of $\lam^-$, or there exists a gap
$H$ of $\lam^-$ such that $xa\cup xb \subset H$. In particular, if
two gaps $G,H$ of $\lam$ meet only in a point, then there exists a
gap $K$ of $\lam^-$ such that both $G\cap K$ and $H\cap K$ are
leaves from $\lam$.

\item\label{allcrit}
Suppose that $\ell$ is a critical leaf of $\lam$. Then either $\ell$
is a separate leaf and all its images are disjoint from $\ell$, or
$\ell$ is a boundary leaf of an all-critical gap $H$ of $\lam$, all
boundary leaves of $H$ are limit leaves, and $\si^n(H)$ is disjoint
from $H$ for all $n>0$. In particular, $\ell$ is a limit leaf from
at least one side.

\item \label{perio-strict} If $G$ is a gap or leaf of $\lam$ and
$\si^n(G)\subset G$ then $\si^n(G)=G$.

\item \label{wanper} Any gap or leaf $G$ of $\lam$ either wanders or is such
that for some $m<n$ we have $\si^m(G)=\si^n(G)$.

\item \label{uncount}
If $G$ is a gap of $\lam$ such that $G'$ is infinite, then $G$ is a Fatou gap.
\end{enumerate}

\end{lem}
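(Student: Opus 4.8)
The plan is to build everything on the single structural fact that $|\lam|=\ol{|\lam^-|}$, so that every leaf of $\lam$ is either a leaf of $\lam^-$ or a Hausdorff limit of leaves of $\lam^-$, and a leaf of $\lam$ with a gap of $\lam$ on each side (an isolated leaf) is necessarily already a leaf of $\lam^-$; together with the hypothesis — which in particular forbids a separate critical leaf of $\lam^-$ and a critical boundary leaf of any element of $\G_{\mathcal A}$, hence forbids critical leaves of $\lam^-$ altogether — this pins down where the ``new'' leaves of $\lam$ (those not in $\lam^-$) can lie, namely inside complementary gaps of $\lam^-$. I would prove the claims in the stated order, since (4) uses (1)--(3) and (7) uses (4)--(6). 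For (1): if the middle leaf $\ell_2$ at a triple point $x$ is not in $\lam^-$, the two outer leaves block the $\lam^-$-leaves accumulating on $\ell_2$ from their sides, so those leaves converge onto $\ell_2$ from within one of the two sectors at $x$ bounded by $\ell_2$ and an outer leaf; the complementary gaps of $\lam^-$ just inside them shrink onto $\ell_2$, and in the limit one obtains a gap of $\lam^-$ with $\ell_2$ on its boundary, which, tracing that boundary leaf back to a boundary leaf of an element of $\G_{\mathcal A}$, gives the conclusion. For (2): a leaf meeting another leaf at $x$ cannot be a separate leaf of $\lam^-$, so by (1) the middle leaves of a configuration of four or more are boundary leaves of elements of $\G_{\mathcal A}$; two consecutive such bound a common element of $\G_{\mathcal A}$ on the side where they face each other, and in a configuration of five the two elements so obtained for the two inner-adjacent pairs are distinct yet both contain $x$, contradicting pairwise disjointness of $\G_{\mathcal A}$; the case of four yields that the two middle leaves bound a common element of $\G_{\mathcal A}$. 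For (3): examine the sector at $x$ between $xa$ and the boundary leaf $xb$ of $G$; since $G$ contains no leaf of $\lam^-$ in its interior, if $xb\nin\lam^-$ the $\lam^-$-leaves accumulating on it lie on the $xa$-side, and the complementary gap $H$ of $\lam^-$ bordering $xb$ from that side contains both $xa$ and $xb$; the ``in particular'' clause follows by applying this with $xa$ replaced by a boundary leaf of the second gap.

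For (4): a critical leaf $\ell$ of $\lam$ cannot be isolated, for otherwise it would lie in $\lam^-$ and on the boundary of an element of $\G_{\mathcal A}$ (or of a complementary gap of $\lam^-$, each of whose bounding leaves is a boundary leaf of such an element), contradicting the hypothesis. So by Lemma~\ref{no-crit-leaf} applied to $\lam$, $\ell$ is a separate leaf or a boundary leaf of an all-critical union of gaps all of whose boundary leaves are limit leaves; in the latter case adjacency of two all-critical gaps would create an isolated boundary leaf, so the union is a single all-critical gap $H$, and since $\si(H)$ is a point we get $\si^n(H)\cap H=\0$ for $n>0$ once one checks, via backward invariance, that the collapse point is not in $\ol H$; in the separate case $\si(\ell)$ is a point, and one must still rule out $\si(\ell)$ — and, more generally, any $\si^n(\ell)$ — being an endpoint of $\ell$, which by the $d$ disjoint preimages of $\ell$ would force a second leaf of $\lam$ sharing an endpoint with the separate leaf $\ell$; then all images of $\ell$ are disjoint from $\ell$, and $\ell$ is a limit leaf from at least one side.

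Claims (5) and (6) are the usual ``no strict self-return'': by Theorem~\ref{kiwi-wan} a gap or leaf with infinite basis is not a wandering polygon, so two of its images meet, hence share a boundary leaf or are nested, and iterating together with Lemma~\ref{lem:expand} — which forbids a nontrivial nested sequence of images from shrinking to a point — forces $\si^m(G)=\si^n(G)$ for some $m<n$; the finite-basis case either reduces to this (if images meet) or $G$ wanders, and a strict inclusion $\si^n(G)\subsetneq G$ produces precisely such a non-stabilizing nested sequence, so $\si^n(G)=G$. For (7), in the spirit of Lemma~\ref{fat-gap-domain}: an infinite gap $G$ is (pre)periodic by (6), so we may assume $\si^n(G)=G$, and to invoke Lemma~\ref{good-gap} we must exclude a boundary leaf $\ell$ of $G$ with $\si^m(\ell)$ a leaf and $\si^{m+k}(\ell)$ an endpoint of it; such $\ell$ is (pre)critical, the first critical leaf in its orbit lies on the boundary of an image of $G$ and, by (4), also on the boundary of an all-critical gap $H$; these gaps are distinct (equality would make $G$ (pre)critical, impossible for an infinite periodic gap), so they share a boundary leaf — necessarily an isolated leaf, contradicting the conclusion of (4) that the boundary leaves of $H$ are limit leaves. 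Hence Lemma~\ref{good-gap} applies and $G$ is a Fatou gap.

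The hard part will be the local bookkeeping in (1)--(3): tracking exactly how the leaves of $\lam\sm\lam^-$ sit inside complementary gaps of $\lam^-$ and relating those gaps to the pairwise disjoint family $\G_{\mathcal A}$, together with nailing down the ``all images disjoint'' refinements in (4); once (4) is in hand, the dynamical claims (5)--(7) should be routine applications of Theorem~\ref{kiwi-wan} and Lemmas~\ref{lem:expand} and \ref{good-gap}.
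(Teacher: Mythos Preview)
Your treatment of (1)--(4) and (7) is in the right spirit and close to the paper's, though for (1)--(3) you overcomplicate: the paper simply observes that the middle leaf (or the leaf $xb$ in (3)) is \emph{isolated} in $\lam$ --- it has a gap of $\lam$ on each side --- and hence lies in $\lam^-$, exactly the principle you state in your preamble but then fail to apply directly. Your ``shrinking complementary gaps'' detour for (1) is unnecessary and not clearly correct as stated.

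The real gap is in (5) and (6), which you dismiss as routine ``no strict self-return'' via nested sequences and Lemma~\ref{lem:expand}. This is where the argument fails. For (5): if $G$ is a finite gap and $\si^n(G)=ab$ is a boundary leaf of $G$, the iterates $G,\,ab,\,\si^n(ab),\ldots$ need not form a strictly shrinking nested sequence --- the paper shows $a,b$ become $\si^{2n}$-fixed, so the sequence \emph{stabilizes} at $ab$ and Lemma~\ref{lem:expand} gives nothing. The paper's actual argument analyzes whether the adjacent leaf $ca\subset\bd(G)$ lies in $\lam^-$ or is a limit leaf, and derives a contradiction in each case from the disjointness of cells of $\lam^-$ and the behavior of images of nearby leaves. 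For (6): your dichotomy ``two images meet, hence share a boundary leaf or are nested'' is false --- two gaps of $\lam$ can meet in a single point. The paper handles three separate cases (leaf, gaps meeting along a leaf, gaps meeting at a point), and in each the images are not nested but \emph{concatenated}, forming a chain that converges to a $\si^n$-fixed leaf or point, contradicting local expansion of $\si$. The meeting-at-a-point case specifically uses (2)/(3) to find a gap $H\in\lam^-$ containing the relevant boundary leaves, and then the disjointness of the cells of $\lam^-$ forces $H$ to wander, driving the concatenation argument.

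In short, (5) and (6) require exactly the ``local bookkeeping'' you flagged as the hard part of (1)--(3), and are not routine consequences of Theorem~\ref{kiwi-wan} and Lemma~\ref{lem:expand}.
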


\begin{proof}
(\ref{if3}) Suppose that $ax,bx,cx$ are three leaves of $\lam$ with $x<a<b<c$
in the counterclockwise order $<$ on $S^1$. Then $bx$ is isolated. Hence  $bx$
is either a separate leaf from $\lam^-$ or a boundary leaf of a gap from
$\lam^-$.

(\ref{4meet}) Suppose that $\lam$ contains the  leaves $a_1x,\ a_2x,\
a_3x,\dots, a_n x$ with $n\ge 4$. We may assume that $a_1<a_2\dots<a_n<x$.
Since the leaves $a_2x,a_3x,\dots, a_{n-1}x$ are isolated and must come from
$\lam^-$. Since cells of  $\lam^-$ are pairwise disjoint, $n=4$ and the
leaves $a_2x,a_3x$ are on the boundary of a gap of $\lam^-$.

(\ref{intersect}) Suppose that a gap $G$ and a leaf $xa$ of $\lam$
meet only at $x$. Let $xb\subset G$ be the leaf which separates
$G\sm xb$ from $xa\sm\{x\}$. Then $xb$ is isolated and, hence,
either a separate leaf in $\lam^-$ or a boundary leaf of a gap of
$\lam^-$. If the former holds, or if $G$ is a gap of $\lam^-$, we
are done. Otherwise there exists a gap $H$ of $\lam^-$ which
contains $xb$. It now follows easily that $xa\subset H$ as desired.

(\ref{allcrit}) The first part immediately follows from
Lemma~\ref{no-crit-leaf} and the assumption that there are no critical leaves
in the prelamination $\lam^-$. This implies that the point $\si(H)$ is
separated by leaves of $\lam^-$ from all other points of $\uc$. Hence by the
properties of geo-laminations $\si^n(H)\subset H$ is impossible.

(\ref{perio-strict}) By (\ref{allcrit}) we may assume that $G$ is a
gap which contains no $\si^n$-critical leaves in $\bd(G)$ and
$\si^n(G)$ is not a point. Now, if $G$ is a gap and $\si^n(G)=ab$ is
a boundary leaf of $G$ then $\si^{2n}(a)=a, \si^{2n}(b)=b$ and $G$
is a finite gap. Denote by $ca$ the other leaf in $\bd(G)$
containing $a$. Suppose first that $G$ is a cell of $\lam^-$. Then
$\si^n(G)=ab$ is a leaf of $\lam^-$ strictly contained in the
boundary of $G$, a contradiction. Hence some boundary leaves of $G$
may come from $\lam^-$, but there are no two consecutive leaves like
that in $\partial G$. Thus, if $ca$ is a leaf of $\lam^-$ then
$\si^n(ca)=ab$ (because there are no critical leaves in $\bd(G)$) is
also a leaf of $\lam^-$ and we get a contradiction by the above.
Thus, $ca$ is not a leaf of $\lam^-$ which implies that $ca$ is not
on the boundary of a gap $H\ne G$ (otherwise $ca$ is isolated in
$\lam$ and hence $ca$ must be a leaf of $\lam^-$, a contradiction).
We conclude that $ca$ is a limit leaf from the outside of $G$.
However then the $\si^{2n}$-images of leaves converging to $ca$ will
cross $G$, a contradiction.

(\ref{wanper}) Suppose that $G$ is a gap or leaf from $\lam$ for
which the conclusions of the lemma do not hold. If $G$ is infinite,
then by Theorem~\ref{kiwi-wan} $G$ is preperiodic. So we may assume
that $G$ is finite and that $|G'|=|\si^i(G)'|$ for all $i>0$. By the
assumption about $G$ we may assume that for some $n>0$, $G\cap
\si^n(G)\ne\0$ and $\si^n(G)\ne G$ (in particular, $G$ is not a
point because otherwise we would have $\si^n(G)=G$) and for no $i\ne
j$ we have $\si^i(G)=\si^j(G)$. Since the cells of $\lam^-$ are
pairwise disjoint, $G$ is not a cell of $\lam^-$. Moreover, it is
easy to see that no leaf in $\bd(G)$ is periodic. Indeed, otherwise
under the map, which fixes the endpoints of this leaf, $G$ will have
to be mapped onto itself (see ~\ref{perio-strict}).

Suppose now that $G$ is a leaf. If an endpoint of $G$ is $\si^n$-fixed then
we would have more than 4 leaves of $\lam$ coming out of this point,
contradicting (\ref{4meet}). Hence $\si^n(G)$ must be a leaf, ``concatenated''
to $G$, $\si^{2n}(G)$ is a leaf ``concatenated'' to $\si^n(G)$, and so on.
Since these leaves do not intersect inside $\disk$, it follows that they
converge to a leaf or to a point $\lim_i \si^{ni}(G)$ which is $\si^n$-fixed.
This contradicts the fact that $\si^n$ is locally repelling.

Suppose next that $G$ is a gap such that $G$ and $\si^n(G)$ meet
along the (isolated) leaf $\ell$. By ~\ref{perio-strict}, $\si^n(G)$
is a gap. Hence the leaf $\ell$ is isolated in $\lam$ which implies
that $\ell\in\lam^-$. Since there are no periodic leaves in
$\bd(G)$, $\si^n(\ell)\cap\ell=\0$. Repeating this argument we see
that leaves $\si^{in}(\ell)$ are such that the gaps $\si^{in}(G)$
are ``concatenated'' (attached) to each other at these leaves. This
again, as in the previous paragraph, implies that the limit $\lim
\si^{ni}(\ell)$ exists and is either a $\si^n$-fixed leaf in $\lam$
or a $\si^n$-fixed point in $\uc$. This contradicts the fact that
$\si$ is locally repelling.

Hence it remains to consider the case when $G$ and $\si^n(G)$ meet in a point
$x\in \uc$. By (\ref{4meet}) there exist boundary leaves $xa\subset\bd(G)$ and
$xb\subset\bd(\si^n(G))$ and there exists a gap $H\in\lam^-$ which contains
both of these leaves in its boundary. If $H$ is periodic, then $xa$ and $xb$
are periodic too, a contradiction. Hence, $H$ is not periodic. Since
$H\in\lam^-$, $H$ must wander and $\si^n(H)\cap \si^m(H)=\0$ when $n\ne m$. It
follows that sets $\si^{in}(G)$ are all ``concatenated'' at points $x,
\si^n(x), \dots$, the set $\bigcup_{i=0}^\infty \si^{ni}(G)$ is connected set,
and $\lim\si^{ni}(G)$ exists and is either a leaf in $\lam$ or point in $\uc$
which is fixed under $\si^n$. As before, this contradicts the fact that $\si^n$ is
locally repelling and completes the proof of (\ref{wanper}).

(\ref{uncount}) Follows immediately from (\ref{allcrit}) and
Lemma~\ref{good-gap}.(2).

\end{proof}

We are ready to construct a non-degenerate lamination compatible with $\lam^-$
(or, equivalently, with $\lam$). Suppose that $\mathcal A=\{G_\al\}$ is a
generating collection of finite gaps/leaves and $\lam^-$ is a non-empty
geometric $d$-invariant pre-lamination generated by a generating family
$\mathcal A$ such that there are no critical leaves in $\lam^-$. Set
$\lam=\ol{\lam^-}$ and $\approx_{\lam}=\approx_{\mathcal A}$.

\begin{thm}\label{periodic-prel}
We have that $\uc/\approx_{\mathcal A}$ is non-degenerate and any equivalence
class of $\approx_{\mathcal A}$ is finite. Moreover, $\approx_\A$ has no Siegel
domains. In particular, if $R\ne \0$, then the finest lamination
$\approx_{rat}$ which respects $\lam_{rat}$, is not degenerate and in the
geometric lamination $\lam_{\approx_{rat}}$ every leaf not contained in the
boundary of a Fatou domain is a limit of leaves from $\lam_{rat}$.
\end{thm}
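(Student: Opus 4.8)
The plan is to prove the three assertions of Theorem~\ref{periodic-prel} in the order: (i) every $\approx_{\A}$-class is finite; (ii) $\uc/\approx_{\A}$ is non-degenerate and $\approx_{\A}$ has no Siegel domains; (iii) the specialization to $\lam_{rat}$. Throughout put $\lam=\ol{\lam^-}$ and use freely the structure of $\lam$ from Lemmas~\ref{good-gap}, \ref{no-crit-leaf}, \ref{fat-gap-domain}, and especially \ref{atmost4}, together with the description (recalled before Lemma~\ref{atmost4}) that $a\approx_{\A}b$ if and only if $a$ and $b$ lie in a continuum $K\subset\uc\cup|\lam|$ with $K\cap\uc$ countable; in that case all of $K\cap\uc$ lies in a single $\approx_{\A}$-class.

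First I would prove that every class $\mathbf g$ is finite, which I expect to be the main obstacle. The idea is to localize $\mathbf g$ inside the cells of $\lam$: by Lemma~\ref{atmost4}(\ref{4meet}) at most four leaves of $\lam$ issue from any point, and by Lemma~\ref{atmost4}(\ref{uncount}) an infinite gap of $\lam$ is a Fatou gap, on whose boundary every chain of concatenated leaves has length at most the constant $N$ of Lemma~\ref{good-gap}(1) and whose $\approx$-quotient is a circle. Hence $\mathbf g$ could be infinite only through an infinite chain of concatenated $\lam$-leaves lying on no Fatou-gap boundary, or through an accumulation of such chains at a limit leaf --- the second possibility governed by Lemma~\ref{atmost4}(\ref{if3})--(\ref{intersect}), which describe exactly how leaves and gaps of $\lam$ may abut. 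So the heart of the matter is the claim that \emph{every chain of concatenated leaves of $\lam$ is finite}: given a chain $L$, either $\ch(L)$ wanders and Theorem~\ref{kiwi-wan} bounds $|L|$, or some iterate of $\ch(L)$ is (pre)periodic (Lemma~\ref{atmost4}(\ref{wanper})) and then the spiralling argument from the proof of Lemma~\ref{good-gap}(1) applies --- an infinite concatenated chain would converge to a $\si^n$-fixed leaf or point, contradicting the local expansion of Lemma~\ref{lem:expand}. Assembling these bounds shows $\mathbf g$ is finite; the delicate part is making rigorous the passage from ``all chains of $\lam$ are finite'' to ``all $\approx_{\A}$-classes are finite,'' i.e.\ ruling out that finitely-bounded chains and finite gaps of $\lam$ cluster into an infinite class.

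Granting (i), $\uc$ (being infinite) is not a single finite class, so $\approx_{\A}$ is non-degenerate. For the absence of Siegel domains, suppose $G$ were a periodic Siegel Fatou gap of $\lam$, equivalently of $\lam_{\approx_{\A}}$, of period $m$. Then, exactly as in Case~1 of the proof of Lemma~\ref{good-gap} (together with Lemma~\ref{lem:periodic_gap}), all of the infinitely many boundary leaves of $G$ would be (pre)critical, and their critical iterates would be critical leaves of $\lam$ lying on the boundaries of the (periodic) gaps in the orbit of $G$. But by Lemma~\ref{atmost4}(\ref{allcrit}) a critical leaf of $\lam$ is separate or bounds an all-critical \emph{wandering} gap, and, since $\lam^-$ is generated by non-(pre)critical cells, an analysis parallel to the proof of Lemma~\ref{fat-gap-domain} (no all-critical cell can arise from the forward or backward orbits of the generating family) yields a contradiction with those critical leaves lying on periodic-gap boundaries. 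Hence $(\si^*)^m|_{\bd(G)}$ is a covering map of degree $>1$, i.e.\ $G$ is parattracting, never Siegel.

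Finally, for (iii), assume $R\ne\0$. The rational generating family $\A_{rat}=\{G_x=\ch(A(x)):x\in R\}$ satisfies the hypotheses of the theorem: each $A(x)$ with $x\in R$ is finite by Douady--Hubbard, so the $G_x$ are finite gaps/leaves; distinct external rays do not cross, so the $G_x$ are pairwise disjoint; $\A_{rat}$ is forward invariant; and $\lam_{rat}$ has no critical leaves, by the discussion preceding Lemma~\ref{atmost4} (non-(pre)critical generating cells mapping as coverings admit no all-critical cell in their forward or backward orbits). So, with $\lam=\ol{\lam_{rat}}$, parts (i) and (ii) give that $\approx_{rat}=\approx_{\A_{rat}}$ is non-degenerate, has only finite classes, and has no Siegel domains. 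For the last assertion, one checks --- using Lemma~\ref{atmost4} together with the density of the backward orbit of any non-exceptional point of $J_P$, which forces the preimage-gaps contributing to $\lam_{rat}$ to accumulate --- that the only leaves of $\lam_{\approx_{rat}}$ not accumulated by leaves of $\lam_{rat}$ are those lying on the boundary of a Fatou domain of $\lam$; equivalently, every leaf of $\lam_{\approx_{rat}}$ not contained in such a boundary is a limit of leaves from $\lam_{rat}$.
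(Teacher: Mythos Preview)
Your strategy matches the paper's: split into wandering vs.\ (pre)periodic via Lemma~\ref{atmost4}(\ref{wanper}), bound the wandering case by Theorem~\ref{kiwi-wan}, and handle the periodic case by a spiralling/expansion contradiction; the Siegel argument and the specialization to $\lam_{rat}$ are essentially as in the paper. However, the ``delicate part'' you yourself flag is exactly what the paper spends most of its proof on, and your sketch does not yet close it.

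Concretely: you phrase the finiteness argument in terms of \emph{chains of concatenated leaves}, but an $\approx_{\A}$-class is in general not carried by a single chain. It sits inside a connected component of the union of \emph{finite gaps} of $\lam$ (possibly tree-like, with gaps meeting along leaves or at vertices). The paper makes this precise by defining, for each gap/leaf $G$, the \emph{super gap} $G^+$ as the closure of the convex hull of the component of $\bigcup\{\text{finite gaps of }\lam\}$ containing $G$, and then proves two Claims: (1) if $G$ is non-(pre)critical wandering then $G^+$ is a finite wandering polygon (by an inductive ``add one adjacent gap $H$ at a time'' argument, showing $G'\cup H$ still wanders and then invoking Theorem~\ref{kiwi-wan}); (2) if $G$ is (pre)periodic then $G^+$ is finite (reduce to the fixed case with all boundary leaves $\si^n$-fixed, and show any adjacent gap $H$ must also be $\si^n$-fixed, so one stays inside a finite $\si^n$-fixed configuration or hits a Fatou gap boundary). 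These super gaps, together with separate points, are then shown to be pairwise disjoint and to form the $\approx_{\A}$-classes. Your invocation of Lemma~\ref{atmost4}(\ref{wanper}) for $\ch(L)$ is also not quite licit as stated: that lemma is about gaps/leaves of $\lam$, not about the convex hull of an arbitrary chain, so the dichotomy has to be run gap-by-gap (as in the paper's inductive Claims), not on $\ch(L)$ directly. Once you recast your argument around super gaps rather than chains, the pieces you have assembled slot in exactly as the paper uses them.
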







\begin{proof}
Let $\approx$ be the equivalence relation in $\uc$ defined as follows:
$x\approx y$ if and only if there exists a continuum $K\subset S^1\cup |\lam|$
such that $x,y\in K$ and $K\cap S^1$ is countable (such continua are called
\emph{$\omega$-continua}). Then $\approx$ is the finest closed equivalence
relation which respects $\lam$; moreover, $\approx$ is an invariant lamination
(\cite{bcmo08}).

Now, suppose first that $\lam$ has no gaps. Then the leaves from $\lam$ fill
the entire disk. If there are two leaves coming out of one point, then there
must be infinitely many leaves coming out of the same point which is impossible
by Lemma~\ref{atmost4}. Hence all leaves of $\lam$  are pairwise disjoint and
equivalence classes of $\approx$ are endpoints of (possibly degenerate) leaves.
From now in the proof we assume that $\lam$ has gaps. It now follows easily
that gaps of $\lam$ are dense in $\disk$ and so if an $\omega$-continuum $K$
meets a leaf $\ell\in \lam$ and $\disk\sm \ell$, then $K$ must contain one of
the endpoints of $\ell$.

In the proof below we construct so-called \emph{super gaps} and associate them
to some leaves and gaps of $\lam$. If $G$ is a leaf of $\lam$ disjoint from all
gaps of $\lam$ we call it a \emph{separate} leaf (of $\lam$). In this case put
$G^+=G$ and call it a \emph{super gap} associated with $G$. Clearly, $G^+$ is a
two sided limit of leaves from $\lam^-$. Let $\mathfrak{G}=\bigcup\{G\mid G
\text{ is a \emph{finite} gap of }\lam\}$. For any gap $G$ of $\lam$, let $G^+$
be the closure of the convex hull of the component of $\mathfrak{G}$ which
contains $G$. Again, call $G^+$ a \emph{super gap} associated with $G$. By
Lemma~\ref{atmost4}.(\ref{wanper}), a gap/leaf $G$ of $\lam$ either wanders or
is such that $\si^m(G)=\si^n(G)$ for some $m<n$.

\smallskip


\noindent \textbf{Claim 1.} \emph{Suppose that $G$ is a non-(pre)critical wandering gap/leaf
of $\lam$. Then $G^+$ is either a separate leaf or a finite union of finite
gaps whose convex hull is a non-(pre)critical wandering polygon and every
leaf  in its boundary is a limit of leaves from $\lam^-$.}

\smallskip

\noindent \emph{Proof of Claim} 1.\, The case when $G$ is a separate leaf
immediately follows from the definition of a super gap; in this case $G^+=G$ is
a separate leaf. Suppose next that $G$ is a leaf which meets a gap $H$ of
$\lam$ or $G$ is a non-(pre)critical wandering gap. By Lemma~\ref{atmost4}.(\ref{4meet}), there
exist gaps $G_0, \dots, G_n$ such that $G\subset G_0$, $G_i\cap G_{i+1}$ is a
leaf and $G_n=H$ (if $G\cup H$ is a wandering gap, then we set $G_0=H=G_n$). By
Lemma~\ref{atmost4}.(\ref{wanper}) $G_0$ is either non-(pre)critical wandering or (pre)periodic,
and since $G$ is non-(pre)critical wandering, so is $G_0$.

Assume, by way of induction, that $G'$ is a finite union of finite gaps which
is a non-(pre)critical wandering polygon and $H$ is a gap of $\lam$ which meets $G'$ along the
leaf $ab$. Then $ab$ is non-(pre)critical wandering because it comes from $G'$. Again since by
Lemma~\ref{atmost4} $H$ is either non-(pre)critical wandering or (pre)periodic, we see that $H$
also wanders. In particular, by Theorem~\ref{kiwi-wan} $H$ is finite. We claim
that $H\cup G'$ is a non-(pre)critical wandering polygon. For suppose this is not the case. Then
we may assume that $\si(G')\cap H\ne\0$. Moreover, the common leaf $ab$ of $G'$
and $H$ is isolated and hence comes from $\lam^-$. Therefore it is not critical
and its image $\si(ab)$ is a leaf again. Clearly, $\si(ab)$ is the leaf shared
by $\si(G')$ and $\si(H)$. Repeating this argument, we get a sequence of gaps of
$\lam$ ``concatenated'' at images of the leaf $ab$. Similarly to the arguments
in the proof of Lemma~\ref{atmost4}.(\ref{wanper}) it implies that the orbit of
$ab$ converges to a point or a leaf but never maps into it which is impossible
because of repelling properties of $\si$.


It follows that $G^+$ is a non-(pre)critical wandering polygon and, by Theorem~\ref{kiwi-wan},
$|G^+\cap S^1|\le 2^d$. Hence $G^+$ is finite union of finite gaps. Note that
every leaf on the boundary of $G^+$ is a limit of leaves from $\lam^-$ as
desired. This completes the proof of Claim 1. \qed

Observe that by Lemma~\ref{atmost4}.(\ref{uncount}) for every infinite gap $G$
of $\lam$ the set $G'$ consists of a Cantor set $G'_c\subset \uc$ and a
countable collection of finite sets $G'_1, G'_2, \dots$ of cardinality at most
$k$ ($k$ depends on $G$) such that for every $i$ the set $G'_i$ is the
intersection of $G'$ and a complementary to $G_c$ subarc $U_i$ of $\uc$. If
$|G'_i|>1$ we connect the endpoints of $U_i$ with a leaf $\ell$ and add $\ell$
to the lamination $\lam$. It is easy to see that the resulting extension of the
geo-lamination $\lam$ is a geo-lamination itself. From now on we will use the
notation $\lam$ for the new extended geo-lamination.

Suppose next that $G$ is a finite (pre)periodic gap or a (pre)periodic leaf of
$\lam$. If some forward image of $G$ contains a critical leaf on its boundary,
then we may assume that $\si(G)$ is a point by
Lemma~\ref{atmost4}.(\ref{allcrit}). Hence each leaf in the boundary of $G$ is
a limit of leaves from $\lam^-$ and $G^+=G$. If no forward image of $G$
contains a critical leaf on its boundary, then from some time on
$|\si^k(G')|>1$ stabilizes and by Lemma~\ref{atmost4}.(\ref{wanper}) we may
assume that $\si^m(G)=G$ for some $m>0$ and $|G'|\ge 2$. Choose $n\ge 0$ such
that $\si^n(G)=G$ and each leaf in the boundary of $G$ is fixed.

\smallskip

\noindent \textbf{Claim 2.} \emph{Suppose $G$ is a (pre)periodic finite gap or
(pre)periodic leaf of $\lam$. Then $G^+$ is a finite polygon and any leaf in
the boundary of $G^+$ is either a limit of leaves from $\lam^-$ or is contained
in an uncountable gap of $\lam$. Moreover, if $G$ is an $n$-periodic gap/leaf
then $G^+\supset G$ is the convex hull of a subset of the component of the set
of leaves from $\lam$ with $\si^n$-fixed endpoints.}

\smallskip

\noindent \emph{Proof of Claim} 2.\, Suppose that $\si^n(G)=G$ and that all
points of $G'$ are fixed. If $G$ is a separate leaf then $G^+=G$ and we are
done. If $G$ is a non-separate leaf then it is a boundary leaf of a gap $Q$.
Since the endpoints of $G$ are $\si^n$-fixed, either $\si^n(Q)=G$ or, because
the map $\si^n|_\bd(Q)$ is positively oriented, $\si^n(Q)=Q$. The former is
impossible by Lemma~\ref{atmost4}.(\ref{perio-strict}). Hence we find a gap
$Q\supset G$ whose all vertices are $\si^n$-fixed. Finally, if $G$ is a gap
then we can set $Q=G$. Thus, if $G$ is not a separate leaf, we can always find
a gap $Q\supset G$ whose all vertices are $\si^n$-fixed.

Suppose, by induction, that $G$ is a finite polygon which is a finite union of
gaps from $\lam$. Moreover, suppose that the boundaries of the gaps consist of
leaves with $\si^n$-fixed endpoints. Let $H$ be any gap of $\lam$ which meets
$G$ along the leaf $ab$. By Lemma~\ref{atmost4}.(\ref{perio-strict}) $\si^n(H)$
cannot be equal to $ab$, and since $\si^n|_{\bd(H)}$ is a positively oriented
covering map we see that $\si^n(H)=H$. If $H$ is finite, all leaves in the
boundary of $H$ must also be fixed. Otherwise $H$ is an infinite, and hence
uncountable, gap. It follows that $G^+\supset G$ is a finite union of
$\si^n$-fixed gaps and that every leaf in the boundary of $G^+$ is  either a
limit of leaves from $\lam^-$ or is contained in an uncountable gap from
$\lam$.

Now let $G$ be any (pre)periodic finite gap of $\lam$. Then there exists $n$
such that $\si^n(G)=H$ is periodic. If $H$ is a point then by
Lemma~\ref{atmost4}.(\ref{allcrit}) all leaves in the boundary of $G$ are limit
leaves and  hence $G^+=G$. Otherwise if $H$ is a separate leaf it follows that
all boundary leaves of $G$ are limit leaves and we are done. Thus by the
previous paragraph we may assume that $H^+$ is a finite union $H=H_1,\dots,H_n$
of gaps of $\lam$. Let $G^+$ be the component of $\bigcup_i \si^{-n}(H_i)$
which contains $G$. Then $G^+$ is a finite union of finite gaps and every leaf
in the boundary of $H^+$ is either a limit of leaves from $\lam^-$ or on the
boundary of an uncountable gap from $\lam$.  \qed

We now pass on to the proof of the fact that $\approx_\lam=\approx_{\mathcal
A}$ is non-degenerate. Indeed, consider all the super gaps constructed in Claim
1 and Claim 2 (i.e., all the sets $G^+\cap \uc$ for different gaps and leaves
$G$ of the geo-lamination $\lam$). Also, if a point $x\in \uc$ does not belong
to any gap or leaf of $\lam$ we call it a \emph{separate point} and add it to
the family of sets which we construct. Clearly, all sets in the just
constructed family $\mathcal F$ of super gaps and separate points are closed.
Moreover, by the definition two sets in the family $\mathcal F$ are disjoint.
Indeed, two super gaps cannot meet over a leaf by the definition. If they meet
at a vertex then by Lemma~\ref{atmost4}.(\ref{if3}),
Lemma~\ref{atmost4}.(\ref{4meet}) and by the construction of the extended
lamination $\lam$ they again must be in one super gap. Hence all elements of
$\mathcal F$ are pairwise disjoint.

Considering elements of $\mathcal F$ as equivalence classes we get a closed
equivalence $\approx$ on $\uc$ which respects $\lam^-$ and $\lam$ (it is easy
to see that $\approx$ is indeed closed). By the construction and Claims 1 and
2, all $\approx$-classes are finite. Because of the definition of a super gap,
if an equivalence respects $\lam^-$ (and hence $\lam$), it cannot split a
$\approx$-class (i.e., a set $G^+\cap \uc$ for some gap/leaf $G$ of $\lam$)
into two or more classes of equivalence. Therefore $\approx$ is the finest
equivalence which respects $\lam^-$. As was explained in the beginning of the
proof of Theorem~\ref{periodic-prel}, by \cite{bcmo08} there always exists the
finest equivalence which respects a geometric lamination, and from what we have
just proven if follows that this finest equivalence $\approx_\lam$ coincides
with $\approx$. By Claims 1 and 2 super gaps are finite, thus all
$\approx$-classes are finite and hence $\approx$ is non-degenerate.

Finally assume that $U$ is a Siegel domain of $\approx$. Then $\bd(U)$ must
contain a critical leaf because otherwise by a well-known result from the
topological dynamics (see, e.g., Lemma~18.8 from \cite{Milnor:2006fr})
$\bd(U)\cap \uc$ must be finite, a contradiction. However by
Lemma~\ref{atmost4}.(\ref{allcrit}) this is impossible. The rest of
Theorem~\ref{periodic-prel} which deals with the rational lamination follows
immediately from the construction. This completes the proof of the theorem.
\end{proof}

\section{The existence of a locally connected model for unshielded planar continua}\label{model}

As outlined in Section~\ref{intro}, in this section we prove
Theorem~\ref{main1} and show the existence of the finest model and the finest
map for any unshielded planar continuum $Q$. We do this in
Subsection~\ref{unshield}. In Subsection~\ref{well-crit} we suggest a
topological condition sufficient for an unshielded continuum $Q$ to have a
non-degenerate finest model. This will be used later when in
Theorem~\ref{main2} we establish the criterion for the connected Julia set of a
polynomial to have a non-degenerate finest model.

\subsection
{The existence of the finest map $\ph$ and the finest locally connected model}
\label{unshield}

In what follows $Q$ will always denote an unshielded continuum in the plane and
$U_\iy$ will always denote the corresponding simply connected neighborhood of
infinity in the sphere, called the \emph{basin of infinity} (so that
$Q=\bd(U_\iy)$).

We begin by constructing the finest monotone map $\ph$ of $Q$ onto a locally
connected continuum.  The map will be constructed in terms of impressions of
the continuum $Q$. Since $Q = \bd(U_\infty)$, there is a unique conformal
isomorphism $\Psi : U_\iy \rightarrow \D$ which has positive real derivative at
$\infty$. (Note that the domain of the map is in the dynamical plane.) Define the \emph{principal set of the external angle $\alpha \in
\ucirc$} as
\[\acc(\alpha) = Q \cap
\overline{ \Psi^{-1}( \{re^{2 \pi i \alpha} \, \mid \, r \in [0,1)\}
)}.\]

Define the \emph{impression
of the external angle $\alpha \in \ucirc$} as
\[ \imp(\alpha) = \Big\{ \lim_{i \rightarrow \infty}
  \Psi^{-1}(\alpha_i) \, \mid \, \{\alpha_i \, \mid \, i > 0\} \subset \D
\text{ and }\lim_{i \rightarrow \infty} \alpha_i = \alpha \Big\}. \]

The \emph{positive wing (of an impression)} is defined as follows:

\[
\begin{split}
  \imp^+(\alpha) = \Big\{ \lim_{i \rightarrow \infty}
  \Psi^{-1}(\alpha_i) \, \mid \, \{\alpha_i \, \mid \, i > 0\}
  \subset \D \text{ and }&\\
  \lim_{i \rightarrow \infty} \alpha_i = \alpha \text{ with }
  \arg(\alpha_i) \ge \arg(\alpha)& \Big\}.
\end{split}
\]

Similarly, the \emph{negative wing (of an impression)} is defined as follows:
\[
\begin{split}
  \imp^-(\alpha) = \Big\{ \lim_{i \rightarrow \infty}
  \Psi^{-1}(\alpha_i) \, \mid \, \{\alpha_i \, \mid \, i > 0\}
  \subset \D \text{ and }&\\
  \lim_{i \rightarrow \infty} \alpha_i = \alpha \text{ with }
  \arg(\alpha_i) \le \arg(\alpha)& \Big\}.
\end{split}
\]
The differences between these sets are illustrated in
Figure~\ref{fig:impressions}.
\begin{figure}
  \centering
  \includegraphics[width=0.45 \textwidth]{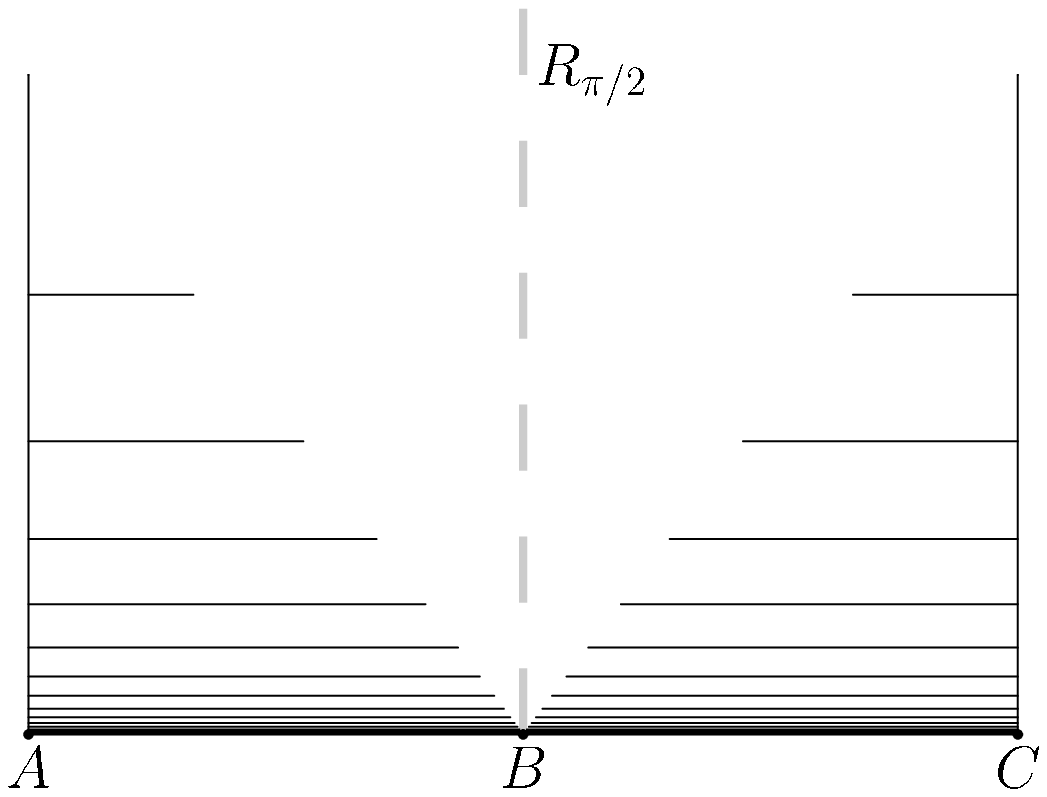}
  \hfill
  \includegraphics[width=0.45 \textwidth]{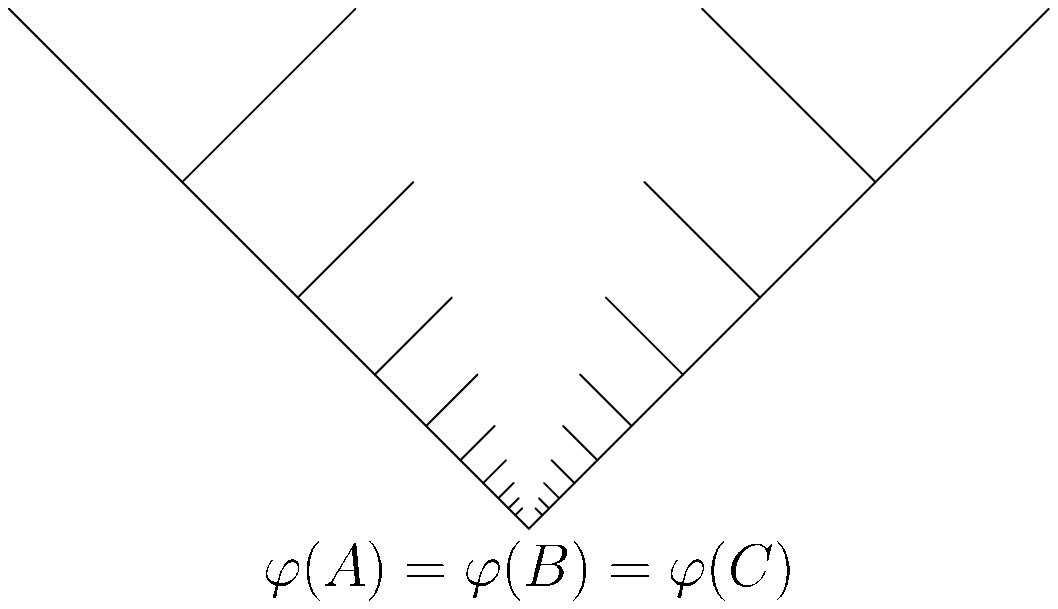}
  \caption{On the left is depicted a continuum with an external ray
    for which the impression, positive wing, negative wing, and
    principal sets are distinct.  The positive wing is the line
    segment joining $A$ and $B$, while the negative wing is the line
    segment joining $B$ and $C$.  On the right is depicted the
    quotient by $\mathcal D$ defined in
    Lemma~\ref{lem:exists_partition}, which is locally connected.}
  \label{fig:impressions}
\end{figure}
In a lot of applications it is crucial that in the above construction the map
$\Psi$ is conformal. However the construction can be carried out if instead of
$\Psi$ certain homeomorphisms $\Psi':U_\iy\to \disk$ are used. The definitions
of the principal set, impression and wings of impression can be given in this
case as well. Since some continua we construct have topological nature, we use
this idea in what follows defining for them the map $\Psi'$ in a topological
way and then defining principle sets, impressions and wings of impressions
accordingly.

Any angle's principle set, impression, wings of its impression are
each subcontinua of $Q$.  It is known that $\acc(\alpha) =
\imp^+(\alpha) \cap \imp^-(\alpha) \subset \imp^+(\alpha) \cup
\imp^-\alpha = \imp(\alpha)$. If $Q$ is locally connected, the
impression of every external angle is a point, and therefore
impressions intersect only when they coincide. Non-locally connected
continua may have impressions of different external angles which
intersect and do not coincide. Suppose that $\mathcal D$ is a
partition of a compactum $K$ (i.e., a collection of pairwise
disjoint subsets of $K$ whose union is all of $K$). Clearly, $\D$
defines an equivalence relation on $K$ whose classes are elements of
$\D$. A partition $\D$ is called \emph{upper semi-continuous} if
this equivalence relation is closed (i.e., its graph is closed in
$K\times K$).

  \begin{lem}\label{lem:exists_partition}
    There exists a partition $\mathcal D_Q=\mathcal D$ of $Q$ which is finest
    among all upper semi-continuous partitions whose elements are unions of
    impressions of $Q$.  Further, elements of $\mathcal D$ are subcontinua of
    $Q$.
  \end{lem}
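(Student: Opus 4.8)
The plan is to realize the desired partition as the common refinement of the family $\mathfrak{F}$ of \emph{all} upper semi-continuous partitions of $Q$ whose elements are unions of impressions, and then to check that this common refinement again lies in $\mathfrak{F}$ and has connected elements. Throughout I use two basic facts: every point of $Q$ lies in some impression (a standard prime-end fact, since $Q=\bd(U_\iy)$), and, for any $\mathcal{E}\in\mathfrak{F}$, each impression is contained in a single element of $\mathcal{E}$ — no impression is split — which is essentially the content of ``partition into unions of impressions'' (the elements of $\mathcal{E}$ being pairwise disjoint unions of the continua $\imp(\al)$). In particular $\{Q\}\in\mathfrak{F}$, so $\mathfrak{F}\ne\0$, and $\mathfrak{F}$ is a set (it sits inside $\mathcal{P}(\mathcal{P}(Q))$), which matters below.

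Now let $\mathcal{D}$ be the partition of $Q$ whose class through $x$ is $\mathcal{D}(x)=\bigcap\{\mathcal{E}(x):\mathcal{E}\in\mathfrak{F}\}$, where $\mathcal{E}(x)$ denotes the $\mathcal{E}$-class of $x$; by construction $\mathcal{D}$ refines every member of $\mathfrak{F}$. To see $\mathcal{D}$ is upper semi-continuous, consider $\Pi=\prod_{\mathcal{E}\in\mathfrak{F}}q_{\mathcal{E}}:Q\to\prod_{\mathcal{E}\in\mathfrak{F}}(Q/\mathcal{E})$, where $q_{\mathcal{E}}$ is the quotient map; the target is compact Hausdorff (Tychonoff), and the fibers of $\Pi$ are exactly the $\mathcal{D}$-classes, so $\Pi$ factors through a continuous bijection $Q/\mathcal{D}\to\Pi(Q)$. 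Since $Q/\mathcal{D}$ is compact (a continuous image of $Q$) and $\Pi(Q)$ is Hausdorff, that bijection is a homeomorphism, so $Q/\mathcal{D}$ is Hausdorff, which is equivalent to $\mathcal{D}$ being upper semi-continuous. Moreover the elements of $\mathcal{D}$ are unions of impressions: for $x\in Q$ pick $\al$ with $x\in\imp(\al)$; since no impression is split, $\imp(\al)\subset\mathcal{E}(x)$ for every $\mathcal{E}\in\mathfrak{F}$, hence $\imp(\al)\subset\mathcal{D}(x)$, so $\mathcal{D}(x)$ contains an impression through each of its points. Therefore $\mathcal{D}\in\mathfrak{F}$, and being finer than every member of $\mathfrak{F}$ it is \emph{the} finest such partition.

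It remains to prove that the elements of $\mathcal{D}$ are subcontinua. I would pass to the partition $\mathcal{D}_c$ whose elements are the connected components of the elements of $\mathcal{D}$. Since impressions are connected, each impression lies in a single component of its $\mathcal{D}$-class and those components exhaust the class, so $\mathcal{D}_c$ is again a partition of $Q$ into unions of impressions. The crucial point is that $\mathcal{D}_c$ is still upper semi-continuous; equivalently, the $\mathcal{D}_c$-saturation of a closed set $F\subset Q$ is closed. Suppose $x_n\to x$ with each $x_n$ in a component $K_n$ of a $\mathcal{D}$-element $D_n$ and $y_n\in K_n\cap F$, $y_n\to y\in F$. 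Upper semi-continuity of $\mathcal{D}$ gives $\limsup_n D_n\subset D_x$, the $\mathcal{D}$-class of $x$; passing to a subsequence, the continua $K_n$ converge in the hyperspace to a subcontinuum $K$ with $x,y\in K$ and $K\subset\limsup_n D_n\subset D_x$. Hence $x$ and $y$ lie in one component $K_x$ of $D_x$, so $x$ belongs to the $\mathcal{D}_c$-saturation of $F$. Thus $\mathcal{D}_c\in\mathfrak{F}$; as $\mathcal{D}_c$ refines $\mathcal{D}$ and $\mathcal{D}$ is finest in $\mathfrak{F}$, the two partitions coincide, so every element of $\mathcal{D}$ equals the union of its own components, i.e.\ is connected (and closed, being a class of a USC partition of a compactum), hence a subcontinuum of $Q$.

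I expect the connectivity step — showing $\mathcal{D}_c$ is upper semi-continuous — to be the main obstacle; it rests on the continuum-theoretic facts that a Hausdorff limit of subcontinua with non-empty lower limit is a subcontinuum and that a subcontinuum of $D_x$ meeting $x$ lies in the $x$-component of $D_x$, combined with upper semi-continuity of $\mathcal{D}$ itself. By comparison, the existence and universality of $\mathcal{D}$ are routine: upper semi-continuity of a common refinement is handled uniformly by the product embedding $\Pi$, and the ``unions of impressions'' bookkeeping reduces to the elementary remark that no impression is split between two elements.
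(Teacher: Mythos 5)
Your proof is correct and follows essentially the same route as the paper: form the common refinement (equivalently, the intersection $\bigcap\Xi$ of all closed equivalence relations not splitting impressions), check it stays in the family, then pass to connected components and use the fact that the component decomposition of an upper semi-continuous decomposition of a compactum is again upper semi-continuous to conclude the classes are already connected. The only difference is that you prove the two supporting facts inline (upper semi-continuity of the common refinement via the product embedding, and the component step via a hyperspace limit argument), where the paper asserts the first and cites Nadler's Lemma 13.2 for the second.
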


  \begin{proof}
    Let $\Xi$ be the collection of closed equivalence relations on $Q$ such
    that, for any equivalence relation $\approx$ from $\Xi$, $\imp(\alpha)$ is
    contained in one class of equivalence for any external angle $\alpha$. Then
    the equivalence relation $\bigcap \Xi$ is also an element of $\Xi$ (classes
    of equivalence of $\bigcap \Xi$ are intersections of classes of equivalence
    of all equivalence relations from $\Xi$). Let $\mathcal D$ be the
    collection of equivalence classes of $\bigcap \Xi$.

    To see that the elements of $\mathcal D$ are connected, we can define a
    finer partition $\mathcal D'$ whose elements are connected components of
    elements of $\mathcal D$.  Then $\mathcal D'$ is an upper semi-continuous
    monotone decomposition of $Q$ \cite[Lemma 13.2]{nad92}.  Since impressions
    are connected subsets of $Q$, that each impression belongs to an element of
    $\mathcal D$ implies that it belongs to an element of $\mathcal D'$.
    Therefore, $\mathcal D' \in \Xi$ and $D'$ is a refinement of $\mathcal D$,
    so $\mathcal D = \mathcal D'$, and the elements of $\mathcal D$ are
    connected.
  \end{proof}

  We will show that $Q / \mathcal D$ is locally connected, and
  $\mathcal D$ is the finest upper semi-continuous partition of $X$
  into connected sets with that property. Thus, the finest monotone
  map respecting impressions turns out to be the finest monotone map
  producing a locally connected model. To implement our plan we
  study properties of monotone maps of unshielded continua. First we
  suggest the canonic extension of any monotone map of a planar
  unshielded continuum $Q$ to a monotone map of the entire plane
  onto the entire plane. Given any monotone map $\psi$, let call
  sets $\psi^{-1}(y)$ \emph{$\psi$-fibers}, or just \emph{fibers}.

  \begin{defn}
    Let $U \subset \hc$ be a simply connected open set containing $\infty$. If
    $A$ is a continuum disjoint from $U$, the \emph{topological hull} $\tl(A)$
    of $A$ is the union of $A$ with the bounded components of $\hc \setminus
    A$. Equivalently, $\tl(A)$ is the complement of the unique component of
    $\hc\sm A$ containing $U$. Note that $\tl(A) \subset \C$ is a continuum
    which does not separate the plane.
  \end{defn}

  \begin{figure}
    \centering
    \includegraphics[width=0.45 \textwidth]{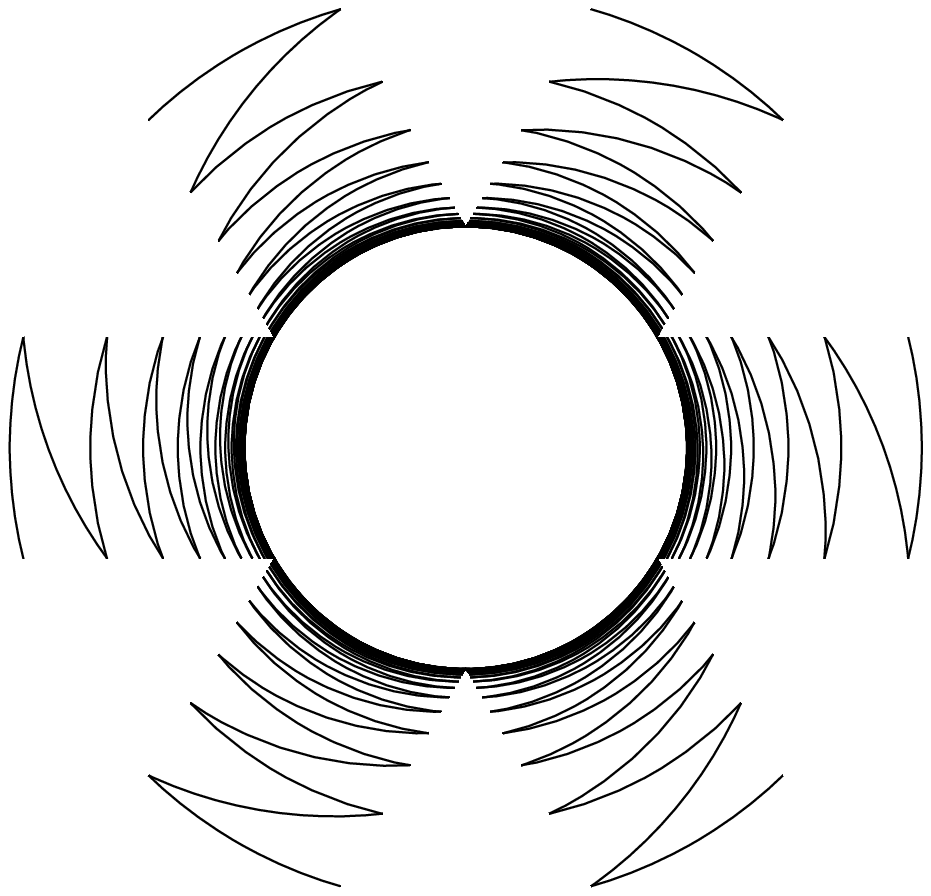}
    \includegraphics[width=0.45 \textwidth]{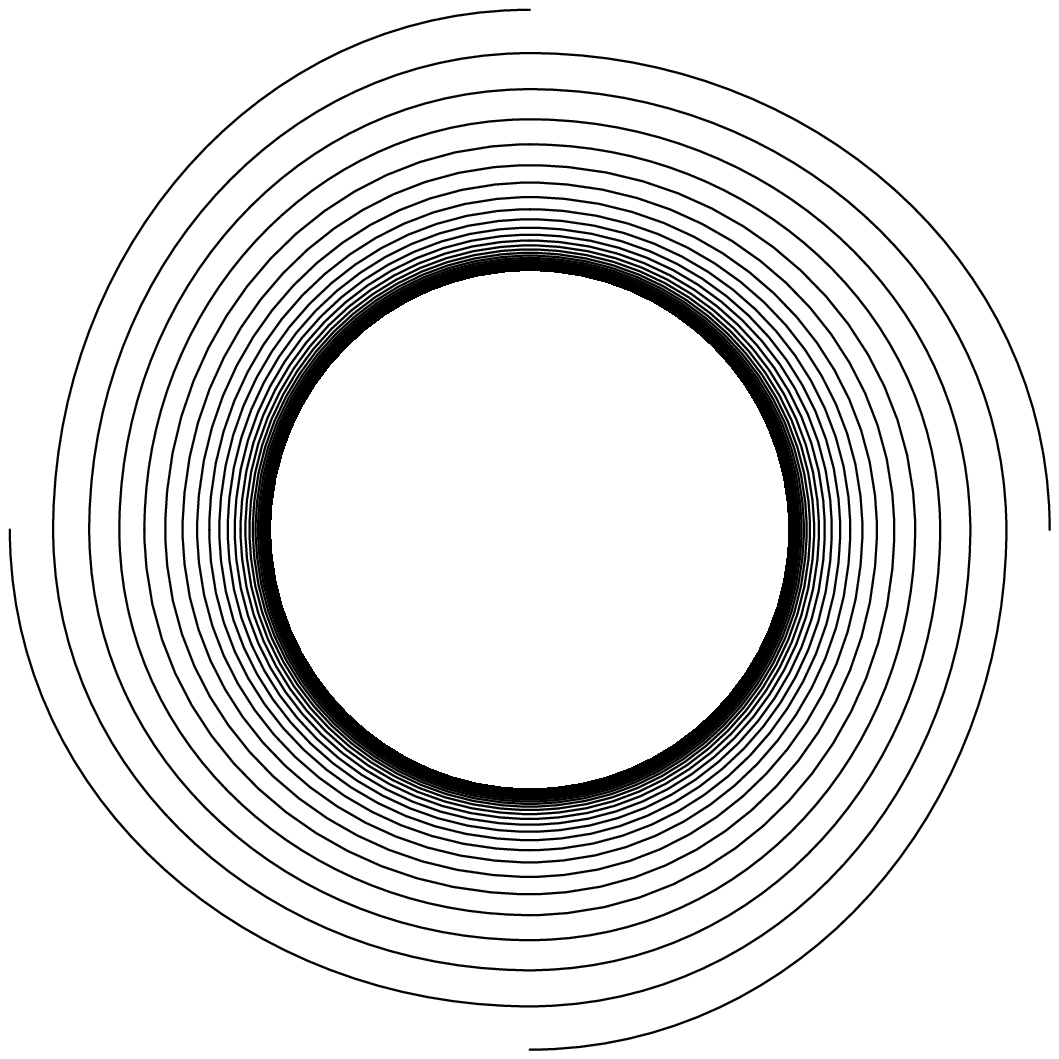}
    \caption{Here are two examples of continua for which one of the fibers
      of the finest map $\varphi$ to a locally connected continuum is a simple closed curve.  Notice that points of the
      simple closed curve in the figure on the left are accessible from
      both the bounded and unbounded complementary domains.}
    \label{fig:sepfiber}
  \end{figure}

  Suppose that a monotone map $m$ from an unshielded continuum $Q$
  to an arbitrary continuum $Y$ is given. Then $m$-fibers may be
  separating, as indicated in Figure~\ref{fig:sepfiber}, or
  non-separating. Denote by $T_m(Q)$ the union of $Q$ and the
  topological hulls of all separating fibers. To extend our map $m$
  onto the plane as a monotone map, we must collapse topological
  hulls of separating fibers because otherwise the extension will
  not be monotone. This idea is implemented in the next lemma.

  \begin{lem}\label{lem:monotone_maps_extend}
    If $Q$ is an unshielded continuum and $m:Q \rightarrow Y$ is a surjective monotone map
    onto an arbitrary continuum $Y$, then there exists a
    monotone map $M: \hc \rightarrow \hc$ and an embedding $h:Y \rightarrow \C$
    such that:

    \begin{enumerate}

    \item $M|_{\hc \setminus T_m(Q)}$ is a homeomorphism onto its
      image;

    \item $M(U_\iy)$ is a simply connected open set whose boundary is
      $M(Q)$, with $M(\infty) = \infty$; and

    \item $M|_{Q} = h \circ m$.

    \end{enumerate}

  \end{lem}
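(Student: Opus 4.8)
The plan is to realize $M$ as the quotient map of a suitable upper semicontinuous decomposition $\mathcal G$ of $\hc$ and then invoke Moore's theorem to identify the quotient with $\hc$. Declare the nondegenerate elements of $\mathcal G$ to be (a) the topological hulls $\tl(m^{-1}(y))$ of all \emph{separating} fibers of $m$, and (b) the \emph{non-separating} fibers $m^{-1}(y)$ themselves; all remaining points of $\hc$ become singleton elements. Note that in every case the element carrying a fiber $F$ is exactly $\tl(F)$ (the hull does nothing to a non-separating fiber), that $\bigcup$(nondegenerate elements)$\,=T_m(Q)$, and that the singletons cover $\hc\sm T_m(Q)$.

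First I would record two elementary facts. (i) \emph{Unshieldedness localizes hulls}: for any fiber $F$ we have $\tl(F)\cap Q=F$. Indeed, a point $q\in Q=\bd(U_\iy)$ is a limit of points of $U_\iy$; if $q$ lay in a bounded complementary component $V$ of $F$, then $V$ (an open set meeting $U_\iy$, connected, disjoint from $F\subset Q$) would lie in the complementary component of $F$ containing the unbounded connected set $U_\iy$, contradicting boundedness of $V$. (ii) \emph{The hull operator is monotone and upper semicontinuous}: $A\subset B$ implies $\tl(A)\subset\tl(B)$, and if continua $A_n\subset\C$ converge in the Hausdorff metric to $A$ then $\limsup\tl(A_n)\subset\tl(A)$; for the latter, a point outside $\tl(A)$ can be joined to $\iy$ by an arc disjoint from $A$, hence disjoint from $A_n$ for large $n$, so it lies outside $\tl(A_n)$ for large $n$.

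Next I would verify that the elements of $\mathcal G$ are pairwise disjoint non-separating continua and that $\mathcal G$ is upper semicontinuous. \emph{Disjointness}: distinct fibers are disjoint; if $F_1\ne F_2$ then by (i) we have $F_2\subset Q\sm F_1$, which lies in the complementary component of $F_1$ containing $\iy$, and the same applies to every bounded complementary component of $F_2$ (its boundary lies in $F_2$, and it is disjoint from $F_1$ again by (i) applied to $F_2$); hence $\tl(F_1)\cap\tl(F_2)=\0$, and the mixed ``separating hull versus non-separating fiber'' case is identical. Each $\tl(F)$ is a non-separating continuum by definition, each non-separating fiber is a continuum (fibers of monotone maps are connected) that by hypothesis does not separate, and singletons are trivial. \emph{Upper semicontinuity}: suppose $x_n,y_n$ lie in one element $E_n$, with $x_n\to x$, $y_n\to y$. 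If $E_n$ is a singleton this is trivial; if $E_n$ is a non-separating fiber then $m(x_n)=m(y_n)$, so $m(x)=m(y)$ and $x,y$ lie in the fiber $F=m^{-1}(m(x))$, hence in the element $\tl(F)$; if $E_n=\tl(F_n)$ with $F_n$ separating, choose $p_n\in F_n$ and pass to a subsequence with $p_n\to p$, $F_n\to\Phi$, $\tl(F_n)\to\Theta$. Then $\Phi\subset m^{-1}(m(p))=:F$ by continuity of $m$, so by (ii) we get $x,y\in\Theta\subset\tl(\Phi)\subset\tl(F)\in\mathcal G$. The same Hausdorff-limit argument shows $T_m(Q)$ is closed, so $\hc\sm T_m(Q)$ is an open saturated set all of whose elements are singletons.

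Finally I would assemble the statement. By Moore's theorem on upper semicontinuous decompositions of $S^2$ into non-separating continua, $\hc/\mathcal G$ is homeomorphic to $\hc$; composing the quotient map with such a homeomorphism, chosen so that the image of the singleton element $\{\iy\}$ is $\iy$, produces a monotone map $M:\hc\to\hc$ with $M(\iy)=\iy$. Since $\hc\sm T_m(Q)$ is open, saturated, and carries only trivial elements, $M$ restricts there to a homeomorphism onto its (open) image, giving (1). Define $h:Y\to\C$ by $h(y)=M(m^{-1}(y))$; this is well defined because each fiber lies in a single element, it satisfies $M|_Q=h\circ m$, it is continuous because $m$ is a quotient map, and it is injective because an element of $\mathcal G$ meets $Q$ in exactly one fiber (fact (i) again) — so $h$ is an embedding, which is (3). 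For (2): $U_\iy$ is disjoint from $Q$ and from every bounded complementary component of every fiber, so $U_\iy\subset\hc\sm T_m(Q)$, whence $M|_{U_\iy}$ is a homeomorphism and $M(U_\iy)$ is a simply connected open set with $M(\iy)=\iy$; as $M$ is a closed map, $\ol{M(U_\iy)}=M(\ol{U_\iy})=M(U_\iy)\cup M(Q)$, and $M(Q)\cap M(U_\iy)=\0$ since elements meeting $U_\iy$ are singletons, so $\bd M(U_\iy)=M(Q)$. I expect the upper semicontinuity of $\mathcal G$ — precisely, controlling Hausdorff limits of hulls of separating fibers — to be the only genuinely delicate point; once facts (i) and (ii) are in place, everything else is bookkeeping.
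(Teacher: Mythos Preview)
Your proof is correct and follows essentially the same route as the paper: define the decomposition $\mathcal G=\{\tl(m^{-1}(y)):y\in Y\}\cup\{\{p\}:p\notin T_m(Q)\}$, apply Moore's theorem, and read off (1)--(3). The paper simply asserts that $\mathcal G$ is an upper semicontinuous partition into non-separating continua (``it is immediate''), whereas you supply the supporting facts --- unshieldedness gives $\tl(F)\cap Q=F$, and the hull operator is monotone and upper semicontinuous on Hausdorff limits --- which are exactly what is needed to verify disjointness and upper semicontinuity; so your write-up is a more detailed version of the same argument.
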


  \begin{proof}
    We extend the map $m$ by filling in its fibers. Define the collection

    \[ \widehat{\mathcal D} = \left\{\tl(m^{-1}(y)): y \in
      Y\right\} \cup \left\{\{p\}:p \nin T_m(Q)\right\}.\]

    \noindent It is immediate that $\widehat{\mathcal D}$ is an upper
    semi-continuous partition of $\hc$ whose elements are non-separating
    continua. Therefore, by \cite{m25}, $\hc / \widehat{\mathcal D}$ is
    homeomorphic to $\hc$, and there exists a monotone map $M:\hc \rightarrow
    \hc$ whose fibers are sets from $\widehat{\mathcal D}$. Observe that by the
    construction $M^{-1}(Y)=T_m(Q)$.

    Further, since points of $\hc \setminus M^{-1}(Y)$ are elements of
    $\widehat{\mathcal D}$, invariance of domain gives that $M|_{\hc \setminus
    M^{-1}(Y)}$ is a homeomorphism onto its image and that $M(U_\iy)$ is an
    open subset of $\hc$ with $M(\iy)=\iy$.  Also, $M(U_\iy) \cap M(Q) =
    \emptyset$, so $\bd(M(U_\iy)) = M(Q)$.  Finally, notice that the fibers of
    $M|_{Q}$ are the same as the $m$-fibers so there exists a natural
    homeomorphism $h:Y \rightarrow M(Q)$. This is a homeomorphism of $Y$ onto
    $M(Q)$ and an embedding of $Y$ into $\C$ since $Y$ is compact.
  \end{proof}

  Next we show that \emph{any} monotone map of an unshielded continuum
  onto a locally connected continuum must collapse impressions to
  points. A \emph{crosscut} of $Q$ is a homeomorphic image $C\subset
  U_\iy$ of an open interval $(0, 1)$ under a homeomorphism $\psi:[0,
  1]\to \C$ such that $\psi(0)\in Q\ne \psi(1)\in Q$. Define $\sh(C)$
  (the \emph{shadow of $C$}) as the closure of the bounded component
  of $U_\iy \setminus C$. Observe that in our definition of a crosscut
  and its shadow we always assume that the continuum is unshielded and
  that crosscuts are contained in the basin of infinity.

  \begin{figure}
    \centering
    \includegraphics{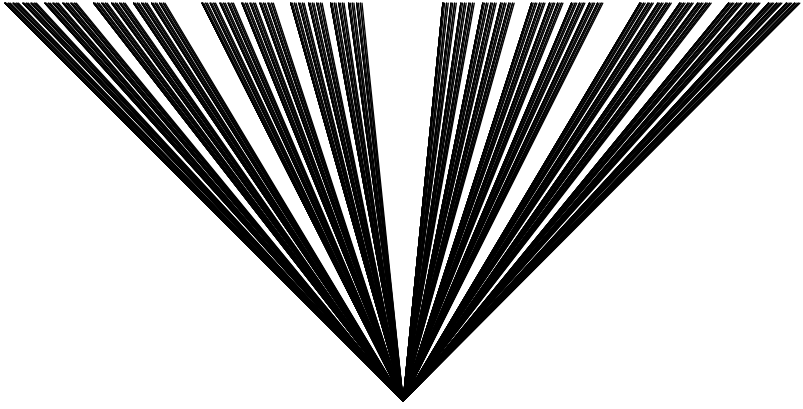}
    \caption{In this continuum, constructed by joining every point of
      a Cantor set to a base point with a straight line segment, every
      pair of non-degenerate impressions intersect, and every point is
      contained in a non-degenerate impression.  Therefore,
      Lemma~\ref{lem:impressions_to_points} concludes that the finest
      locally connected model is a point.}
    \label{fig:cantorcone}
  \end{figure}

  \begin{lem}\label{lem:impressions_to_points}
    Suppose that $m:Q \rightarrow Y$ is a monotone map onto a locally connected
    continuum. Then $m(\imp(\alpha))$ is a point for every $\alpha \in \ucirc$.
  \end{lem}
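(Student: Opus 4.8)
The plan is to argue by contradiction, using the fact that a locally connected continuum has a metric in which all connected subsets of small diameter are ``small'' in a controlled way (this is essentially the content of local connectivity via Whyburn's property $S$ / the fact that $Y$ is a Peano continuum). Suppose that for some angle $\alpha$ the image $m(\imp(\alpha))$ is a nondegenerate subcontinuum of $Y$. Since $Y$ is locally connected, I can pick two points $y_0, y_1 \in m(\imp(\alpha))$ with $y_0 \neq y_1$ and fix a small $\eps>0$ so that no connected subset of $Y$ of diameter less than $\eps$ meets both the $\eps$-ball around $y_0$ and the $\eps$-ball around $y_1$. Pull these back: there are points $a_0, a_1 \in \imp(\alpha)$ with $m(a_i)=y_i$, and by definition of the impression there are sequences $\beta_i^{(0)} \to \alpha$ and $\beta_i^{(1)} \to \alpha$ with $\Psi^{-1}(\beta_i^{(0)}) \to a_0$ and $\Psi^{-1}(\beta_i^{(1)}) \to a_1$.

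The key geometric step is to connect, for each large $n$, the approximating point near $a_0$ to the approximating point near $a_1$ by a crosscut $C_n$ of $Q$ whose shadow $\sh(C_n)$ shrinks as $n\to\infty$. Concretely, choose radii $r_n \nearrow 1$ and take $C_n$ to be (a slight modification, staying in $U_\iy$, of) the image under $\Psi^{-1}$ of the circular arc at radius $r_n$ between two angles $\gamma_n < \alpha < \delta_n$ with $\gamma_n, \delta_n \to \alpha$, chosen so that $\Psi^{-1}(\gamma_n)$ is close to a point of $\imp^-(\alpha)$ mapping near $y_0$ (or $y_1$) and $\Psi^{-1}(\delta_n)$ is close to a point of $\imp^+(\alpha)$; the endpoints of the closure of $C_n$ on $Q$ then lie in $\imp^-(\alpha) \cup \imp^+(\alpha) = \imp(\alpha)$. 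Because $\gamma_n,\delta_n\to\alpha$ and the prime-end structure forces the $\Psi$-image of the region between the two radial segments at angles $\gamma_n,\delta_n$ beyond radius $r_n$ to shrink, one gets $\dia(\sh(C_n)) \to 0$: indeed $\sh(C_n)\cap Q$ is squeezed down toward $\imp(\alpha)$ and, more to the point, $\sh(C_n)$ itself has diameter tending to $0$ once $r_n$ is close enough to $1$ and $\gamma_n,\delta_n$ close enough to $\alpha$ (this is the standard fact that one can choose crosscuts of arbitrarily small shadow separating a given impression, which should already be available from the discussion of crosscuts preceding the lemma, or proved here in a line using continuity of $\Psi^{-1}$ on compact subsets and the nesting of the shadows).

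Now $\sh(C_n)\cap Q$ is a connected subset of $Q$ (it is closed, and it is connected because $Q$ is unshielded so $Q\setminus \sh(C_n)$ and the ``inside'' fit together as boundaries of the two pieces of $U_\iy\setminus C_n$) containing points $a_0^{(n)} \to$ a point over $y_0$ and $a_1^{(n)} \to$ a point over $y_1$; more carefully, for $n$ large, $\sh(C_n)\cap Q$ contains points whose $m$-images are within $\eps/2$ of $y_0$ and within $\eps/2$ of $y_1$ respectively. Since $m$ is continuous and $\dia(\sh(C_n)\cap Q)\to 0$ is \emph{not} what we need — rather, we need $\dia(m(\sh(C_n)\cap Q))\to 0$, which follows because $\sh(C_n)\cap Q$ is a \emph{connected} set and we will instead use that its \emph{fibers do not grow}: here is the clean way. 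The sets $K_n = m(\sh(C_n)\cap Q)$ are connected subcontinua of $Y$; they contain points near $y_0$ and near $y_1$, hence each has diameter at least, say, $d(y_0,y_1) - \eps > 0$. On the other hand I claim $\dia(K_n)\to 0$: the continua $\sh(C_n)\cap Q$ are nested decreasing with intersection contained in $\imp(\alpha)$ together with the single principal direction, and since $m$ collapses $\imp(\alpha)$ — no, that is what we are trying to prove. The correct final move: because $\dia(\sh(C_n))\to 0$ and $m$ extends (by Lemma~\ref{lem:monotone_maps_extend}) to a monotone map $M$ of $\hc$ with $M^{-1}(Y)=T_m(Q)$, the nested continua $\tl(\ol{\sh(C_n)})$ (the topological hulls, which also shrink) map under $M$ to nested continua with one-point intersection $\{M(\text{limit point})\}$; hence $\dia(M(\tl(\ol{\sh(C_n)})))\to 0$, so $\dia(K_n)\to 0$ as well, contradicting the uniform lower bound above. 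Therefore $m(\imp(\alpha))$ is a single point.

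I expect the main obstacle to be the bookkeeping in the second paragraph: constructing the crosscuts $C_n$ with endpoints genuinely on $\imp(\alpha)$ (using the wings $\imp^+(\alpha)$ and $\imp^-(\alpha)$ so that both sequences $\Psi^{-1}(\gamma_n)$ and $\Psi^{-1}(\delta_n)$ limit into $\imp(\alpha)$) and simultaneously ensuring $\dia(\sh(C_n))\to 0$; once that is set up, the contradiction via local connectivity of $Y$ (a connected set of image-diameter bounded below cannot be squeezed into a vanishing set) is routine. A secondary point to handle carefully is the justification that $\sh(C_n)\cap Q$ is connected and contains the required approximating points — this is where unshieldedness of $Q$ is used, exactly as in the standing conventions on crosscuts established just before the lemma.
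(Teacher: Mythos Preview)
Your argument has a genuine gap at the central step: you assert that $\dia(\sh(C_n))\to 0$, and this is false in exactly the situation you are trying to handle. The shadow $\sh(C_n)$ is the closure of the bounded component of $U_\iy\sm C_n$, and that closure meets $Q$ in a set which, as your crosscuts close in on the angle $\alpha$, contains (in the limit) all of $\imp(\alpha)$. Under your hypothesis-for-contradiction $m(\imp(\alpha))$ is nondegenerate, hence $\imp(\alpha)$ itself is nondegenerate, so $\dia(\sh(C_n))\ge \dia(\imp(\alpha))>0$ for all large $n$. There is no ``standard fact that one can choose crosscuts of arbitrarily small shadow separating a given impression''; the standard fact (the one cited from \cite[Lemma~17.9]{Milnor:2006fr}) is that one can choose crosscuts $C_n$ of arbitrarily small \emph{diameter} with $\bigcap_n\sh(C_n)=\imp(\alpha)$. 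Small crosscut does not mean small shadow unless the continuum is already locally connected --- which is precisely the property you are trying to exploit on the \emph{target} side, not on $Q$.

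The paper's proof fixes exactly this. It takes crosscuts $C_i$ with $\dia(C_i)\to 0$ (not $\dia(\sh(C_i))\to 0$), pushes them forward by the extension $M$ of Lemma~\ref{lem:monotone_maps_extend}, observes that $M|_{U_\iy}$ is a homeomorphism so $M(C_i)$ is again a crosscut (or a loop touching $M(Q)$ at one point) with $M(\sh(C_i))=\sh(M(C_i))$, and then uses uniform continuity to get $\dia(M(C_i))\to 0$. Only now does local connectivity enter, applied to $M(Q)$: for a locally connected unshielded continuum, crosscuts of small diameter have shadows of small diameter, so $\bigcap_i\sh(M(C_i))$ is a point. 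Your final paragraph gestures toward the extension $M$ but then immediately reverts to the false shadow-diameter claim; if you instead track $\dia(M(C_n))\to 0$ and invoke local connectivity on the image side, your argument becomes the paper's.
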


  \begin{proof}
    Let $M$ be as guaranteed in Lemma~\ref{lem:monotone_maps_extend}. Since
    $M|_{U_\iy}$ is one-to-one, it is then easy to see that a crosscut of $Q$
    maps by $M$ either to a crosscut of $M(Q)$ or to an open arc in $M(U_\iy)$
    whose closure is a simple closed curve meeting $M(Q)$ in a single point.
    Because $M(\infty) = \infty$, we see that $M(\sh(C))=\sh(M(C))$ for any
    crosscut $C$ whose image is a crosscut while if $\ol{M(C)}$ is a simple
    close curve then $M(\sh(C))$ is the interior of the Jordan disk enclosed by
    $\ol{M(C)}$.

    Choose any external angle $\alpha$.  There exists a sequence of
    crosscuts $(C_i)_{i=1}^\infty$ such that their diameters
    converge to $0$ and $\bigcap_{i=1}^\infty \sh(C_i) =
    \imp(\alpha)$ \cite[Lemma 17.9]{Milnor:2006fr}.  Since
    $\sh(C_i)$ are nested, we have

    \begin{align*}
      M(\imp(\alpha)) &=
      M\left(\bigcap_{i=1}^\infty \sh(C_i)\right)\\
      &= \bigcap_{i=1}^\infty M(\sh(C_i))\\
      &= \bigcap_{i=1}^\infty \sh(M(C_i)).
    \end{align*}

    By uniform continuity, $\lim_{i \rightarrow
      \infty}\dia(M(C_i))=0$.  Since $M(Q)$ is locally connected,
    $\bigcap_{i=1}^\infty \sh(M(C_i))$ is indeed a point, and so is
    $M(\imp(\alpha))$.
  \end{proof}

  The next lemma is essentially a converse of
Lemma~\ref{lem:impressions_to_points}.

  \begin{lem}\label{lem:collapse_impressions_lc}
    Suppose that $m:Q \rightarrow Y$ is a monotone surjective map such that
    $m(\imp(\alpha))$ is a point for all $\alpha \in \ucirc$.  Then $Y$ is
    locally connected. Moreover, the map $\Phi_m:\ucirc \rightarrow Y$ defined
    by $\Phi_m = m \circ \imp$ is a continuous single-valued onto function.
  \end{lem}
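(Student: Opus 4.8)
The plan is to first establish that $\Phi_m = m \circ \imp$ is a well-defined single-valued function, then show it is continuous, and finally deduce local connectedness of $Y$ as a consequence (a continuous image of $\ucirc$ is a locally connected continuum — a Peano continuum — by the Hahn–Mazurkiewicz theorem). The hypothesis that $m(\imp(\alpha))$ is a single point is exactly what makes $\Phi_m$ single-valued: even though $\imp(\alpha)$ is a nondegenerate subcontinuum of $Q$ in general, its $m$-image is a point, so $\Phi_m(\alpha) := m(\imp(\alpha))$ is an unambiguous element of $Y$. Surjectivity of $\Phi_m$ will follow because every point of $Q$ lies in some principal set $\acc(\alpha) \subset \imp(\alpha)$ (every point of $Q = \bd(U_\iy)$ is a limit of points $\Psi^{-1}(re^{2\pi i\alpha})$ along some internal radius, hence belongs to the principal set of the corresponding angle), so $\imp$ covers $Q$ and therefore $m \circ \imp$ covers $m(Q) = Y$.

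For continuity, I would argue as follows. Fix $\alpha \in \ucirc$ and a sequence $\alpha_i \to \alpha$; I must show $\Phi_m(\alpha_i) \to \Phi_m(\alpha)$ in $Y$. Pass to the extension $M:\hc \to \hc$ from Lemma~\ref{lem:monotone_maps_extend} and to the embedding $h:Y \to \C$, so that it suffices to prove $M(\imp(\alpha_i)) \to M(\imp(\alpha))$ as points in $M(Q)$. Choose a sequence of crosscuts $(C_j)$ of $Q$ with $\dia(C_j) \to 0$ and $\bigcap_j \sh(C_j) = \imp(\alpha)$, as in the proof of Lemma~\ref{lem:impressions_to_points}. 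Given $\varepsilon > 0$, local connectedness of $M(Q)$ together with uniform continuity of $M$ gives (exactly as in Lemma~\ref{lem:impressions_to_points}) that $\dia(\sh(M(C_j))) < \varepsilon$ for $j$ large; fix such a $j$. Since the crosscut $C_j$ has its two endpoints on $Q$ but is otherwise contained in the open set $U_\iy$, for $i$ large enough the angle $\alpha_i$ satisfies $\imp(\alpha_i) \subset \sh(C_j)$ (the impressions of nearby angles land inside the shadow of a fixed crosscut separating $\imp(\alpha)$ from $\infty$). Then $M(\imp(\alpha_i)) \subset \sh(M(C_j))$ and likewise $M(\imp(\alpha)) \subset \sh(M(C_j))$, a set of diameter less than $\varepsilon$; hence the two image points are within $\varepsilon$ of each other. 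This shows $\Phi_m$ is continuous.

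Having a continuous surjection $\Phi_m:\ucirc \to Y$, the Hahn–Mazurkiewicz theorem immediately gives that $Y$ is a locally connected continuum, completing the proof. The main obstacle I anticipate is the continuity argument — specifically, making rigorous the claim that for a fixed crosscut $C_j$ whose shadow contains $\imp(\alpha)$, all sufficiently nearby impressions $\imp(\alpha_i)$ are also contained in $\sh(C_j)$. This requires a careful comparison of impressions of nearby angles with the shadow of a crosscut, using that $C_j \setminus \{$endpoints$\}$ is a compact arc in the open set $U_\iy$ bounded away from $Q$, so that the conformal preimages $\Psi^{-1}(re^{2\pi i\alpha_i})$ for $r$ close to $1$ and $\alpha_i$ close to $\alpha$ eventually lie on the bounded side of $C_j$; taking limits in $r$ then places $\imp(\alpha_i)$ inside $\sh(C_j)$. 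Everything else — single-valuedness, surjectivity, and the final appeal to Hahn–Mazurkiewicz — is routine.
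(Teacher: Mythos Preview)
Your continuity argument is circular. You write ``local connectedness of $M(Q)$ together with uniform continuity of $M$ gives (exactly as in Lemma~\ref{lem:impressions_to_points}) that $\dia(\sh(M(C_j))) < \varepsilon$ for $j$ large.'' But local connectedness of $M(Q)\cong Y$ is precisely the conclusion you are trying to establish; it is \emph{not} a hypothesis here. In Lemma~\ref{lem:impressions_to_points} that step was legitimate because $Y$ was assumed locally connected, and that assumption is what forces small crosscuts to have small shadows. Without it, $\dia(M(C_j))\to 0$ says nothing about $\dia(\sh(M(C_j)))$, so the rest of the paragraph does not go through.

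The paper's proof avoids this entirely by appealing directly to the upper semi-continuity of impressions: if $\alpha_i\to\alpha$ then $\limsup_i \imp(\alpha_i)\subset \imp(\alpha)$, whence by continuity of $m$ every accumulation point of the sequence $\Phi_m(\alpha_i)=m(\imp(\alpha_i))$ lies in $m(\imp(\alpha))=\{\Phi_m(\alpha)\}$. This uses nothing about $Y$. In fact, if you strip the illegitimate step from your crosscut argument, what remains --- that $\imp(\alpha_i)\subset \sh(C_j)$ for $i$ large, for each fixed $j$, and $\bigcap_j \sh(C_j)=\imp(\alpha)$ --- is just a rederivation of this upper semi-continuity; you should then apply $m$ on the $Q$-side rather than pass to shadows on the $Y$-side.

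A minor point on surjectivity: not every point of $Q$ need lie in a \emph{principal} set $\acc(\alpha)$; the argument you sketch (take a sequence in $U_\infty$ converging to $x$ and extract a limit angle) only yields $x\in\imp(\alpha)$, which is all you need.
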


  \begin{proof}
    $\Phi_m$ is a single-valued function, since by assumption $m$ maps
    impressions to points of $Y$.  Also, it is surjective, since $m$ is
    surjective and every point is contained in the impression of some angle. To
    see sequential continuity, observe that

    \begin{align*}
      \alpha_i \rightarrow \alpha & \implies \limsup_{i \rightarrow
        \infty} \imp(\alpha_i) \subset \imp(\alpha)\\
      &\implies \limsup_{i \rightarrow \infty} m(\imp(\alpha_i))
      \subset
      m(\imp(\alpha))\\
      &\implies \Phi(\alpha_i) \rightarrow \Phi(\alpha).
    \end{align*}

    The continuous image of a locally connected continuum is locally connected,
    so $Y$ is locally connected as the $\Phi_m$-image of $\uc$.
  \end{proof}

  The picture which follows from the above lemmas is as follows. Imagine that
  we have a monotone map $m$ of an unshielded continuum $Q\subset \C$ onto a
  locally connected continuum $Y$.  By Lemma~\ref{lem:monotone_maps_extend} we
  can think of $m$ as the restriction of a monotone map $M:\hc \to \hc$ which
  in fact is a homeomorphism on $U_\iy$ as well as on the components of $\C\sm
  Q$ whose boundaries are not collapsed by $m$. To avoid confusion, we call the
  plane containing $Q$ the \emph{$Q$-plane}, and the plane containing $Y$ the
  \emph{$Y$-plane}. Likewise, if there is no ambiguity we will call various
  objects in the $Q$-plane $Q$-rays etc while calling corresponding objects in
  the $Y$-plane $Y$ rays etc.

  Now, take external conformal $Q$-rays. Then the map $M$ carries them over to
  the $Y$-plane as just continuous rays (obviously, our construction is purely
  topological and does not preserve the conformal structure in any way). The
  construction however forces all these $Y$-rays to land; moreover, the family
  of $Y$-rays can be used to define impressions in the sense of that family
  (see our explanation following the definition of the impression). By
  Lemma~\ref{lem:impressions_to_points}, these impressions must be degenerate.

  We are ready to prove the existence of the finest locally connected model and
  the finest map for unshielded continua. Recall that $\mathcal D_Q=\mathcal D$
  denotes the finest among all upper semi-continuous partitions of $Q$ whose
  elements are unions of impressions of $Q$ (it is provided by
  Lemma~\ref{lem:exists_partition}).

  \begin{thm}\label{thm:exists_finest}
    There exists a monotone map $\ph:\hc \rightarrow \hc$ such that $\ph|_Q$ is
    the finest monotone map of $Q$ onto a locally connected continuum, $\ph(Q)$
    is the finest locally connected model of $Q$, and $\ph$ is a homeomorphism
    on $\hc \setminus \ph^{-1}(\ph(Q))$. Moreover, the map $\ph|_Q$ can be
    defined as the quotient map $Q \rightarrow Q / \mathcal D$.
  \end{thm}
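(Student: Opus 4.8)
The plan is to define $\ph|_Q$ to be the quotient map $q_{\mathcal D}\colon Q\to Q/\mathcal D$ furnished by Lemma~\ref{lem:exists_partition}, to check that it enjoys the required universal property, and then to promote it to a monotone self-map of $\hc$ by invoking Lemma~\ref{lem:monotone_maps_extend}. First I would record the easy facts: since $\mathcal D$ is an upper semi-continuous partition of $Q$ into subcontinua, $Q/\mathcal D$ is a metric continuum and $q_{\mathcal D}$ is monotone; and since every element of $\mathcal D$ is a union of impressions, $q_{\mathcal D}$ collapses $\imp(\al)$ to a point for each $\al\in\uc$, so Lemma~\ref{lem:collapse_impressions_lc} applies and $Q/\mathcal D$ is locally connected. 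Thus $q_{\mathcal D}$ is at least a competitor among monotone maps of $Q$ onto locally connected continua.

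The heart of the argument is the universal property. Let $\psi\colon Q\to L$ be an arbitrary monotone surjection onto a locally connected continuum. By Lemma~\ref{lem:impressions_to_points}, $\psi(\imp(\al))$ is a point for every $\al$. Because $Q$ is unshielded, $Q=\bd(U_\iy)$, and a short argument with the conformal map $\Psi$ shows that every point of $Q$ lies in the impression of some external angle; hence each fiber $\psi^{-1}(y)$ is precisely the union of all impressions meeting it, in particular a union of impressions. The partition of $Q$ into $\psi$-fibers is upper semi-continuous (being the fiber partition of a closed continuous surjection) and consists of connected sets each of which is a union of impressions, so the corresponding equivalence relation lies in the family $\Xi$ of Lemma~\ref{lem:exists_partition}. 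Consequently $\bigcap\Xi$---and therefore $\mathcal D$---refines it: every element of $\mathcal D$ is contained in a single $\psi$-fiber. This yields a factorization $\psi=h\circ q_{\mathcal D}$, and $h\colon Q/\mathcal D\to L$ is automatically monotone since $h^{-1}(x)=q_{\mathcal D}(\psi^{-1}(x))$ is the continuous image of a continuum. Hence $q_{\mathcal D}$ is the finest monotone map of $Q$ onto a locally connected continuum and $Q/\mathcal D$ is the finest locally connected model; uniqueness up to homeomorphism was already noted in Section~\ref{intro}.

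It remains to extend the map over $\hc$. Applying Lemma~\ref{lem:monotone_maps_extend} with $m=q_{\mathcal D}$ produces a monotone $M\colon\hc\to\hc$ with $M(\iy)=\iy$, with $M(U_\iy)$ simply connected and $\bd(M(U_\iy))=M(Q)$, with $M|_Q=h'\circ q_{\mathcal D}$ for an embedding $h'$, and with $M$ a homeomorphism on $\hc\sm T_m(Q)$. Identifying $Q/\mathcal D$ with $M(Q)$ via $h'$, we set $\ph=M$, so that $\ph|_Q$ is (a copy of) the quotient map $Q\to Q/\mathcal D$. Since the construction in Lemma~\ref{lem:monotone_maps_extend} gives $M^{-1}(M(Q))=T_m(Q)$, we have $\ph^{-1}(\ph(Q))=T_m(Q)$, and hence $\ph$ restricts to a homeomorphism of $\hc\sm\ph^{-1}(\ph(Q))$ onto its image. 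This establishes all the assertions of the theorem.

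The one step that deserves care is the middle one: verifying that the fiber partition of an arbitrary competitor $\psi$ really does land inside the class of partitions governed by Lemma~\ref{lem:exists_partition}. This uses two inputs: that a monotone image of $Q$ which is locally connected must collapse every impression (Lemma~\ref{lem:impressions_to_points}), and that unshieldedness forces every point of $Q$ into some impression, so that a $\psi$-fiber is exactly a union of impressions and not merely a set containing the impressions through its points. Granting these, the theorem is an assembly of the preceding lemmas.
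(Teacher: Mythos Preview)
Your proof is correct and follows essentially the same strategy as the paper: verify that $Q/\mathcal D$ is locally connected (you do this explicitly via Lemma~\ref{lem:collapse_impressions_lc}, the paper leaves it implicit), use Lemma~\ref{lem:impressions_to_points} to see that any competitor $\psi$ defines a partition lying in $\Xi$ so that $\mathcal D$ refines it, and then extend to $\hc$ via Lemma~\ref{lem:monotone_maps_extend}. One remark: your final paragraph worries about showing each $\psi$-fiber is \emph{exactly} a union of impressions, but this is more than is needed---membership in $\Xi$ only requires that each impression lie in a single class, which follows immediately from Lemma~\ref{lem:impressions_to_points} without invoking that every point of $Q$ belongs to some impression.
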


  \begin{proof}
    Let us show that the quotient map $m:Q \rightarrow Q / \mathcal D$ is the
    finest map of $Q$ onto a locally connected continuum. Indeed, suppose that
    $\psi:Q\to A$ is a monotone map onto a locally connected continuum $A$.
    Then $\psi$ generates an upper semi-continuous partition of $Q$ whose
    elements, by Lemma~\ref{lem:impressions_to_points}, are unions of
    impressions of $Q$. By the choice of $\mathcal D$ there exists a continuous
    map $h: Q / \mathcal D\to A$ which associates to an element $B$ of
    $\mathcal D$ the point $x\in A$ such that $\psi^{-1}(x)$ contains $B$. To
    complete the proof we let $\ph:\C \rightarrow \C$ be the extension of $m$
    guaranteed by Lemma~\ref{lem:monotone_maps_extend}.
  \end{proof}

  Define $\Phi:\ucirc \rightarrow \ph(Q)$ as $\Phi = \ph \circ \imp$. From
  Lemma~\ref{lem:collapse_impressions_lc}, $\Phi$ is a well-defined continuous
  function.
  According to the picture given after Lemma~\ref{lem:collapse_impressions_lc},
  $\Phi$ maps an angle $\al$ to the landing point of the corresponding
  $\ph(Q)$-ray (i.e., the $\ph$-image of the external conformal ray to $Q$ in
  the $Q$-plane). Then the \emph{finest lamination $\sim_Q$} (corresponding to
  $Q$) is the equivalence relation $\sim$ on $\ucirc$, defined by $\alpha_1
  \sim \alpha_2$ if and only if $\Phi(\alpha_1) = \Phi(\alpha_2)$.


\subsection{A constructive approach}\label{well-crit}

Recall that the finest map of an unshielded continuum $Q$ is always denoted by
$\ph=\ph_Q$. Fibers under the finest map will be called \emph{K-sets}. In the
notation from Subsection~\ref{unshield} and Lemma~\ref{lem:exists_partition},
K-sets are exactly the elements of the partition $\mathcal D_Q=\mathcal D$, the
finest among all upper semi-continuous partitions whose elements are unions of
impressions of $Q$. Classes of equivalence in the lamination $\sim_Q$ will be
called \emph{K-classes}. We are interested in the structure of K-sets, and will
describe how to determine if two points lie in the same K-set. Given a set
$A\subset \uc$ let $\imp(A)$ be the union of impressions of all angles in $A$.

  \begin{lem}\label{lc}
    If $\{a\}$ is a degenerate K-set then $Q$ is locally connected at $a$.
  \end{lem}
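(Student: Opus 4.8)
Let $\{a\}$ be a degenerate K-set, i.e.\ $\ph^{-1}(\ph(a)) = \{a\}$. We want to show $Q$ is locally connected at $a$, meaning every neighborhood of $a$ in $Q$ contains a connected neighborhood of $a$. The strategy is to transfer the (known) local connectivity of the model $\ph(Q)$ back to $Q$ at the point $a$, exploiting that $\ph$ is a homeomorphism on $\hc \setminus \ph^{-1}(\ph(Q))$ and that the only fiber over $\ph(a)$ is the single point $a$. First I would use the fact that $\ph:\hc\to\hc$ (as provided by Theorem~\ref{thm:exists_finest}) is a monotone, hence closed, surjection, so $\ph$ is a quotient map. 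Since $\ph(Q)$ is a locally connected continuum, there is a basis of open connected sets $V$ around $b := \ph(a)$ in $\ph(Q)$. The idea is to pull these back: $\ph^{-1}(V)\cap Q$ is open in $Q$ and contains $a$; the obstacle is that $\ph^{-1}(V)$ need not be connected even though $V$ is, because $\ph$ collapses various other fibers.

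The key point to exploit is that $a$ is the \emph{only} preimage of $b$. By upper semicontinuity of the decomposition (equivalently, because $\ph$ is a closed map with $\ph^{-1}(b)=\{a\}$), for any open $W\ni a$ in $\hc$ there is an open $V\ni b$ in $\hc$ with $\ph^{-1}(V)\subset W$. So fibers over points near $b$ are small: given any neighborhood $W$ of $a$, I can find a neighborhood $V$ of $b$ in the model such that every K-set meeting $\ph^{-1}(V)$ lies entirely inside $W$. Shrinking $V$ further, I may assume $V\cap\ph(Q)$ is open and connected in $\ph(Q)$. Now consider $U := \ph^{-1}(V\cap\ph(Q)) \cap Q = \ph^{-1}(V)\cap Q$, an open subset of $Q$ containing $a$ and contained in $W$. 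I claim the connected component $U_0$ of $U$ containing $a$ is a neighborhood of $a$ in $Q$, which is exactly what local connectivity at $a$ requires.

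To see that $U_0$ is a neighborhood of $a$, I would argue that $\ph(U)$ is all of $V\cap\ph(Q)$ and that $\ph|_U$ is again a monotone (closed, connected-fibered) surjection onto the connected, locally connected space $V\cap\ph(Q)$. A monotone surjection onto a connected space from a space that is a union of its clopen-in-$U$ pieces: since $V\cap\ph(Q)$ is connected and each fiber $\ph^{-1}(y)\cap U$ (for $y\in V\cap\ph(Q)$) is connected, the preimage $U$ itself is connected --- this is the standard fact that a monotone preimage of a connected set is connected, provided the map is closed, which holds here since fibers are compact and the map restricted to the saturated open set $\ph^{-1}(V)$ is closed onto $V$. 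Hence $U = U_0$ is connected, open in $Q$, contains $a$, and sits inside the arbitrarily chosen neighborhood $W$. Since $W$ was arbitrary, $Q$ has a neighborhood basis at $a$ of connected open sets, so $Q$ is locally connected at $a$.

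\textbf{Main obstacle.} The delicate step is justifying that $\ph^{-1}(V)\cap Q$ is connected, i.e.\ the "monotone preimage of a connected set is connected" claim in the \emph{relative} setting where we only look inside the saturated open set $\ph^{-1}(V)$. One must check that $\ph$ restricted to this open saturated set remains a closed map onto $V\cap\ph(Q)$ (closedness can fail for restrictions of closed maps to non-saturated pieces, but here $\ph^{-1}(V)$ \emph{is} saturated, so it is fine, and $V\cap\ph(Q)$ being locally compact Hausdorff lets us run the usual argument), and that all relevant fibers are connected --- which is precisely the content of $\mathcal D$ being a monotone (connected-element) decomposition, established in Lemma~\ref{lem:exists_partition}. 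The hypothesis $\ph^{-1}(\ph(a))=\{a\}$ is what forces the fibers near $b$ to be small and thus keeps everything inside the prescribed neighborhood $W$; without it the pulled-back connected set could be large and the conclusion would fail, as the examples in Figure~\ref{fig:sepfiber} and Figure~\ref{fig:cantorcone} illustrate.
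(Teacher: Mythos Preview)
Your proposal is correct and follows essentially the same approach as the paper: both pull back a basis of connected open neighborhoods of $\ph(a)$ in the locally connected model $\ph(Q)$ via the monotone map $\ph$, using $\ph^{-1}(\ph(a))=\{a\}$ to ensure the resulting connected open sets shrink to $a$. The paper's proof is terser---it simply asserts that the preimages $\ph^{-1}(U_i)$ are open connected neighborhoods of $a$ with trivial intersection ``by the properties of $\ph$''---whereas you spell out the upper semi-continuity step and the ``monotone preimage of a connected set is connected'' argument explicitly; but the logical content is the same.
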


  \begin{proof}
    Suppose that $A$ is a K-class with a degenerate K-set $\imp(A)=\{a\}$ (by
    the definitions, this is equivalent to $\ph^{-1}(\ph(a))=\{a\}$). Take the
    point $\ph(a)$. Since $\ph(Q)$ is locally connected, there is a nested
    sequence of open connected neighborhoods $U_1\supset U_2\supset \dots$ of
    $\ph(a)$ such that $\cap^\iy_{i=1} U_i=\{\ph(a)\}$. By the properties of
    $\ph$, the sets $V_i=\ph^{-1}(U_i)$ form a nested sequence of open
    connected neighborhoods of $a$ with the intersection coinciding with
    $a=\ph^{-1}(\ph(a))$. So, $Q$ is locally connected at $a$.
  \end{proof}

  Now we introduce two important notions.

  \begin{defn}\label{raycont}
    A \emph{ray-compactum} (or \emph{ray-continuum}) $X \subset Q$ is a
    compactum (respectively, a continuum or a point) for which there exists a
    closed set of angles $\Theta(X) \subset \ucirc$ such that
    \[
    \bigcup_{\theta \in \Theta(X)}\acc(\theta) \subset X \subset
    \bigcup_{\theta \in \Theta(X)}\imp(\theta).
    \]
    Denote $X \cup \bigcup_{\theta \in \Theta(X)}R_\theta$
    by $\widetilde{X}$.
  \end{defn}

  One of the notions defined below is fairly standard. We give two equivalent
  definitions of the second notion, one involving separation of sets and the other
  involving cutting the plane.

  \begin{defn}
    A set $Y$ \emph{separates} a space $X$ between subsets $A$ and $B$ if $X
    \setminus Y = U \cup V$, where $A \subset U$, $B \subset V$, and $\overline
    U \cap V = U \cap \overline V = \emptyset$. We say that a ray-compactum $C$
    \emph{ray-separates} subsets $A$ and $B$ of $Q$ if $\widetilde{C}$
    separates $\overline{U_\iy}$ between $A$ and $B$. If $X\subset Q$ is a
    continuum and there are at least two points of $X$ which are ray-separated
    by $C$, we say that $C$ \emph{ray-separates} $X$.
  \end{defn}

  The definition of ray-separation can be equivalently given as follows: (a) a
  ray-compactum $C$ \emph{ray-separates} subsets $A$ and $B$ of $Q$ if $C\cap
  (A\cup B)=\0$ and there exists no component of $\ol{U_\iy}\sm \wc$
  containing points of $A$ and $B$.
  All these notions are important ingredients of the central notion of
  \emph{well-slicing}.

  \begin{defn}
    A continuum $X \subset Q$ is \emph{well-sliced} if there exists a
    collection $\mathcal C$ of pairwise disjoint ray-compacta in $Q$ such that

    \begin{enumerate}


    \item each $C \in \mathcal C$ ray-separates $X$,

    \item for every different $C_1, C_2 \in \mathcal C$ there exists
    $C_3 \in \mathcal C$ which ray-separates $C_1$ and $C_2$, and

    \item $\mathcal C$ has at least two elements.

    \end{enumerate}

    The family $\mathcal C$ is then a \emph{well-slicing family} for $X$.
  \end{defn}

  We will also use the following combinatorial (laminational) version of
  well-slicing.

  \begin{defn} Suppose that there is a collection $\mathcal C$ of at least two
  pairwise disjoint geometric leaves or gaps in $\disk$. Suppose that for every
  different $C_1, C_2 \in \mathcal C$ there exists $C_3 \in \mathcal C$ which
  separates $\disk$ between $C_1$ and $C_2$. Then the family $\mathcal C$ is
  then a \emph{well-slicing family} for $\disk$. Equivalently, consider the
  family $\mathcal C'$ of closed pairwise unlinked subsets of $\uc$. Suppose
  that for every different $C'_1, C'_2\in \mathcal C'$ there exists $C'_3\in
  \mathcal C'$ which separates $\uc$ between $C'_1$ and $C'_2$. Then we say
  that $\mathcal C'$ is a well-slicing family of $\uc$. Clearly, if
  $\mathcal C$ is a well-slicing family of $\disk$ then the intersections of
  elements of $\mathcal C$ with $\uc$ (i.e., their bases) form a well-slicing
  family of $\uc$, and vice versa.
  \end{defn}

  As an example of a well-slicing family, take $Q=\uc$. We define the family
  of subsets
  \[
   C_\alpha = \{e^{2 \pi i \alpha}, e^{-2 \pi i \alpha}\}
  \]
  with $\alpha$ taking a rational value in $[0,1/2)$.  Each $C_\alpha$ is then
  a ray-compactum with the set of angles $\Ta(C_\al)=\{\al, -\al\}$. Then for
  $0 \le \alpha < \beta < \frac 1 2$, we see that $C_\alpha$ and $C_\beta$ are
  ray-separated by $C_{(\alpha + \beta)/2}$. Hence, $\mathcal C$ is a
  well-slicing family for $\ucirc$. Set $\mathcal C_{\uc}=\mathcal C$ and call
  this collection the \emph{vertical collection}.

  Suppose that a collection $\mathcal C'$ of closed pairwise unlinked subsets
  of $\uc$ is a well-slicing family of $\uc$. Moreover, suppose that for each set
  $C'\in \mathcal C'$ the set $\imp(C')$ is a continuum in $Q$, and for
  distinct sets $C'_1, C'_2$ their impressions are disjoint. Then it follows
  from the definitions that the sets $\imp(C'), C'\in \mathcal C'$ form a
  well-slicing family of the entire $Q$. If $X\subset Q$ is such that all sets
  $A$ from this collection cut $X$ (i.e., $X\sm A$ is disconnected) then it
  follows that this is a well-slicing family for $X$.

 \begin{lem}\label{lem:middle_separates}
   Suppose that $C_1, C_2$ are disjoint ray-compacta each of which
   ray-separates $A,B \subset Q$.  If $C_3$ is a ray-compactum disjoint from $A
   \cup B$ which ray-separates $C_1$ and $C_2$, then $C_3$ also ray-separates
   $A$ and $B$.
 \end{lem}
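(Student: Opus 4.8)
The plan is to work directly with the connectivity-based form of ray-separation: $\wc$ ray-separates $A$ and $B$ means $\wc \cap (A \cup B) = \emptyset$ and no component of $\ol{U_\iy} \setminus \wc$ meets both $A$ and $B$. So I must show: assuming $C_1, C_2$ each ray-separate $A$ and $B$, and $C_3$ is disjoint from $A \cup B$ and ray-separates $C_1$ from $C_2$, then no component of $\ol{U_\iy} \setminus \widetilde{C_3}$ contains points of both $A$ and $B$. First I would fix notation: let $V$ be the component of $\ol{U_\iy} \setminus \widetilde{C_3}$ containing $C_1$ (well-defined since $C_1$ is connected and disjoint from $\widetilde{C_3}$ — here one uses that $C_3$ ray-separates $C_1$ and $C_2$, so $\widetilde{C_3}$ misses $C_1 \cup C_2$), and let $W$ be the component containing $C_2$; these are distinct by hypothesis.

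The key topological fact I would invoke is a Janiszewski-type property for the planar (or spherical) set $\ol{U_\iy}$: if two closed sets $\widetilde{C_1}$ and $\widetilde{C_3}$ each fail to separate two points, and their intersection is connected (here it is empty, since $C_1$ and $C_3$ are disjoint ray-compacta with disjoint rays — rationality of angles and disjointness of the $C_i$ guarantee the closed ray-sets $\bigcup R_\theta$ are disjoint), then their union fails to separate those two points. Applying this with the pair of points taken from $A$ and from $B$: suppose for contradiction that $a \in A$ and $b \in B$ lie in the same component $O$ of $\ol{U_\iy} \setminus \widetilde{C_3}$. Since $\widetilde{C_1}$ does separate $a$ from $b$ in $\ol{U_\iy}$, and $\widetilde{C_1} \cap \widetilde{C_3} = \emptyset$, the Janiszewski property forces $\widetilde{C_3}$ alone to separate $a$ from $b$ — contradiction with $a, b \in O$ — OR the separating set must be such that a connected piece of $\widetilde{C_3}$'s complement crosses from the $C_1$-side to the $C_2$-side. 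Making this precise is the crux.

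The cleaner route, which I expect to actually carry out, is: the component $V$ of $\ol{U_\iy}\setminus\widetilde{C_3}$ that contains $C_1$ is an open connected set whose closure meets $\widetilde{C_3}$ only; so $A \cap \ol V$ and $B \cap \ol V$ — if nonempty — must each lie entirely in $\overline{C_1}$'s side, i.e., in the component of $\ol{U_\iy}\setminus\widetilde{C_1}$ containing... Hmm, let me restate: because $C_1$ ray-separates $A$ from $B$, write $\ol{U_\iy} \setminus \widetilde{C_1} = U' \sqcup V'$ (separated, with $A \subset U'$, $B \subset V'$ — taking the union of components accordingly). Now $\widetilde{C_3} \setminus \widetilde{C_1} = \widetilde{C_3}$ is connected-by-pieces; in fact since $C_3$ ray-separates $C_1$ from $C_2$ and $C_1 \subset$ one side of $\widetilde{C_1}$'s complement... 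The honest main step is a lemma of the form: a connected subset of $\ol{U_\iy}$ that avoids $\widetilde{C_1}$ must lie in $U'$ or in $V'$. So each component of $\ol{U_\iy} \setminus \widetilde{C_3}$ lies in $U'$ or $V'$, hence meets at most one of $A, B$. That is exactly the conclusion, provided I also check $\widetilde{C_3} \cap (A \cup B) = \emptyset$, which is the hypothesis on $C_3$. The only real obstacle is making the Janiszewski/connectedness argument rigorous for $\ol{U_\iy}$ (which is a closed, generally non-locally-connected subset of $\hc$, not an open disk), and in particular justifying that the closed ray-sets are disjoint so that the various $\widetilde{C_i}$ are genuinely disjoint closed sets; I would handle this by passing through the conformal coordinate $\Psi$ where rays with distinct angles are literally disjoint radial segments, and invoking the Janiszewski theorem in $\hc$ for the closed sets $\widetilde{C_1}$ and $\widetilde{C_3}$ directly.
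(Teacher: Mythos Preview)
Your ``cleaner route'' is on the right track but contains a genuine gap. You assert that ``each component of $\ol{U_\iy}\setminus\widetilde{C_3}$ lies in $U'$ or $V'$'' (where $U',V'$ are the two sides of $\ol{U_\iy}\setminus\widetilde{C_1}$). This is false: the component of $\ol{U_\iy}\setminus\widetilde{C_3}$ that \emph{contains} $C_1$ also contains all of $\widetilde{C_1}$ and hence lies in neither $U'$ nor $V'$. The hypothesis you carefully set up in the first paragraph---that $C_3$ ray-separates $C_1$ from $C_2$, so they sit in distinct components of $\ol{U_\iy}\setminus\widetilde{C_3}$---never gets used in your cleaner route, and it is exactly the missing ingredient.

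The paper's proof is the direct four-line version of what you are reaching for. Assume for contradiction that some component $V$ of $\ol{U_\iy}\setminus\widetilde{C_3}$ meets both $A$ and $B$. Since $C_3$ ray-separates $C_1$ and $C_2$, the component $V$ cannot meet both $C_1$ and $C_2$; say $V\cap C_1=\0$. Then in fact $V\cap\widetilde{C_1}=\0$ (each ray $R_\ta$ with $\ta\in\Theta(C_1)$ is, together with its principal set in $C_1$, connected and disjoint from $\widetilde{C_3}$, hence lies in the component of $C_1$, which is not $V$). So $V$ sits inside a single component $W$ of $\ol{U_\iy}\setminus\widetilde{C_1}$, and $W$ meets both $A$ and $B$, contradicting that $C_1$ ray-separates them. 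No Janiszewski theorem, no worries about local connectivity of $\ol{U_\iy}$, and no appeal to ``rationality of angles'' (disjointness of $\widetilde{C_1}$ and $\widetilde{C_3}$ follows simply from $C_1\cap C_3=\0$, since any common angle would force the principal set into both).
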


 \begin{proof} Suppose that $C_3$ does not ray-separate $A$ and $B$.
 Then there exists a component $V$ of $\C\sm \wc_3$ containing points of both
 $A$ and $B$. Since $C_3$ ray-separates $C_1$ and $C_2$, one of these sets
 (say, $C_1$) is disjoint from $V$. Then $V$ is contained in a component $W$ of
 $\C\sm \wc_1$. Hence $W$ contains points of both $A$ and $B$ and so $\wc_1$
 does not separate $X$ between $A$ and $B$, a contradiction.
 \end{proof}

 The next lemma is close in spirit to Lemma~\ref{lem:middle_separates}.

 \begin{lem}\label{lem:sep_sep}
   Let $A, B \subset Q$.  Suppose that $K_1$ is a ray-compactum which
   ray-separates $A$ and $B$, and $K_2$ is a ray-compactum disjoint from $B$
   which ray-separates $A$ and $K_1$.  Then $K_2$ ray-separates $A$ and $B$.
  \end{lem}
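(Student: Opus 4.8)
The plan is to argue by contradiction, in the same spirit as Lemma~\ref{lem:middle_separates}. First I would record that $K_2\cap(A\cup B)=\emptyset$: indeed $K_2\cap B=\emptyset$ by hypothesis, while $K_2\cap A=\emptyset$ because $K_2$ ray-separates $A$ and $K_1$ and hence, by definition, is disjoint from $A\cup K_1$; in particular we also get $K_1\cap K_2=\emptyset$. Thus, in view of the equivalent form of ray-separation, to prove that $K_2$ ray-separates $A$ and $B$ it remains to rule out a component of $\overline{U_\iy}\setminus\widetilde{K_2}$ meeting both $A$ and $B$. So I would suppose such a component $V$ exists, say $a\in V\cap A$ and $b\in V\cap B$, and derive a contradiction.

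The crux is the claim that $V\cap\widetilde{K_1}=\emptyset$, and two elementary facts about ray-compacta feed into it. First, since $K_1\cap K_2=\emptyset$, no angle lies in both $\Theta(K_1)$ and $\Theta(K_2)$ (such an angle would have its nonempty principal set $\acc(\theta)$ inside $K_1\cap K_2$), so the external rays appearing in $\widetilde{K_1}$ are distinct from those appearing in $\widetilde{K_2}$; since distinct external rays meet only at $\infty$ (and the rays lie in $U_\iy$ while $K_1,K_2\subset Q$), this gives $\widetilde{K_1}\cap\widetilde{K_2}\subset\{\infty\}$, and $\infty\notin V$ because $\infty\in\widetilde{K_2}$. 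Second, every connected component of $\widetilde{K_1}$ meets $K_1$, because the closure of each ray $R_\theta$ with $\theta\in\Theta(K_1)$ contains the principal set $\acc(\theta)\subset K_1$. Now suppose $V$ contained a point $x$ of $\widetilde{K_1}$. Then $x\neq\infty$, so $x$ lies in a component $Z$ of $\widetilde{K_1}$ with $Z\setminus\{\infty\}\subset\overline{U_\iy}\setminus\widetilde{K_2}$; as $Z\setminus\{\infty\}$ is connected and meets $V$, it is contained in $V$. But $Z$ meets $K_1$ (and such a point of $K_1\cap Z$ is not $\infty$), so $V$ contains a point of $K_1$; together with $a\in V\cap A$ this contradicts the fact that $K_2$ ray-separates $A$ and $K_1$. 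Hence $V\cap\widetilde{K_1}=\emptyset$.

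To finish, since $V$ is connected and disjoint from $\widetilde{K_1}$, it is contained in a single component of $\overline{U_\iy}\setminus\widetilde{K_1}$; that component then contains both $a\in A$ and $b\in B$, contradicting the hypothesis that $K_1$ ray-separates $A$ and $B$. This last contradiction proves the lemma.

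I do not expect a serious obstacle here: once the two elementary facts above are isolated, the argument is a routine separation chase mirroring the proof of Lemma~\ref{lem:middle_separates} almost line for line. The only point requiring mild care is the bookkeeping around the common base point $\infty$ shared by all external rays, which is why the claim $\widetilde{K_1}\cap\widetilde{K_2}\subset\{\infty\}$ is stated with the exceptional point; this causes no trouble because $\infty$ is deleted from $\overline{U_\iy}$ as soon as $\widetilde{K_2}$ is removed. If one prefers, the same argument can be run entirely in $\C$, where distinct external rays are genuinely disjoint and the exceptional point disappears, exactly as in the proof of Lemma~\ref{lem:middle_separates}.
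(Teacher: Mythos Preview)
Your proof is correct and follows the same contradiction strategy as the paper's three-line argument, though you supply considerably more detail on why $V$ meeting $\widetilde{K_1}$ forces $V$ to meet $K_1$ itself (the paper asserts this in one clause). One small slip: the set $Z\setminus\{\infty\}$ need not be connected when the ray-compactum $K_1$ is disconnected (distinct rays may land on different components of $K_1$), but replacing $Z$ by the single half-ray $R_\theta\setminus\{\infty\}$ through $x$ and its principal set---or, as you yourself suggest, running the argument in $\C$---fixes this immediately.
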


  \begin{proof}
  Suppose that $K_2$ does not ray-separate $A$ and $B$. Then there exists a
  component $V$ of $\ol{U_\iy}\sm \wk_2$ containing points of both $A$ and $B$.
  Since $K_1$ ray-separates $A$ and $B$, there must be points of $K_1$ in $V$
  too. However this implies that $K_2$ does not ray-separate $A$ and $K_1$, a
  contradiction.
  \end{proof}

  The next lemma shows that elements of a well-slicing family are separated by
  infinitely many elements of the same family.

  \begin{lem}\label{lem:no_finite}
    If $\mathcal C$ is a well-slicing family of a continuum $X \subset Q$ then,
    for any two elements $C_1$ and $C_2$, infinitely many different elements of
    $\mathcal C$ separate $C_1$ and $C_2$.
  \end{lem}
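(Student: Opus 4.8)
The plan is to argue by contradiction. Suppose only finitely many elements of $\mathcal C$ ray-separate $C_1$ and $C_2$; since $\mathcal C$ is a well-slicing family and $C_1\neq C_2$, there is at least one such element, so list them as $D_1,\dots,D_n$ with $n\ge 1$. Two easy observations will be used throughout. First, any element of $\mathcal C$ that ray-separates two sets is disjoint from their union; in particular every $D_i$ is disjoint from $C_1\cup C_2$. Second, for distinct $D,D'\in\mathcal C$ the ray-filled sets $\widetilde D$ and $\widetilde{D'}$ are disjoint, since a common angle $\theta\in\Theta(D)\cap\Theta(D')$ would give $\acc(\theta)\subset D\cap D'=\emptyset$ while rays of distinct angles never meet. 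The goal is to construct an infinite sequence of pairwise distinct members of $\{D_1,\dots,D_n\}$, which is absurd.

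The combinatorial engine is a \emph{betweenness} fact: if $P,P'$ are disjoint ray-compacta and $A\subset Q$ is disjoint from $P\cup P'$, then ``$P$ ray-separates $A$ and $P'$'' is incompatible with ``$P'$ ray-separates $A$ and $P$''. To see this, write $\ol{U_\iy}\sm\widetilde P=\mathcal U\sqcup\mathcal V$ for the separation witnessing the first statement, with $A\subset\mathcal U$ and $P'\subset\mathcal V$; since $\widetilde{P'}$ is a continuum (taken together with $\infty$, which is where connectedness is needed) disjoint from $\widetilde P$, it lies in $\mathcal V$, so $\mathcal U\cup\widetilde P$ is a connected subset of $\ol{U_\iy}$ that avoids $\widetilde{P'}$ and meets both $A$ and $P$; hence $A$ and $P$ lie in a single component of $\ol{U_\iy}\sm\widetilde{P'}$, contradicting the second statement. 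I expect the only real subtlety of the whole argument to be in phrasing this step correctly, because elements of a well-slicing family need not themselves be connected (cf.\ the vertical collection) and one must pass to $\widetilde P\cup\{\infty\}$.

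Now set $E_1:=D_1$ and suppose we have produced pairwise distinct $E_1,\dots,E_k$, each ray-separating $C_1$ and $C_2$, with $E_k$ ray-separating $C_1$ and $E_j$ for every $j<k$. Since $E_k$, ray-separating $C_1$ and $C_2$, is disjoint from $C_1$, the pair $C_1,E_k$ consists of distinct elements of $\mathcal C$, so the well-slicing property provides $E\in\mathcal C$ ray-separating $C_1$ and $E_k$. Then $E$ is disjoint from $C_1\cup E_k$, so $E\neq C_1,E_k$; and $E\neq C_2$ and $E\neq E_j$ for $j<k$ by the betweenness fact, applied with $P=E_k$, $A=C_1$, and $P'=C_2$ (using that $E_k$ ray-separates $C_1$ and $C_2$), respectively $P'=E_j$ (using that $E_k$ ray-separates $C_1$ and $E_j$). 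Thus $E\notin\{E_1,\dots,E_k\}$. Finally, Lemma~\ref{lem:sep_sep} with $K_1=E_k$ and $K_2=E$ shows that $E$ ray-separates $C_1$ and $C_2$ (taking $B=C_2$, legitimate since $E$ is disjoint from $C_2$), and, for each $j<k$, that $E$ ray-separates $C_1$ and $E_j$ (taking $B=E_j$, legitimate since we just checked $E\neq E_j$); together with the choice of $E$ this gives the required property for $j\le k$. Hence $E_{k+1}:=E$ extends the sequence with all inductive properties preserved. Iterating produces infinitely many distinct elements of $\{D_1,\dots,D_n\}$ — the desired contradiction — and the lemma follows.
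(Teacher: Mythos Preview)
Your approach is essentially the paper's: inductively produce a new element of $\mathcal C$ that ray-separates $C_1$ from the last one chosen, then use Lemma~\ref{lem:sep_sep} to see it also ray-separates $C_1$ from $C_2$, and argue distinctness at each step. You are in fact more careful than the paper, which hides the distinctness step behind ``It is easy to see that $C_4\ne C_1$'' and never states the strengthened inductive hypothesis (that $E_k$ ray-separates $C_1$ from every earlier $E_j$) that both arguments actually need.

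The one genuine gap is in your proof of the betweenness fact. Your argument hinges on two assertions: that $\widetilde{P'}$ (with $\infty$) is a continuum, and that $\mathcal U\cup\widetilde P$ is connected. Neither is justified for general ray-\emph{compacta}, which is what elements of a well-slicing family are. For the first, $P'$ need not be connected, and components of $P'$ need not contain any principal set, so $\widetilde{P'}\cup\{\infty\}$ can be disconnected; however, the conclusion $\widetilde{P'}\subset\mathcal V$ that you draw from it is still correct, since each ray $R_\theta\setminus\{\infty\}$ is connected, avoids $\widetilde P$, and accumulates on $\acc(\theta)\subset P'\subset\mathcal V$. The second assertion is the real problem: $\widetilde P$ need not be closed in $\ol{U_\iy}$ (its closure picks up full impressions, not just $P$), and neither $\mathcal U$ nor $\widetilde P$ need be connected, so ``$\mathcal U\cup\widetilde P$ is connected'' is not obvious and you give no argument for it. A clean route is to argue with components instead: if both separations hold, show as above that the component of $a\in A$ in $\ol{U_\iy}\setminus\widetilde P$ avoids $\widetilde{P'}$ and the component of $a$ in $\ol{U_\iy}\setminus\widetilde{P'}$ avoids $\widetilde P$, hence the two coincide; then its closure in $\ol{U_\iy}\cup\{\infty\}$ can meet $\widetilde P\cup\{\infty\}$ and $\widetilde{P'}\cup\{\infty\}$ only at $\infty$, and one finishes by a short boundary argument. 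The paper, for what it is worth, does not prove this step either.
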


  \begin{proof} Choose $C_3\in \mathcal C$ which ray-separates $C_1$ and $C_2$.
  Then choose $C_4\in \mathcal C$ which ray-separates $C_3$ and $C_2$. It is easy to see
  that $C_4\ne C_1$. By Lemma~\ref{lem:sep_sep} $C_3$ ray-separates $C_1$ and $C_2$.
  Inductively applying this argument, we will find a sequence of pairwise distinct
  elements of $\mathcal C$ each of which ray-separates $C_1$ and $C_2$ as desired.
  \end{proof}

  Now we prove the first theorem of this subsection which implies that in a few
  cases certain subcontinua of $Q$ do not collapse under the finest map $\ph$.

  \begin{thm}\label{th:wc-non-deg}
    Suppose that $\mathcal C$ is a well-slicing family of a continuum $X \subset Q$.
    Then $\ph(X)$ is not a point.
  \end{thm}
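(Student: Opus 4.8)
The plan is to argue by contradiction: suppose that $\ph(X)$ is a point, so that $X$ lies in a single K-set $\ph^{-1}(\ph(X))$. First I would exploit Lemma~\ref{lem:no_finite} to extract a countably infinite sequence of pairwise distinct elements $C_1, C_2, C_3, \dots$ of the well-slicing family $\mathcal C$ such that, after relabeling, each $C_{i+1}$ ray-separates $C_i$ from $C_{i+2}$ (or, more conveniently, ray-separates a fixed pair of ``endpoint'' elements); by Lemma~\ref{lem:sep_sep} and Lemma~\ref{lem:middle_separates} one may moreover arrange that all of them ray-separate two fixed points $a, b \in X$ which are ray-separated by some element of $\mathcal C$. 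This gives a nested sequence of regions in $\ol{U_\iy}$: writing $V_i$ for the component of $\ol{U_\iy}\sm \wc_i$ containing $a$ (equivalently, not containing $b$), one gets $V_1 \supset V_2 \supset \cdots$ (after passing to a subsequence so the separating order is monotone), with $b \nin V_i$ for all $i$ and $a$ in every $V_i$.

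Next I would push this configuration through the monotone extension $M = \ph$ of the finest map provided by Lemma~\ref{lem:monotone_maps_extend} and Theorem~\ref{thm:exists_finest}. Since $\ph$ is a homeomorphism on $U_\iy$ and carries external conformal $Q$-rays to landing $\ph(Q)$-rays, each ray-compactum $C_i$ together with its rays $\widetilde{C_i}$ maps to a set $\ph(\widetilde{C_i})$ which separates $\ol{M(U_\iy)}$; and because $\ph(X)$ is a single point $q$, \emph{every} $C_i$ has $\ph(C_i)\subset\{q\}$ together with the landing point of the rays in $\Theta(C_i)$ being $q$ as well (the rays in $\widetilde{C_i}$ all land at points of $\ph(\imp(\Theta(C_i)))$, and $\imp(\Theta(C_i))$ is a subset of the K-set containing $X$ precisely when $C_i$ ray-separates points of $X$ — here is where I would need to be careful and possibly enlarge the K-set argument: $C_i$ ray-separating $a$ from $b$ forces $C_i$ to meet the fiber, hence $\ph(\widetilde{C_i})$ is a ``rose'' of arcs emanating from $q$). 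So in the $\ph(Q)$-plane we obtain infinitely many pairwise disjoint sets, each a finite or Cantor union of $\ph(Q)$-rays landing at the single point $q$, and each separating $\ph(a)=q=\ph(b)$ — which is already absurd, since $\ph(a)$ and $\ph(b)$ coincide and a set disjoint from a point cannot separate that point from itself. The cleaner contradiction: the $\ph$-images of the disjoint shadows $\sh$-analogues are nested open sets $M(V_i)$ in the locally connected continuum $\ph(Q)$ with $\bigcap M(V_i)$ containing $q$ but each $M(V_i)$ having $q$ on its boundary, forcing $\dia(M(\widetilde{C_i}))\not\to 0$, yet local connectivity of $\ph(Q)$ (via the crosscut argument of Lemma~\ref{lem:impressions_to_points}) forces these diameters to zero.

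The cleanest route, which I would ultimately write up, is this: by Lemma~\ref{lem:no_finite} choose infinitely many distinct $C_i\in\mathcal C$ each ray-separating a fixed pair $a,b$ of points of $X$. The sets $\wc_i$ are pairwise disjoint, and passing to a subsequence we may assume the component $V_i$ of $\ol{U_\iy}\sm\wc_i$ not containing $b$ satisfies $\wc_{i+1}\subset V_i$. Applying $\ph$ and using that $\ph|_{U_\iy}$ is one-to-one while $\ph(X)=\{q\}$, the sets $\ph(\wc_i\cap U_\iy)$ are pairwise disjoint arcs (or families of arcs) in $M(U_\iy)$ accumulating only at $q$, the $M(V_i)$ are nested, and $\bigcap_i \ol{M(V_i)}$ is a continuum containing $a,b$ but with $\ph$-image $\{q\}$ — consistent so far — \emph{however} the frontier of $M(V_i)$ in $\ol{M(U_\iy)}$ is $\ph(\wc_i)$, a subset of $\{q\}\cup(\text{arcs to }q)$, so for large $i$ these frontiers have arbitrarily small diameter (local connectivity of $\ph(Q)$, since arcs of $\ph(Q)$-rays landing at $q$ and contained in shrinking shadows must shrink, exactly as in Lemma~\ref{lem:impressions_to_points}); this contradicts the fact that each $\ph(\wc_i)$ separates $\ph(a)$ from $\ph(b)$ while $a$ and $b$ are at a fixed positive distance in $Q$ and $\ph$ restricted to the relevant crosscut neighborhood is a homeomorphism, so $\ph(a)\ne\ph(b)$, contradicting $\ph(X)=\{q\}$.

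\textbf{Main obstacle.} The delicate point is the bookkeeping that translates ``$C_i$ ray-separates $a$ and $b$ in $\ol{U_\iy}$'' into a statement about $\ph(\wc_i)$ in the $\ph(Q)$-plane that survives the collapsing: one must verify that $\ph$ does not collapse the separating structure away, i.e. that $\ph(\wc_i)$ genuinely separates $\ph(a)$ from $\ph(b)$ in $\ol{M(U_\iy)}$ whenever these images are distinct, and simultaneously that \emph{if} $\ph(a)=\ph(b)$ then infinitely many disjoint $\ph(\wc_i)$ each ``separate a point from itself'' inside the locally connected continuum $\ph(Q)$, which contradicts local connectivity via the shrinking-crosscut lemma. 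Getting the logical order right — extract the infinite nested separating family \emph{first} (Lemmas~\ref{lem:middle_separates}, \ref{lem:sep_sep}, \ref{lem:no_finite}), then apply the locally-connected-model machinery (Lemma~\ref{lem:monotone_maps_extend}, Theorem~\ref{thm:exists_finest}, and the crosscut estimate from Lemma~\ref{lem:impressions_to_points}) — is where the real care is needed.
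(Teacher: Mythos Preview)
Your approach is genuinely different from the paper's, and as written it has a real gap.

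The paper does \emph{not} argue by contradiction or push anything through $\ph$. Instead it builds, directly from the well-slicing family $\mathcal C$, a new closed equivalence relation $\approx$ on all of $Q$: declare $x\approx y$ iff only finitely many elements of $\mathcal C$ ray-separate $x$ from $y$. One checks transitivity and closedness, then shows that every impression $\imp(\alpha)$ lies in a single $\approx$-class (using that at most one $\Theta(C)$ can contain $\alpha$). Refining to connected components gives a monotone quotient $m:Q\to Q/\!\!\approx'$ which collapses impressions, hence by Lemma~\ref{lem:collapse_impressions_lc} has locally connected image. By Lemma~\ref{lem:no_finite}, points of $C_1\cap X$ and $C_2\cap X$ are $\not\approx$, so $m$ does not collapse $X$; since $\ph$ is the \emph{finest} such map, $\ph$ does not collapse $X$ either. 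The universal property of $\ph$ does all the work.

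Your argument never exhibits such a map and tries instead to extract a contradiction from the assumption $\ph(X)=\{q\}$ by analyzing $\ph(\widetilde{C_i})$. The step that fails is the one you flag yourself: from ``$\widetilde{C_i}$ separates $a,b$ and $\ph^{-1}(q)$ is a continuum containing $a,b$'' you only get $C_i\cap\ph^{-1}(q)\neq\emptyset$, i.e.\ $q\in\ph(C_i)$ --- not $\ph(C_i)=\{q\}$, and not that the landing points of the rays in $\Theta(C_i)$ are near $q$. So your ``frontiers $\ph(\widetilde{C_i})$ have small diameter'' claim is unsupported: the $C_i$ can be large ray-compacta whose $\ph$-images wander all over $\ph(Q)$. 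The crosscut argument of Lemma~\ref{lem:impressions_to_points} does not help, since it requires crosscuts of small diameter in the $Q$-plane, which you do not have. And the final sentence (``$\ph$ restricted to the relevant crosscut neighborhood is a homeomorphism, so $\ph(a)\ne\ph(b)$'') is circular: that is exactly the conclusion you are trying to reach.

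The idea you are missing is precisely the one the paper uses: rather than trying to control the given map $\ph$, use $\mathcal C$ to \emph{manufacture} some monotone map onto a locally connected continuum that visibly separates two points of $X$, and then invoke that $\ph$ factors through it.
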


  \begin{proof}
    Define $x \approx y$ whenever only finitely many elements of $\mathcal C$
    ray-separate $x$ and $y$.  Clearly, such a relation is symmetric and
    reflexive.  To see that it is transitive, suppose $x \approx y$ and $x \not
    \approx z$.  Then infinitely many elements of $\mathcal C$ ray-separate $x$
    and $z$.  However, only finitely many of these elements ray-separate $x$
    from $y$, and the rest then ray-separate $y$ from $z$, so $y \not \approx
    z$.

    Therefore, $\approx$ is an equivalence relation.  We will now show that $
    \approx$ is a closed equivalence relation by showing that $\{(x, y) \in Q^2
    \, \mid \, x \not \approx y\}$ is open.  Suppose that $x \not \approx y$.
    In particular, there are two elements $C_1$ and $C_2$ which ray-separate
    $x$ and $y$.  Every subspace of $\C$ is a normal space,
    so it is easy to see that sets $\widetilde{C_1}$ and $\widetilde{C_2}$
    separate $\overline{U_\iy}$ between every point $y$ in a
    neighborhood $V$ of $x$ and every point $z$ in a neighborhood $W$ of $y$.
    Then by Lemma~\ref{lem:no_finite} we can find infinitely many elements of
    $\mathcal C$ which do not contain $y$ or $z$ and separate $X$ between $C_1$
    and $C_2$. Each such element of $\mathcal C$ separates $X$ between $y$ and
    $z$ by Lemma~\ref{lem:middle_separates}. Hence no point in $V$ is
    $\approx$-equivalent to any point in $W$, and $\approx$ is closed. In
    particular, the partition of $Q$ into $\approx$-classes is upper
    semi-continuous.

    Now we show that, for any external angle $\alpha$, the impression
    $\imp(\alpha)$ is contained in a $\approx$-class.  To see this, suppose
    that $x, y \in \imp(\alpha)$ are ray-separated by two elements $B, C$ of
    $\mathcal C$. Since $B\cap C=\0$, we see that the set $\Theta(B)$ of angles
    associated with $B$ is disjoint from $\Theta(C)$. Hence at most one of
    these sets of angles contains $\al$, and we may assume that $\al\nin
    \Theta(C)$. Now, since $C$ is a ray-compactum, then each component $W$ of
    $\C\sm \wc$ corresponds to a well-defined open set of angles in $\uc$
    whose external rays are contained in $W$. Since $\al\nin \Theta(C)$, one
    such component $V$ contains $R_\al$ together with rays of close to $\al$
    angles. Hence $\imp(\al)\subset \ol{V}$ which means that $\imp(\al)$ is
    disjoint from all other components of $\ol{U_\iy}\sm \wc$ but $V$.
    However, by the assumption $C$ ray-separates $X$ between $x$ and $y$, hence
    the points $x\in \imp(\al)$ and $y\in \imp(\al)$ must belong to distinct
    components of $\ol{U_\iy}\sm \wc$, a contradiction.

    Finally, we show that $\ph(X)$ is not a point. First we refine $\approx$ to
    get an equivalence $\approx'$ with connected classes. Indeed, as in the
    proof of Lemma~\ref{lem:exists_partition} we can define a finer partition
    than that into $\approx$-classes whose elements are connected components of
    $\approx$-classes. Then the new partition is an upper semi-continuous
    monotone decomposition of $Q$ \cite[Lemma 13.2]{nad92}. By the previous
    paragraph any impression is still contained in an $\approx'$-class. Thus
    the quotient map $m:Q\to Q/\approx'$ is a monotone surjective map
    collapsing impressions. By Lemma~\ref{lem:collapse_impressions_lc} $Q/\approx'$ is
    locally connected. Now, let $C_1, C_2 \in \mathcal C$ be different.  For
    all $x \in C_1 \cap X$ and $y \in C_2 \cap X$, we see that $x \not \approx
    y$ by Lemma~\ref{lem:no_finite} and hence $m(x)\ne m(y)$.  Since $\ph$ is
    the finest monotone map, we see that $\ph(x) \neq \ph(y)$, and so $\ph(X)$
    is not a point.
  \end{proof}

  Now we prove a related criterion: If an unshielded continuum $Q
  \subset \C$ has an uncountable family of disjoint ray-continua, each
  of which ray-separate $Q$, then there is a sub-family which is
  well-sliced, and therefore the finest model is non-degenerate.

  \begin{lem}\label{lem:linear_order}
    Let $\mathcal C$ be an uncountable collection of disjoint ray-continua of
    an unshielded continuum $Q\subset\mathbb{C}$, each of which ray-separates
    $Q$. Then there exist elements $C_0, C_1 \in \mathcal C$ such that
    uncountably many elements of $\mathcal C$ ray-separate $C_0$ and $C_1$.
  \end{lem}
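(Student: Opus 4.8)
The plan is to build, using the ray-separation structure on $\mathcal C$, a linear (or at least a rich partial) order and then invoke a cardinality/counting argument to find two elements separated by uncountably many others. First I would fix any element $C_\ast \in \mathcal C$. For each pair $C, D \in \mathcal C$ with $C \ne D$, the hypothesis gives that some element of $\mathcal C$ ray-separates them; more usefully, each $\widetilde C$ separates $\overline{U_\iy}$, so I would record for each $C$ the (closed) set of external angles $\Theta(C)$ and the component of $\overline{U_\iy}\sm \widetilde C$ that does \emph{not} contain $C_\ast$ — call it the ``far side'' $F(C_\ast,C)$. Since the ray-continua are pairwise disjoint, the far sides are nested or disjoint: for $C \ne D$ either $F(C_\ast,C) \supset F(C_\ast,D)$, or $F(C_\ast,D)\supset F(C_\ast,C)$, or they are disjoint. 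This is the key structural observation, and establishing it carefully (using that $\widetilde C$ and $\widetilde D$ are disjoint closed sets each separating $\overline{U_\iy}$, together with the fact that $Q$ is unshielded so all crosscuts/rays live in $U_\iy$) is what I expect to be the main obstacle.

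Given the nesting/disjointness dichotomy, the far sides $\{F(C_\ast,C)\}_{C\in\mathcal C}$ form a tree-like family ordered by inclusion. I would then argue that this family, being uncountable, cannot be an ``antichain-heavy'' tree: either some chain in it is uncountable, or some element has uncountably many immediate successors, or one runs into a contradiction with separability of the plane (each far side contains an open set of angles, and $\uc$ is second countable, so an uncountable pairwise-disjoint subfamily is impossible). Thus there is an uncountable \emph{chain} $\mathcal C_0 \subset \mathcal C$, i.e. the far sides $F(C_\ast,C)$, $C \in \mathcal C_0$, are linearly ordered by inclusion. Passing to $\mathcal C_0$, pick $C_0$ with the largest far side and $C_1$ with the smallest far side available after discarding countably many (an uncountable linear order has elements $a<b$ with uncountably many elements strictly between them — e.g. because a linear order with all intervals countable has size at most $\aleph_1$-with-countable-cofinality obstructions; more concretely, if every open interval $(C_0,C_1)$ in $\mathcal C_0$ were countable then $\mathcal C_0$ would be a countable union of countable sets). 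Then every $C$ strictly between $C_0$ and $C_1$ in this order has $F(C_\ast,C_1)\subsetneq F(C_\ast,C)\subsetneq F(C_\ast,C_0)$, which forces $\widetilde C$ to separate $\overline{U_\iy}$ between $C_0$ and $C_1$, i.e. $C$ ray-separates $C_0$ and $C_1$. Hence uncountably many elements of $\mathcal C$ ray-separate $C_0$ and $C_1$.

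An alternative, possibly cleaner, route avoids trees: for each $C\in\mathcal C$ choose a rational angle $q(C)$ in the open set of angles whose rays land in the far side $F(C_\ast,C)$ but not in the far side of any fixed ``reference'' separator; by pigeonhole an uncountable subfamily shares the property that their far sides all contain a common rational angle $q_0$ and all miss a common rational angle $q_1$. Ordering this subfamily by inclusion of far sides (which is now a genuine linear order since disjointness is excluded — every far side contains $q_0$) and applying Lemma~\ref{lem:sep_sep} to propagate ray-separation, one extracts $C_0,C_1$ as above. In either approach the crux is the dichotomy for far sides and the standard fact that an uncountable linear order cannot have all of its open intervals countable; I would lead with the dichotomy lemma, then the counting step, then read off the conclusion.
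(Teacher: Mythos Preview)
Your approach differs from the paper's. The paper argues by contradiction: assuming no pair works, it introduces auxiliary open sets $Y_{AB}\subset Q$ indexed by pairs from $\mathcal C$ and uses two applications of second countability (once to extract a countable subcover by the $Y_{AB}$'s, once to bound a resulting family of pairwise disjoint open sets $V_D$) to reach a contradiction. Your route is direct and order-theoretic: fix $C_\ast$, order $\mathcal C$ by inclusion of far sides, extract an uncountable chain (the pigeonhole-on-rational-angles variant is the clean way to do this), and then locate two elements with uncountably many strictly between them. Both arguments ultimately rest on separability of the plane, but package it quite differently; the paper's is shorter, yours exposes more structure.

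There is, however, a genuine gap at your final step. The assertion ``an uncountable linear order cannot have all of its open intervals countable'' is \emph{false}: $\omega_1$ with its usual order is a counterexample, and your ``countable union of countable sets'' justification does not apply to it (nor does the vaguer ``$\aleph_1$-with-countable-cofinality'' remark help). What saves the step here is additional structure you have not invoked: the angle-sets $A(C)=\{\theta:R_\theta\subset F(C_\ast,C)\}$ form a strictly monotone chain of \emph{open} subsets of the hereditarily Lindel\"of space $\ucirc$, and the complementary angle-sets on the $C_\ast$ side form another; Lindel\"of then forces the chain $\mathcal C_0$ to have countable cofinality \emph{and} countable coinitiality, after which your ``write $\mathcal C_0$ as a countable union of intervals'' argument is valid. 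You need to spell this out. Two smaller points: $\overline{U_\iy}\setminus\widetilde C$ can have more than two components, so take $F(C_\ast,C)$ to be the union of all non-$C_\ast$ components (the nesting/disjointness trichotomy then goes through); and the inference ``no uncountable antichain $\Rightarrow$ uncountable chain'' in your first route is not a theorem of ZFC (Suslin trees), though your pigeonhole alternative sidesteps it.
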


  \begin{proof}
    Assume by way of contradiction that this is not the case.  For $A, B \in
    \mathcal{C}$, let $Y_{AB}$ denote the set of points $x\in X\setminus(A \cup
    B)$ which are not ray-separated from $\A$ by $B$, nor vice-versa. We see
    that $Y_{AB}$ is an open subset of $X$, each $Y_{AB}$ contains every
    element of $\mathcal{C}$ that it intersects, and by assumption each
    $Y_{AB}$ may contain only countably many elements of $\mathcal{C}$. Then
    the open set $U=\bigcup_{A, B\in \mathcal{C}}Y_{AB}$ is an open subset of
    $Q$ of which $\{Y_{AB}\}_{A,B\in J}$ forms an open cover. Since $Q$ is
    second countable, countably many $Y_{AB}$ cover $U$. We therefore
    conclude that the set of elements of $\mathcal{C}$ contained in (or
    intersecting) $U$ is countable.

    Consider now any $D \in \mathcal{C}$ contained in $Q\setminus U$. By the
    definition of $U$, $D$ does not ray-separate any pair of elements in
    $\mathcal{C}$, so $U$ must lie in a component of
    $\C\setminus\widetilde{D}$.  Let $V_{D}$ denote a different component of
    $\C\setminus\widetilde{D}$. Notice that, for any $D,E\in \mathcal{C}$ such
    that $D\cup E\subset Q\sm U$, $V_{D}\cap V_{E}=\emptyset$, since any
    point in their intersection by definition belongs to $Y_{DE}\subset U$
    while $V_{D}\cup V_{E}\subset\mathbb{C}\setminus U$. Therefore,
    $\{V_{A}\,\mid\, A \in \mathcal{C},\,A \nsubseteq U\}$ is an uncountable
    collection of disjoint open subsets of $X$, contradicting that $X$ is a
    metric continuum.
\end{proof}

\begin{thm}
  Suppose that $\mathcal{C}$ is an uncountable collection of pairwise-disjoint
  ray-continua in an unshielded continuum $Q\subset\mathbb{C}$, each of which
  ray-separates $Q$.  Then a subcollection of $\mathcal{C}$ forms a
  well-slicing family of $Q$, and the finest model of $Q$ is non-degenerate.
\end{thm}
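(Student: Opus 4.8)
The plan is to use Lemma~\ref{lem:linear_order} to bootstrap an uncountable collection of ray-continua into a well-slicing family, and then invoke Theorem~\ref{th:wc-non-deg} to conclude. The key point is that a well-slicing family requires a transfinite ``betweenness'' structure (condition (2) in the definition), and Lemma~\ref{lem:linear_order} is exactly the tool that lets us extract the first instance of such betweenness; the difficulty is iterating it to get betweenness for \emph{every} pair in a subcollection, not just one.

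First I would apply Lemma~\ref{lem:linear_order} to obtain $C_0, C_1 \in \mathcal C$ such that the set $\mathcal C_1$ of elements of $\mathcal C$ which ray-separate $C_0$ and $C_1$ is uncountable. By Lemma~\ref{lem:sep_sep} (or the transitivity-of-separation arguments already developed), every element of $\mathcal C_1$ lies ``between'' $C_0$ and $C_1$ in a natural linear order: for $A \in \mathcal C_1$, $A$ separates $\ol{U_\iy}$ into a side containing $C_0$ and a side containing $C_1$, and for distinct $A, B \in \mathcal C_1$ exactly one of them separates the other from $C_0$. This gives a linear order $\prec$ on $\mathcal C_1$. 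The next step is to extract from $(\mathcal C_1, \prec)$ an uncountable subset that is \emph{order-dense in itself} — equivalently, a subset in which between any two elements lies a third. This is a standard fact about uncountable linearly ordered sets: an uncountable linear order either contains an uncountable well-ordered (or reverse-well-ordered) subset, or it contains an uncountable dense-in-itself subset; and in the former case one can still thin out further. More concretely, the set of elements $A \in \mathcal C_1$ which are ``isolated on the left or right'' within $\mathcal C_1$ is countable (each such isolated element, via rational approximation or a choice of separating ray-continuum with rational angle data in the gap, injects into a countable set), so removing them leaves an uncountable subcollection $\mathcal C_2 \subseteq \mathcal C_1$ in which every element is a two-sided limit — hence between any two elements of $\mathcal C_2$ there is a third element of $\mathcal C_2$.

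Then I would verify that $\mathcal C_2$ is a well-slicing family of $Q$: it is a collection of pairwise disjoint ray-continua (inherited from $\mathcal C$), it has at least two elements (it is uncountable), each element ray-separates $Q$ by hypothesis, and for distinct $C_1', C_2' \in \mathcal C_2$ there is $C_3' \in \mathcal C_2$ between them, which ray-separates $C_1'$ and $C_2'$ — here one uses Lemma~\ref{lem:middle_separates} and Lemma~\ref{lem:sep_sep} to pass from ``$C_3'$ is between $C_1'$ and $C_2'$ in the order $\prec$'' to ``$C_3'$ ray-separates $C_1'$ and $C_2'$'' in the sense of the definition. Finally, since each $C_3'$ ray-separates $Q$ (in particular any subcontinuum realizing the separation), Theorem~\ref{th:wc-non-deg} applies with $X = Q$ and yields that $\ph(Q)$ is not a point, i.e.\ the finest model of $Q$ is non-degenerate.

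The main obstacle I expect is the order-theoretic thinning step: showing that after discarding elements isolated from one side, an uncountable remainder survives. The cleanest route is to observe that each ``one-sided isolated'' gap in $(\mathcal C_1, \prec)$ can be tagged by a ray-continuum (or rational angle pair) lying strictly in that gap — but this requires knowing that the complement of $\bigcup \mathcal C_1$ in $\ol{U_\iy}$ has, between consecutive elements, an open region carrying rational external angles, which follows from the ray-compactum structure (each $\wc$ determines an open set of angles). Alternatively, one invokes directly the classical result that an uncountable subset of $\mathbb R$ (or of any separable linearly ordered continuum, which $\ol{U_\iy}$ furnishes via a continuous monotone real-valued coordinate transverse to the separating family) has uncountably many condensation points, all of which are two-sided limit points of the set; restricting to those gives $\mathcal C_2$. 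Everything else is a routine assembly of the separation lemmas already proved.
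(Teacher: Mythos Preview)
Your approach is essentially the same as the paper's: apply Lemma~\ref{lem:linear_order}, induce a linear order on the resulting uncountable family of separators, thin to a subfamily dense-in-itself, and apply Theorem~\ref{th:wc-non-deg}. The only substantive difference is in the thinning step. The paper associates to each separator a chord in $\disk$ (via the angle sets $\Theta(C)$), so that the linear order embeds into a separable ordered set; it then performs a dyadic bisection, choosing at each stage a splitting element with uncountably many separators on each side, to produce a countable family $\{\alpha_q\}_{q\in \mathbb{Q}\cap(0,1)}$ order-isomorphic to the dyadic rationals. Your condensation-point alternative achieves the same end and is perfectly valid once the embedding into $\mathbb R$ is in place; the chord association in the paper is exactly the concrete realization of the ``monotone real-valued coordinate'' you gesture at.

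One genuine imprecision: your ``remove one-sided isolated elements'' step does \emph{not} by itself yield a family in which between any two elements lies a third. Consider $\mathcal C_1 = (-1,0]\cup\{1/n:n\ge 2\}\cup\{1-1/n:n\ge 2\}\cup[1,2)$ inside $\mathbb R$: the points $0$ and $1$ are two-sided limits of $\mathcal C_1$ and survive the removal, but nothing in $(0,1)$ survives. So the implication ``every element of $\mathcal C_2$ is a two-sided limit of $\mathcal C_1$, hence $\mathcal C_2$ is dense-in-itself'' fails. The fix is exactly what you mention afterward: pass instead to the set of \emph{two-sided condensation points} of $\mathcal C_1$ (points every one-sided neighborhood of which meets $\mathcal C_1$ uncountably). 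That set differs from $\mathcal C_1$ by a countable set and is genuinely dense-in-itself. With that correction your argument goes through; the paper's dyadic bisection sidesteps the issue by never needing density of the whole remainder, only the inductive splitting property.
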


\begin{proof}
  By Lemma~\ref{lem:linear_order}, without loss of generality we may assume
  that there are elements $\alpha_{0},\alpha_{1}\in \mathcal{C}$ such that all
  other elements of $\mathcal{C}$ ray-separate $\alpha_{0}$ and $\alpha_{1}$.
  Clearly, a linear order $\prec$ is induced on $\mathcal{C}$, where $\beta
  \prec \gamma$ whenever $\beta$ ray-separates $\alpha_0$ and $\gamma$ (for if
  neither ray-separates the other from $\al_0$, one of them does not
  ray-separate $\al_0$ and $\al_1$).

  To each element $\al\in \mathcal{C}$ we can associate a chord $\ell_\al$ so
  that this collection of chords in the unit disk is uncountable and also
  linearly ordered. Hence there exists an element $\al_{1/2}$ such that both
  intervals $(\alpha_0,\alpha_{1/2})_\prec$ and $(\alpha_{1/2},\alpha_1)_\prec$
  in $\mathcal{C}$ are uncountable. By induction we can define $\al_q$ for any
  dyadic rational $q, 0<q<1$. Then the collection $\{\al_q\}$ with $q$ dyadic
  rational is a well-slicing family. By Theorem~\ref{th:wc-non-deg}, the finest
  model of $Q$ is non-degenerate.
\end{proof}


\section{The finest model for polynomial Julia sets is dynamical}\label{finmodpol}

Now we show that if $Q=J_P$ is a connected polynomial Julia set then the finest
map $\ph$ (which we always canonically extend onto the entire plane as
explained above) semiconjugates $P$ to a branched covering map $g:\hc
\rightarrow \hc$, which we call the \emph{topological polynomial}. Call
$\ph(J_P)$ the \emph{topological Julia set} (see the diagram on page~\pageref{eq:commdiag}). In Section~\ref{intro} by a
topological polynomial we understood the map $f_\sim$ induced by $\sim$ on the
quotient space of a lamination $\sim$; since it will always be clear whether we
deal with a topological polynomial considered on $J_\sim$ or we deal with its
canonic extension on the entire plane, our terminology will not cause
ambiguity. Recall that $\mathcal D_Q=\mathcal D$ is the finest among all upper
semi-continuous partitions whose elements are unions of impressions of $Q$, or,
as we have shown above, the family of all fibers of the finest map $\ph$
(K-sets).

We now give a transfinite method for constructing the finest closed
equivalence relation $\sim$ respecting a given collection of continua
$\mathcal A$.  To begin, let $\sim_0$ denote the equivalence relation
such that $x \sim_0 y$ if and only if $x$ and $y$ are contained in a
connected finite union of elements of $\mathcal A$.  Typically,
$\sim_0$ does not have closed classes, so $\sim$ makes more
identifications.  If an ordinal $\alpha$ has an immediate predecessor
$\beta$ for which $\sim_{\beta}$ is defined, we define $x \sim_\alpha
y$ if there exist finitely many sequences of $\sim_{\beta}$ classes
whose limits comprise a continuum containing $x$ and $y$.  (Here, the
limit of non-closed sets is considered to be the same as the limit of
their closures.)  In the case that $\alpha$ is a limit ordinal, we say
$x\sim_\alpha y$ whenever there exists $\beta < \alpha$ such that $x
\sim_\beta y$.  Notice that the sequence of $\sim_\alpha$-classes of a
point $x$ (as $\alpha$ increases) is an increasing nest of connected
sets, with the closure of each being a subcontinuum of its successor.
It is also apparent that $\sim_\alpha$-classes are contained in
$\sim$-classes for all ordinals $\alpha$.

Let us now show that $\sim = \sim_\Omega$ where $\Omega$ is the
smallest uncountable ordinal.  To see this, we first note that
$\sim_\Omega = \sim_{(\Omega+1)}$.  This is because the sequence of
closures of $\sim_\alpha$-classes containing a point $x$ forms an
increasing nest of subcontinua, no uncountable subchain of which can
be strictly increasing.  Therefore, all $\sim_\alpha$-classes have
stabilized when $\alpha = \Omega$.  This implies that $\sim_\Omega$ is
a closed equivalence relation, since the limit of
$\sim_\Omega$-classes is a $\sim_{(\Omega+1)}$-class, which we have
shown is a $\sim_\Omega$-class again.  Finally, $\sim_\Omega$
identifies elements of $\mathcal A$ to points and
$\sim_\Omega$-classes are contained in $\sim$-classes, so $\sim$ and
$\sim_\Omega$ coincide.

\begin{thm}\label{thm:ph_dynamic}
  For any $D \in \mathcal D$, $P(D) \in \mathcal D$.
\end{thm}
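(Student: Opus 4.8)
The statement is equivalent to showing $P(D)=D'$ for the unique element $D'\in\mathcal D$ containing $P(D)$, and I would establish the two inclusions separately. Throughout I would use three facts: $P$ carries impressions \emph{onto} impressions (in the B\"ottcher coordinate the ray $R_\al$ is sent to $R_{\si_d(\al)}$, and passing to closures of ray tails gives $P(\imp(\al))=\imp(\si_d(\al))$); every point of $J_P$ lies in some impression; and, by the construction of $\mathcal D$ in Lemma~\ref{lem:exists_partition}, an impression meeting an element of $\mathcal D$ is contained in it.

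\emph{The inclusion $P(D)\subseteq D'$.} I would consider the decomposition of $J_P$ into connected components of the fibers of $\ph\circ P\colon J_P\to J_P/\mathcal D$. The fibers form an upper semi-continuous decomposition, hence so does the refinement $\mathcal D^{\sharp}$ into their components by \cite[Lemma 13.2]{nad92}. If $E\in\mathcal D^{\sharp}$ is a component of $P^{-1}(C)$ with $C\in\mathcal D$ and $\imp(\al)$ meets $E$, then $\imp(\si_d(\al))=P(\imp(\al))$ meets $P(E)\subseteq C$, so $\imp(\si_d(\al))\subseteq C$, whence $\imp(\al)\subseteq P^{-1}(C)$ and, being connected, $\imp(\al)\subseteq E$. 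Thus the elements of $\mathcal D^{\sharp}$ are subcontinua of $J_P$ that are unions of impressions, so $\mathcal D^{\sharp}$ is one of the partitions considered in Lemma~\ref{lem:exists_partition}; therefore $\mathcal D$ refines $\mathcal D^{\sharp}$, so $D$ lies in a single component $E$ of $P^{-1}(D')$ for a unique $D'\in\mathcal D$, giving $P(D)\subseteq P(E)\subseteq D'$. In particular $\ph$ semiconjugates $P$ to a well-defined continuous map $g$ of $\ph(J_P)=J_P/\mathcal D$, which extends to $\hat g=\hat\ph\circ P\circ\hat\ph^{-1}$ on $\hc$; on the topological basin $\hat\ph(U_\iy)$ the map $\hat g$ is topologically conjugate to $z\mapsto z^d$ on the exterior of the unit disk, and since $\hat\ph(J_P)$ bounds $\hat\ph(U_\iy)$ and is locally connected this conjugacy extends continuously to the boundary; the boundary map is $\Phi$ from Lemma~\ref{lem:collapse_impressions_lc}, so $\Phi\circ\si_d=g\circ\Phi$ and $\Phi(\al)$ is the landing point of the topological ray $\hat\ph(R_\al)$.

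\emph{The inclusion $D'\subseteq P(D)$.} Put $\mathbf b=\{\al:\imp(\al)\subseteq D\}$ and $\mathbf b'=\{\be:\imp(\be)\subseteq D'\}$; since every point of $J_P$ lies in an impression and impressions do not straddle elements of $\mathcal D$, we have $D=\bigcup_{\al\in\mathbf b}\imp(\al)$ and $D'=\bigcup_{\be\in\mathbf b'}\imp(\be)$ with $\mathbf b,\mathbf b'\neq\emptyset$, and $g(\ph(D))=\ph(P(D))=\ph(D')$. It suffices to show that every $\be\in\mathbf b'$ has a preimage $\al\in\si_d^{-1}(\be)$ lying in $\mathbf b$: then $\imp(\be)=\imp(\si_d(\al))=P(\imp(\al))\subseteq P(D)$, so $D'=\bigcup_{\be\in\mathbf b'}\imp(\be)\subseteq P(D)$, and with the previous paragraph $P(D)=D'\in\mathcal D$. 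To produce such $\al$, I would fix $\be\in\mathbf b'$, note that the topological ray $\hat\ph(R_\be)$ lands at $\ph(D')=\hat g(\ph(D))$, and pull this ray back through the local $z\mapsto z^d$ model of $\hat g$ near $\ph(D)$ to a topological ray landing at $\ph(D)$; by construction the pulled-back ray is $\hat\ph(R_\al)$ for some $\al$ with $\si_d(\al)=\be$, and then $\Phi(\al)=\ph(D)$, i.e. $\al\in\mathbf b$.

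I expect the surjectivity step --- equivalently, the equality $\si_d(\mathbf b)=\mathbf b'$ --- to be the main obstacle. The inclusion $\si_d(\mathbf b)\subseteq\mathbf b'$ is free from $P(D)\subseteq D'$, but the reverse inclusion uses the $d$-fold covering structure of $\hat g$ on the basin of infinity and must be argued from the $z^d$-model rather than from formal properties of the semiconjugacy alone, since an arbitrary $\si_d$-compatible identification of $\ucirc$ need not be forward invariant in the strong sense required. Some care is also needed when $\ph(D)$ is a branch point of $\hat g$ (several $\mathcal D$-classes collapsing together under $P$), so that the ray must be pulled back along the correct local branch; this is the standard pullback-of-external-rays argument and is the technical heart of the proof.
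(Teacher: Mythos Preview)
Your Part~1 argument for $P(D)\subseteq D'$ is correct and elegant---in fact cleaner than the paper's transfinite induction for this direction. By pulling $\mathcal D$ back through $P$ and taking components you produce another upper semi-continuous decomposition $\mathcal D^\sharp$ into continua that are unions of impressions; minimality of $\mathcal D$ then forces $\mathcal D$ to refine $\mathcal D^\sharp$, which is exactly $P(D)\subseteq D'$. This already yields the induced map $g$ on $\ph(J_P)$ and the semiconjugacy $g\circ\Phi=\Phi\circ\si_d$.

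Your Part~2, however, is circular. You propose to ``pull the ray $\hat\ph(R_\be)$ back through the local $z\mapsto z^d$ model of $\hat g$ near $\ph(D)$'' to obtain a preimage ray landing at $\ph(D)$. But the existence of such a local model---equivalently, that $\hat g$ (or even $g$ on $\ph(J_P)$) is a branched covering, or merely open, at points of $\ph(J_P)$---is precisely Theorem~\ref{thm:model_map}, which the paper \emph{derives from} Theorem~\ref{thm:ph_dynamic} via the Stoilow theorem. At the moment you invoke it, all you know is that $\hat g$ is continuous on $\overline{\hat\ph(U_\iy)}$ and a $d$-fold cover on the open basin. The $d$ preimage rays $\hat\ph(R_{\al_i})$, $\al_i\in\si_d^{-1}(\be)$, are determined globally and land at $\Phi(\al_i)$; nothing in your setup forces any $\Phi(\al_i)$ to equal $\ph(D)$, even though $\ph(D)\in g^{-1}(\ph(D'))$. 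Concretely, a component $E$ of $P^{-1}(D')$ may contain several $\mathcal D$-classes, and you have not excluded that all $\imp(\al_i)$ lie in the classes other than $D$.

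The paper sidesteps this by building $\mathcal D$ transfinitely from the impression relation and proving, at every ordinal stage $\alpha$, that $\sim_\alpha$-classes map \emph{onto} $\sim_\alpha$-classes; the onto step at successor ordinals uses only that $P$ is open (to lift convergent chains of $\sim_\beta$-classes), never the branched-cover structure of $g$. Your minimality trick cannot substitute for this: minimality of $\mathcal D$ compares $\mathcal D$ only against \emph{coarser} impression-saturated partitions, whereas surjectivity would require a comparison in the other direction. If you want to salvage your approach, you would need an independent proof that each component of $P^{-1}(D')$ is a single element of $\mathcal D$ (then confluence of the open map $P$ gives $P(D)=D'$), or an independent proof that $g$ is open---but neither follows from Part~1 alone.
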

\begin{proof}
  We prove by transfinite induction that, for any ordinal $\alpha$,
  the image of a $\sim_\alpha$-class is again a $\sim_\alpha$-class.
  It is the case that the image of a $\sim_0$-class is again a
  $\sim_0$-class.  For instance, if $x \sim_0 y$ there is a finite
  chain of impressions containing them, and the image is a finite
  chain of impressions containing $P(x)$ and $P(y)$.  Furthermore, if
  $P(x)$ is contained in the class of $y$, there is a finite union $K$
  of impressions connecting them.  Since $P$ is an open map, the
  component of the $P^{-1}(K)$ containing $x$ is a finite union of
  impressions containing a preimage of $y$.

  Now assume for induction that, for every $\beta<\alpha$, that
  $\sim_\beta$-classes map to other $\sim_\beta$-classes.  We will
  show that this is also true for $\sim_\alpha$-classes.  This is easy
  to see if $\alpha$ is a limit ordinal, so we will concentrate on
  proving this fact when $\alpha$ has an immediate predecessor
  $\beta$.  Suppose first that $x \sim_\alpha y$.  Then there are
  sequences $(K_i^1)_{i=1}^\infty \to K_1$, \ldots,
  $(K_i^n)_{i=1}^\infty \to K_n$ of $\sim_\beta$-classes such that
  $K_1$, \ldots, $K_n$ form a chain from $x$ to $y$ (i.e., so $K_1$
  contains $x$, $K_n$ contains $y$, and $K_i \cap K_{i+1} \neq
  \emptyset$ for $1 \le i < n$).  The image sequences
  $(P(K_i^1))_{i=1}^\infty \to P(K_1)$, \ldots,
  $(P(K_i^n))_{i=1}^\infty \to K_n$ are also sequences of
  $\sim_\beta$-classes by the inductive hypothesis, which converge
  onto the chain $K_1 \cup \ldots \cup K_n$.  This illustrates that
  $P(x) \sim_\alpha P(y)$.

  On the other hand, say $P(x) \sim_\alpha y$; we will show that $x
  \sim_\alpha z$ for some $z \in P^{-1}(y)$, so that the
  $\sim_\alpha$-class of $x$ maps \emph{onto} to $\sim_\alpha$-class
  of $P(x)$.  Again find sequences $(K_i^j)_{i=1}^\infty$ of
  $\sim_\beta$-classes with $j \in \{1, \ldots, n\}$ whose limits
  $K_1$, \ldots, $K_n$ form a chain from $P(x)$ to $y$.  Because $P$
  is open, there is a sequence of preimages of $(K_i^1)_{i=1}^\infty$,
  (whose members are $\sim_\beta$-classes by hypothesis) that limit to
  a continuum $K_1'$ containing $x$.  By continuity, $P(K_1') = K_1$
  intersects $K_2$, so we can proceed by inductively choosing limits
  $K_{i+1}'$ of $\sim_{\beta}$-classes intersecting $K_i'$ and mapping
  onto $K_{i+1}$.  The resulting chain $K_1'\cup \ldots\cup K_n'$ maps
  onto $K_1 \cup \ldots \cup K_n$, which shows that $x$ is
  $\sim_\alpha$-equivalent to some preimage of $y$. We therefore see
  that $\sim_\alpha$-classes map onto $\sim_\alpha$-classes, by
  letting $\alpha = \Omega$ that elements of $\mathcal D$ map onto
  elements of $\mathcal D$.
\end{proof}

  The next theorem follows from Theorem~\ref{thm:ph_dynamic}.

  \begin{thm}\label{thm:model_map}
    The map $\ph$ semiconjugates $P$ to a branched covering map \linebreak $g:\C \rightarrow \C$.
  \end{thm}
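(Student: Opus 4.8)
The plan is to read off the induced dynamics on the topological Julia set $\ph(J_P)$ from Theorem~\ref{thm:ph_dynamic}, and then push it over the complementary domains by means of the canonical extension $\hat\ph\colon\hc\to\hc$. First I would record the Julia-set statement: by Theorem~\ref{thm:ph_dynamic} (and the ``onto'' part of its proof, where $\sim_\al$-classes are shown to map \emph{onto} $\sim_\al$-classes), $P$ permutes the elements of $\mathcal D$ --- that is, $P(D)\in\mathcal D$ and $P$ maps $\mathcal D$-elements onto $\mathcal D$-elements. Hence $g_0(\ph(x)):=\ph(P(x))$ is a well-defined surjection of $\ph(J_P)$ onto itself, continuous because $\ph|_{J_P}$ is a quotient map, so that $g_0\circ\ph|_{J_P}=\ph|_{J_P}\circ P|_{J_P}$.

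The substantive step is to extend $g_0$ to $g\colon\hc\to\hc$ with $g\circ\hat\ph=\hat\ph\circ P$. Since $\hat\ph$ is a homeomorphism off the collapsed set $T_{\ph}(J_P)$ (Theorem~\ref{thm:exists_finest}, Lemma~\ref{lem:monotone_maps_extend}), $g$ is forced and the whole question is whether $P$ respects the decomposition $\widehat{\mathcal D}$ of $\hc$ whose non-singleton elements are the hulls $\tl(D)$, $D\in\mathcal D$. I would first identify these hulls. Because $J_P$ is unshielded, $U_\iy$ is connected and disjoint from $D\subset J_P$, so no bounded component of $\hc\sm D$ can meet $U_\iy$; and because $\tl(D)$ is the $\hat\ph$-fiber over $\hat\ph(D)$ while $\hat\ph|_{J_P}$ equals $\ph|_{J_P}$ composed with a homeomorphism, $\tl(D)\cap J_P=D$. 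Consequently
\[ \tl(D)=D\cup\bigcup\{F:\ F\text{ a bounded Fatou domain of }P,\ \bd(F)\subset D\}, \]
each such $F$ being a bounded component of $\hc\sm D$. Now $P$ is an open proper map with $P(\infty)=\infty$, hence preserves $J_P$, $U_\iy$ and the Fatou set; for $F$ as above, $P(F)$ is a bounded Fatou domain with $\bd(P(F))\subset P(\bd F)\subset P(D)=:D'$, and any Fatou domain whose boundary lies in the K-set $D'$ is itself a bounded component of $\hc\sm D'$, hence is contained in $\tl(D')$. Therefore $P(\tl(D))\subset\tl(P(D))$, and together with $P(D)\in\mathcal D$ this says exactly that $P$ carries elements of $\widehat{\mathcal D}$ into elements of $\widehat{\mathcal D}$ (singleton fibers being harmless, as $\hat\ph$ is injective on their union). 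Thus $g(\hat\ph(z)):=\hat\ph(P(z))$ is well-defined, continuous (quotient map), satisfies $g|_{\ph(J_P)}=g_0$, $g(\infty)=\infty$, and $g\circ\hat\ph=\hat\ph\circ P$.

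Finally I would verify that $g$ is a branched covering of degree $d$. On $\hat\ph(\hc\sm T_{\ph}(J_P))$, where $\hat\ph$ is a homeomorphism, $g=\hat\ph\circ P\circ\hat\ph^{-1}$ is locally modeled on $P$, hence open and finite-to-one with $P$'s local degrees --- in particular $g$ has degree $d$ near $\infty$ and no branching outside the critical values of $P$. On $\ph(J_P)$, $g_0$ is at most $d$-to-one: the $P$-preimage of a connected set has at most $\dgr(P)=d$ components, each again an element of $\mathcal D$ (or its hull) collapsed by $\hat\ph$ to a point; and $g_0$ is open on $\ph(J_P)$ by the same mechanism that makes induced maps on topological Julia sets open. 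Taking the monotone--light factorization of $g$ and noting that on each piece of $\hc$ it coincides with a branched cover, one concludes $g$ is globally open and finite-to-one, i.e.\ a branched covering, and a preimage count near $\infty$ gives $\dgr(g)=d$.

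The hard part will be the middle step: pinning down exactly which continua $\hat\ph$ collapses and checking that $P$ preserves that family. The decisive, non-formal inputs are that $J_P$ is unshielded (so $\tl(D)$ is nothing but $D$ together with filled-in Fatou pieces, never a bay of $U_\iy$) and that $P$ is an open map preserving the Fatou set; these are precisely what make $P(\tl(D))\subset\tl(P(D))$ hold even though $P^{-1}$ of a K-set need not be a single K-set. By comparison, the degree bookkeeping in the last step, assembling the piecewise description of $g$ into a single degree-$d$ branched covering, is routine.
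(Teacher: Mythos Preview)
Your middle step is more explicit than the paper's: you correctly identify that well-definedness of $g$ on the whole sphere requires $P(\tl(D))\subset\tl(P(D))$, and your analysis of $\tl(D)$ as $D$ together with the bounded Fatou domains whose boundaries lie in $D$ is sound. The paper invokes Theorem~\ref{thm:ph_dynamic} and passes over this point quickly.

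Your final step, however, has a genuine gap. Knowing that $g$ is locally modelled on $P$ off $\hat\ph(T_\ph(J_P))$ and that $g_0$ is open as a self-map of $\ph(J_P)$ does \emph{not} yield openness of $g\colon\hc\to\hc$: the two pieces are not both open in $\hc$, so openness cannot be patched together, and the monotone--light factorization remark does not bridge this. Concretely, for $y\in\ph(J_P)$ you need a neighborhood basis of $y$ whose $g$-images are open in $\hc$, and nothing in your argument produces that. The same applies to finite-to-one: your claim that each component of $P^{-1}(D')$ is a single K-set is asserted but not proved.

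The paper's route is cleaner and avoids the piecewise analysis entirely. One checks directly that for open $U\subset\hc$ the set $P(\hat\ph^{-1}(U))$ equals $\hat\ph^{-1}(g(U))$; since $P$ is open this set is open, and since it is $\hat\ph$-saturated the quotient property gives that $g(U)$ is open. Openness on a $2$-sphere then feeds into Stoilow's theorem to conclude that $g$ is a branched covering. You should replace your last paragraph with this global openness argument; the ingredients you need (fibers map into fibers, $P$ open, $\hat\ph$ a quotient map) are exactly what you already established in the middle step, together with the ``onto'' half of Theorem~\ref{thm:ph_dynamic}.
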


  \begin{proof}
    Let $m:\C \rightarrow \C$ be the quotient map corresponding to
    $\mathcal D$ as constructed in Lemma~\ref{lem:monotone_maps_extend}.
    The map $m \circ P : \C \rightarrow \C$ is continuous, and is
    constant on elements of $\mathcal D$ by
    Theorem~\ref{thm:ph_dynamic}.  Therefore, there is an induced
    function $g:\C \rightarrow \C$ such that $m \circ P = g \circ
    m$. Also, it is easy to see that $g$ is continuous. Indeed, let
    $x_i\to x$. Then $\ph^{-1}(x_i)$ converge into $\ph^{-1}(x)$ and
    $P(\ph^{-1}(x_i))$ converge into $P(\ph^{-1}(x))$. Applying
    $\ph$ to this, we see that $g(x_i)=\ph(P(\ph^{-1}(x_i)))$
    converge to $g(x)=\ph(P(\ph^{-1}(x)))$, and so $g$ is
    continuous.

    To see that $g$ is open, let $U \subset \C$ be an open set. Then
    $m^{-1}(U)$ is an open set. By the previous paragraph,
    $P(m^{-1}(U))=m^{-1}(g(U))$. Now by Theorem~\ref{thm:ph_dynamic}
    and by the definition of a quotient map $m(P(m^{-1}(U))) = g(U)$
    is open. Since $U$ was arbitrary, $g$ is an open map. By the
    Stoilow Theorem \cite{sto56} $g$ is branched covering.
  \end{proof}

  In what follows we always denote by $g$ the topological polynomial to which
  $P$ is semiconjugate; the $\ph$-image of $J_P$ is denoted by $J_{\sim_P}$.
  Define $\Phi:\ucirc \rightarrow J_{\sim_P}$ as $\Phi = \ph \circ \imp$. From
  Lemma~\ref{lem:collapse_impressions_lc}, $\Phi$ is a well-defined continuous
  function.

  \begin{thm}\label{thm:lamination_model}
    The map $\Phi$ semiconjugates $z \mapsto z^d$ to $g|_{J_{\sim_P}}$.
  \end{thm}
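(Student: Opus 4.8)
The statement to prove is that $\Phi=\ph\circ\imp\colon\uc\to J_{\sim_P}$ is a continuous surjection with $\Phi\circ\si_d=g|_{J_{\sim_P}}\circ\Phi$, where $\si_d$ denotes $z\mapsto z^d$ on $\uc$. Continuity and surjectivity of $\Phi$ are already in hand: by Lemma~\ref{lem:impressions_to_points} the map $\ph$ collapses every impression to a point, every point of the unshielded continuum $J_P$ lies in some impression, and so by Lemma~\ref{lem:collapse_impressions_lc} $\Phi$ is a well-defined continuous surjection onto $\ph(J_P)=J_{\sim_P}$. Hence the entire content of the theorem is the functional equation, and the plan is to obtain it by composing two facts: that $\ph$ semiconjugates $P$ to $g$ (Theorem~\ref{thm:model_map}), and that on the level of impressions $P$ acts as angle $d$-tupling.

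The key intermediate step is therefore: \emph{$P(\imp(\alpha))=\imp(\si_d(\alpha))$ for every $\alpha\in\uc$.} Here I would invoke the B\"ottcher coordinate, which conjugates $P$ restricted to the basin of infinity with $z\mapsto z^d$, so that $P(R_\alpha)=R_{\si_d(\alpha)}$ by Douady and Hubbard~\cite{douahubb85}. Writing $\imp(\alpha)$ as a nested intersection $\bigcap_n\ol{\Psi(S_n)}$ of closures of shrinking ``sectors'' $S_n$ about the radial segment at $\alpha$, and using that $P$ is a polynomial --- hence continuous and proper, so it commutes with decreasing intersections of compacta and carries closures of bounded sets onto closures of their images --- together with the conjugacy, one gets $P(\imp(\alpha))=\bigcap_n\ol{\Psi(S_n^d)}$; since $z\mapsto z^d$ carries the neighborhood basis $\{S_n\}$ cofinally onto one about the radial segment at $\si_d(\alpha)$, this intersection is exactly $\imp(\si_d(\alpha))$. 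I note that one does not even need the equality here: the inclusion $P(\imp(\alpha))\subseteq\imp(\si_d(\alpha))$ already suffices below, because $\ph$ collapses $\imp(\si_d(\alpha))$ to a single point.

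With this in hand the conclusion is a one-line computation. Fixing $\alpha\in\uc$ and using Theorem~\ref{thm:model_map} (so $\ph\circ P=g\circ\ph$, in particular on $\imp(\alpha)\subset J_P$) and the previous step,
\[
g(\Phi(\alpha))=g(\ph(\imp(\alpha)))=\ph(P(\imp(\alpha)))=\ph(\imp(\si_d(\alpha)))=\Phi(\si_d(\alpha)),
\]
so $\Phi\circ\si_d=g\circ\Phi$. Finally $g(J_{\sim_P})=g(\Phi(\uc))=\Phi(\si_d(\uc))=\Phi(\uc)=J_{\sim_P}$, so $g$ restricts to a self-map of $J_{\sim_P}$ and $\Phi$ is the asserted semiconjugacy. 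Combined with the continuity and surjectivity of $\Phi$ recorded above, this completes the proof. The only step that is not purely formal is the push-forward of impressions in the second paragraph; its difficulty is entirely bookkeeping --- reconciling the topological impressions of Subsection~\ref{unshield} with the dynamical external rays and tracking the sector neighborhood bases --- rather than anything conceptual.
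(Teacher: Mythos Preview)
Your proof is correct and follows exactly the paper's own argument: invoke the semiconjugacy $g\circ\ph=\ph\circ P$ from Theorem~\ref{thm:model_map} together with the B\"ottcher relation $P\circ\imp=\imp\circ\si_d$, then read off $g\circ\Phi=\Phi\circ\si_d$ as a four-step chain of equalities. The paper's version is just the single displayed computation and cites the B\"ottcher relation without the sector justification you sketch, but the logic is identical.
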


  \begin{proof}

    Define $\sigma_d = z \mapsto z^d$.  Recall that $g$ is defined so that
    (1) $g \circ \ph = \ph \circ P$ and also that the B\"ottcher uniformization gives that
    (2) $P \circ \imp = \imp \circ \sigma_d$. We then see that, as desired,

    \begin{align*}
      g \circ \Phi &= g \circ \ph \circ \imp \\
      &= \ph \circ P \circ \imp & \text{(by (1))}\\
      &= \ph \circ \imp \circ \sigma_d & \text{(by (2))}\\
      &= \Phi \circ \sigma_d.
    \end{align*}
  \end{proof}

  Then, as in the previous section, the \emph{finest lamination} corresponding
  to $J_{\sim_P}$ is the equivalence relation $\sim_P$ on $\ucirc$, defined by
  $\alpha_1 \sim_P \alpha_2$ if and only if $\Phi(\alpha_1) = \Phi(\alpha_2)$.

\section{A criterion for the polynomial Julia set to have
a non-degenerate finest monotone model}\label{criter}

Here we obtain the remaining main results of the paper. We give a necessary and
sufficient condition for the existence of a non-degenerate locally connected
model of the connected Julia set of a polynomial $P$. We give this criterion in
terms of its rational lamination as well as the existence of specific wandering
continua in the Julia set behaving in the fashion reminiscent of the irrational
rotation on the unit circle.

\subsection{Topological and laminational preliminaries} Let us recall the following
definitions. A finite set $A$ is said to be \emph{all-critical} if
$\si(A)$ is a singleton. A finite set $B$ is said to be
\emph{eventually all-critical} if there exists a number $n$ such
that $\si^n(B)$ is a singleton. The following result is obtained in
\cite{bfmot10}, Theorem 7.2.7.

\begin{thm}\label{fixpts}
  Suppose that $J_P$ is the connected Julia set of a polynomial $P$ such that
  its locally connected model $J_\sim$ corresponding to the lamination
  $\sim=\sim_P$ is a dendrite. Then there are infinitely many periodic
  cutpoints of $J_\sim$ and, respectively, $\sim$-classes, each of which
  consists of more than one point.
\end{thm}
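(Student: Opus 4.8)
The plan is to pass to the topological polynomial $g=f_{\sim}$ acting on the dendrite $J=J_{\sim}$, together with the canonical geometric lamination extending $\sim=\sim_P$, and to note that a point $p(\mathbf g)\in J$ is a cutpoint precisely when $|\mathbf g|\ge 2$; so the statement becomes: \emph{$g$ has infinitely many periodic cutpoints}. First I would record the structure forced by $J$ being a nondegenerate dendrite. Since $J$ contains no simple closed curve, $\sim$ has no Fatou gap, so by Lemma~\ref{fat-gap-domain} every $\sim$-class is finite; equivalently $g$ has no attracting, parabolic or super-attracting cycle, hence no periodic critical point. Because $p$ is a monotone factor map from $\si_d$ and the $\si_d$-periodic points are repelling, every periodic point of $g$ in $J$ is repelling — no orbit but its own converges to it, and a small connected neighbourhood of it expands under the appropriate power of $g$. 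Also $g|_J$ is topologically exact (the image of the topologically exact $\si_d$ under $p$), so periodic points of $g$ are dense in $J$; and, since an all-critical class collapses to a point, every periodic cutpoint carries a non-(pre)critical periodic $\sim$-class of cardinality $\ge 2$, while $g$ permutes the periodic cutpoints.

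\textbf{Two parts.} The argument then splits into producing \emph{one} periodic cutpoint and upgrading ``one'' to ``infinitely many''. For the first part, if $J$ is an arc then $g$ is a Chebyshev-type piecewise-monotone map of degree $d\ge2$, which has dense periodic points all but at most two of which are cutpoints, and we are already done; if $J$ has a branch point, running a suitable forward iterate of a well-chosen point produces a periodic cutpoint (alternatively, a fixed-point index computation for the branched cover $g$ on the sphere, in the spirit of \cite{bfmot10}, shows that not all fixed points of $g$ can be endpoints). For the second part I would run a renewal argument: given a repelling periodic cutpoint $c$ of period $m$, the expansion of $g^{m}$ at $c$ together with $\deg g\ge2$ yields, after passing to a suitable power $g^{km}$, a subdendrite $D\ni c$ with $g^{km}(D)\supseteq D$ on which $g^{km}$ is not a homeomorphism (a degree‑one action on $\ol D$ would make $\ol D$ an arc on which $g^{km}$ is a homeomorphism fixing two endpoints, incompatible with the repulsion recorded above); hence $D$ contains a proper subcontinuum mapped onto $D$, and the fixed point property of dendrites gives a periodic point $c'\in D$ which can be taken to be an interior point of a branch of $D$, hence a cutpoint of $J$ distinct from $c$. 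Iterating the construction inside successively smaller branches (equivalently, arranging the period to strictly increase) yields infinitely many distinct periodic cutpoints, and pushing the corresponding $\sim$-classes back through $p$ gives infinitely many periodic $\sim$-classes with more than one element.

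\textbf{Main obstacle.} The delicate point is exactly the passage from one to infinitely many: one must guarantee that each renewal step produces a \emph{genuinely new} periodic cutpoint rather than re-detecting $c$ or landing on an endpoint, which requires keeping track of which branch of $J$ the subdendrite $D$ lives in and of its size — combinatorial bookkeeping that is cleanest to carry out on the $\uc$ side in terms of the orbit of $\ch(\mathbf g_c)$ and the arcs it cuts off, and then transported through the monotone map $p$. A cleaner alternative, and presumably the route of \cite{bfmot10}, dispenses with the renewal mechanism and argues by contradiction using the fixed-point index (variation) of $g^{n}$: were there only finitely many periodic cutpoints, the index along small curves enclosing them could not be reconciled with the index dictated by the degree-$d^{n}$ branched-cover structure of $g^{n}$, and this discrepancy forces new repelling periodic cutpoints for all large $n$.
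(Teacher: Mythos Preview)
The paper does not prove Theorem~\ref{fixpts}; it is quoted verbatim from \cite{bfmot10}, Theorem~7.2.7, so there is no in-paper proof to compare against. I can only assess your outline on its own merits.

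Your setup is fine: on a dendritic $J_\sim$ the lamination has no Fatou gap, so by Lemma~\ref{fat-gap-domain} every class is finite, and cutpoints of $J_\sim$ are exactly the $p$-images of classes with at least two elements. The reduction to ``$g$ has infinitely many periodic cutpoints'' is correct.

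The gap is that neither half of your two-part plan is actually carried out. For the existence of \emph{one} periodic cutpoint you defer to ``a fixed-point index computation\ldots in the spirit of \cite{bfmot10}'', which is precisely the content of the theorem you are trying to prove. For the upgrade to infinitely many, the renewal step does not hold together: the parenthetical ``a degree-one action on $\ol D$ would make $\ol D$ an arc'' is a non sequitur (a monotone self-surjection of a dendrite does not force the dendrite to be an arc), and even granting a new $g^{km}$-fixed point inside $D$ you give no mechanism preventing it from being an endpoint of $J$. You acknowledge this yourself, but the ``combinatorial bookkeeping on the $\uc$ side'' you invoke is exactly the missing substance.

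Your last paragraph points in the right direction. The argument in \cite{bfmot10} is a variation/index argument for branched covers of the plane applied to powers of $g$: one shows that if all fixed points of $g^n$ in $J_\sim$ were endpoints, the Lefschetz-type count coming from degree $d^n$ could not be accounted for, and this forces (for every $n$) fixed cutpoints. That is the line to develop; the renewal sketch, as written, does not close.
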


We will also need another result from \cite{bfmot10}; recall that by
$K_P$ we denote the filled-in Julia set of a polynomial $P$.

\begin{thm}\label{degimp}
Suppose that $P:\Complex\to\Complex$ is a polynomial, $X\subset K_P$
is a non-separating continuum or a point such that $P(X)\subset X$,
all fixed points in $X$ are repelling or parabolic, for every Fatou
domain $U$ of $P$ either $U\subset X$ or $U\cap X=\0$, and for each
fixed point $x_i\in X$ there exists an external ray $R_i$ of $X$,
landing at $x_i$, such that $P(R_i)=R_i$. Then $X$ is a single
point.
\end{thm}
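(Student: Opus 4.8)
The natural route is to argue by contradiction using the index--variation theory for fixed points of plane maps from \cite{bfmot10} (which is, after all, the source of the statement). Assume $X$ is non-degenerate. A polynomial has only finitely many fixed points, so let $x_1,\dots,x_k$ be the fixed points lying in $X$ and let $R_1,\dots,R_k$ be the fixed external rays of $X$ furnished by the hypothesis. Using $X\subset K_P$ and the requirement that each Fatou domain of $P$ is contained in $X$ or misses $X$, one first checks that $\bd X=X\cap J_P$, so that $P(\bd X)\subset\bd X$. Since $X$ does not separate the plane, $\hc\sm X$ is a punctured disk; let $\ga\colon\ucirc\to\bd X$ be its Carath\'eodory loop. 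As $P$ is a positively oriented branched covering with $P(\bd X)\subset\bd X$, it induces a continuous circle map $\tau\colon\ucirc\to\ucirc$ on prime ends with $\ga\circ\tau=P\circ\ga$ (this is part of the setup of \cite{bfmot10} for the class of maps at hand), and each $R_i$ singles out a $\tau$-fixed prime end lying over $x_i$. By the Cartwright--Littlewood theorem for branched coverings, $X$ contains a fixed point of $P$, so $k\ge1$.

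Next I would apply the ``sum of variations'' (lollipop) principle of \cite{bfmot10}. After a small perturbation of $\ga$ off the finitely many fixed prime ends, the fixed-point index $\mathrm{ind}(P,\ga)$ is defined and is at least $1$ by Cartwright--Littlewood. Partition $\ucirc$ into arcs $A_1,\dots,A_m$ whose endpoints are exactly the $\tau$-fixed prime ends over $x_1,\dots,x_k$. The principle then yields
\[
\mathrm{ind}(P,\ga)\;=\;1+\sum_{j}\mathrm{var}(P,A_j).
\]

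The heart of the argument --- and the step I expect to be the main obstacle --- is to show $\mathrm{var}(P,A_j)\le0$ for every $j$. The hypotheses are exactly what blocks positive variation. First, a \emph{fixed} external ray at $x_i$ forces $\tau$ to have no rotational part at the corresponding prime end. Second, the assumption that $x_i$ is repelling or parabolic, combined with the hypothesis that each parabolic basin lies entirely inside or entirely outside $X$ (which pins down on which side of $\bd X$ the attracting petals sit), forces the boundary dynamics near each fixed prime end over $x_i$ to push nearby prime ends \emph{away} from it rather than fold over it; one then argues, transporting the expansion Lemma~\ref{lem:expand} to $\bd X$, that on an arc $A_j$ containing no interior fixed prime end there is no room for a positive junction, so $\mathrm{var}(P,A_j)\le0$. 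Feeding this and $\mathrm{ind}(P,\ga)\ge1$ into the displayed identity forces $\mathrm{ind}(P,\ga)=1$ and $\mathrm{var}(P,A_j)=0$ for all $j$; in particular $\tau$ has degree one and is monotone.

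Finally I would extract the contradiction. A monotone degree-one circle map has rotation number $0$, so every prime end is attracted under $\tau$ to the set of fixed prime ends, and each fixed prime end lies over one of the finitely many points $x_i$. But the variation estimate shows each such fixed prime end is repelling from \emph{both} sides, whereas a monotone degree-one circle map possessing a fixed point must have one that is attracting from a side (on a complementary arc of its fixed-point set it flows monotonically toward an endpoint). This is impossible unless $\tau$ has no fixed prime end at all, contradicting $k\ge1$. Hence $X$ cannot be non-degenerate, i.e. $X$ is a single point. The bookkeeping special cases are handled separately: $k=0$ is impossible by Cartwright--Littlewood, and for $k=1$ it is cleanest to attach $R_1$ to $X$, obtaining a non-separating invariant ``lollipop'' continuum to which the same index computation applies, the stick $R_1$ contributing zero variation because $P(R_1)=R_1$; the conclusion $\bd X=\{\mathrm{pt}\}$ then forces $\Int X=\0$, so $X=\bd X$ is that point.
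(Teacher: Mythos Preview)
The paper does not prove Theorem~\ref{degimp}: it is quoted as a result from \cite{bfmot10} with no argument given, so there is no in-paper proof to compare your attempt against.

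That said, your outline does follow the strategy one expects from \cite{bfmot10}: surround $X$ by a curve built using the given invariant external rays, apply the index--variation identity, and argue that the repelling/parabolic hypothesis together with the invariance of the rays forces each variation term to be non-positive, contradicting the index lower bound. Two places need tightening before this is a proof. First, your prime-end/Carath\'eodory-loop set-up requires $P$ to induce a self-map of the prime-end circle of $\hc\sm X$; for that you need $P(X)=X$, not merely $P(X)\subset X$, and extra care when $X$ has interior (Fatou domains inside $X$). The argument in \cite{bfmot10} sidesteps this by working with a bumping simple closed curve pushed into $U_\iy$ along the rays $R_i$, rather than with prime ends of $X$ itself. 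Second, the step you yourself flag as the obstacle --- showing $\mathrm{var}(P,A_j)\le 0$ on each arc --- is precisely where the substantive work in \cite{bfmot10} lies (the ``no positive outchannel'' analysis for repelling/parabolic fixed points equipped with invariant rays). Your justification (``transporting Lemma~\ref{lem:expand} to $\bd X$'') is not an argument: Lemma~\ref{lem:expand} is a statement about $\si_d$ on $\uc$ and has no direct analogue on $\bd X$. Your concluding paragraph, deriving a contradiction from a monotone degree-one prime-end map with only two-sided-repelling fixed points, is in the right spirit but rests on the unproven variation claim.
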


Theorem~\ref{fixpts} applies in the proof of Theorem~\ref{neces}. Define an
\emph{all-critical} point as a cutpoint of $J_\sim$ whose image is an endpoint
of $J_\sim$.

\begin{thm}\label{neces}
  Suppose $\sim$ is a lamination. Then at least one of the following
  properties must be satisfied:

  \begin{enumerate}

  \item $J_\sim$ contains the boundary of a parattracting Fatou domain;

  \item there are infinitely many periodic $\sim$-classes each of
    which consists of more than one angle;

  \item there exists a finite collection of all-critical
    $\sim$-classes with pairwise disjoint grand orbits whose images under the
    quotient map form the set of all-critical points on the boundaries of
    Siegel domains from one cycle of Siegel domains so that \emph{all}
    cutpoints of $J_\sim$ on the boundaries of these Siegel domains belong to
    the grand orbits of these all-critical points.
  \end{enumerate}

\end{thm}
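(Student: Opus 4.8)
The plan is to argue by contradiction: assume that neither property~(1) nor property~(2) holds and deduce property~(3). So suppose $J_\sim$ contains no parattracting Fatou domain and that only finitely many periodic $\sim$-classes contain more than one angle. First I would observe that $J_\sim$ cannot be a dendrite, for otherwise Theorem~\ref{fixpts} would produce infinitely many periodic $\sim$-classes each consisting of more than one point, contradicting the failure of~(2). Hence $J_\sim$ has a Fatou domain, and since~(1) fails, every Fatou domain of $J_\sim$ is Siegel. I then fix a cycle of Siegel domains with boundaries $S_0,\dots,S_{m-1}$ and associated Fatou gaps $G_0,\dots,G_{m-1}$ of $\lam_\sim$, so that $\si^*(G_i)=G_{i+1}$ (indices mod $m$) and $(\si^*)^m|_{\bd(G_0)}$ is monotone of degree $1$.

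The next step records the combinatorial structure of the boundary of this cycle. By Lemma~\ref{good-gap}(3), Case~1, the first return map $(\si^*)^m|_{\bd(G_0)}$ is monotonically semiconjugate to an irrational rotation $\rho$, collapsing precisely the arcs of $\bd(G_0)$ complementary to the Cantor set $(G'_0)_c$; combined with Lemma~\ref{lem:periodic_gap} this gives that every leaf in each $\bd(G_i)$ is (pre)critical and lies in the grand orbit of one of finitely many critical leaves. Since $\rho$ has no periodic points, no arc complementary to $(G'_0)_c$ is $(\si^*)^m$-periodic; consequently no cutpoint of $J_\sim$ lying on $S_0\cup\dots\cup S_{m-1}$ is $g$-periodic.

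Now I would trace the forward $g$-orbit of an arbitrary cutpoint $x$ of $J_\sim$ lying on this cycle. Such an $x$ is the $p$-image of a $\sim$-class $\g$ abutting a vertex of some $G_i$, and since every boundary leaf there is (pre)critical, $\g$ is a (pre)critical class; moreover $\g$ is finite, because an infinite class would make $\ch(\g)$ a parattracting Fatou gap by Lemma~\ref{fat-gap-domain}, contradicting the failure of~(1). By Lemma~\ref{atmost4}(\ref{wanper}) the class $\g$ either wanders or is (pre)periodic. If $\g$ were (pre)periodic with its periodic image again meeting a Siegel vertex, that image would be a $g$-periodic cutpoint on the cycle, impossible by the previous paragraph; and if its periodic image escaped the cycle, one would obtain a non-degenerate periodic continuum in a dendritic part of $J_\sim$, and Theorem~\ref{fixpts} (or Theorem~\ref{degimp}) would again contradict the failure of~(2). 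The conclusion to push through is therefore that $\g$ must be eventually all-critical, so that $g^k(x)$ is an all-critical point for some $k$. Since there are only finitely many all-critical $\sim$-classes arising in this way --- they come from the finitely many critical leaves and the finitely many all-critical gaps --- and two distinct critical leaves have disjoint grand orbits (a critical leaf maps to a point, hence cannot be an iterate of another critical leaf), these classes witness~(3): their $p$-images are exactly the all-critical points on $S_0\cup\dots\cup S_{m-1}$, and every cutpoint of $J_\sim$ on the cycle lies in one of their grand orbits.

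The hard part is precisely the dichotomy buried in the last step: ruling out, or else rerouting into case~(2), the possibility that the class $\g$ attached at a Siegel-boundary vertex is a \emph{wandering} multi-point polygon (or a (pre)periodic class whose periodic image escapes the cycle) which never collapses to a point. I expect to dispatch this by combining Theorem~\ref{nowanco} (the topological polynomial $f_\sim$ has no wandering continua) with the no-periodic-cutpoint fact above and the absence of parattracting domains, to show that such a configuration cannot persist under iteration or else forces infinitely many periodic multi-point $\sim$-classes --- which is exactly the escape clause~(2). Everything else is routine bookkeeping with impressions, quotient maps, and grand orbits.
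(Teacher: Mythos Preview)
Your overall architecture matches the paper's: reduce to the dendrite case via Theorem~\ref{fixpts}, then use Lemma~\ref{two-dyns} to get either a parattracting boundary or a Siegel cycle, and in the Siegel case show every cutpoint on the cycle is eventually all-critical. The identification of Theorem~\ref{nowanco} as the decisive tool is also correct.

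However, the execution of the Siegel step is more circuitous than necessary and contains some misapplications. Lemma~\ref{atmost4} is proved only for closures of prelaminations generated by families without critical leaves; it is not available for $\lam_\sim$ in general, so your invocation of part~(\ref{wanper}) is unjustified. Your handling of the ``(pre)periodic $\g$ escaping the cycle'' case is also off: Theorem~\ref{fixpts} needs the \emph{entire} $J_\sim$ to be a dendrite, and Theorem~\ref{degimp} is a statement about genuine polynomials $P$ on $K_P$, not about topological polynomials on $J_\sim$. Finally, your reason for pairwise disjoint grand orbits of the all-critical classes is not the right one; the point is rather that an all-critical point on $S$ is a cutpoint while each of its forward images is an endpoint, so no two such points can lie on the same forward orbit.

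The paper sidesteps all of this by working directly in $J_\sim$ rather than with $\sim$-classes. Given a cutpoint $x\in S$, one first notes $x$ is not (pre)periodic (irrational rotation), hence its class is finite by Theorem~\ref{kiwi-wan}. Let $B_x$ be the union of $\{x\}$ with all components of $J_\sim\setminus\{x\}$ disjoint from $S$. If every $f_\sim^n(x)$ were still a cutpoint, then past some $N$ none of the sets $B_{f_\sim^n(x)}$ would contain a critical point, making $B_{f_\sim^N(x)}$ a wandering continuum in $J_\sim$ --- directly contradicting Theorem~\ref{nowanco}. Hence some iterate of $x$ is an endpoint, and the $\sim$-class of $x$ is eventually all-critical. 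This single topological argument replaces your entire ``hard part'' and the case analysis on wandering versus (pre)periodic $\g$; you never need to decide which alternative $\g$ falls under.
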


\begin{proof} Suppose that $J_\sim$ is a dendrite. Then the result follows from
Theorem~\ref{fixpts}. Suppose now that $J_\sim$ is not a dendrite. Then
$J_\sim$ contains a simple closed curve $S$. By Lemma~\ref{two-dyns}, there are
two cases possible. First, we may assume that $S$ is the boundary of a periodic
parattracting Fatou domain. Then (1) holds.

Consider the case when $S$ is of period $1$ and $f_\sim|_S$ is conjugate to an
irrational rotation (the case of higher period is similar). Consider a point
$x\in S$ which is a cutpoint of $J_\sim$ ($x$ must exist since $S\ne J_\sim$).
Then $x$ is not (pre)periodic. Hence by Theorem~\ref{kiwi-wan} the $\sim$-class
corresponding to $x$ is finite. This implies that the number of components of
$J_\sim \sm \{x\}$ is finite. One such component contains $S\sm \{x\}$ while
all others have closures intersecting $S$ exactly at $x$. Denote by $B_x$ the
union of $x$ and all such components not containing points of $S$. Clearly the
set $B_x$ is closed and connected.

Let us show that $x$ is eventually mapped into a point which is not a cutpoint
of $J_\sim$. Indeed, otherwise all points $f_\sim^i(x)$ are cutpoints of
$J_\sim$. Since there are finitely many critical points of $f_\sim$, we can
then choose $N$ such that no set $B_{f_\sim^m(x)}$ contains a critical point
for $m\ge N$. On the other hand, $f_\sim^N(x)$ is a cutpoint of $J_\sim$ by the
above. Hence $B_{f_\sim^N(x)}$ is a wandering continuum in $J_\sim$, a
contradiction with Theorem~\ref{nowanco}. Now the connection between
$\sim$-classes and points of $J_\sim$ implies that the $\sim$-class
corresponding to $x$ is eventually all-critical. Clearly, any all-critical
point $y\in S$ corresponds to an all-critical $\sim$-class which meets the
boundary of the corresponding Siegel domain $U$ of $\sim$ in a leaf (since
$\sim$-classes are convex). Moreover, an all-critical point in $S$ is a
cutpoint of $J_\sim$ whereas all forward images of an all-critical point are
endpoints of $J_\sim$. Hence the all-critical classes which are non-disjoint
from $\ol{U}$ have pairwise disjoint grand orbits. Clearly, this implies the
properties listed in the case (3) of the theorem. Similar arguments go through
if $S$ is periodic rather than fixed.
\end{proof}

By Theorem~\ref{main2}, if the finest model is not degenerate then it gives
rise to a non-degenerate lamination $\sim_P$. Hence one of the three phenomena
described in Theorem~\ref{neces} will have to take place in $J_{\sim_P}$.
Thanks to the existence of the finest map, this implies that corresponding
phenomena will be present in the Julia set $J_P$. In other words, the presence
of at least one of the phenomena is a \emph{necessary} condition for the
existence of a non-degenerate finest model (we will formalize this observation
later on in Theorem~\ref{csgap}). However now our main aim is to show that the
presence of at least one of the phenomena is \emph{sufficient} for the
existence of a non-degenerate finest model. The main tool here is well-slicing
studied in Subsection~\ref{well-crit}. We will describe three cases in which we
establish sufficient conditions for the existence of well-slicing for the Julia
set (and hence, by the results from Subsection~\ref{well-crit}, for the
non-degeneracy of its finest model). The sufficient conditions are stated in a
step by step fashion in a series of lemmas and propositions.

\subsection{The case of infinitely many periodic cutpoints}

Next we want to suggest a sufficient condition for the non-collapse of the
entire $J_P$ corresponding to the case (2) of Theorem~\ref{neces}. However this
time we need a lot of preparatory work. First we study CS-points and CS-cycles
(recall that a \emph{CS-point} is a Siegel or Cremer periodic point and a
\emph{CS-cycle} is a cycle of CS-points). Call a set $X$ \emph{periodic (of
period $m$)} if $X, \dots, P^{m-1}(X)$ are pairwise disjoint while
$P^m(X)\subset X$. Then the union $\cup^{m-1}_{i=0} P^i(X)$ is said to be a
\emph{cycle of sets} (we can then talk about \emph{cycles of continua} and the
like).

\begin{lem}\label{hedgehog}
  If $Y$ is a cycle of continua containing a CS-cycle and a periodic point not
  from this CS-cycle then it contains a critical point of $P$.
\end{lem}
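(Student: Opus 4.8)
The plan is to argue by contradiction, assuming that $Y$ is a cycle of continua containing a CS-cycle $S$ and a periodic point $q\notin S$, but that $Y$ contains no critical point of $P$. Replacing $Y$ by a single periodic component and passing to an iterate of $P$ (which introduces no new critical points into $Y$, only removes some), I may assume that $Y$ is a \emph{fixed} continuum, that it contains a \emph{fixed} CS-point $p$, and that it contains a fixed point $q\ne p$; I want to derive a contradiction. The idea is that if $Y$ carries no critical point then $P|_Y$ is a local homeomorphism near $p$, but a Cremer or Siegel fixed point is precisely an obstruction to the kind of ``expanding/repelling'' combinatorial behavior one would expect of a branched cover with no critical points on an invariant continuum; more concretely, the Fatou/Julia structure around a CS-point is incompatible with $Y\setminus\{$Siegel disks$\}$ being mapped properly onto itself with no folding.

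First I would recall that a Cremer point $p$ lies in $J_P$, is non-linearizable, and has no Siegel disk; a Siegel point $p$ is the center of a Siegel disk $\Delta$. In the Siegel case, since $Y$ is a continuum containing the center $p$ of $\Delta$ and is invariant, either $\Delta\subset Y$ or (because $\partial\Delta$ is connected and $p\in Y$) one argues that $Y$ must contain $\overline\Delta$: a continuum containing the center of a Siegel disk and invariant under the rotation-like dynamics on the disk cannot meet $\Delta$ in a proper subcontinuum, since the forward orbit of any interior point is dense in an invariant circle. So in either case, by removing the (finitely many, by the no-critical-point assumption and classical Fatou theory) periodic Fatou components meeting $Y$, I reduce to a non-separating subcontinuum $X\subset K_P$ with $P(X)\subset X$, containing the repelling/parabolic structure but still containing $q$, which I would like to feed into Theorem~\ref{degimp}. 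The point of Theorem~\ref{degimp} is that such an $X$ must be a single point, contradicting $q\ne p$ — \emph{provided} I can verify its hypotheses: all fixed points in $X$ repelling or parabolic (true after removing Siegel disks and using that a Cremer point, as a fixed point in $X$, must then be excluded — here is where the absence of a critical point in $Y$ is essential, since a Cremer point forces, via work of Pérez-Marco/Sullivan-type arguments or via the structure of hedgehogs, a critical point in every invariant continuum containing it), each Fatou domain either inside or disjoint from $X$, and each fixed point the landing point of an invariant external ray of $X$.

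The main obstacle will be the Cremer case, specifically establishing that a fixed Cremer point $p$ contained in a fixed continuum $Y$ with no critical point in $Y$ leads directly to a contradiction — this is really the content of the lemma, and it should rest on the theory of hedgehogs (Pérez-Marco): a Cremer fixed point has an associated hedgehog $\mathcal H$, an invariant full continuum on which the dynamics is conjugate to an irrational rotation on the boundary in a generalized sense, and any invariant continuum containing $p$ either is contained in behavior forcing recurrence incompatible with a critical-point-free proper self-map, or must swallow a critical point. I would handle the parabolic and repelling fixed points that appear (the point $q$, and $p$ in the parabolic sub-case) by the external-ray argument: by Douady–Hubbard, repelling and parabolic periodic points are landing points of periodic external rays of $K_P$, and since $X$ is obtained from $Y$ by filling in and deleting Fatou domains, these rays survive as external rays of $X$; after passing to a further iterate they become invariant, matching the last hypothesis of Theorem~\ref{degimp}. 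With all hypotheses verified, Theorem~\ref{degimp} forces $X$ to be a point, contradicting that $q$ and the CS-point are distinct members of $Y$, and completing the proof.
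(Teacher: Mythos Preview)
Your detour through Theorem~\ref{degimp} cannot be completed, and the essential step you defer to hedgehog theory is in fact the entire content of the lemma.

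In the Cremer case, $p\in J_P$, so no amount of removing Fatou components from $Y$ will expel $p$ from your set $X$. Thus $X$ still contains a fixed point that is neither repelling nor parabolic, and Theorem~\ref{degimp} simply does not apply. You acknowledge this and say that ``a Cremer point forces, via \ldots\ the structure of hedgehogs, a critical point in every invariant continuum containing it'' --- but that assertion (plus the analogous one for Siegel points) \emph{is} the lemma. Invoking it to verify a hypothesis of Theorem~\ref{degimp} is circular. In the Siegel case you have a different problem: once you remove the Siegel disk $\Delta$ from $Y$, the disk becomes a bounded complementary component of the remainder, so your $X$ is separating and again Theorem~\ref{degimp} does not apply.

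The paper's proof bypasses Theorem~\ref{degimp} entirely and uses P\'erez-Marco's hedgehog theory \emph{directly}, via the following key fact: if $p$ is a CS-fixed point and $\overline U$ is a closed neighborhood of $p$ containing no critical points, then the component $K$ containing $p$ of the set of points whose forward orbits never leave $\overline U$ (a \emph{hedgehog}) contains no periodic point other than $p$. Now assume $Y$ is invariant and contains no critical point; choose $U\supset Y$ with $\overline U$ critical-point-free. Since $Y$ is invariant, $Y$ never leaves $\overline U$, so $Y\subset K$. But then the second periodic point $q\in Y\subset K$ contradicts the cited property of hedgehogs. That is the whole argument; the Cremer and Siegel cases are handled simultaneously, with no case split and no appeal to Theorem~\ref{degimp}.
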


\begin{proof}
  We only consider the case when $Y$ is an invariant continuum; the case of the
  cycle of continua can be dealt with similarly.  Suppose that $Y$ contains no
  critical points. Choose a neighborhood $U$ of $Y$ such that no critical
  points belong to $\ol{U}$, consider the set of all points never exiting
  $\ol{U}$, and then the component $K$ of this set containing the given
  CS-point $p$; clearly, $Y\subset K$. Such sets are called \emph{hedgehogs}
  (see \cite{pere94,pere97}) and have a lot of important properties. In
  particular, $K$ cannot contain any other periodic points. On the other hand,
  $Y\subset K$, a contradiction with the assumption that there is a periodic
  point in $Y$, distinct from $p$.
\end{proof}

Next we prove a few lemmas which discuss properties of $J_P$ at (pre)periodic
points. We need them for two reasons. First of all, they help us establish the
next sufficient condition for the non-collapse of $J_P$ under the finest map.
Second, they give sufficient conditions on a (pre)periodic point to be a point
of local connectivity of the Julia set. In that sense they generalize Kiwi's
theorem \cite{kiwi97} where he proves (using different methods) that in the
absence of CS-points the Julia set is locally connected at its (pre)periodic
points.

There are two competing laminations which both reflect the structure of $J_P$,
the rational lamination $\approx_{rat}$ and the finest lamination $\sim_P$. We
use both of them to suggest sufficient conditions for $J_P$ to be locally
connected at a (pre)periodic point $p$. Recall that $A(y)$ is the set of all
angles whose rays land at a point $y\in J_P$.

\begin{lem}\label{lcpt} Suppose that $K=\imp(A)$ is the union of
  impressions of a finite set of periodic angles $A$ which is periodic,
  connected and disjoint from impressions of all other angles. Then $K$ is a
  repelling or parabolic periodic point.  Thus, if $p$ is a (pre)periodic
  point  of $P$ and $\Phi^{-1}(\ph(p))$ is finite then $\ph^{-1}(\ph(p))=\{p\}$ (i.e.,
  $\{p\}$ is a K-set) and $J_P$ is locally connected at $p$.
\end{lem}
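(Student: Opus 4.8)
I prove the two assertions in turn; the first is the substantive one, and it is purely dynamical. The plan is to apply Theorem~\ref{degimp} to a suitable iterate of $P$. Let $n$ be a common multiple of the periods of the angles in $A$, so that $\si_d^n$ fixes each $\theta\in A$. Since $P\circ\imp=\imp\circ\si_d$, this gives $P^n(\imp(\theta))=\imp(\theta)$ for each $\theta\in A$, hence $P^n(K)=K$; note that $P^n$ has the same filled Julia set, Julia set, and Fatou domains as $P$, so it suffices to verify the hypotheses of Theorem~\ref{degimp} for $P^n$ and $X=K$.

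The easy hypotheses: $K\subset J_P\subset K_P$, and for every Fatou domain $U$ one has $U\subset\C\sm J_P$, so $U\cap K=\0$ and the Fatou-domain condition holds vacuously. For the ray condition, let $x$ be a $P^n$-fixed point in $K$. Siegel periodic points lie in their (Fatou) Siegel disks, so they are not in $K\subset J_P$; and if $x$ were a Cremer periodic point, then the cycle $Y$ of $K$ would contain the CS-cycle of $x$ together with the repelling or parabolic periodic landing point $z_\theta$ of $R_\theta$ for any $\theta\in A$ (and $A\ne\0$), which is not in the cycle of $x$, so by Lemma~\ref{hedgehog} $Y$ would contain a critical point of $P$ --- and one checks this is incompatible with $K$ being disjoint from the impressions of all other angles. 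Hence every $P^n$-fixed point $x\in K$ is repelling or parabolic, so by Douady and Hubbard \cite{douahubb85} some periodic external ray $R_\be$ lands at $x$; since $x\in\acc(\be)\subset\imp(\be)$ meets $K$, the disjointness hypothesis forces $\be\in A$, whence $\si_d^n\be=\be$ and $P^n(R_\be)=R_\be$. It remains to see that $K$ is non-separating: if $W$ were a bounded component of $\C\sm K$, then since every impression is a connected subset of the closure of the component $V_\iy$ of $\C\sm K$ containing $U_\iy$ (impressions are accumulation sets of the external rays, all lying in $U_\iy$), no impression meets $W$; thus $W\cap J_P=\0$, so $W$ coincides with a Fatou domain whose boundary is a finite union of impressions of periodic angles disjoint from all other impressions, which is impossible. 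By Theorem~\ref{degimp}, $K$ is a single point, which is $P^n$-fixed and lies in $J_P$, hence repelling or parabolic. This proves the first assertion.

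For the second assertion, let $p$ be (pre)periodic with $\Phi^{-1}(\ph(p))$ finite, and choose $k$ with $q:=P^k(p)$ periodic, of period $m$. Since $\Phi$ semiconjugates $\si_d$ to $g|_{J_{\sim_P}}$ and $\si_d$ carries $\sim_P$-classes onto $\sim_P$-classes, the class $B:=\Phi^{-1}(\ph(q))=\si_d^k(\Phi^{-1}(\ph(p)))$ is finite; moreover $g^m(\ph(q))=\ph(q)$ gives $\si_d^m(B)=B$, so $\si_d^m$ permutes the finite set $B$ and every angle in $B$ is periodic. The K-set $D':=\ph^{-1}(\ph(q))=\imp(B)$ is then connected (being a fiber of the monotone map $\ph$), periodic, and disjoint from the impressions of all angles outside $B$ (an impression meeting $D'$ would be mapped into $\ph(q)$ by $\ph$, putting its angle in $B$). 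Thus $D'$ satisfies the hypotheses of the first assertion, so it is a single repelling or parabolic periodic point; since $q\in D'$, we get $\ph^{-1}(\ph(q))=\{q\}$. By Theorem~\ref{thm:ph_dynamic}, $P^k(\ph^{-1}(\ph(p)))=\ph^{-1}(\ph(q))=\{q\}$, so $\ph^{-1}(\ph(p))$ is a continuum contained in the finite set $(P^k)^{-1}(q)$, hence is a single point, necessarily $\{p\}$. Finally, $J_P$ is locally connected at $p$ by Lemma~\ref{lc}.

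The main obstacle is the verification, inside the proof of the first assertion, of the two nontrivial hypotheses of Theorem~\ref{degimp}: that $K$ contains no Cremer periodic point, and that $K$ does not separate the plane. Both are reduced to a contradiction with the standing hypothesis that $K$ is disjoint from the impressions of all angles outside $A$ --- in the Cremer case via Lemma~\ref{hedgehog} (a critical point appears in the cycle of $K$), and in the separation case via the observation that a bounded complementary component of $K$ would have to be a Fatou domain whose boundary is covered by finitely many impressions of periodic angles. Everything else is routine bookkeeping with the semiconjugacies $\ph$ and $\Phi$ and with Theorem~\ref{thm:ph_dynamic}.
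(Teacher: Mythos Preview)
Your strategy---verify the hypotheses of Theorem~\ref{degimp} for a suitable iterate and $X=K$, then deduce the (pre)periodic case via Lemma~\ref{lc}---is exactly the paper's, and your treatment of the second assertion is fine.

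The two phrases ``one checks this is incompatible'' and ``which is impossible'' are, however, precisely where the substance lies. For the first, the paper's argument is that the critical point $c$ produced by Lemma~\ref{hedgehog} lies in $J_P$, hence in $K$ (the paper works with $\tl(K)$ rather than the $P$-cycle of $K$, and uses $\tl(K)\cap J_P=K$; note that your hedgehog application only puts $c$ somewhere in the cycle of $K$, where the disjointness hypothesis is not assumed). Then the local symmetry of $P$ at a critical point forces at least two angles with the same $\si$-image to have impressions containing $c$; since only angles of $A$ have impressions meeting $K$, two \emph{periodic} angles in $A$ would collide under $\si$, which is impossible.

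The second is a genuine gap. Your reduction ``a bounded complementary component $W$ of $K$ is a Fatou domain with $\bd(W)\subset K$'' is correct, but this is not a priori absurd. The paper iterates inside the forward-invariant set $\tl(K)$ until the Fatou domain is periodic. If it is parattracting, \cite{pz94} supplies infinitely many repelling periodic boundary points, each the landing point of a distinct external ray whose angle must then lie in the finite set $A$---a contradiction. If it is a Siegel disk, its center is a CS-point lying in $\tl(K)$ (not in $K$), and one must invoke Lemma~\ref{hedgehog} for $\tl(K)$, not merely for $K$, to obtain a critical point in $\tl(K)\cap J_P=K$ and then finish with the symmetry argument above. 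Your use of Lemma~\ref{hedgehog} only excludes Cremer points \emph{inside} $K$; it does not address this Siegel-disk scenario, so non-separation is not established.
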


\begin{proof}
  It is easy to see that if $K$ contains a point of a Fatou domain
  in its topological hull, then the entire Fatou domain is contained
  in this topological hull. Now, if $K$ contains a parattracting
  Fatou domain in its topological hull, then infinitely many
  periodic repelling points on its boundary (which exist by
  \cite{pz94}) would give rise to infinitely many impressions
  non-disjoint from $K$, a contradiction. Let us show that the
  topological hull $\tl(K)$ of $K$ cannot contain a CS-point either.
  Indeed, otherwise by Lemma~\ref{hedgehog} it has to contain a
  critical point $c$.  Moreover, since $\tl(K)$ does not contain
  parattracting Fatou domains, $c\in J_P$. Then the symmetry around
  critical points implies that there are two angles in $A$ which map
  into one (recall that the only angles whose impressions may
  contain $c$ are the angles of $A$), in contradiction with the fact that angles in $A$ were periodic.
  Thus $K$ is an invariant non-separating
  subcontinuum of $J_P$ which contains no CS-points.  On the other
  hand, by the assumptions, there are only finitely many periodic
  points in $K$ (because $K$ is disjoint from impressions of all
  angles except for finitely many). Hence all of these points are
  repelling or parabolic and together with the rays landing at them
  may be assumed to be fixed.  By Theorem~\ref{degimp} this implies
  that $K$ is a repelling or parabolic periodic point.

  To establish the next implication of the lemma, assume that $p$ is a
  $P$-periodic point and $\ph(p)=x$. We may assume that $x$ is $g$-fixed. Set
  $A=\Phi^{-1}(x)$.  By the assumptions of the lemma $\Phi^{-1}(\ph(p))$ is
  finite. Hence we may assume that all angles in $A$ are fixed. Clearly,
  $B=\ph^{-1}(x)$ is an invariant continuum. By the assumptions only angles of
  $A$ can have impressions non-disjoint from $B$, and there are finitely many
  of them. Hence by the first part of the lemma $B$ is a repelling or parabolic
  periodic point. The remaining claim that $J_P$ is locally connected at $p$
  now follows from Lemma~\ref{lc}.
\end{proof}

The next lemma relies upon Lemma~\ref{lcpt}. Recall, that by
$\approx_{rat}$ we denote the finest lamination which respects the
geometric lamination $\lam_{rat}$. Properties of $\approx_{rat}$ are
studied in Theorem~\ref{periodic-prel} (in particular, it is shown
there that $\approx_{rat}$ is not degenerate). Recall that gaps of a
lamination understood as an equivalence class of an equivalence
relation are normally denoted by a small boldface letter (such as
$\g$) while geometric gaps of geometric laminations are normally
denoted by capital letters (such as $G$). Also, recall that $R$ is
the set of all periodic repelling (parabolic) bi-accessible points
and their preimages.

\begin{lem}\label{lcpt1}
  Suppose that $\g$ is a (pre)periodic finite gap or leaf of $\approx_{rat}$
  disjoint from boundaries of Fatou domains of $\approx_{rat}$. Then $\ch(\g)$
  is a gap or leaf of $\lam_{rat}$, coinciding with the set $A(p)$ for a point
  $p\in R$, and $J_P$ is locally connected at $p$.
\end{lem}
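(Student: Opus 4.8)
The strategy is to identify $\ch(\g)$ with a convex hull of a set of rational angles landing at a single point of $R$, and then invoke Lemma~\ref{lcpt}. First I would use that $\g$ is a (pre)periodic finite gap or leaf of $\approx_{rat}$ which avoids all boundaries of Fatou domains of $\approx_{rat}$. By Theorem~\ref{periodic-prel} (applied to the generating family giving $\lam_{rat}$), the finest lamination $\approx_{rat}$ respecting $\lam_{rat}$ has all finite classes, and every leaf of $\lam_{\approx_{rat}}$ not on the boundary of a Fatou domain is a limit of leaves from $\lam_{rat}$. So each boundary leaf of $\ch(\g)$ is either a leaf of $\lam_{rat}$ or a limit of such leaves. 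I would then argue that since $\g$ is finite, (pre)periodic, and disjoint from Fatou-domain boundaries, the super-gap construction from the proof of Theorem~\ref{periodic-prel} (Claim 2) forces $\ch(\g)$ itself to be a gap or leaf of $\lam_{rat}$: the only way a boundary leaf of a finite super gap fails to be a limit of $\lam_{rat}$-leaves is if it lies on the boundary of an uncountable (Fatou) gap, which is excluded by hypothesis; and a finite union of $\lam_{rat}$-gaps whose convex hull is the whole of $\ch(\g)$ is, by the definition of $\lam_{rat}$ via the sets $G_x=\ch(A(x))$, itself of the form $\ch(A(p))$ for a single $p\in R$ (because the $G_x$ are the maximal such polygons, and two of them sharing a leaf would have to be united).

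Once $\ch(\g)=\ch(A(p))$ for some $p\in R$, I would check the hypotheses of Lemma~\ref{lcpt}. Passing to an iterate, we may assume $p$ is $P$-fixed and all angles of $A=A(p)$ are $\si_d$-fixed. The key point to verify is that $\Phi^{-1}(\ph(p))$ is finite: since $K=\imp(A)$ and $p$ is a repelling or parabolic periodic point of $P$, by Douady--Hubbard the rays with angles in $A$ land at $p$, so $\acc(\theta)=\{p\}$ for $\theta\in A$, and one must argue $\imp(\theta)$ is likewise small enough — or rather, that no other angle's ray lands at $p$, so that $\Phi^{-1}(\ph(p))=A$ is finite (equivalently, that $K$ is disjoint from impressions of all angles outside the finite set $A$). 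This is where the assumption that $\g$ is disjoint from boundaries of Fatou domains of $\approx_{rat}$ and is a gap of $\lam_{rat}$ (hence equals $\ch(A(p))$ with $A(p)$ the \emph{full} set of angles landing at $p$) does the work: any angle whose impression meets $\imp(A)$ would, after the reductions, have to land at $p$ as well, hence lie in $A(p)=A$. With $\Phi^{-1}(\ph(p))$ finite, Lemma~\ref{lcpt} yields directly that $\ph^{-1}(\ph(p))=\{p\}$ (so $\{p\}$ is a K-set) and that $J_P$ is locally connected at $p$.

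The main obstacle I anticipate is the identification step: showing rigorously that a finite (pre)periodic gap $\g$ of the \emph{finest} lamination $\approx_{rat}$, once it avoids Fatou-domain boundaries, is actually a gap of the original prelamination $\lam_{rat}$ rather than something strictly larger produced by the transfinite closure/amalgamation process. This requires carefully replaying the super-gap bookkeeping of Claim~2 in the proof of Theorem~\ref{periodic-prel} and using that an all-critical or uncountable alternative is ruled out by finiteness and the disjointness-from-Fatou hypothesis; the non-(pre)critical-from-a-generating-family language of Theorem~\ref{periodic-prel} has to be invoked to exclude critical leaves on $\bd(\ch(\g))$. After that, the application of Lemma~\ref{lcpt} is essentially mechanical, modulo the bookkeeping reductions to the fixed case.
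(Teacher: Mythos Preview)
Your strategy inverts the logical order of the paper's argument, and the step you correctly flag as ``the main obstacle'' is a genuine gap that your outline does not close. You attempt to show \emph{first} that $\ch(\g)$ is already a cell of $\lam_{rat}$ (i.e., equals some $G_p=\ch(A(p))$), and only then invoke Lemma~\ref{lcpt}. But Claim~2 of Theorem~\ref{periodic-prel} only tells you that the super gap $\ch(\g)$ is a finite union of gaps of the \emph{closure} $\overline{\lam_{rat}}$, not of $\lam_{rat}$ itself; a (pre)periodic gap of $\overline{\lam_{rat}}$ can arise purely as a limit of $\lam_{rat}$-cells without being one. Thus your sentence ``a finite union of $\lam_{rat}$-gaps whose convex hull is the whole of $\ch(\g)$ is \dots\ of the form $\ch(A(p))$'' rests on an unestablished premise. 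Likewise, your verification that $\Phi^{-1}(\ph(p))$ is finite (``any angle whose impression meets $\imp(A)$ would have to land at $p$'') confuses impressions with principal sets; an impression can contain $p$ without the ray landing there.

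The paper avoids this obstacle by reversing the order. From the hypothesis that $\g$ avoids Fatou boundaries, Theorem~\ref{periodic-prel} gives that every boundary leaf of $\ch(\g)$ is a limit of leaves from $\lam_{rat}$. This is then used, not to identify $\ch(\g)$ combinatorially, but for two facts about impressions: (i) by upper semi-continuity, $\imp(\g)$ is a continuum (adjacent angles of $\g$ are limits of endpoints of $\lam_{rat}$-leaves, hence their impressions meet); and (ii) for any $\gamma\notin\g$ one can interpose an actual $\lam_{rat}$-leaf between $\gamma$ and $\g$, and the corresponding point of $R$ together with its two landing rays cuts $\imp(\gamma)$ away from $\imp(\g)$. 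Now the \emph{first} clause of Lemma~\ref{lcpt} applies directly to $K=\imp(\g)$ (after reducing to the periodic case) and yields that $K$ is a single repelling or parabolic periodic point $p$. The identification $\g=A(p)$ is then a \emph{consequence}, not a prerequisite: each $\theta\in\g$ has $\imp(\theta)=\{p\}$, so $\g\subset A(p)$; conversely $\alpha\in A(p)$ gives $p\in\imp(\alpha)$, which by (ii) forces $\alpha\in\g$. Local connectivity at $p$ follows from Lemma~\ref{lc} since $\{p\}$ is a K-set.
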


\begin{proof}
  Assume that $\g$ is invariant. Since no leaf of $B=\bd(\ch(\g))$
  can come from the boundary of a Fatou domain of $\approx_{rat}$, all leaves
  in $B$ are limit leaves of $\lam_{rat}$. The upper semi-continuity of
  impressions now implies that the union of impressions $\imp(\g)$ of angles of
  $\g$ is a continuum itself. Moreover, it is disjoint from impressions of all
  angles not in $\g$ because for any such angle $\ga$ we can find a leaf of
  $\lam_{rat}$ which cuts $\imp(\ga)$ off $\imp(\g)$. Now the lemma follows
  from Lemma~\ref{lcpt}.
\end{proof}

We need another preparatory result, dealing with laminations generated by
collections of periodic gaps and leaves. If we fix a set $A$, then a set
$B\subset A$ is said to be \emph{cofinite (in $A$)} if $|A\sm B|$ is finite.
Given a generating (and hence invariant) family $A$ of pairwise disjoint periodic gaps or leaves we
then consider a geometric prelamination $\lam_A$ consisting of $A$ and
preimages of elements of $A$. By Theorem~\ref{periodic-prel} we can construct
the corresponding lamination $\approx_A$; given a finite gap or leaf $G$ from
$\lam_A$, there exists a finite $\approx_A$-class $\cl_A(G)$ containing $G'$ and
called the $\approx_A$-class \emph{generated} by $G$. Denote by $p_A$ the
quotient map from $\uc$ to $J_{\approx_A}$.

\begin{lem}\label{fincut} Suppose that $A$ is an infinite
generating family of periodic gaps or leaves under the map $\si_d$.
Then there exists a cofinite invariant subset $D'\subset A$ such
that any cofinite invariant set $E\subset D'$ has the following
properties.

\begin{enumerate}

\item If $G$ is a leaf or finite gap of $\lam_{\approx_E}$ then
$\cl_E(G)\cap \bd(U)=\0$ for any Fatou domain $U$ in
$\lam_{\approx_E}$ or $\lam_{\approx_A}$.

\item The family of $\approx_E$-classes generated by the elements of $\lam_E$
is a well-slicing family of $\uc$.


\end{enumerate}

\end{lem}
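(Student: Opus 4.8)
The plan is to treat two essentially independent matters: first, removing from $A$ a finite collection of ``bad'' cycles so that the Fatou behaviour of the associated laminations is under control (this produces $D'$ and yields claim (1)); and second, verifying the combinatorial well-slicing property (2) for any such $E$. For the first matter, recall from Theorem~\ref{periodic-prel} that $\approx_A$ is non-degenerate, all its classes are finite, and it has no Siegel domains, so every Fatou domain of $\approx_A$ (equivalently, of $\ol{\lam_A}$) is parattracting of some degree $k\ge 2$. A standard critical-multiplicity count for the induced branched covering of $\ol{\disk}$ shows that the degrees of the distinct cycles of parattracting Fatou domains of $\approx_A$ sum to at most $d-1$; hence there are only finitely many such cycles, with Fatou gaps $U_1,\dots,U_r$. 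Each $U_j$ is (pre)periodic, and since by the standing hypothesis $\lam_A$ has no critical leaves on boundaries of its cells, the ``in particular'' part of Lemma~\ref{no-crit-leaf} together with Lemma~\ref{lem:periodic_gap} shows that all leaves of $\ol{\lam_A}$ lying on $\bigcup_j\bd(U_j)$ are (pre)periodic from finitely many grand orbits of periodic leaves. Consequently only finitely many cycles of periodic gaps or leaves of $\lam_A$ meet $\bigcup_j\bd(U_j)$ or share a vertex with some $U_j$; let $D'$ be $A$ with these finitely many cycles deleted. It is cofinite in $A$ and forward invariant.

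Now fix a cofinite forward invariant $E\subseteq D'$ and prove (1). Since $\lam_E\subseteq\lam_A$ one has $\ol{\lam_E}\subseteq\ol{\lam_A}$, and every leaf of $\lam_A$ not in $\lam_E$ lies in one of the deleted grand orbits; from this it follows that each Fatou domain of $\ol{\lam_E}$ is in fact a Fatou domain of $\ol{\lam_A}$ (deleting leaves can only destroy Fatou gaps and cannot alter the Cantor part of the boundary of a surviving one). So it suffices to show, for each surviving $U\in\{U_1,\dots,U_r\}$, that no finite $\approx_E$-class $\cl_E(G)$ generated by a finite gap or leaf $G$ of $\lam_{\approx_E}$ meets $\bd(U)$. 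If it did, then $\cl_E(G)$, being finite, would contain a vertex $z$ of $U$; tracking $z$ forward under $\si_d$ and using Lemma~\ref{lem:periodic_gap} and the gap invariance of $\ol{\lam_E}$, one produces a periodic gap or leaf of $E$ lying on, or sharing a vertex with, a Fatou gap in the cycle of $U$ --- contradicting the choice of $D'\supseteq E$. The case distinctions involved here (according to whether $G$ is periodic, a wandering preimage of an element of $A$, or a finite ``bridging'' gap of $\lam_{\approx_E}$) are routine but somewhat lengthy, and this bookkeeping is the main technical obstacle of the lemma.

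Finally, for (2), set $\mathcal C'=\{\cl_E(G):G\text{ a cell of }\lam_E\}$; its members are finite $\approx_E$-classes, pairwise unlinked (being distinct classes of the lamination $\approx_E$), and there are infinitely many of them since $E$ is infinite. Given distinct members $\g_1=\cl_E(G_1)$ and $\g_2=\cl_E(G_2)$, the convex hulls $\ch(\g_1)$ and $\ch(\g_2)$ are disjoint, so $\disk\sm(\ch(\g_1)\cup\ch(\g_2))$ has a ``channel'' component $W$ whose boundary consists of an edge or vertex of $\ch(\g_1)$, an edge or vertex of $\ch(\g_2)$ (each edge, when present, being a leaf of $\ol{\lam_E}$, as an outer edge of a super gap), and two arcs $I',I''\subset\uc$. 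It suffices to find $\g_3\in\mathcal C'$ with a point in $I'$ and a point in $I''$; for then $\ch(\g_3)$ separates $\ch(\g_1)$ from $\ch(\g_2)$ and $\g_3\notin\{\g_1,\g_2\}$, so $\g_3$ separates $\uc$ between $\g_1$ and $\g_2$. Suppose no leaf of $\ol{\lam_E}$ joins $I'$ to $I''$. Then every leaf of $\ol{\lam_E}$ inside $W$ hangs off $\ol{I'}$ or off $\ol{I''}$, the central part of $W$ is connected and lies in a single gap $V$ of $\ol{\lam_E}$, and $\g_1$ and $\g_2$ each have a vertex on $\bd(V)$. If $V$ is finite, then (by the super gap construction in Theorem~\ref{periodic-prel}) $V$ lies in a super gap, so those vertices of $\g_1$ and $\g_2$ are $\approx_E$-equivalent, forcing $\g_1=\g_2$, a contradiction; if $V$ is infinite, then $V$ is a Fatou domain of $\ol{\lam_E}$, contradicting (1), since $\g_1=\cl_E(G_1)$ avoids Fatou boundaries. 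Hence some leaf of $\ol{\lam_E}$ joins $I'$ to $I''$; as $\ol{\lam_E}$ is the closure of $\lam_E$, a leaf of $\lam_E$ also joins $I'$ to $I''$, and that leaf is an edge of $\ch(G_3)$ for some cell $G_3$ of $\lam_E$. Then $\cl_E(G_3)$ has points in both $I'$ and $I''$ and is disjoint from $\g_1$ and $\g_2$, so it is the required third element. This establishes (2) and completes the proof.
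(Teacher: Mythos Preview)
Your argument for part (2) is essentially the paper's, fleshed out a bit more geometrically, and is fine once (1) is in hand. The real problem is in your construction of $D'$ and the justification of (1).

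You assert that ``each Fatou domain of $\ol{\lam_E}$ is in fact a Fatou domain of $\ol{\lam_A}$ (deleting leaves can only destroy Fatou gaps \dots)''. This is backwards. Since $\ol{\lam_E}\subset\ol{\lam_A}$, every gap of $\ol{\lam_E}$ \emph{contains} a gap of $\ol{\lam_A}$, so removing leaves can only \emph{enlarge} gaps and hence \emph{create} new Fatou gaps; it never destroys them. Concretely, suppose one of the periodic cycles you delete from $A$ has the property that its grand orbit of preimages accumulates one-sidedly on some limit leaf $\ell$ of $\ol{\lam_A}$, and that in a neighbourhood of $\ell$ on that side \emph{only} leaves from this grand orbit are present. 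Then after deletion, an entirely new Fatou gap opens up on that side of $\ell$ in $\ol{\lam_E}$, one that was not present in $\ol{\lam_A}$. Nothing in your choice of $D'$ rules this out, and the new Fatou gap may well have further elements of $D'$ on its boundary, so (1) fails.

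This is precisely the phenomenon the paper isolates with its notion of a \emph{finite limiting collection}: a finite invariant subfamily $Q\subset A$ whose grand orbit alone furnishes all the leaves approaching some limit leaf (or point) from one side. The paper proves that if no such $Q$ exists, then Fatou gaps are stable under cofinite removal; and if such a $Q$ does exist, removing it produces a genuinely new Fatou gap. One then iterates, and a critical-leaf count (of the kind you invoke elsewhere) forces termination after finitely many steps at some $D$ with no finite limiting collections. Only \emph{then} does one remove the finitely many elements touching Fatou boundaries to obtain $D'$. Your proof collapses these two stages into one and thereby misses the possibility of newly created Fatou gaps.
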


\begin{proof}
To prove claim (1) suppose that there is a finite invariant collection $Q$ of
elements of $A$ for which there exists a leaf $\ell=ab$ of
$\lam_A$ or a point $x\in \uc$ with the following properties:
\begin{enumerate}

\item $\ell$ (resp., $x$) is a limit leaf for a sequence of
leaves of $\lam_A$ with endpoints in the positively oriented arc
$(a, b)$ (resp., with endpoints on both sides of $x$);

\item there is $\e>0$ such that any leaf of $\lam_A$ with
endpoints in $(a, a+\e)$ and $(b-\e, b)$ (resp., in $(x-\e, x+\e)$)
is a preimage of a boundary leaf of a set from $Q$.
\end{enumerate}
Then we call $Q$ a \emph{finite limiting collection} (of elements of $A$).

Let us proceed as follows. Suppose that there exist no finite
limiting collections. Take a cofinite invariant set $E\subset A$.
Let us show that the Fatou gaps of $\approx_E$ coincide with the
Fatou gaps of $\approx_A$. Indeed, otherwise there is a Fatou gap
$G$ of $\approx_E$ which was not a Fatou gap of $\approx_A$.

Clearly, gaps from the orbit of $G$ contain no sets from $E$.
However, gaps from the orbit of $G$ must contain some sets from
$A\sm E$ (otherwise these gaps would be gaps of $\approx_A$ as
well). Denote the sets from $A\sm E$ contained in gaps from the
orbit of $G$ by $T_1, \dots, T_r$. There must exist a leaf $\ell$ of
$\lam_{\approx_A}$ or a point $x\in \uc$ such that the leaves from the grand
orbits of sets $T_1, \dots, T_r$ contained inside gaps from the
orbit of $G$ accumulate on one side of $\ell$ (resp., at the point
$x$ while separating $\disk$ in two components the smaller of which
contains $x$). Moreover, only sets $T_1, \dots, T_r$ can have leaves
accumulating upon $\ell$ (resp., $x$) in this way (because only
these sets from $A$ are contained in gaps from the orbit of $G$).
This implies that $T_1, \dots, T_r$ is a finite limiting collection,
a contradiction.

Now, suppose that there exists a finite limiting collection $Q_1$
and $\ell$ is a limit leaf of $Q_1$ existing by the definition
(the case of a point $x$ is considered similarly). Remove $Q_1$ from
$A$ and consider a generating set $E_1=A\sm Q_1$ and the
corresponding laminations $\approx_1$ and $\lam_{\approx_1}$. It
follows that there is a gap $G_1$ of $\lam_1$ with the boundary leaf
$\ell$ located on the same side of $\ell$ from which the pullbacks
of leaves of sets from $Q_1$ approach $\ell$ in $\lam_A$. Clearly,
$G_1$ cannot have boundary leaves concatenated to $\ell$ at $\ell$'s
endpoints for if such boundary leaves exist, they will have to be
leaves of $\lam_{\approx_A}$ too which is impossible because then
they would intersect the leaves from the grand orbits of sets from
$Q_1$ which converge to $\ell$ from the appropriate side by the
assumption. Hence, $G_1$ is a Fatou gap of $\lam_{\approx_1}$ which
did not exists in $\lam_{\approx_A}$.

We repeat this construction over and over until, after finitely many
steps, we will find a cofinite invariant subset of $A$ which has no
finite limiting collections. Indeed, in the process of finding sets
$E_1\supset E_2\supset \dots$, on each step at least one new Fatou gap $G_1,
G_2, \dots$ appears. Clearly, at each step all the Fatou gaps of the
current lamination $\lam_k$ allow us to draw a maximal collection of
pairwise disjoint critical leaves inside them, and the number of
critical leaves in such a collection is bounded by $d-1$. Hence the
number of such critical leaves eventually stabilizes which implies
that from this moment on the new Fatou gaps will have to contain the
previously existing ones. This implies that the periods of the new
Fatou gaps can only be smaller than the periods of the ones which
had existed before. Therefore, from some time on the appearance of
new infinite gaps becomes impossible.

Denote the corresponding cofinite invariant subset $E_m$ of $A$ by
$D$. By the construction $D$ has no finite limiting collections,
hence by the first paragraph of the proof all its cofinite invariant
subsets $S$ generate a lamination $\approx_S$ which has the same
Fatou gaps as $\approx_D$. Each periodic Fatou domain of $\approx_D$
has finitely may $\approx_D$-gaps/leaves non-disjoint from its
boundary. Denote by $D'\subset D$ the family of all \emph{other}
$\approx_D$-gaps/leaves (observe that $D'$ is cofinite and hence
infinite). Suppose now that $E\subset D'$ is cofinite. Then by the
above the Fatou gaps of $\approx_E$ coincide with Fatou gaps of
$\approx_D$, and by the choice of $E$ no element of $\lam_E$
intersects $\bd(U)$ where $U$ is a Fatou gap of $\lam_{\approx_E}$.
Moreover, since $E\subset A$, $\ol{\lam_E}\subset \ol{\lam_A}$.
Hence, Fatou domains of $\lam_{\approx_A}$ are contained in Fatou
domains of $\lam_{\approx_E}$. This proves claim (1) of the lemma.

To prove claim (2), let $G, H\in \lam_E$ are such that $\cl_E(G)$
and $\cl_E(H)$ are distinct. Suppose that there are no
$\approx_E$-classes, generated by elements of $\lam_E$, separating
$\uc$ between $\cl_E(G)$ and $\cl_E(H)$. Since by the construction
$\approx_E$-classes generated by elements of $\lam_E$ are dense in
$\lam_{\approx_E}$, there must be a Fatou gap of $\lam_{\approx_E}$
on whose boundary both $\cl_E(G)$ and $\cl_E(H)$ lie which is
impossible by the above. This completes the proof of (2).
\end{proof}


\begin{prop}\label{biacc}
Suppose that $p\in J_{\sim_P}=\ph(J_P)$ is a periodic point such that $\Phi^{-1}(p)$
is infinite. Then there are no more than finitely many periodic leaves of the
rational geometric lamination $\lam_{rat}$ connecting angles of $\Phi^{-1}(p)$. In
particular, {\rm (1)} the set of all bi-accessible periodic repelling or
parabolic points in $\ph^{-1}(p)$ must be finite, and {\rm (2)} if the set of
\textbf{all} repelling bi-accessible periodic points of $P$ is infinite then
the finest model is non-degenerate.
\end{prop}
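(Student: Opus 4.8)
The plan is to prove the principal assertion of the proposition by contradiction and then read off (1) and (2) as formal consequences. First I would pass to an iterate of $P$ so that $p$ is $g$-fixed, and set $B=\ph^{-1}(p)$; by Theorem~\ref{thm:ph_dynamic} this is a $P$-invariant K-set. Since $\ph$ collapses every impression to a point (Lemma~\ref{lem:impressions_to_points}), $\Phi(\al)=p$ holds exactly when $\imp(\al)\subset B$; consequently, for a periodic repelling or parabolic point $x$, every boundary leaf of $G_x=\ch(A(x))$ connects two angles of $\Phi^{-1}(p)$ precisely when $x\in B$ (if $x\in B$ then $x\in\acc(\al)\subset\imp(\al)$, so the point $\ph(\imp(\al))$ equals $p$; conversely $a,b\in\Phi^{-1}(p)$ landing at $x$ give $x\in\imp(a)\subset B$). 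As each $A(x)$ is finite and distinct such $G_x$ are pairwise disjoint, ``infinitely many periodic leaves of $\lam_{rat}$ connecting angles of $\Phi^{-1}(p)$'' is equivalent to ``$B$ contains infinitely many periodic repelling or parabolic bi-accessible points'', and I would assume the latter.

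Next I would note that the $G_x$ with $x\in B$ periodic repelling or parabolic bi-accessible, together with their preimages, form an infinite $\si_d$-invariant geometric prelamination inside $\lam_{rat}$ generated by the infinite family $\mathcal A=\{G_x\}$ of periodic gaps and leaves. Applying Lemma~\ref{fincut} yields a cofinite invariant $D'\subset\mathcal A$ such that, for $E=D'$, the family $\mathcal C'$ of $\approx_E$-classes generated by the elements of $\lam_E$ is a well-slicing family of $\uc$, no element of which meets the boundary of a Fatou gap of $\lam_{\approx_E}$; each $C'\in\mathcal C'$ is finite by Theorem~\ref{periodic-prel}.

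Then I would transfer $\mathcal C'$ to $J_P$ through impressions. For any leaf $ab$ of $\ol{\lam_{rat}}$ one has $\imp(a)\cap\imp(b)\ne\0$ --- for honest rational leaves the two rays co-land, and for limit leaves this follows by passing to a limit of the co-landing points of approximating rational leaves and using upper semicontinuity of impressions --- so traversing $\bd(\ch(C'))$ shows that each $\imp(C')=\bigcup_{\theta\in C'}\imp(\theta)$ is a ray-continuum. The crucial point, which I expect to be the main obstacle, is that the ray-continua $\imp(C')$ are pairwise \emph{disjoint}: given distinct $\approx_E$-classes $C'_1,C'_2$ there is a leaf $\ell=ab$ of $\ol{\lam_E}\subset\ol{\lam_{rat}}$ separating them in $\disk$ with $a,b\notin C'_1\cup C'_2$ (if an endpoint lay in some $C'_i$ then, as $\approx_E$ respects $\ol{\lam_E}$, the whole of $\ell$ would, and $\ell$ could not separate), so $C'_1$ and $C'_2$ lie in opposite open arcs determined by $\{a,b\}$, and --- exactly as in the proof of Lemma~\ref{lcpt1}, using that the relevant boundary leaves are limits of honest rational leaves and that the classes avoid Fatou boundaries --- $\imp(C'_1)$ and $\imp(C'_2)$ lie in closures of opposite complementary domains of $\widetilde{\imp(\{a,b\})}$ in $\ol{U_\iy}$. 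The same sector argument shows that whenever $C'_3\in\mathcal C'$ separates $C'_1$ and $C'_2$ in $\uc$, the set $\widetilde{\imp(C'_3)}$ ray-separates $\imp(C'_1)$ and $\imp(C'_2)$; and any such separator ray-separates two of the points $x_i\in B$, since their angle sets fall in opposite arcs. Since between any two elements of the well-slicing family $\mathcal C'$ infinitely many others separate them (as in Lemma~\ref{lem:no_finite}), fixing $x_1,x_2\in B$, keeping the infinitely many elements of $\mathcal C'$ that separate $A(x_1)$ from $A(x_2)$ in $\uc$, and propagating separations with Lemmas~\ref{lem:middle_separates} and~\ref{lem:sep_sep}, I would obtain a well-slicing family of the continuum $B$. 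By Theorem~\ref{th:wc-non-deg} this forces $\ph(B)$ not to be a point, contradicting $\ph(B)=\{p\}$; this proves the principal assertion.

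Finally, (1) follows because any bi-accessible periodic repelling or parabolic point $x\in\ph^{-1}(p)$ has $A(x)\subset\Phi^{-1}(p)$, so $G_x$ gives a periodic leaf of $\lam_{rat}$ joining angles of $\Phi^{-1}(p)$, and distinct such $x$ give distinct leaves; and (2) follows because a degenerate finest model would make $\ph(J_P)$ a single $g$-fixed point $p_0$ with $\Phi^{-1}(p_0)=\uc$, whence the principal assertion gives only finitely many periodic leaves of $\lam_{rat}$ in all, while infinitely many repelling bi-accessible periodic points of $P$ produce infinitely many pairwise disjoint periodic polygons $G_x$ and hence infinitely many periodic leaves. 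The genuinely delicate step throughout is the pairwise disjointness (and mutual ray-separation) of the ray-continua $\imp(C')$, which is precisely why one first thins $\mathcal A$ to the family $E$ of Lemma~\ref{fincut} and then borrows the separation argument of Lemma~\ref{lcpt1}.
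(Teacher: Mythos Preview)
Your argument is correct and reaches the same destination as the paper's, but it differs in one organizational choice. The paper first notes that $G=\ch(\Phi^{-1}(p))$ is a Fatou gap of $\lam_{\sim_P}$ and uses the monotone semiconjugacy $\psi:\bd(G)\to\uc$ from Lemma~\ref{good-gap} (sending $\si_d^*|_{\bd(G)}$ to some $\si_k$, $k>1$) to transport the rational leaves lying \emph{inside} $G$ to a prelamination on a full circle with $\si_k$-dynamics, applies Lemma~\ref{fincut} there, and pulls back. You skip $\psi$ entirely and apply Lemma~\ref{fincut} directly on the $\si_d$-circle to the invariant family $\{G_x:x\in B\text{ periodic bi-accessible}\}$. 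Both routes then feed into the Lemma~\ref{lcpt1}/Lemma~\ref{lcpt} mechanism: once $E$ is chosen so that each $\approx_E$-class $C'$ avoids Fatou boundaries, the boundary leaves of $\ch(C')$ are limits of honest rational leaves, so $\imp(C')$ is cut off from every impression outside $C'$ and Lemma~\ref{lcpt} actually forces $\imp(C')$ to be a single (pre)periodic point --- your disjointness sketch is really proving this stronger fact, and once you have it the ray-separation step becomes trivial. The paper's projection keeps all elements inside $G$ from the outset, so ``each element ray-separates $\ph^{-1}(p)$'' is automatic; your route avoids the $\psi$-machinery at the price of having preimage elements outside $G$, which you handle by restricting at the end to the linearly ordered subfamily separating two fixed $x_1,x_2\in B$. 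Either way one obtains a well-slicing family of $\ph^{-1}(p)$ and Theorem~\ref{th:wc-non-deg} gives the contradiction.
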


\begin{proof}
  We may assume that $p$ is a fixed point of $g$; then $\Phi^{-1}(p)$ is an
  infinite gap of $\sim_P$. Set $G=\Int(\ch(\Phi^{-1}(p)))$; by
  Lemma~\ref{fat-gap-domain} $G$ is a Fatou gap of $\lam_{\sim_P}$ and hence by
  Lemma~\ref{good-gap} there is a monotone semiconjugacy $\psi$ of
  $\si^*|_{\bd(G)}$ and a map $\si_k:\uc\to \uc$ with the appropriately chosen
  $k>1$. The map $\psi$ collapses all chains of concatenated leaves in $\bd(G)$
  to points; by Lemma~\ref{lem:periodic_gap} all leaves in the chains are
  (pre)periodic and by Lemma~\ref{good-gap} and Lemma~\ref{fat-gap-domain} each
  chain consists of at most $N$ leaves ($N$ depends on $G$). By way of
  contradiction suppose that there are infinitely many periodic leaves of the
  rational prelamination $\lam_{rat}$ connecting angles of $\Phi^{-1}(p)$. The
  idea is to use the map $\psi$ in order to transport the restriction of
  $\lam_{rat}$ onto $\Phi^{-1}(p)$ to the entire circle $\uc$, then to use
  Lemma~\ref{fincut} to find a well-slicing family of $\uc$ consisting of
  (pre)periodic geometric gaps and leaves of $\uc$ corresponding to elements of
  $\lam_{rat}$, and then to show that ray-continua corresponding to those elements
  of $\lam_{rat}$ form a well-slicing family of $\ph^{-1}(p)$. By
  Theorem~\ref{th:wc-non-deg} then $\ph(\ph^{-1}(p))$ is not a point, a
  contradiction.

  The leaves of $\lam_{rat}$ which lie in the boundary of
  $\ch(\Phi^{-1}(p))=G$ will produce just points under
  $\psi$. However, by Lemma~\ref{lem:periodic_gap} there are only
  finitely many periodic leaves in $\bd(G)$. Hence by the assumptions
  of the proposition there are infinitely many periodic geometric
  leaves or gaps of $\lam_{rat}$ contained in $G$ and such that $\psi$
  does not identify points of their bases with other points at
  all. Denote their family by $A$; also, denote the family of all
  their preimages under all powers of $\si$ \emph{contained in $G$} by
  $\widehat{\lam}_A$ (recall, that the notation $\lam_A$ is reserved
  for the collection of \emph{all} preimages of elements of
  $A$). Thus, $\widehat{\lam}_A$ is the family of all (pre)periodic
  geometric leaves and gaps of $\lam_{rat}$ contained in $G$ and not
  in $\bd(G)$. Define the geometric prelamination
  $\lam'=\psi(\widehat{\lam}_A)$ on the entire circle $\uc$ as the
  family of convex hulls of $\psi$-images of bases of elements of
  $\widehat{\lam}_A$ (recall that $\psi$ is defined only on
  $\bd(G)$). It is easy to see that this indeed creates a geometric
  prelamination whose all leaves are (pre)periodic. By the choice of
  $A$ in this way each gap/leaf of $\lam=\widehat{\lam}_A$ is
  transported by $\psi$ to the corresponding gap/leaf of $\lam'$ in a
  one-to-one fashion. Then $\psi(A)$ is the family of periodic
  geometric leaves and gaps of $\lam'$. Clearly, $\psi(A)$ is infinite
  and the lamination $\lam'$ is the same as the lamination
  $\lam_{\psi(A)}$ introduced right before Lemma~\ref{fincut} in which
  appropriate preimages of elements of $\psi(A)$ are used.

  By Lemma~\ref{fincut} there exists a cofinite family $B\subset A$
  satisfying both properties listed in Lemma~\ref{fincut}. In
  particular, as in Lemma~\ref{fincut} for $B$ the prelamination
  $\lam_B$ and the corresponding lamination $\approx_B$ can be
  constructed. By the choice of $A$ the map $\psi$ then allows us to
  pull them back to $G$ in a one-to-one fashion and without changing
  the order. Now, by claim (1) of Lemma~\ref{fincut} if $h\in \lam_B$
  then $\cl(h)\cap \bd(U)=\0$ for any Fatou domain $U$ in
  $\lam_{\approx_B}$ (here $\cl(h)$ is understood in the sense of the
  lamination $\approx_B$, i.e. $\cl(h)$ is the $\approx_B$-class containing
  $h$). Let us show that then in fact $h=\cl(h)$ and $\psi^{-1}(h)\in
  \lam_{rat}$. Indeed, consider leaves on the boundary of $\cl(h)$. By
  Theorem~\ref{periodic-prel} they all are limits of leaves of
  elements of $\lam_B$. It follows that then leaves on the boundary of
  $\psi^{-1}(\cl(h))$ are limit leaves for leaves of $\psi$-preimages
  of elements of $\lam_B$. Thus, leaves on the boundary of
  $\psi^{-1}(\cl(h))$ are limit leaves for leaves of
  $\lam_{rat}$. This implies that the impression of any angle not from
  $\psi^{-1}(\cl(h))$ is cut off $\imp(\psi^{-1}(\cl(h)))$ by tails of
  the appropriate points of $R$ and hence is disjoint from
  $\imp(\psi^{-1}(\cl(h)))$. By Lemma~\ref{lcpt} then $h=\cl(h)$ and
  $\psi^{-1}(h)\in \lam_{rat}$.

  Now, by Lemma~\ref{fincut} $\lam_B$ is a well-slicing family of
  $\uc$. By the previous paragraph and by the properties of the map
  $\psi$ it follows that the family of degenerate ray-continua
  $\imp(\psi^{-1}(h)), h\in \lam_B$ is a well-slicing family of
  $\ph^{-1}(p)$ and hence by Theorem~\ref{th:wc-non-deg}
  $\ph(\ph^{-1}(p))$ is not a point, a contradiction. This proves
  (1). Now, if the finest model is degenerate then the degenerate
  topological Julia set can play the role of the point $p$, the entire
  circle $\uc$ plays the role of the $\approx_P$-class $\Phi^{-1}(p)$,
  and (1) implies that $R$ is finite. Hence, (2) follows and the proof
  is completed.
\end{proof}

\subsection{The Siegel case}

Now we establish the third sufficient condition for the non-degeneracy of
the finest model, this time corresponding to the case (3) of
Theorem~\ref{neces}. However first we need to introduce the appropriate
terminology.

As was explained in Section~\ref{lamprel}, the closure of any invariant
geometric prelamination is a geometric lamination. This idea was used when the
geo-lamination $\ol{\lam_{rat}}$ was constructed. However it can also be used
in other situations. Suppose that there exists a finite collection $\Kc$ of
wandering ray-continua $K_i, i=1, \dots, m$. We will call $\Kc$ a
\emph{wandering collection} if distinct forward images of continua $K_i$ are
all pairwise disjoint. By the arguments similar those from Theorem 4.2
\cite{bo08a} one can associate to $\Kc$ a geometric prelamination $\lam_{\Kc}$
\emph{generated by $\Kc$}, and then its closure - a geo-lamination
$\ol{\lam_{\Kc}}$ \emph{generated by $\Kc$}. For completeness we will briefly
explain the main ideas of this theorem.

First we need to construct the grand orbit of sets from $\Kc$. However it may
happen that simply taking pullbacks of the forward images of these sets will
lead to their growth. Indeed, suppose, for example, that $K_1$ contains a
critical point $c$. Then already the first pullback of $P(K_1)$ may well be
bigger than $K_1$. If as we iterate the map $K_1$ hits several critical points,
the same can take place several times. However since $\Kc$ is a wandering
collection we can choose a big $N$ so that the continua $P^N(K_i), i=1, \dots, m$
are non-precritical.

If we now take these continua, all their forward images, and then all pullbacks
of these forward images we will get a ``consistent'' grand orbit of several
sets meaning that for every set $Q$ from the grand orbit in question the
$P^i$-pullback of $P^i(Q)$ containing $Q$ coincides with $Q$. As a result of
the construction the initially given ray-continua may have grown, however they
will have (eventually) the same images as the originally given continua. In
particular, the continua $K_i$ may have grown to new continua $K'_i$, and we
can think of the just constructed grand orbit $\Ga$ as the grand orbit of the
family of continua $K'_1, \dots, K'_m$. Observe that $\Kc'=\{K'_1, \dots,
K'_m\}$ is still a wandering collection. Hence, since all Fatou domains are
(pre)periodic, any set from $\Ga$ is a non-separating subcontinuum of $J_P$.

Since each $K_i$ is a ray-continuum, by Definition~\ref{raycont} there is a set
of angles associated to $K_i$ in that the union of the principal sets of these
angles is contained in $K_i$ while the union of their impressions contains
$K_i$. The new continuum $K'_i$ is obtained as the union of $K_i$ with some
pullbacks of its images. Hence and because the collection of all principal sets
and impressions is invariant we see that $K'_i$ is also a ray-continuum. It
follows that in fact any continuum $K'\in \Kc'$ is a ray-continuum, and if we
define the set of angles $\Ta(K')=H_{K'}$ as the set of all angles whose
principal sets are contained in $K'$ then we will have

\[
    \bigcup_{\theta \in H_{K'}}\acc(\theta) \subset K' \subset
    \bigcup_{\theta \in H_{K'}}\imp(\theta)
    \]
which means that the set of angles $H_{K'}$ is associated with the continuum
$K'$ in the sense of the Definition~\ref{raycont}. Observe that by
Theorem~\ref{kiwi-wan} the sets of angles $H_{K'}, K'\in \Kc'$ cannot have more
than $2^d$ angles (and therefore they are closed).

Now it is not hard to show (see Theorem 4.2 \cite{bo08a}) that the family of
convex hulls of so defined sets of angles $H_{K'}, K'\in \Ga$ form a geometric
prelamination which we denote by $\lam_\Kc$. By the definition each original
ray-continuum $K_i$ has the associated to it set of angles $A_i$, and it
follows that $A_i\subset H_{K'_i}=H_i$. Therefore each $A_i$ is contained in a
leaf or gap of $\lam_\Kc$. Then the closure $\ol{\lam_{\Kc}}$ of $\lam_\Kc$ is
a geo-lamination. We are especially interested in collections of angles which
give rise, through the above construction, to specific geo-laminations
reminiscent of the case (3) of Theorem~\ref{neces}.

\begin{defn}\label{def-siegel}
Suppose that $\Hc$ is a collection of finite sets of angles $H_i, i=1, \dots,
m$ such that the following holds.

\begin{enumerate}

\item Each set $H_i$ is mapped into a one-angle set (i.e., is an \emph{all-critical set}).

\item For each $i$ the set $\imp(H_i)$ is a continuum disjoint
from impressions of any angle not belonging to $H_i$.

\item The continua $\imp(H_i), i=1, \dots, m$ form a wandering
collection.

\item Consider the geo-lamination $\ol{\lam_{\Hc}}$. Then
there is a cycle of Siegel domains in $\ol{\lam_{\Hc}}$ such that $\Hc$ is the
family of all-critical gaps/leaves on the boundaries of domains from the cycle.
Moreover, each $\ch(H_i)$ meets the corresponding Siegel domain of
$\ol{\lam_{\Hc}}$ in a leaf and sets $H_i$ have pairwise disjoint orbits.

\end{enumerate}

In that case we say that the collection of sets of angles $\Hc$ with their
impressions and \emph{all} their pullbacks form a \emph{Siegel configuration};
the collection of sets of angles $\Hc$ is said to \emph{generate} the
corresponding Siegel configuration. We will also say in this case that $P$
\emph{admits a Siegel configuration.}
\end{defn}

The next proposition shows that such Siegel configuration
cannot be admitted by the polynomial inside a periodic infinite K-class; in
particular, if $P$ admits a Siegel configuration, it implies that the finest
model is non-degenerate.

\begin{prop}\label{siegel}
Suppose that $p\in J_{\sim_P}=\ph(J_P)$ is a periodic point such that
$\Phi^{-1}(p)=\g$ is infinite. Then no collection of subsets of $\g$ generates
a Siegel configuration. In particular, if $P$ admits a Siegel configuration,
then the finest model is non-degenerate.
\end{prop}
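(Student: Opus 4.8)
The plan is to follow the proof of Proposition~\ref{biacc}, with the all-critical leaves of the purported Siegel configuration and their backward orbit playing the role of the periodic leaves of $\lam_{rat}$ there. First I would reduce to the case that $p$ is $g$-fixed; then $\g=\Phi^{-1}(p)$ is an infinite $\sim_P$-gap, $\ph^{-1}(p)=\imp(\g)$ is the corresponding $K$-set of $J_P$, and by Lemma~\ref{fat-gap-domain} the set $G=\Int(\ch(\g))$ is a Fatou gap of $\lam_{\sim_P}$ with $\si^*|_{\bd(G)}$ of degree $k>1$, so by Lemma~\ref{good-gap} there is a monotone semiconjugacy $\psi$ of $\si^*|_{\bd(G)}$ onto a map $\si_k:\uc\to\uc$ with $k>1$. (If $\sim_P$ is degenerate then $\g=\uc$, $G=\disk$, and one simply takes $\psi=\mathrm{id}$, $k=d$, dispensing with the transport below.) Assume, for a contradiction, that some collection $\Hc=\{H_1,\dots,H_m\}$ of subsets of $\g$ generates a Siegel configuration. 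Each $\ch(H_i)$ is all-critical, hence (pre)critical; since $\bd(G)$ contains no (pre)critical leaves by Lemma~\ref{fat-gap-domain}, the sets $\ch(H_i)$ --- and every element of their grand orbit --- lie strictly inside $G$ with bases in $\g$. Exactly as in Proposition~\ref{biacc}, $\psi$ then transports the collection $\widehat{\lam}_\Hc$ of all those grand-orbit elements contained in $\ch(\g)$ faithfully --- one-to-one on bases and order-preservingly --- to a $\si_k$-invariant prelamination $\lam'=\psi(\widehat{\lam}_\Hc)$ on $\uc$ generated by $\psi(\Hc)$. Running through Definition~\ref{def-siegel} one checks that $\psi(\Hc)$ generates a Siegel configuration with respect to $\si_k$; in particular $\ol{\lam'}$ carries a cycle of Siegel domains on whose boundaries the all-critical leaves $\psi(\ch(H_i))$ lie, meeting the respective domain in a leaf.

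Next I would fix a Siegel domain $W$ of $\ol{\lam'}$ in this cycle, of period $n$, so that by Lemma~\ref{good-gap} the map $\si_k^n|_{\bd(W)}$ is monotonically semiconjugate, via a map $\be:\bd(W)\to\uc$ collapsing the arcs complementary to the relevant Cantor set, to an irrational rotation $r$. Let $\mathcal C$ consist of the all-critical leaves of $\psi(\Hc)$ lying on the cycle of $W$ together with all of their $\si_k$-pullbacks taken inside the boundaries of that cycle; at each step there is a unique such pullback since $\si_k^n|_{\bd(W)}$ has degree one. As the $r$-pullbacks of a point are dense in $\uc$, the $\be$-images of the leaves of $\mathcal C$ are dense; the leaves of $\mathcal C$ are pairwise disjoint (distinct leaves of the Jordan curve $\bd(W)$ with distinct $\be$-images cannot share an endpoint), and, $\be$ being monotone, any two of them are separated in $\disk$ by a third. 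Thus the bases of the leaves of $\mathcal C$ form a well-slicing family of $\uc$. Pulling this family back through $\psi$ --- legitimate because $\mathcal C\subset\lam'=\psi(\widehat{\lam}_\Hc)$ and $\psi$ is faithful on $\widehat{\lam}_\Hc$ --- and using that $\psi$ is monotone and order-preserving, I obtain a well-slicing family of $\uc$ whose members are the bases of certain elements of $\widehat{\lam}_\Hc$, that is, bases of pullbacks of the $H_i$ lying inside $\g$.

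Finally, since $\Hc$ generates a Siegel configuration, the impression of each such base is the corresponding pullback of some $\imp(H_i)$, a wandering ray-continuum contained in $\imp(\g)=\ph^{-1}(p)$; by condition (2) of Definition~\ref{def-siegel} and invariance of impressions it is disjoint from the impressions of all angles outside that base, these ray-continua are pairwise disjoint, and --- being strictly interior to $\ch(\g)$ --- each cuts $\ph^{-1}(p)$. By the remarks following the combinatorial definition of a well-slicing family of $\uc$, their impressions therefore form a well-slicing family of $\ph^{-1}(p)$, and Theorem~\ref{th:wc-non-deg} then forces $\ph(\ph^{-1}(p))$ not to be a point, contradicting $\ph(\ph^{-1}(p))=\{p\}$. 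The ``in particular'' follows as in Proposition~\ref{biacc}: were the finest model degenerate, $J_{\sim_P}$ would be a single $g$-fixed point $p$ with $\Phi^{-1}(p)=\uc$, and an admitted Siegel configuration would be generated by subsets of $\uc=\Phi^{-1}(p)$, which the first part forbids. The main work --- and the likely obstacle --- is checking that the $\psi$-transported data is genuinely a Siegel configuration for $\si_k$ (faithfulness of the transport, and that Siegel domains are carried to Siegel domains), together with the final transfer of pairwise disjointness of the pullback ray-continua and of the cutting property from the circle down to $\ph^{-1}(p)$.
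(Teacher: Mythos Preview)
Your approach has a genuine error at the step where you claim that the collection $\mathcal C$ of all-critical leaves on the boundary of the Siegel gap $W$ (together with their $\si_k$-pullbacks along that boundary) forms a well-slicing family of $\uc$. It does not. Every element $C\in\mathcal C$ is (the basis of) a leaf or finite gap attached to $\bd(W)$; the chord along which it meets $W$ cuts $\disk$ into two pieces, one of which contains $W$, and \emph{every other element of $\mathcal C$ lies in the closure of that piece}. Equivalently, if $\{a,b\}$ is the basis of the boundary leaf of $W$ that $C$ contributes, then one of the two arcs of $\uc\sm\{a,b\}$ is the complementary arc to $W'$ at that leaf and contains no points of $W'$, while all other elements of $\mathcal C$ have their bases in the other arc. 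Thus no $C_3\in\mathcal C$ separates $\uc$ between two other elements $C_1,C_2\in\mathcal C$, so the defining property of a well-slicing family fails. The density of the $\be$-images in the rotation circle is irrelevant here; $\be$ collapses these objects to \emph{points}, and a family of points of $\uc$ is never well-slicing.

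The paper avoids this by working directly with the Siegel gap $S$ of $\ol{\lam_\Hc}$ in the original circle (not transporting through your $\psi$) and building the ray-compacta from \emph{pairs} of angles on $\bd(S)$: for two image-angles $\al,\be$ of elements of $\Hc$ one takes the ray-compactum $\imp(\al)\cup\imp(\be)$ with associated angle set $\{\al,\be\}$. After establishing (Claims A--C) that $\imp(\A)$ is a continuum, that impressions of distinct elements of $\A$ are disjoint, and that such a pair ray-separates $\imp(\A)$, one chooses pairs patterned on the vertical collection $\mathcal C_{\uc}$, which \emph{does} yield a well-slicing family. Your route can be salvaged along the same lines --- pair up elements of your $\mathcal C$ via $\be$ according to the vertical collection --- but then you still owe the analogues of Claims A--C (in particular that two distinct pullback impressions are disjoint and that a chosen pair genuinely ray-separates $\ph^{-1}(p)$), which are the substantive topological steps; condition (2) of Definition~\ref{def-siegel} and invariance give you the disjointness of each $\imp(H_i')$ from impressions of angles outside $H_i'$, but not, by itself, the ray-separation of $\ph^{-1}(p)$ by a \emph{pair} of such impressions.
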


\begin{proof}
By way of contradiction let us assume that $P$ admits a Siegel configuration,
and the corresponding generating collection of sets of angles is $\Hc=\{H_1,
\dots, H_m\}$. Set $\imp(H_i)=T_i$. First we simply analyze the corollaries of
this assumption without assuming that sets from $\Hc$ are contained in a
periodic K-class.


We may assume that all sets $H_i$ have common leaves with an \emph{invariant}
Siegel domain $S$. By Lemma~\ref{good-gap} the map $\si^*|_{\bd(S)}$ is
semiconjugate with an irrational rotation of the circle. Then there are no
periodic leaves/points in $\bd(S)$ and by Lemma~\ref{lem:periodic_gap} every
leaf $\ell\subset \bd(S)$ is (pre)critical. By Lemma~\ref{no-crit-leaf} $\ell$
is not a limit leaf, hence $\ell$ belongs to an element $Q$ of the grand orbit
of $\Hc$. From part (4) of Definition~\ref{def-siegel} $Q\cap \bd(S)=\ell$.
Since grand orbits of sets $H_i$ are pairwise disjoint, all images of $\ell$
are two-sided limit points of $\bd(S)\cap \uc$. Observe that there might exist
chains of concatenated leaves in $\bd(S)$ (they may arise as a result of
pulling back a set $H_i$ through a critical gap on the boundary of $S$).
However by Lemma~\ref{good-gap} any maximal chain of leaves in $\bd(S)$
consists of no more than $N$ leaves with some uniform $N$. Points of $\bd(S)$
which are not contained in any leaf are angles whose impressions are also
continua. Let us denote by $\A$ the collection of elements of the grand orbit
of $\Hc$ non-disjoint from $\bd(S)$ as well as points in $\bd(S)$ which do not
belong to leaves. Then all elements of $\A$ have connected impressions.


Suppose now that $A, B\in \A$. Choose the arc $I\subset \bd(S)$ which contains
$A\cap \bd(S), B\cap \bd(S)$ and runs in a counterclockwise direction from
$A\cap \bd(S)$ to $B\cap \bd(S)$. Consider the union $T=T(A, B)$ of all
elements of $\A$ non-disjoint from $I$. Clearly, $T$ is connected.

\smallskip

\noindent \textbf{Claim A.} \emph{The set $\imp(T)$ is a
continuum.}

\smallskip

It follows from the upper semi-continuity of impressions that $\imp(T)$ is
closed. By way of contradiction suppose that $\imp(T)=X\cup Y$ where $X, Y$ are
disjoint non-empty closed sets. Since for every $Q\in \A$ such that $Q\subset
T$ we have that the set $\imp(Q)$ is a continuum, every such $Q$ has its
impression either in $X$ or in $Y$. Denote by $X'$ the union of all elements of
$\A$ contained in $T$ whose impressions are contained in $X$; then $X'$ is
well-defined and disjoint from the union $Y'$ of all elements of $\A$ contained
in $T$ whose impressions are contained in $Y$. Now, by the upper
semi-continuity of impressions the sets $X', Y'$ are closed (every limit set of
$X'$ still comes from $T$ and has its impression in $X$), and by the above they
are disjoint and non-empty. However $X'\cup Y'=T$ is connected, a
contradiction. This implies that $\imp(\A)$ is a continuum.

\smallskip

\noindent \textbf{Claim B.} \emph{Impressions of two distinct
elements $A, B$ of $\A$ do not meet. The continuum $\imp(\A)$
separates the plane.}

\smallskip

Indeed, suppose otherwise. Choose a set $H_1\in \Hc$. Then $H_1\cap \bd(S)$ is
a leaf. By Lemma~\ref{good-gap}, there exists a sequence $m_i$ such that
$\si^{m_i}(A)$ will approach an endpoint of $H_1\cap \bd(S)$ while
$\si^{m_i}(B)$ will approach a point $y\in S'$. Now, $y\nin H_1$ because $A$ is
distinct from $B$ and because the map $\si$ on $\bd(S)$ acts like an irrational
rotation. On the other hand, by the assumption $\imp(A)\cap \imp(B)\ne \0$,
hence by the upper semi-continuity of impressions $\imp(y)\cap \imp(H_1)\ne 0$,
a contradiction with the part (2) of Definition~\ref{def-siegel}. Hence
elements of $\A$ have pairwise disjoint impressions. It implies that $\imp(\A)$
separates the plane because otherwise by \cite{akis99, bfmot10} $\imp(\A)$ would
contain a fixed point, and then an element of $\A$ containing it and its image
would have non-disjoint impressions, a contradiction.


\smallskip

\noindent \textbf{Claim C.} \emph{The union of two impressions of distinct
angles - images of elements of $\Hc$ - ray-separates $\imp(\A)$.}

\smallskip

Consider $\{\al\}, \{\be\}\in \A, \al\ne \be$, both $\al$ and $\be$ images of
sets from $\Hc$ which are non-isolated from either side in $\bd(S)\cap \uc$ (we
can do this by what we showed in the second paragraph of the proof). We need to
show that if $Q=\imp(\al)\cup \imp(\be)$ then $\imp(\A)$ meets two distinct
components of $\ol{U_\iy}\sm \widetilde Q$ ($U_\iy$ is the basin of attraction
of infinity, $Q$ is a ray-compactum with the associated set of angles $\{\al,
\be\}$, and by $\widetilde Q$ we denote the union of $Q$ and rays $R_\al,
R_\be$, see Section 3 where this notation is introduced). Consider the union
$V$ of rays of all angles from $[\al, \be]$ and the union $W$ of rays of all
angles from $[\be, \al]$. Clearly, $V\cap W=R_\al\cup R_\be$ and $V\cup
W=U_\iy$. Also, it follows that $\ol{V}=V\cup \imp([\al, \be])$ and
$\ol{W}=W\cup \imp([\be, \al])$.

Let us show that $\ol{V}\cap \ol{W}=\widetilde Q$. It suffices to show that if
$T'=\imp([\al, \be])\sm Q$ and $T''=\imp([\be, \al])\sm Q$ then $T'\cap
T''=\0$. Observe that by Claim B and by the choice of $\al, \be$ we  have that
$\imp(\al)$ is disjoint from impressions of all angles not equal to $\al$, and
$\imp(\be)$ is disjoint from impressions of all angles not equal to $\be$.
Hence it suffices to show that if $\ga'\in (\al, \be)$ and $\ga''\in (\be,
\al)$ then $\imp(\ga')\cap \imp(\ga'')=\0$. By Claim B we may assume that at
least one of the angles $\ga', \ga''$ (say, $\ga'$) does not belong to an
element of $\A$. Then there exists a non-degenerate element $L$ of $\A$ such
that $L\cap \uc\subset (\al, \be)$ and $\ga'$ is contained in an arc $(\ta_1,
\ta_2)\subset (\al, \be)$ where $\ta_1, \ta_2\in L$. This implies that
$\imp(\ga')$ is contained in the union of rays $R_{\ta_1}, R_{\ta_2}$ and the
impression $\imp(L)$ of $L$. If $\ga''$ belongs to $H\in \A$, put $M=H$ and
$\ta_3=\ta_4=\ga''$. Otherwise, there exists a set $M\in \A$ such that $M\cap
\uc\subset (\be, \al)$ and $\ga''$ is contained in an arc $(\ta_3,
\ta_4)\subset (\be, \ga)$ where $\ta_3, \ta_4\in L$. Then $\imp(\ga'')$ is
contained in the union of rays $R_{\ta_3}, R_{\ta_4}$ and the impression
$\imp(M)$ of $M$. Since by Claim B $\imp(L)\cap \imp(M)=\0$, it follows that
$\imp(\ga')\cap \imp(\ga'')=\0$ as desired. Observe that $\ol{U_\iy}\sm
\widetilde Q=(\ol{V}\sm \widetilde Q)\cup (\ol{W}\sm \widetilde Q)$ where sets
$\ol{V}\sm \widetilde Q$ and $\ol{W}\sm \widetilde Q$ are open in $\ol{U_\iy}$ and
disjoint which proves the claim.

Let us now prove the theorem. Observe that by Claim A the set $\imp(A)$ is a
continuum. Denote by $\Z$ the family of impressions of singletons from $\A$
which are angles-images of elements of $\Hc$. By Lemma~\ref{good-gap} the map
$\psi$ semiconjugates $\si^*|_{\bd(S)}$ to an irrational rotation $\tau$ of
$\uc$. This map allows us to associate to elements of $\Z$ their $\psi$-images
which are angles in $\uc$ coming from a finite collection of orbits under
$\tau$. Choose pairs of angles from $\psi(\Z)$ so that $\uc$ with them is
homeomorphic to $\uc$ with the vertical collection of pairs $\mathcal C_{\uc}$.
This gives rise to the corresponding family of pairs of impression from $\Z$.
By Claim C and by the construction these pairs of impressions form a
well-slicing family of $\imp(\A)$. Therefore by Theorem~\ref{th:wc-non-deg}
$\ph(\imp(A))$ is not a point. On the other hand, by the construction
$\imp(\A)\subset \ph^{-1}(p)$, a contradiction.
\end{proof}

\subsection{The criterion}

First we deal with parattracting Fatou domains. This sufficient
condition for the non-collapse of a subset of $J_P$ corresponds to
case (1) of Theorem~\ref{neces}. Let us recall that by $R$ we denote
the set of all periodic repelling (parabolic) \emph{bi-accessible}
points and their preimages.

\begin{prop}\label{fatliv1}
  Suppose that $U$ is parattracting Fatou domain of $P$.  Then $\bd(U)$ is
  well-sliced in $J_P$ and hence is not collapsed under the finest map
  $\ph$. In particular, suppose that $p\in J_{\sim_P}$ is a periodic
  point.  Then $\ph^{-1}(p)$ cannot contain the boundary of a
  parattracting Fatou domain of $P$.
\end{prop}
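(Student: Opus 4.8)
The plan is to exhibit a well-slicing family for $\bd(U)$ by transporting the vertical collection through the combinatorial circle model of $\bd(U)$ attached to the parattracting domain $U$, and then to quote Theorem~\ref{th:wc-non-deg}; the ``in particular'' statement is then immediate.

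First I would record the combinatorial structure of $\bd(U)$. Let $k$ be the period of $U$ and $m=\deg(P^k|_U)$; since $U$ is parattracting (and not Siegel), $m\ge 2$. Using the B\"ottcher uniformization of $U_\iy$ together with the standard description of parabolic/attracting boundary dynamics --- repelling (pre)periodic points are dense in $\bd(U)$ by \cite{pz94}, cf. Kiwi's analysis in \cite{kiwi97} --- one obtains a closed set of angles $\Theta_U\subset\uc$ with
\[
\bigcup_{\theta\in\Theta_U}\acc(\theta)\subset \bd(U)\subset\bigcup_{\theta\in\Theta_U}\imp(\theta),
\]
so that $\bd(U)$ is a ray-continuum with $\Theta(\bd(U))=\Theta_U$, whose convex hull $G=\ch(\Theta_U)$ is a $\si_d$-periodic Fatou gap (possibly all of $\ol{\disk}$, namely when $J_P=\bd(U)$), on which $(\si_d^*)^{k}|_{\bd(G)}$ is monotonically semiconjugate --- by a cyclic-order preserving map $\psi$ collapsing the uniformly bounded chains of concatenated leaves of $\bd(G)$, as in the parattracting case of Lemma~\ref{good-gap} --- to $\si_m$ with $m>1$; moreover the rational angles of $\Theta_U$ are dense in $\Theta_U$ and each is a landing angle with landing point on $\bd(U)$ (Douady--Hubbard).

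Next I would transport a well-slicing family. Using density of rational angles of $\Theta_U$ and the semiconjugacy $\psi$, I would choose a well-slicing family $\mathcal C'$ of $\uc$ consisting of pairs $\{\theta_\alpha,\theta'_\alpha\}$ of rational (hence landing) angles of $\Theta_U$, arranged so that the sets $\psi(\{\theta_\alpha,\theta'_\alpha\})$ form the vertical collection on the model circle; such $\mathcal C'$ exists by the same kind of pruning used in Propositions~\ref{biacc} and \ref{siegel} (Lemma~\ref{fincut}). Put $C'_\alpha=\acc(\theta_\alpha)\cup\acc(\theta'_\alpha)$ (a two-point set of landing points on $\bd(U)$) with $\Theta(C'_\alpha)=\{\theta_\alpha,\theta'_\alpha\}$; after discarding finitely many indices we may assume the $C'_\alpha$ are pairwise disjoint ray-compacta in $J_P$. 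Now each $C'_\alpha$ ray-separates $\bd(U)$: this is the analogue, for the pair $(\bd(U),\bd(G))$, of Claim~C in the proof of Proposition~\ref{siegel} --- the tails $R_{\theta_\alpha},R_{\theta'_\alpha}$ cut $\ol{U_\iy}$ into two pieces, and since $\{\theta_\alpha,\theta'_\alpha\}$ separates $\uc$ while $m>1$ forces $\bd(G)$ to be non-degenerate with $\Theta_U$ uncountable in each complementary arc, the impressions of angles of $\Theta_U$ in the two arcs land on the two sides, so $\bd(U)$ has points on both. The ``middle element'' condition for $\{C'_\alpha\}$ transfers from that of $\mathcal C'$ on $\uc$ through Lemma~\ref{lem:middle_separates}. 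Hence $\{C'_\alpha\}$ is a well-slicing family for $\bd(U)$, and by Theorem~\ref{th:wc-non-deg} the set $\ph(\bd(U))$ is not a point, i.e., $\bd(U)$ is well-sliced and not collapsed by $\ph$. Finally, if $p\in J_{\sim_P}$ were periodic with $\bd(U)\subset\ph^{-1}(p)$, then $\ph(\bd(U))=\{p\}$ would be a single point, a contradiction.

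The main obstacle is the first step: verifying that $\bd(U)$, which may fail to be locally connected when $P$ has a Cremer or Siegel point elsewhere, is faithfully a ray-continuum sitting over a parattracting Fatou gap $G$ of $\ol{\lam_{rat}}$ with $(\si_d^*)^k|_{\bd(G)}$ semiconjugate to $\si_m$, $m>1$ (including the density of landing rational angles in $\Theta_U$). Once this ``combinatorial circle with an $m$-to-$1$ model map, $m>1$'' picture is in place, the rest is routine given the transport-of-well-slicing machinery developed earlier (the discussion following the definition of a well-slicing family of $\uc$, Lemmas~\ref{lem:middle_separates}--\ref{lem:no_finite}, Lemma~\ref{fincut}, and Theorem~\ref{th:wc-non-deg}), exactly in the style of Propositions~\ref{biacc} and \ref{siegel}.
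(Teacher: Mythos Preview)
Your approach is essentially correct but substantially more elaborate than the paper's. The paper avoids the external Fatou-gap machinery entirely: instead of building $\Theta_U$, the gap $G=\ch(\Theta_U)$, and the semiconjugacy $\psi$ from Lemma~\ref{good-gap}, it simply quotes \cite{pz94} to obtain a dense set $A\subset\bd(U)$ of repelling/parabolic (pre)periodic points that are accessible both from within $U$ and from $U_\iy$, and then parameterizes $A$ by the \emph{internal} Riemann map of $U$. Density of $A$ in $\bd(U)$ gives density of the corresponding internal angles in $\uc$, so one can directly pick pairs of points of $A$ whose internal angles mimic the vertical collection $\mathcal C_{\uc}$; accessibility from both sides makes each such pair a ray-compactum that ray-separates $\bd(U)$, and Theorem~\ref{th:wc-non-deg} finishes. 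In particular, the ``main obstacle'' you flag --- verifying that $\bd(U)$ sits over a parattracting Fatou gap of $\ol{\lam_{rat}}$ with the right semiconjugacy --- simply does not arise in the paper's argument, and neither Lemma~\ref{good-gap} nor Lemma~\ref{fincut} is invoked. Your route has the advantage of being uniform with the proofs of Propositions~\ref{biacc} and \ref{siegel}, but at the cost of proving more than is needed; the paper's internal-parameterization trick is the shortcut.
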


\begin{proof}
  By \cite{pz94}, $R \cap \bd(U)=A$ is dense in $\bd(U)\subset X$ and each
  point of $A$ is accessible from within and from without $U$. This implies
  that any pair of points of $A$ ray-separates $\bd(U)$. Since $A$ consists of
  points accessible from within $U$ we can use the canonic Riemann map for $U$
  and parameterize points of $A$ by the corresponding angles; denote the
  corresponding set of angles by $\A$. Since all points of $A$ are accessible
  from outside $U$ and $A$ is dense in $\bd(U)$, it follows that $\A$ is dense
  in $\uc$. Since $R$ is countable, so is $\A$, and it is easy to see that we
  can choose pairwise disjoint pairs of angles from $A$ so that $\uc$ with this
  collection of pairs is homeomorphic to $\uc$ with the vertical collection of
  pairs $\mathcal C_{\uc}$ defined in the end of Section~\ref{model}. Then the
  corresponding to these pairs of angles pairs of points from $A$ form a
  well-slicing family of $\bd(U)$ and by Theorem~\ref{th:wc-non-deg} $\bd(U)$
  is not collapsed under the finest $\ph$ as desired.
\end{proof}

We are ready to state the main result of this section which gives a criterion
of the finest model not be degenerate. It lists three conditions, and for the
finest model to be non-degenerate  it is necessary and sufficient that at least
one of them must be satisfied. In a descriptive form it was given in
Section~\ref{intro}.

\begin{thm}\label{csgap} The finest model of the Julia set of
a polynomial $P$ is not degenerate if and only if at least one of the following
properties is satisfied.

\begin{enumerate}

\item The filled-in Julia set $K_P$ contains a parattracting Fatou
  domain.

\item The set of all repelling bi-accessible periodic points is infinite.

\item The polynomial $P$ admits a Siegel configuration.

\end{enumerate}

\end{thm}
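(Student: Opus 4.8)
The plan is to prove Theorem~\ref{csgap} by splitting into the two implications and assembling the machinery developed throughout the paper. The \emph{sufficiency} direction (any one of (1), (2), (3) implies non-degeneracy) is essentially already in hand: condition (1) is handled by Proposition~\ref{fatliv1}, which shows that the boundary of a parattracting Fatou domain is well-sliced in $J_P$ and hence not collapsed by $\ph$; condition (2) is handled by Proposition~\ref{biacc}.(2); condition (3) is handled by Proposition~\ref{siegel}. So for sufficiency I would simply cite these three results in turn, noting that in each case $\ph(J_P)$ contains a non-degenerate image, hence the finest model is non-degenerate.

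For the \emph{necessity} direction, the plan is to argue contrapositively: suppose the finest model is non-degenerate. By Theorem~\ref{main2} there is then a non-degenerate finest lamination $\sim_P$ with $g|_{\ph(J_P)}$ conjugate to $f_{\sim_P}|_{J_{\sim_P}}$, and $J_{\sim_P}$ is a non-degenerate locally connected continuum. Now apply Theorem~\ref{neces} to $\sim=\sim_P$: one of the three structural phenomena must occur in $J_{\sim_P}$, namely (1$'$) $J_{\sim_P}$ contains the boundary of a parattracting Fatou domain, or (2$'$) there are infinitely many periodic $\sim_P$-classes each with more than one angle, or (3$'$) there is a finite collection of all-critical $\sim_P$-classes related to a cycle of Siegel domains as in Theorem~\ref{neces}.(3). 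The task is then to \emph{lift} each of these laminational phenomena back through the semiconjugacy $\Phi$ (equivalently through $\ph$) to the corresponding dynamical phenomenon in $J_P$, thereby producing one of (1), (2), (3) of the present theorem. For (1$'$): a parattracting Fatou domain of $\sim_P$ pulls back under $\ph$ to a Fatou domain of $P$ on which the return map has topological degree $>1$, which forces a parattracting (attracting or parabolic) Fatou domain of $P$ by the standard correspondence, giving (1). For (2$'$): infinitely many periodic $\sim_P$-classes with $\ge 2$ angles mean infinitely many periodic cutpoints of $J_{\sim_P}$; pulling back via $\Phi$, each corresponds to a periodic angle-identification realized by rational external rays of $J_P$ landing together, i.e. infinitely many bi-accessible repelling periodic points, giving (2) — here one must be slightly careful that the bi-accessibility survives, which is exactly the content worked out in the preparatory Lemmas~\ref{lcpt}, \ref{lcpt1} and Proposition~\ref{biacc}. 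For (3$'$): the all-critical $\sim_P$-classes sitting on the boundaries of a Siegel cycle pull back via $\Phi$ to finite sets of angles $H_i$ whose impressions $\imp(H_i)$ form a wandering collection, and the geo-lamination $\ol{\lam_{\Hc}}$ reproduces the Siegel cycle — this is precisely Definition~\ref{def-siegel}, so we conclude $P$ admits a Siegel configuration, giving (3).

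The main obstacle, I expect, is the lifting step in case (3$'$): one must verify that the impressions $\imp(H_i)$ are genuinely wandering continua, pairwise disjoint from impressions of other angles, and that they meet the Siegel domain boundaries in single leaves with pairwise disjoint orbits — i.e. that \emph{all four} clauses of Definition~\ref{def-siegel} hold. The delicate point is that the existence of a Siegel domain in $J_{\sim_P}$ with all-critical cutpoints translates into a wandering non-(pre)critical structure downstairs: the cutpoints of $J_{\sim_P}$ on the Siegel boundaries are non-(pre)periodic and eventually all-critical (as shown inside the proof of Theorem~\ref{neces}), so their $\ph$-preimages are wandering continua in $J_P$; one must then invoke Theorem~\ref{nowanco} (no wandering continua in $J_{\sim_P}$) together with the finiteness of the relevant $\sim_P$-classes (Theorem~\ref{kiwi-wan}) to control the combinatorics, and use that the $\ph$-preimages of the all-critical classes, together with their forward orbits and all pullbacks, generate exactly the geo-lamination $\ol{\lam_{\Hc}}$ whose Siegel cycle matches that of $\sim_P$. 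Once the wandering collection and its generated geo-lamination are in place, the identification with a Siegel configuration is definitional. I would therefore present the necessity proof as: invoke Theorem~\ref{main2}, invoke Theorem~\ref{neces}, then handle the three cases, devoting the bulk of the argument to case (3$'$) and citing the earlier preparatory lemmas for the bi-accessibility bookkeeping in case (2$'$).
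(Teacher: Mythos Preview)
Your proposal is correct and follows essentially the same route as the paper: sufficiency via Propositions~\ref{fatliv1}, \ref{biacc}, \ref{siegel}, and necessity by applying Theorem~\ref{neces} to the non-degenerate $\sim_P$ and lifting each of the three cases back through $\ph$ (resp.\ $\Phi$). Two small calibrations: in case~(1$'$) the paper does not argue via ``return degree $>1$'' but instead uses Lemma~\ref{lcpt} on a dense set of $g$-periodic boundary points with finite $\Phi$-preimages to exhibit genuine $P$-periodic points in $\bd(U)$, from which the parattracting nature follows; and in case~(2$'$) the relevant lifting tool is Lemma~\ref{lcpt} alone (showing each finite periodic $\sim_P$-class has point impression), not Proposition~\ref{biacc}, which goes in the other direction.
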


\begin{proof} First we show that the fact that at least one of
properties (1) - (3) holds is necessary for the non-degeneracy of
$J_{\sim_P}=\ph(J_P)$. In other words, we assume that $J_{\sim_P}$ is non-degenerate and
deduce the appropriate properties of $J_P$ using Theorem~\ref{neces}. Consider
the cases (1) - (3) one by one.

(1) Suppose that, according to Theorem~\ref{neces}.(1), $J_{\sim_P}$
contains a simple closed curve $S$ which is the boundary of a parattracting Fatou
domain.  Then $\ph^{-1}(S)$ is a continuum which
separates the plane and encloses an open set $U$ complementary to
$J_P$. Moreover, for a dense in $S$ subset of $g$-periodic points
their $\Phi$-preimages are finite (there are no more than finitely
many periodic points of $\ph(J_P)$ whose $\Phi$-preimages are
infinite). By Lemma~\ref{lcpt} this implies that full $\ph$-preimages
of these $g$-periodic points are $P$-periodic points at which $J_P$ is
locally connected. Thus, $U$ is a Fatou domain of $P$ whose boundary
contains periodic points. This implies that $U$ is a parattracting
domain, and case (1) holds.

(2) Assume now that $J_{\sim_P}$ does not contain simple closed curves, that is, that
$J_{\sim_P}$ is a dendrite. Consider the lamination $\sim_P$. Since $J_{\sim_P}$ is a
dendrite, $\sim_P$ does not have Fatou domains. Hence by Theorem~\ref{neces}
there are infinitely many periodic $\sim_P$-classes each of which consists of
more than one point. Moreover, we may assume that they are all finite (because
there can only be finitely many infinite periodic classes of a lamination).
Finally, by the construction the impression of each such class is disjoint from
impressions of all angles not belonging to the class. Hence by Lemma~\ref{lcpt}
all their impressions are points. We conclude that there are infinitely many
repelling bi-accessible periodic points as desired and case (2) holds.

(3) By Lemma~\ref{two-dyns} we may now assume that $\ph(J_P)$ contains the
boundary $S$ of an invariant Siegel domain. By Theorem~\ref{neces}.(3), there
exists a finite collection of all-critical $\sim_P$-classes $\Hc=\{H_1, \dots,
H_m\}$ with pairwise disjoint grand orbits whose images $x_1, \dots, x_m$ under
the quotient map $\Phi:\uc\to \ph(J_P)=J_{\sim_P}$ form the set of all-critical
points in $S$ so that \emph{all} cutpoints of $\ph(J_P)$ in $S$ belong to the
grand orbits of these all-critical points. Observe that by the construction for
every $i$ we have that $\imp(H_i)=\ph^{-1}(x_i)$ is a continuum.

We want to show that this implies that $P$ admits a Siegel configuration. As
the collection of sets of angles needed to define a Siegel configuration we
take exactly $\Hc$. Moreover, as in the definition of a Siegel configuration we
take the grand orbit of $\Hc$ then the corresponding sets of angles to form the
geometric prelamination $\lam_{\Hc}$. Observe that this will bring back
\emph{all} the leaves and gaps from the set $\Phi^{-1}(S)$ because all leaves
and gaps in this set correspond to cutpoints of $J_{\sim_P}$ in $S$ and, by
Theorem~\ref{neces}.(3), come from the grand orbits of all-critical points from
$S$. Finally, by the construction the impressions $\imp(H_i)$ are disjoint from
impressions of all angles not belonging to $H_i$. All this implies that $P$
admits a Siegel configuration and completes the consideration of the case (3).

Now we consider the sufficiency of conditions (1) - (3). If (1) holds, then the
finest map is not degenerate by Proposition~\ref{fatliv1}. If (2) holds, then
the finest map is not degenerate by Proposition~\ref{biacc}. If (3) holds, then
the finest map is not degenerate by Proposition~\ref{siegel}. This completes
the proof.
\end{proof}

By Theorem~\ref{csgap} the finest model of a polynomial Julia set is degenerate
if and only if there are no parattracting Fatou domains, the set of all
repelling bi-accessible periodic points is finite, and there is no Siegel
configuration. As an application let us first prove a sufficient condition for
the finest model to be non-degenerate. Recall that the valence of a ray-continuum $K$ is
the cardinality of the set of all rays whose principal sets are contained in $K$.

\begin{thm}\label{suff1}
Suppose that $K'$ is a wandering ray-continuum such that the valence
of $P^n(K')$ is greater than $1$ for all $n\ge 0$. Then there are
infinitely many repelling bi-accessible periodic points of $J$ and
the finest model is non-degenerate. In particular, these conclusions
hold if there exists a bi-accessible point of $J$ which is
non-(pre)periodic and non-(pre)critical.
\end{thm}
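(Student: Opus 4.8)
The plan is to reduce to case~(2) of Theorem~\ref{csgap}: I will produce infinitely many repelling bi-accessible periodic points of $P$, which already forces the finest model to be non-degenerate by Proposition~\ref{biacc}.(2). The wandering ray-continuum $K'$ is fed into the construction of Section~\ref{lamprel}. Since $K'$ is wandering, only finitely many of its forward images contain critical points, so after replacing $K'$ by $P^N(K')$ for large $N$ we may assume no forward image meets a critical point; the ``consistent grand orbit'' construction preceding Definition~\ref{def-siegel} (cf. Theorem~4.2 of \cite{bo08a}) then replaces $K'$ by an enlarged wandering ray-continuum with the same eventual forward images, hence still of valence $>1$, and now \emph{non-(pre)critical}. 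Set $\Kc=\{K'\}$; by Theorem~\ref{kiwi-wan} the associated gap $\ch(\Theta(K'))$ is finite, its forward orbit $\A$ consists of pairwise unlinked non-degenerate finite gaps/leaves (as $K'$ wanders), and the geo-lamination $\lam=\ol{\lam_\Kc}$ generated by $\Kc$ has no critical leaves (a critical cell cannot come from the forward orbit of a non-(pre)critical cell, nor from its preimages).

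Next I apply Theorem~\ref{periodic-prel} to the generating family $\A$: the finest lamination $\approx_\A$ respecting $\lam$ is non-degenerate, all of its classes are finite, and it has \emph{no Siegel domains}. I then apply Theorem~\ref{neces} to the lamination $\approx_\A$. Property~(3) cannot hold, since it requires a nonempty cycle of Siegel domains; so either~(1) or~(2) holds. In case~(2) we directly obtain infinitely many periodic $\approx_\A$-classes each with at least two angles. In case~(1), $J_{\approx_\A}$ contains the boundary $S$ of a parattracting Fatou domain, and on $S$ the repelling periodic cutpoints --- equivalently, periodic $\approx_\A$-classes with at least two angles --- are dense (the analogue of \cite{pz94} for topological polynomials, cf. the proof of Proposition~\ref{fatliv1}); so again we get infinitely many periodic $\approx_\A$-classes $\g_1,\g_2,\dots$, each with $|\g_k|\ge 2$ and, being $\sigma_d$-periodic, consisting of rational angles.

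It remains to push these classes back into $J_P$, and this is the step I expect to be the main obstacle. Using the description of $\approx_\A$-classes as bases of the ``super gaps'' constructed in the proof of Theorem~\ref{periodic-prel}, all but finitely many $\g_k$ have the property that every leaf on the boundary of $\ch(\g_k)$ is a limit of leaves of $\lam_\Kc$ --- the exceptions being the super gaps adjacent to the (finitely many) periodic Fatou gaps of $\lam$. For such $\g_k$ I would argue as in Lemma~\ref{lcpt1}: the approximating leaves of $\lam_\Kc$ are edges of angle-hulls of ray-continua in the grand orbit of $K'$, and upper semi-continuity of impressions should yield that $K_k:=\imp(\g_k)$ is a continuum disjoint from the impression of every angle outside $\g_k$ (the latter being cut off from $K_k$ by the tails associated with nearby $\lam_\Kc$-leaves). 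The delicate point, which does not arise for the rational lamination, is that a leaf of $\lam_\Kc$ joins two angles of a possibly large ray-continuum rather than two angles landing at a common point, so the connectedness of $\imp(\g_k)$ must be extracted from the connectedness of the approximating ray-continua together with the fact that $\g_k$ is a single $\approx_\A$-class; I expect this to require the finer analysis of accumulation of ray-continua near a periodic leaf.

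Granting this, Lemma~\ref{lcpt} forces $K_k$ to be a single repelling or parabolic periodic point; since $|\g_k|\ge 2$ it is bi-accessible, and since there are only finitely many parabolic cycles, infinitely many of the $K_k$ are repelling. Distinct $\g_k$ are disjoint sets of angles while each bi-accessible periodic point has a finite set of landing angles, so the $K_k$ are infinitely many distinct repelling bi-accessible periodic points of $P$; non-degeneracy of the finest model then follows from Proposition~\ref{biacc}.(2), which completes the proof of the first assertion. For the ``in particular'' statement, a bi-accessible, non-(pre)periodic, non-(pre)critical point $p$ yields $K'=\{p\}$: it is wandering because $p$ is non-(pre)periodic, and the two (or more) rays landing at $p$ keep distinct angles under every $\sigma_d^n$ because $p$ is non-(pre)critical, so $P^n(K')$ has valence $>1$ for all $n$ and the main statement applies.
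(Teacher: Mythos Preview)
Your overall strategy --- build the geo-lamination $\ol{\lam_\Kc}$ from the wandering ray-continuum, invoke Theorem~\ref{periodic-prel} to get a non-degenerate lamination $\approx_\A$ with finite classes and no Siegel domains, locate infinitely many finite periodic $\approx_\A$-classes of cardinality $\ge 2$, and then push them down to $J_P$ via Lemma~\ref{lcpt} --- is exactly the paper's strategy. The difference, and the genuine gap, is in how you manufacture those infinitely many periodic classes.

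Your appeal to Theorem~\ref{neces} splits into cases (1) and (2), and your treatment of case~(1) fails. You claim that on the boundary $S$ of a parattracting Fatou domain of $J_{\approx_\A}$ the periodic \emph{cutpoints} (equivalently, periodic $\approx_\A$-classes with at least two angles) are dense, citing an ``analogue of \cite{pz94}''. That is not true here and is not what \cite{pz94} gives. A periodic point of $S$ corresponds either to a periodic boundary leaf of the Fatou gap $U$ or to a periodic angle in the Cantor part $U'_c$ of $U'$. The latter are singleton $\approx_\A$-classes (such an angle is not the endpoint of any leaf), hence not cutpoints. And by Lemma~\ref{lem:periodic_gap} there are only \emph{finitely many} periodic leaves in $\bd(U)$. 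So $S$ carries only finitely many periodic cutpoints of $J_{\approx_\A}$, and your case~(1) produces at most finitely many of the $\g_k$ you need --- precisely the ones your step~7 then discards as adjacent to Fatou gaps. Even in case~(2), Theorem~\ref{neces} does not tell you that the infinitely many periodic classes it supplies avoid Fatou boundaries, so you cannot bypass the issue by landing in case~(2).

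The paper handles this by an extra step you are missing: it first shows that the closures of the Fatou domains of $\approx_\A$ are pairwise disjoint (each boundary leaf is a limit of $\lam_\Kc$-leaves from the outside, so it is not shared with another gap), then collapses every Fatou-domain boundary to a point to obtain a new lamination $\approx'$. By the Sierpi\'nski theorem the quotient $J_{\approx'}$ is a non-degenerate \emph{dendrite}, and Theorem~\ref{fixpts} then yields infinitely many periodic cutpoints of $J_{\approx'}$. Only finitely many of those come from collapsed Fatou boundaries; the rest are genuine finite periodic $\approx_\A$-classes $\h$ disjoint from all Fatou gaps, and for these every boundary leaf of $\ch(\h)$ is necessarily a limit of leaves of $\lam_\Kc$. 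From there your steps~8--10 go through.

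On your acknowledged ``delicate point'' about the connectedness of $\imp(\g_k)$: the paper's resolution is short. If $\ell''$ is a boundary leaf of $\ch(\h)$, it is the limit of a sequence of cells of $\lam_\Kc$; the corresponding ray-continua in the grand orbit of $K$ are continua, and any Hausdorff limit of them is a continuum contained (by upper semi-continuity of impressions) in $\imp(\ell'')$. Hence each $\imp(\ell'')$ is a continuum, and $\imp(\h)$, being the finite union of these over the boundary leaves of $\ch(\h)$ (which share endpoints), is a continuum as well. Disjointness from impressions of angles outside $\h$ follows exactly as you indicate, by cutting them off with nearby $\lam_\Kc$-elements.
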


\begin{proof}
As explained in Subsection 5.3, the construction and the arguments similar to
those from Theorem 4.2 \cite{bo08a} imply that there is a (possibly) bigger
than $K'$ but still wandering ray-continuum $K$ (with the same eventual images
as $K'$) whose grand orbit $\Ga$ (i.e. the collection of pullbacks of its
forward images) is well-defined. Moreover, to each element $Q$ of $\Ga$ we can
associate the set $\Ta(Q)=H_Q$ of all angles whose principal sets are contained
in $Q$ (by Theorem~\ref{kiwi-wan} the set $H_Q$ is finite). Then all elements
of $\Ga$ are non-separating and wandering ray-continua. Moreover, convex hulls
of sets $H_Q, Q\in \Ga$ form a prelamination which we denote $\lam_K$. By the
properties

By Theorem~\ref{periodic-prel} we can consider its closure
$\ol{\lam_K}$ which is the geo-lamination generated by $K$ and then
the lamination $\approx_K$ generated by $K$. By
Theorem~\ref{periodic-prel} $\approx_K=\approx$ has no Siegel
domains. However it may have several parattracting Fatou domains.

Let us show that closures of Fatou domains of $\approx$ are pairwise disjoint.
Let $U$ be a Fatou domain of $\ol{\lam_K}$. By the construction from
Theorem~\ref{periodic-prel}, $U$ remains a Fatou domain of $\approx$. Let us
study $\bd(U)$ in detail. By Theorem~\ref{periodic-prel} in the geo-lamination
$\ol{\lam_K}$ and in the refined geo-lamination $\lam_{\approx}$ there are no
critical leaves. Therefore by Lemma~\ref{lem:periodic_gap} all leaves in
$\bd(U)$ are (pre)periodic. Thus, they do not come from $\lam_K$ and must be
the limit leaves of $\lam_K$. Choose a geometric leaf $\ell$ in $\bd(U)$. By
Theorem~\ref{periodic-prel} elements of $\lam_K$ cannot be contained in
$\ol{U}$, hence they approach $\ell$ from outside of $U$. Moreover, we may
assume that these elements of $\lam_K$ are contained in convex hulls of
distinct $\approx$-classes. Therefore $\ell$ cannot lie on the boundary of any
other gap of $\lam_{\approx}$ or on the boundary of another Fatou domain of
$\ol{\lam_K}$ (or, equivalently, of $\approx$), as desired.

Consider a new lamination $\approx'_K=\approx'$ obtained by identifying the
boundary of each Fatou domain of $\approx_K$ and show that $J_{\approx'}$ is
a non-degenerate dendrite. It is easy to see that $\approx'$ is a well-defined
lamination. Then the corresponding topological Julia set $J_{\approx'}$ can be
obtained from $J_\approx$ by collapsing closures of all its Fatou domains into
points. Clearly, there are no more than countably many Fatou domains of
$\approx$, their boundaries are continua, and these continua are pairwise
disjoint by the previous paragraph. Then by the Sierpi\'nski Theorem
\cite{sie18} the resulting (after this collapse) quotient space $J_{\approx'}$
is not degenerate. Hence the lamination $\approx'$ is not degenerate. Moreover,
since it no longer has Fatou domains, $J_{\approx'}$ is a dendrite.

By Theorem 7.2.7 of \cite{bfmot10} any dendritic topological Julia
set has infinitely many periodic cutpoints. Hence there are
infinitely many periodic cutpoints in $J_{\approx'}$. We now want to
show that this implies that there are infinitely many periodic
cutpoints of $J$. Let $\h$ be a finite periodic class of $\approx'$
which does not belong to the boundary of a Fatou domain of
$\approx$. Then geometric leaves from $\bd(\ch(\h))$ cannot come
from elements of $\lam_K$ (who are all wandering). Let us show that
all geometric leaves on the boundary of $\h$ are limit leaves of
$\lam_K$. Indeed, suppose that $\ell'$ is a boundary geometric leaf
of $\ch(\h)$ which is not such a limit leaf. Then there is a
geometric gap $\g'$ of $\ol{\lam_K}$ on the side of $\ell'$ opposite
to $\g$. By the choice of $\h$, the gap $\g'$ cannot be a Fatou
domain of $\ol{\lam_K}$ which implies that it has a finite basis
which should have been united with $\h$ into one $\approx$-class, a
contradiction. Thus, the set $\imp(\h)$ is disjoint from impressions
of all angles not in $\h$ because these other impressions are cut
off $\imp(\h)$ by the ray-continua from the grand orbit of $K$
corresponding to the appropriate elements of $\lam_K$.

Consider now the set $\imp(\h)$ and show that $\imp(\h)$ is a continuum itself.
If a geometric leaf $\ell''$ belongs to the boundary of $\ch(\h)$ then by the
previous paragraph $\ell''$ is the limit of a sequence of elements of
$\lam_\Ta$. Taking the Hausdorff limit of a subsequence we see that the
corresponding continua on the plane converge to a continuum. By the
semi-continuity of impressions this continuum is contained in $\imp(\ell'')$.
Hence $\imp(\ell'')$ is a continuum itself. Since the union of impressions of
leaves $\ell''$ from the boundary of $\ch(\h)$ is in fact $\imp(\h)$, the set
$\imp(\h)$ is a continuum. By Lemma~\ref{lcpt} $\imp(\h)$ is a repelling or
parabolic periodic point, and since $\h$ is a gap or leaf, it is a repelling or
parabolic point of $J$ at which at least two rays land, as desired. By
Theorem~\ref{csgap} this implies that the finest model is non-degenerate.
Clearly, the case when there exists a non-(pre)periodic bi-accessible point of
$J$ is a particular case of the above. This completes the proof.
\end{proof}

Let us show how one can deduce Kiwi's results \cite{kiwi97} from our results.
Say that two angles $\al, \be$ are \emph{K-equivalent} if there exists a finite
collection of angles $\al_0=\al, \dots, \al_k=\be$ such that $\imp(\al_i\cap
\imp(\al_{i+1})\ne \0$ for each $i=0, \dots, k-1$. The notion (but not the
terminology!) is due to Jan Kiwi \cite{kiwi97} and is instrumental in his
construction of locally connected models for connected Julia sets of
polynomials without CS-points. Clearly, if two angles are K-equivalent, they
must belong to the same K-class. Suppose that $P$ does not have CS-points. Let
us show first that the finest model is non-degenerate. Indeed, by the
assumption $P$ has no Siegel domains. If $P$ has a parattracting domain then by
Theorem~\ref{csgap} the finest model is non-degenerate. It remains to consider
the case when $P$ has no Fatou domains (i.e., $J_P$ is non-separating) and no
CS-points. Then by \cite{gm93,kiwi00} $P$ has infinitely many repelling
periodic bi-accessible points. Hence in this case the finest model is
non-degenerate either.

Now, take any point $p$ of $P$, consider the corresponding K-class
$\Phi^{-1}(p)$ and show that it is finite. Indeed, suppose first that
$p$ is non-(pre)periodic. Then by Theorem~\ref{kiwi-wan} the
corresponding K-class is finite. Now suppose that $p$ is (pre)periodic; we
may assume that it is periodic of period $1$. Consider the set
$Q=\ph^{-1}(\ph(p))$ and show that it is non-separating.  Indeed,
otherwise there is a parattracting domain $U$ contained in the
topological hull $\tl(Q)$ (since $P$ does not have CS-points it cannot
be a Siegel domain). However by Lemma~\ref{fatliv1} the boundary
$\bd(U)$ is not collapsed under $\ph$, a contradiction. Hence $Q$ is
non-separating. Let us show that then it must contain infinitely many
repelling periodic bi-accessible points. Indeed, suppose
otherwise. Then replacing $P$ by an appropriate power we may assume
that all periodic points in $Q$ and all the rays landing at them are
invariant. By Theorem~\ref{degimp} this implies, that $Q$ is a point,
a contradiction to $\Phi^{-1}(p)$ being infinite by the
assumption (at any repelling periodic point only finitely many
rays land). So, if $P$ has no CS-points then there are no infinite
K-classes which implies that K-equivalence in fact coincides with the
lamination $\sim_P$ and thus produces the finest locally connected
model of $J_P$.

Let us compare our approach and results with those of \cite{kiwi97}. Kiwi uses
direct arguments to construct the finest model for polynomials without
CS-points. He also relies more upon combinatorial and related to symbolic
dynamics arguments. Our approach, based upon continuum theory, is different. It
allows us to show that Kiwi's locally connected model of a connected Julia set
without CS-points is actually the \emph{finest locally connected model of
$J_P$}, the finest from the purely topological point of view. It also allows us
to extend Kiwi's results \cite{kiwi97} onto \emph{all} polynomials with
connected Julia sets. However we only tackle the case of connected Julia sets
while in \cite{kiwi97} disconnected Julia sets are also considered.

To conclude the paper we want to specify K-equivalence a little more. Namely,
in the next theorem we obtain additional information about the way impressions
of angles from finite K-classes can intersect. The theorem holds regardless of
whether a polynomial has CS-points or not. However in the case when $P$ has no
CS-points it applies to \emph{all} K-classes.

\begin{thm}\label{ptn} Suppose that $A=\{\al_1, \dots, \al_n\}$ is a
finite K-class. Then impressions of angles of $A$ which are adjacent on the circle meet.
Moreover, any subset of $A$ in which \emph{only} adjacent angles have meeting
impressions consists of no more than 3 elements.
\end{thm}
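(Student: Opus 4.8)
The plan is to transport everything through the finest map $\ph$ (extended to $\hc\to\hc$) and argue in the locally connected model. Write $p=\Phi(\al_1)=\dots=\Phi(\al_n)$ and $K=\ph^{-1}(p)$. Since $\Phi(\theta)=\ph(\imp(\theta))$ for every angle and the elements of $\mathcal D_Q$ containing impressions are pairwise disjoint unions of impressions, the $K$-set $K$ equals $\bigcup_{\al\in A}\imp(\al)$, we have $\imp(\theta)\subset K\iff\theta\in A$ (and $\imp(\theta)\cap K=\0$ otherwise), and $K$ is a continuum (a fiber of the monotone $\ph$). Enumerate $A$ in the circular order on $\uc$, indices mod $n$. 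Because $J_{\sim_P}$ is locally connected, all rays $R'_{\al_1},\dots,R'_{\al_n}$ of $\ph(J_P)$ land at $p$, so $R'_{\al_1}\cup\dots\cup R'_{\al_n}\cup\{\iy,p\}$ is a ``theta--graph'' cutting $\hc$ into $n$ Jordan disks $D_1,\dots,D_n$, with $\partial D_i=R'_{\al_i}\cup R'_{\al_{i+1}}\cup\{\iy,p\}$. Pulling back, the rays $R_{\al_i}$ cut $U_\iy$ into sectors $S_1,\dots,S_n$; one checks $\ol{S_i}\cap J_P=\imp^+(\al_i)\cup\imp((\al_i,\al_{i+1}))\cup\imp^-(\al_{i+1})$ (closure of a Carath\'eodory sector), $\ol{S_i}\cap U_\iy=S_i\cup R_{\al_i}\cup R_{\al_{i+1}}$, and $\ph(\ol{S_i})=\ol{D_i}$; in particular the impressions $\imp(\al_j)$ with $j\ne i,i+1$ all lie in $\ol{S_i^c}$, the closure of the complementary sector.

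For the first assertion I would argue by contradiction. For $n=2$ it is immediate, $K=\imp(\al_1)\cup\imp(\al_2)$ being connected. For $n\ge 3$, suppose $\imp(\al_i)\cap\imp(\al_{i+1})=\0$. The two wings $\imp^+(\al_i)$ and $\imp^-(\al_{i+1})$ are the parts of $\imp(\al_i)$, $\imp(\al_{i+1})$ facing the common channel $S_i$; using disjointness and upper semicontinuity of impressions, together with the fact that $\ph(\ol{S_i})=\ol{D_i}$ is a Jordan disk carrying only the single point $p$ of $J_{\sim_P}$ on the boundary arc $R'_{\al_i}\cup R'_{\al_{i+1}}$, I would produce an arc across $S_i$ realizing a genuine refinement of the partition $\mathcal D_Q$ --- one that still leaves every impression undivided, is upper semi--continuous, and separates $\imp(\al_i)$ from $\imp(\al_{i+1})$, the remaining impressions $\imp(\al_j)$ sitting safely in $\ol{S_i^c}$ and hence being undisturbed. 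By Lemma~\ref{lem:collapse_impressions_lc} this refinement gives a monotone map to a locally connected continuum that keeps $\ph(\imp(\al_i))\ne\ph(\imp(\al_{i+1}))$, contradicting finestness of $\ph$ (equivalently, that $K$ is one $\mathcal D_Q$--class).

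For the second assertion, let $B=\{\be_1,\dots,\be_k\}\subset A$, $k\ge4$, be circularly ordered with $\imp(\be_j)\cap\imp(\be_{j+1})\ne\0$ for all $j$ and $\imp(\be_j)\cap\imp(\be_l)=\0$ for non--adjacent $j,l$. I would derive a contradiction from planarity. Because $\imp(\be_1)\cap\imp(\be_k)\ne\0$, the continuum $\imp(\be_1)\cup\imp(\be_k)\cup R_{\be_1}\cup R_{\be_k}\cup\{\iy\}$ contains a ``loop'' (two disjoint connections between $\imp(\be_1)$ and $\imp(\be_k)$, one through $J_P$, one through $U_\iy$), so --- since impressions lie on $\bd(U_\iy)$ and rays in $U_\iy$, and $\hc$ is unicoherent --- it separates $\hc$, with the rays of all angles in $(\be_1,\be_k)$ (these include $\be_2,\dots,\be_{k-1}$) in one complementary component and those in $(\be_k,\be_1)$ in the other. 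Then $\imp(\be_2)\cup\dots\cup\imp(\be_{k-1})$ is a continuum on the $\be_2$--side touching the loop only along $\imp(\be_1)$ (via $\imp(\be_1)\cap\imp(\be_2)$) and along $\imp(\be_k)$ (via $\imp(\be_{k-1})\cap\imp(\be_k)$); feeding this crosscut--like continuum back into the picture and comparing with the fact that its own rays $R_{\be_2},\dots,R_{\be_{k-1}}$ are linearly ordered inside that component, the circular order of accesses (recorded precisely by the wings $\imp^\pm$ and the principal sets $\acc(\cdot)$) forces two of the $\imp(\be_j)$ with $|j-l|\ge 2$ to meet, contrary to assumption. I expect this last step --- and the analogous arc construction in the first assertion --- to be the main obstacle: the impressions are arbitrary subcontinua of $J_P$, not arcs or simple closed curves, so every ``Jordan curve / separation / side'' statement must be replaced by one about continua (using unshieldedness of $J_P$, upper semicontinuity of impressions, and unicoherence of $\hc$), and the cyclic--order bookkeeping among pairwise--intersecting impressions has to be carried out through the wings and principal sets rather than naively; this, rather than the dynamics, is where the real work lies.
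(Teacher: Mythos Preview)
Your plan diverges from the paper's proof in a way that leaves a real gap. The paper begins with a case split you omit: if the finite $K$-class $A$ is (pre)periodic, then Lemma~\ref{lcpt} gives that $\imp(A)$ is a single point and the theorem is trivial; the substantive case is when $A$ is a \emph{wandering} polygon, and the dynamics (Fatou domains are (pre)periodic, together with density of periodic points on parattracting boundaries \cite{pz94}) is used essentially. Your argument never invokes the hypothesis that $J_P$ is a Julia set.

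For the first assertion, your proposed contradiction---refining $\mathcal D_Q$ to separate $\imp(\al_i)$ from $\imp(\al_{i+1})$---need not be available. Any refinement with a locally connected quotient must still have each class a union of impressions (Lemma~\ref{lem:impressions_to_points}), so splitting the $K$-set $K=\imp(A)$ means partitioning $A$ into $A_1\ni\al_i$ and $A_2\ni\al_{i+1}$ with $\imp(A_1)\cap\imp(A_2)=\0$. Take $n=3$ with $\imp(\al_1)\cap\imp(\al_2)=\0$ but $\imp(\al_3)$ meeting both (this is forced since $K$ is connected): whichever side $\al_3$ is placed on, the two pieces intersect. No arc in the sector $S_i$ rescues this, because an arc lying in $U_\iy$ cannot separate two subsets of the connected $J_P$, and if you let it touch $J_P$ you must explain why the touching point lies outside $\imp(A)$. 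The paper's route is exactly to manufacture such a point: it shows the assumed disjointness forces a Fatou domain $U$ whose boundary meets $\imp(A)$, takes (pre)periodic points on $\bd(U)$ via \cite{pz94}, and uses that these cannot lie in the wandering set $\imp(A)$ to build a curve separating $\imp(\al_1)$ from $\imp(\al_2)$ while avoiding $\imp(A)$---contradicting connectedness of $\imp(A)$.

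For the second assertion your separation set-up is in the right spirit, but the conclusion ``cyclic order of accesses forces two non-adjacent $\imp(\be_j)$ to meet'' is not justified and I do not see how to make it go through. The paper's argument is short and different: with $Y=\imp(\be_1)\cup\imp(\be_2)$ and $Z=\bigcup_{i\ge3}\imp(\be_i)$ one has $Y\cap Z=[\imp(\be_1)\cap\imp(\be_r)]\cup[\imp(\be_2)\cap\imp(\be_3)]$, which is disconnected when $r\ge4$, so $Y\cup Z$ (hence $\imp(A)$) separates the plane; the bounded complementary component is then a Fatou domain with boundary inside the wandering $\imp(A)$, contradicting (pre)periodicity of Fatou domains. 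Again the punch line is dynamical, not purely planar.
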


\begin{proof} In the case when $A$ is a (pre)periodic K-class (equivalently,
$\sim_P$-class) it follows from Lemma~\ref{lcpt} that $\imp(A)$ is a point
which implies the conclusions of the lemma. Also, if $A$ consists of two angles
the conclusions of the lemma are obvious. Hence the remaining case is when
$n\ge 3$ and $A$ is a wandering polygon. Consider this case
by way of contradiction. Assume that $\al_1, \dots, \al_n$ circularly ordered
and $\imp(\al_1)\cap \imp(\al_2)=\0$. Denote the open arc between $\al_1,
\al_2$ which is complementary to $A$ by $I$.

Let us show that there exists a Fatou domain $U$ and a point of $x\in
[\imp(A)\sm (\imp(\al_1)\cup \imp(\al_n))]\cap \bd(U)$. Draw a curve $L$ which
starts at a point of a ray of an angle from $I$ and ends at a point of a ray of
an angle from $\uc\sm \ol{I}$. Clearly, $L$ separates $\imp(\al_1)$ from
$\imp(\al_2)$. Since $\imp(A)$ is a continuum, $L$ will have to intersect
$\imp(A)$. Denote by $x$ the first on $L$ point of intersection between $L$ and
$\imp(A)$. Let us show that a sufficiently small open subarc $T$ of $L$ with
one endpoint $x$ and disjoint from $\imp(A)$ is in fact disjoint from $J_P$.
Indeed, since $\al_1$ and $\al_n$ are adjacent elements of $A$, the set
$\cup_{\ga \in I}\imp(\ga)$ is disjoint from $\imp(A)$, and hence does not
contain $x$. On the other hand, $x\nin \imp(\al_1)\cup \imp(\al_2)$ by the
choice of $L$. Hence $x\nin \cup_{\ga \in \ol{I}}\imp(\ga)=Q$, and since $Q$ is
compact, we can find the desired arc $T$. On the other hand, the intersection
$\imp(A)\cap Q=\imp(\al_1)\cup \imp(\al_n)$ is disconnected which implies that
$Q$ separates the plane. By the construction $T$ must be contained in a bounded
component $U$ of $\C\sm Q$. Since $Q\subset J_P$, it follows that $U$ is a
Fatou domain, and hence $x\in \bd(U)$.

Take a small ball $B$ centered at $x$. By \cite{pz94} there exists a
(pre)periodic point $y\in B\cap \bd(U)$. Also, choose a (pre)periodic point
$y'\in \bd(U)$ so that a ray of an angle belonging to $I$ lands at $Y'$. Since
$\imp(A)$ is wandering, $y, y'\nin \imp(A)$. As in the proof of
Lemma~\ref{fatliv1}, connect a point $z\in U$ with infinity by a curve $E$
which intersects $J_P$ only at $y$ and $y'$. Then $L'$ separates $\imp(\al_1)$
from $\imp(\al_2)$ on the plane and is disjoint from the continuum $\imp(A)$
which contains both $\imp(\al_1)$ and $\imp(\al_n)$, a contradiction. Thus,
adjacent angles in $A$ must have non-disjoint impressions.

To prove the rest, assume that there exist angles $\be_1, \dots, \be_r\in A,
r\ge 4$ which are circularly ordered and such that all adjacent angles have
non-disjoint impressions while otherwise the impressions of angles are
disjoint. Consider two continua, $Y=\imp(\be_1)\cup \imp(\be_2)$ and
$Z=\cup^r_{i=3}\imp(\be_i)$. Then it follows that

$$Y\cap Z=[\imp(\be_1)\cap \imp(\be_r)]\cup [\imp(\be_2)\cap \imp(\be_3)]$$

\noindent which is disconnected because $\imp(\be_1)\cap \imp(\be_3)=\0$
(recall that $r>3$). Hence $\imp(A)$ separates the plane which is impossible.
Indeed, if $\imp(A)$ separates the plane then its topological hull contains a
Fatou domain and $\imp(A)$ is (pre)periodic. Assume that $\imp(A)$ (and $A$) are
periodic of period $1$. If $\imp(A)$ contains the boundary of an parattracting
Fatou domain then by \cite{pz94} $\imp(A)$ will have to intersect infinitely
many impressions, a contradiction. If $\imp(A)$ contains the boundary of a
Siegel domain then by Lemma~\ref{hedgehog} it contains a critical point $c\in
J_P$ and $A$ contains at least two angles with the same $\si$-image. However,
as $A$ is a finite invariant K-class, the map $\si$ maps $A$ onto itself in a
one-to-one fashion, a contradiction.
\end{proof}

\end{document}